\documentclass[12pt]{amsart}

\usepackage{amsmath}
\usepackage{amsthm}
\usepackage{amssymb}
\usepackage{caption}
\usepackage[curve]{xypic}
\usepackage{cite}
\usepackage{enumitem}
\usepackage{color}
\usepackage{url}
\usepackage[usenames,dvipsnames]{xcolor}
\usepackage{graphicx}
\usepackage{tikz}
\usepackage{tikz-cd}
\usepackage{hyperref} 
\usepackage{verbatim}

\setlist{itemsep=4pt, topsep=4pt}

%
%

\makeatletter

\def\chaptermark#1{}

\def\chapter{%
  \if@openright\cleardoublepage\else\clearpage\fi
  \thispagestyle{plain}\global\@topnum\z@
  \@afterindenttrue \secdef\@chapter\@schapter}

\def\@chapter[#1]#2{\refstepcounter{chapter}%
  \ifnum\c@secnumdepth<\z@ \let\@secnumber\@empty
  \else \let\@secnumber\thechapter \fi
  \typeout{\chaptername\space\@secnumber}%
  \def\@toclevel{0}%
  \ifx\chaptername\appendixname \@tocwriteb\tocappendix{chapter}{#2}%
  \else \@tocwriteb\tocchapter{chapter}{#2}\fi
  \chaptermark{#1}%
  \addtocontents{lof}{\protect\addvspace{10\p@}}%
  \addtocontents{lot}{\protect\addvspace{10\p@}}%
  \@makechapterhead{#2}\@afterheading}
\def\@schapter#1{\typeout{#1}%
  \let\@secnumber\@empty
  \def\@toclevel{0}%
  \ifx\chaptername\appendixname \@tocwriteb\tocappendix{chapter}{#1}%
  \else \@tocwriteb\tocchapter{chapter}{#1}\fi
  \chaptermark{#1}%
  \addtocontents{lof}{\protect\addvspace{10\p@}}%
  \addtocontents{lot}{\protect\addvspace{10\p@}}%
  \@makeschapterhead{#1}\@afterheading}
\newcommand\chaptername{Chapter}

\def\@makechapterhead#1{\global\topskip 7.5pc\relax
  \begingroup
  \fontsize{\@xivpt}{18}\bfseries\centering
    \ifnum\c@secnumdepth>\m@ne
      \leavevmode \hskip-\leftskip
      \rlap{\vbox to\z@{\vss
          \centerline{\normalsize\mdseries
              \uppercase\@xp{\chaptername}\enspace\thechapter}
          \vskip 3pc}}\hskip\leftskip\fi
     #1\par \endgroup
  \skip@34\p@ \advance\skip@-\normalbaselineskip
  \vskip\skip@ }
\def\@makeschapterhead#1{\global\topskip 7.5pc\relax
  \begingroup
  \fontsize{\@xivpt}{18}\bfseries\centering
  #1\par \endgroup
  \skip@34\p@ \advance\skip@-\normalbaselineskip
  \vskip\skip@ }
\def\appendix{\par
  \c@chapter\z@ \c@section\z@
  \let\chaptername\appendixname
  \def\thechapter{\@Alph\c@chapter}}

\newcounter{chapter}

\newif\if@openright

\makeatother

\makeatletter 
\def\@cite#1#2{{\m@th\upshape\bfseries%
[{#1\if@tempswa{\m@th\upshape\mdseries, #2}\fi}]}}
\makeatother 

\theoremstyle{plain}
\newtheorem{thm}{Theorem}[section]
\newtheorem{cor}[thm]{Corollary}

\newtheorem{prop}[thm]{Proposition}
\newtheorem{lem}[thm]{Lemma}
\newtheorem{sublem}[thm]{Sublemma}

\theoremstyle{definition}
\newtheorem{defn}[thm]{Definition}

\theoremstyle{remark}
\newtheorem{rem}[thm]{Remark}

\numberwithin{equation}{subsection}
\captionsetup{figurewithin=section}

\renewcommand{\bold}[1]{\medskip \noindent {\bf #1 }\nopagebreak}

\newcommand{\nc}{\newcommand}
\newcommand{\rnc}{\renewcommand}



\nc\bA{\mathbb{A}}
\nc\bB{\mathbb{B}}
\nc\bC{\mathbb{C}}
\nc\bD{\mathbb{D}}
\nc\bE{\mathbb{E}}
\nc\bF{\mathbb{F}}
\nc\bG{\mathbb{G}}
\nc\bH{\mathbb{H}}
\nc\bI{\mathbb{I}}
\nc{\bJ}{\mathbb{J}} 
\nc\bK{\mathbb{K}}
\nc\bL{\mathbb{L}}
\nc\bM{\mathbb{M}}
\nc\bN{\mathbb{N}}
\nc\bO{\mathbb{O}}
\nc\bP{\mathbb{P}}
\nc\bQ{\mathbb{Q}}
\nc\bR{\mathbb{R}}
\nc\bS{\mathbb{S}}
\nc\bT{\mathbb{T}}
\nc\bU{\mathbb{U}}
\nc\bV{\mathbb{V}}
\nc\bW{\mathbb{W}}
\nc\bY{\mathbb{Y}}
\nc\bX{\mathbb{X}}
\nc\bZ{\mathbb{Z}}
\nc\cA{\mathcal{A}}
\nc\cB{\mathcal{B}}
\nc\cC{\mathcal{C}}
\rnc\cD{\mathcal{D}}
\nc\cE{\mathcal{E}}
\nc\cF{\mathcal{F}}
\nc\cG{\mathcal{G}}
\rnc\cH{\mathcal{H}}
\nc\cI{\mathcal{I}}
\nc{\cJ}{\mathcal{J}} 
\nc\cK{\mathcal{K}}
\rnc\cL{\mathcal{L}}
\nc\cM{\mathcal{M}}
\nc\cN{\mathcal{N}}
\nc\cO{\mathcal{O}}
\nc\cP{\mathcal{P}}
\nc\cQ{\mathcal{Q}}
\rnc\cR{\mathcal{R}}
\nc\cS{\mathcal{S}}
\nc\cT{\mathcal{T}}
\nc\cU{\mathcal{U}}
\nc\cV{\mathcal{V}}
\nc\cW{\mathcal{W}}
\nc\cY{\mathcal{Y}}
\nc\cX{\mathcal{X}}
\nc\cZ{\mathcal{Z}}
\nc\bfA{\mathbf{A}}
\nc\bfB{\mathbf{B}}
\nc\bfC{\mathbf{C}}
\nc\bfD{\mathbf{D}}
\nc\bfE{\mathbf{E}}
\nc\bfF{\mathbf{F}}
\nc\bfG{\mathbf{G}}
\nc\bfH{\mathbf{H}}
\nc\bfI{\mathbf{I}}
\nc{\bfJ}{\mathbf{J}} 
\nc\bfK{\mathbf{K}}
\nc\bfL{\mathbf{L}}
\nc\bfM{\mathbf{M}}
\nc\bfN{\mathbf{N}}
\nc\bfO{\mathbf{O}}
\nc\bfP{\mathbf{P}}
\nc\bfQ{\mathbf{Q}}
\nc\bfR{\mathbf{R}}
\nc\bfS{\mathbf{S}}
\nc\bfT{\mathbf{T}}
\nc\bfU{\mathbf{U}}
\nc\bfV{\mathbf{V}}
\nc\bfW{\mathbf{W}}
\nc\bfY{\mathbf{Y}}
\nc\bfX{\mathbf{X}}
\nc\bfZ{\mathbf{Z}}

\nc{\dmo}{\DeclareMathOperator}
\nc{\wt}{\widetilde}
\rnc{\Re}{\operatorname{Re}}
\rnc{\Im}{\operatorname{Im}}
\rnc{\span}{\operatorname{span}}
\dmo{\rank}{rank}
\dmo{\End}{End}
\dmo{\Hom}{Hom}
\dmo{\Jac}{Jac}
\dmo{\Id}{Id}
\dmo{\Ann}{Ann}
\dmo{\Area}{Area}
\dmo{\CP}{\bC P^1}
\dmo{\rk}{rk}
\dmo{\rel}{rel}
\dmo{\ra}{\rightarrow}
\dmo{\Twist}{\mathrm{Twist}}
\dmo{\TwistX}{\mathrm{Twist}(X, \omega)}

\rnc{\Col}{\operatorname{Col}}

\nc{\ColOne}{\Col_{\bfC_1}}
\nc{\ColOneX}{\ColOne(X,\omega)}

\nc{\ColTwo}{\Col_{\bfC_2}}
\nc{\ColTwoX}{\ColTwo(X,\omega)}

\nc{\ColThree}{\Col_{\bfC_3}}
\nc{\ColThreeX}{\ColThree(X,\omega)}

\nc{\ColOneTwo}{\Col_{\bfC_1, \bfC_2}}
\nc{\ColOneTwoX}{\ColOneTwo(X,\omega)}

\nc{\ColOneThree}{\Col_{\bfC_1, \bfC_3}}
\nc{\ColOneThreeX}{\ColOneThree(X,\omega)}

\nc{\MOne}{\cM_{\bfC_1}}
\nc{\MTwo}{\cM_{\bfC_2}}
\nc{\MOneTwo}{\cM_{\bfC_1, \bfC_2}}

\nc{\MThree}{\cM_{\bfC_3}}

\nc{\MOneThree}{\cM_{\bfC_1, \bfC_3}}

\dmo{\For}{\cF}

\nc{\GL}{\mathrm{GL}^+(2, \bR)}

\title[Billiards in Right Triangles]{Billiards in right triangles and orbit closures in genus zero strata}
\author[Apisa]{Paul~Apisa}

\subjclass[2010]{32G15, 37D40, 14H15}

\begin{document}
\maketitle
\thispagestyle{empty}

\begin{abstract}
We classify $\mathrm{GL}(2, \mathbb{R})$ orbit closures of rank at least two in hyperelliptic components of strata of Abelian and quadratic differentials. As a consequence, the orbit closure of the unfolding of every rational right and isosceles triangle is computed and the asymptotic number of periodic billiard trajectories in these triangles is deduced. Additionally, given a fixed set of angles, the orbit closure of the unfolding of all unit area rational parallelograms, isosceles trapezoids, and right trapezoids outside of a discrete set is determined.
\end{abstract}

\setcounter{tocdepth}{2} 
\tableofcontents

\section{Introduction}\label{S:intro}

A \emph{rational polygon} is a polygon with connected\footnote{A generalization to polygons with disconnected boundaries appears in Masur-Tabachnikov \cite{MT}.} boundary in the plane all of whose angles are rational multiples of $\pi$. Katok and Zemlyakov \cite{KatokZemlyakov} observed that every rational polygon has an associated translation surface, called its \emph{unfolding}. Straight line flow on the unfolding corresponds to billiard flow on the original polygon. Our first goal in the sequel is to exploit this connection to study billiards in rational right and isosceles triangles\footnote{The dynamics of two elastic point masses confined to a line segment with barriers at the endpoints is isomorphic to billiards in a right triangle (see Masur-Tabachnikov \cite[Section 1.2]{MT}) so this analysis admits another application to physics.}. Underlying the analysis will be the determination of the $\mathrm{GL}(2,\mathbb{R})$ orbit closure of the unfolding of every rational right and isosceles triangle.

In rational polygons, periodic billiard trajectories are always shadowed by bands of nearby periodic billiard trajectories of the same length. Given a rational polygon $P$ we will let $N_P(L)$ denote the number of (bands of) periodic billiard trajectories in $P$ of length at most $L$. Masur \cite{Masur-LowerBounds, Masur-UpperBounds} showed that $\displaystyle{\liminf_{L \ra \infty} \frac{N_P(L)}{L^2}}$ and $\displaystyle{\limsup_{L \ra \infty} \frac{N_P(L)}{L^2}}$ are positive and finite for all $P$, i.e. that $N_P(L)$ has quadratic asymptotics. If these two limits agree we will say that $N_P(L)$ has \emph{exact quadratic asymptotics}. 

In general, it is unknown whether $N_P(L)$ always has exact quadratic asymptotics. Nevertheless, $N_P(L)$ always satisfies a weak form of quadratic asymptotics by work of Eskin-Mirzakhani \cite{EM} and Eskin-Mirzakhani-Mohammadi \cite[Theorem 2.12]{EMM}. Following Athreya-Eskin-Zorich \cite{AEZ}, we will say that $N_P(L) ``\sim" c \cdot L^2$ means that 
\[ \lim_{L \ra \infty} \frac{1}{L} \int_0^L N_P(e^t) e^{-2t} dt = c. \]
These limits are sometimes referred to as the \emph{weak asymptotics of $N_P$}. The main theorem is the following; note that we will occasionally abbreviate $\mathrm{gcd}(a,b)$ to just $(a,b)$. Moreover, $a$, $b$, and $n$ will always be taken to be positive integers in the sequel.

\begin{thm}\label{T:Counting:ClosedTrajectories}
Suppose that $P$ is a rational right triangle with angles $\left( \frac{a}{2n}, \frac{b}{2n}, \frac{1}{2}\right)\pi$ with $\mathrm{gcd}(a,b,2n) =1$ and $\min(a,b) > 2$, then
\[ N_P(L) ``\sim" \frac{1}{16\pi} \left( 1 - \frac{1}{n} \right)\left( 1 + \frac{2}{ab} \right) \frac{L^2}{\mathrm{area}(P)}.\]
Suppose that $T$ is a rational isosceles triangle with angles $\left( \frac{a}{n}, \frac{a}{n}, \frac{b}{n} \right)\pi$ with $\mathrm{gcd}(a,b,n) =1$, $a \ne 1$, and $b \notin \{1,2,4\}$, then
\[ N_T(L) ``\sim" \frac{1}{8\pi} \left( 1-\frac{(n,2)}{n} \right) \left( 1 + \frac{2}{ab (n,2)}\right) \frac{L^2}{\mathrm{area}(T)}. \]
\end{thm}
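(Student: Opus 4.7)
The plan is to combine the orbit closure classification that is the subject of this paper with the Siegel-Veech counting machinery of Eskin-Mirzakhani and Eskin-Mirzakhani-Mohammadi, and then to unravel the combinatorics of the unfolding map to extract the stated constants. First I would carry out the Katok-Zemlyakov unfolding of $P$ (resp.\ $T$) explicitly to locate the associated translation surface $(X,\omega)$ in its stratum. For the right triangle with angles $(a/(2n), b/(2n), 1/2)\pi$ the unfolding carries an action of the dihedral group of order $4n$ by affine automorphisms, and $P$ is recovered as a fundamental domain for this action; in particular $(X,\omega)$ lies in a hyperelliptic component of a stratum of Abelian or quadratic differentials, and its area is a specific multiple of $\mathrm{area}(P)$. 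The isosceles case is analogous.

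The next and key step is to identify the $\mathrm{GL}^+(2,\mathbb{R})$-orbit closure of $(X,\omega)$. The hypotheses $\min(a,b) > 2$ for the right triangle and $a \neq 1$, $b \notin \{1,2,4\}$ for the isosceles triangle are exactly what is needed to force the orbit closure to have rank at least two: these inequalities rule out the Veech-surface exceptions and lower-rank loci (such as eigenform loci and hyperelliptic sub-loci) that can occur when the denominators are small. Once rank $\geq 2$ is verified, the paper's classification theorem for rank-at-least-two orbit closures in hyperelliptic components identifies the orbit closure as the entire ambient hyperelliptic component.

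Knowing the orbit closure, I would then invoke the theorem of Eskin-Mirzakhani-Mohammadi: the weak asymptotic number of cylinders on $(X,\omega)$ of length at most $L$ equals $c_{\mathrm{hyp}} \cdot \pi L^2 / \mathrm{area}(X,\omega)$, where $c_{\mathrm{hyp}}$ is the cylinder Siegel-Veech constant of the hyperelliptic component, computable from the known formulas of Eskin-Kontsevich-Zorich and subsequent work. Finally I would translate cylinder counts on $(X,\omega)$ to counts of bands of periodic billiard trajectories on $P$. Each band on $P$ corresponds to a $D_{2n}$-orbit of cylinders on $(X,\omega)$, and by enumerating these orbits, separating cylinders that are or are not fixed by rotational subgroups, the factor $(1 - 1/n)$ appears from the generic (non-fixed) orbits, while the correction $(1 + 2/(ab))$ encodes the exceptional contribution of cylinders passing through the right-angle vertex of $P$. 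Combining with the area comparison produces the stated constant.

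The main obstacle will be the orbit-closure step. Even though the classification theorem does most of the work, one still has to prove the rank-at-least-two conclusion under precisely the stated inequalities; this forces a careful case-by-case check that the unfolding of $P$ (resp.\ $T$) is not contained in any of the known exceptional low-dimensional invariant subvarieties of the hyperelliptic components, and it is this delicate exclusion that determines the exact form of the hypotheses $\min(a,b) > 2$, $a\neq 1$, and $b \notin \{1,2,4\}$. The counting step thereafter is essentially bookkeeping, with the arithmetic factors $(1 - 1/n)$, $(1 - (n,2)/n)$, $(1 + 2/(ab))$, and $(1 + 2/(ab(n,2)))$ emerging from the orbit-structure of the $D_{2n}$-action on the set of cylinders.
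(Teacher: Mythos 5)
Your overall architecture (classify the orbit closure, invoke Eskin--Mirzakhani--Mohammadi for weak asymptotics, compute the Siegel--Veech constant, translate back to the billiard table) is the paper's architecture, but two of your steps contain genuine gaps. First, the orbit-closure step does not end where you stop it. Verifying rank at least two (which in the paper is Lemma \ref{L:UnfoldingRank}, and already requires the Hodge-theoretic input of Wright, Filip, and Forni--Matheus--Zorich applied to the eigenspace decomposition of the cyclic cover) together with Theorem \ref{T:Main} only tells you that the orbit closure is \emph{some} full locus of covers of a genus zero stratum; it does not identify it as ``the entire ambient hyperelliptic component.'' Indeed the orbit closure here is not a component of a stratum at all but the quadratic double of $\cQ(a-2,b-2,-1^n)$, a proper (codimension-one) hyperelliptic locus, and one must still rule out that the unfolding covers a strictly smaller genus zero stratum. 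The paper does this in Theorem \ref{T:Unfolding:RightDetailed} by producing eigenforms $q_{\ell_0}, q_{\ell_1}$ with poles at $0$ and $\infty$ inside the tangent space and deducing that the induced map $f$ on the genus zero quotients is unramified there, hence of degree one; without this, degree two or three covers are not excluded and the constant would change.

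Second, the Siegel--Veech constant you propose to use is the wrong one: since the orbit closure is a proper invariant subvariety, the relevant constant is that of the quadratic double of the genus zero stratum, not of a hyperelliptic component of an Abelian stratum, and it is assembled from the Athreya--Eskin--Zorich configuration constants for envelopes and simple cylinders on $\cQ(a-2,b-2,-1^n)$ (Corollary \ref{C:T:AEZ}): the factor $1-\frac{1}{n}$ comes from the binomial coefficient $\binom{n}{2}$ counting pairs of poles in those configurations, and $1+\frac{2}{ab}$ from $c_{env}+c_{simp}$ with $(k_1+2)(k_2+2)=ab$ --- not from fixed versus non-fixed orbits of the dihedral action or from trajectories through the right-angle vertex. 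Relatedly, the passage from cylinders on the (partial) unfolding to bands of billiard trajectories needs Lemma \ref{L:UnfoldingsVsPolygons}, i.e.\ the fact that only finitely many cylinders are invariant under the reflection or a nontrivial deck transformation; and in the isosceles case with $n$ even the unfolding is a \emph{double cover} of the right-triangle unfolding landing in a hyperelliptic component of a quadratic stratum, so a separate cylinder count on that cover (Theorem \ref{T:AEZ:Corollary}) is needed --- this is precisely where the $(n,2)$ factors in the stated formula originate, and ``the isosceles case is analogous'' does not capture it.
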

%
%
%
%
%
%
%
%
%

We will say that a rational right or isosceles triangle is \emph{unexceptional} if it satisfies the hypotheses of Theorem \ref{T:Counting:ClosedTrajectories}.

One striking observation is the following. Let $c(\alpha)$ be the number, wherever defined, so that $N_T(L) ``\sim" c(\alpha) \frac{L^2}{\mathrm{area}(T)}$ where $T$ is an isosceles triangle with angles $(\alpha, \alpha, \pi - 2\alpha)$. Then $c(\alpha)$ is discontinuous at every rational multiple of $\pi$ in $(0, \frac{\pi}{2})$ with a discrete set of possible exceptions. A similar statement holds for right triangles.


Theorem \ref{T:Counting:ClosedTrajectories} completes our understanding of the weak asymptotics of $N_P(L)$ for $P$ a rational right or isosceles triangle. Veech \cite[Theorem 1.2]{V} computed the asymptotics of $N_P(L)$ for $P$ an isosceles triangle with angles $( \frac{1}{n}, \frac{1}{n}, \frac{n-2}{n})\pi$ and Eskin-Marklof-Witte-Morris \cite[Theorem 1.4]{EMWM} computed it for $P$ an isosceles triangle with angles $( \frac{n-2}{2n}, \frac{n-2}{2n}, \frac{4}{2n})\pi$ with $n \ne 5$ odd. In both cases, the authors showed exact (not just weak) quadratic asymptotics. The asymptotics of the exceptional right and isosceles triangles not covered by Theorem \ref{T:Counting:ClosedTrajectories} can be derived from these two results. 

%
%



Another question of interest to a billiard player is the number of ways to shoot a billiard ball between two points on a billiard table. Given two distinct points $p_1$ and $p_2$ on a rational polygon $P$, let $N_{P; p_1, p_2}(L)$ denote the number of billiard trajectories of length at most $L$ from $p_1$ to $p_2$.

\begin{thm}\label{T:Counting:GeneralizedDiagonals}
Suppose that $P$ is a rational right triangle with angles $\left( \frac{a}{2n}, \frac{b}{2n}, \frac{1}{2}\right)\pi$ with $\mathrm{gcd}(a,b,2n) =1$ and $\min(a,b) > 2$. If $p_1$ and $p_2$ are distinct points on $P$, then
\[ N_{P; p_1,p_2}(L) ``\sim" \frac{n_1 n_2 c_{p_1, p_2}}{4n^2} \frac{\pi L^2}{\mathrm{area}(P)}.\]
where we make the following definitions: $d_i$ is $a-2$ (resp. $b-2$, resp. $-1$) if $p_i$ is the vertex of angle $\frac{a\pi}{2n}$ (resp. $ \frac{b\pi}{2n}$, resp. $ \frac{\pi}{2}$) and $0$ otherwise; $n_i$ is $1$ if $p_i$ is a vertex whose angle is different from $\frac{\pi}{2}$, $n$ if $p_i$ is any other point on the boundary of $P$, and $2n$ if $p_i$ is any point in the interior of $P$; and 
\[ c_{p_1, p_2} := \frac{(d_1+d_2+2)!!(d_1+1)!!(d_2+1)!!}{(d_1+d_2+1)!!d_1!!d_2!!}\cdot \begin{cases}
                                  \frac{2}{\pi^2} & \text{ $d_1$ and $d_2$ both odd} \\
                                  \frac{1}{2} & \text{ either $d_1$ or $d_2$ even} 
  \end{cases}.\]
In particular, when $p_1$ is not a vertex, 
\[ N_{P; p_1,p_2}(L) ``\sim" \frac{\theta_1 \theta_2 L^2}{2\pi \mathrm{area}(P)}\]
where $\theta_i$ is the amount of angle, measured in radians, in $P$ around $p_i$, i.e. if $p_i$ is an interior point it is $2\pi$, if $p_i$ is a boundary point that is not a vertex, it is $\pi$, and if $p_i$ is a vertex it is the angle of the vertex. 
%
%
\end{thm}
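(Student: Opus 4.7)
The strategy is to pass from the generalized-diagonal count on $P$ to a point-to-point saddle-connection count on the Katok--Zemlyakov unfolding, identify the $\mathrm{GL}(2,\bR)$-orbit closure via the main classification theorem of the paper, and apply the weak-asymptotic Siegel--Veech formula of Eskin--Mirzakhani--Mohammadi together with the explicit Siegel--Veech constants known for hyperelliptic loci.

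I would realize the unfolding of $P$ as a half-translation surface $(X,q)$ in a genus-zero stratum of quadratic differentials: a vertex of angle $a\pi/(2n)$ becomes a cone point of cone angle $a\pi$ and hence a zero of $q$ of order $a-2$, while the right angle becomes a simple pole. The orders of $q$ at the three vertex images are therefore exactly the $d_i$ of the statement. The dihedral group of order $4n$ acts on $(X,q)$ by symmetries, and passing to the holonomy double cover $(X,\omega)$ produces a translation surface on which the order-$2n$ orientation-preserving subgroup $G$ acts by translation-surface automorphisms. The preimage in $(X,\omega)$ of a non-singular point $p\in P$ has $n$ elements on the boundary and $2n$ in the interior, while each vertex has a single preimage -- this is the quantity $n_i$. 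By the main classification theorem, the unexceptionality hypotheses on $(a,b,n)$ ensure that the $\mathrm{GL}(2,\bR)$-orbit closure of $(X,\omega)$ is the full hyperelliptic component of its stratum.

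Every billiard trajectory from $p_1$ to $p_2$ of length at most $L$ has $|G|=2n$ lifts to saddle connections on $(X,\omega)$, one per symmetry; so the billiard count equals $1/(2n)$ times the total count of saddle connections with endpoints in the $G$-orbits of prescribed preimages $\tilde p_1,\tilde p_2$. By the weak Siegel--Veech formula of Eskin--Mirzakhani--Mohammadi applied to the identified hyperelliptic orbit closure, this orbit-to-orbit saddle-connection count is asymptotic to $n_1n_2 c_{p_1,p_2}\pi L^2/\mathrm{area}(X)$, where $c_{p_1,p_2}$ is the point-to-point Siegel--Veech constant of the hyperelliptic locus attached to local cone-angle data $(d_1,d_2)$ and the factor $n_1 n_2$ enumerates ordered pairs of endpoint preimages. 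With $\mathrm{area}(X)=2n\,\mathrm{area}(P)$ this produces the prefactor $n_1 n_2 c_{p_1,p_2}/(4n^2)$ of the statement.

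The explicit shape of $c_{p_1,p_2}$ comes from a configuration-space volume computation \emph{\`a la} Athreya--Eskin--Zorich: the double-factorial ratio is the ratio of the volume of configurations of two marked cone points of prescribed orders $(d_1,d_2)$ to the ambient Masur--Veech volume of the hyperelliptic locus. The parity dichotomy reflects the interaction of the hyperelliptic involution with the holonomy double cover of $(X,q)$, which ramifies precisely at singularities of odd order: when both $d_i$ are odd, saddle connections between the two ramified marked points are counted primitively with the $\zeta(2)^{-1}$-type correction rescaled to $2/\pi^2$, while when at least one $d_i$ is even the involution pairs saddle connections two-to-one and yields the $1/2$ overcount. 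The simplification to $\theta_1\theta_2 L^2/(2\pi\,\mathrm{area}(P))$ when $p_1$ is not a vertex then follows by substituting $d_1=0$ (so that the $1/2$ branch applies and the double factorials collapse) and using $\theta_i=n_i\pi/n$ for non-vertex $p_i$ together with the cone-angle identities at the possible vertex $p_2$. The hardest step is the Siegel--Veech evaluation itself: the double-factorial formula is not the volume of a single stratum but a ratio of configuration-space volumes, and extracting it requires integrating out local angular data at two marked points of prescribed orders while carefully tracking the interaction of the hyperelliptic involution with the holonomy double cover, and verifying that for unexceptional triangles no additional principal-boundary configurations contribute.
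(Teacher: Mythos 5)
Your high-level strategy (unfold, identify the orbit closure via the classification theorem, apply Eskin--Mirzakhani--Mohammadi together with Athreya--Eskin--Zorich constants, and divide by the covering degree) is the same as the paper's, but the execution has concrete errors that would derail the computation. The paper works on the \emph{partial} unfolding $(Y,q)\in\cQ(a-2,b-2,-1^n)$, a genus-zero quadratic differential tiled by $2n$ copies of $P$; there the quantities $n_i$ are literally the cardinalities $|P_i|$ of the preimage sets, the $d_i$ are the orders of $q$ at those points, and $c_{p_1,p_2}$ is exactly the AEZ constant for the configuration ``saddle connection joining two distinct labelled singularities'' in a genus-zero stratum. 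You instead work on the holonomy double cover $(X,\omega)$, and your bookkeeping there is wrong: $(X,\omega)$ is tiled by $4n$ copies of $P$, so an interior point has $4n$ preimages (not $2n$), a boundary non-vertex point has $2n$, the right-angle vertex has $n$ (the cover is branched over the poles), and a vertex of angle $\tfrac{a\pi}{2n}$ has one or two preimages according to the parity of $a-2$; moreover the covering degree over $D(P)$ is $2n$, so the divisor in the analogue of Lemma \ref{L:UnfoldingsVsPolygons} is $4n$, not $2n$. These discrepancies do not cancel uniformly, because a saddle connection of $(Y,q)$ lifts to either one or two saddle connections of $(X,\omega)$ depending on its $\bZ/2$-holonomy --- this is precisely the subtlety the paper isolates in Theorem \ref{T:AEZ:Corollary} for cylinders and avoids for saddle connections by staying on $(Y,q)$. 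Relatedly, the orbit closure of $(X,\omega)$ is the quadratic double of $\cQ(a-2,b-2,-1^n)$, a \emph{codimension-one} hyperelliptic locus (Lemma \ref{L:Codim=Zeros}, since there are two zeros downstairs), not ``the full hyperelliptic component of its stratum''; hyperelliptic components arise only from genus-zero strata with a single zero.

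Two further gaps: first, before any Siegel--Veech count with endpoints at $p_1,p_2$ can be invoked, one must know that marking the preimages of the non-singular $p_i$ does not shrink the orbit closure, i.e.\ that genus-zero strata of rank at least two have no periodic points; the paper cites Apisa--Wright for this (Theorem \ref{T:QuadraticPeriodicBackground}), and your proposal omits it entirely. Second, your proposed explanation of the parity dichotomy in $c_{p_1,p_2}$ (ramification of the holonomy double cover at odd-order singularities) is not its source: the dichotomy is intrinsic to the AEZ computation on the genus-zero quadratic differential itself and appears already in the constant for configuration $1$ in Definition \ref{D:Configurations}; no passage to the double cover is involved. Since you ultimately defer the evaluation of $c_{p_1,p_2}$ to the literature anyway, the correct move is simply to quote the AEZ constant on $(Y,q)$ rather than to attempt to reconstruct it on $(X,\omega)$.
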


The explicit constants in Theorems \ref{T:Counting:ClosedTrajectories} and \ref{T:Counting:GeneralizedDiagonals} are the Siegel-Veech constants computed in Athreya-Eskin-Zorich \cite{AEZ}. 

Curiously, by Stirling's formula, when $p_1$ and $p_2$ are the two vertices not corresponding to the right angle,
\[ N_{P; p_1, p_2}(L) ``\sim"  \frac{(1+o(1))}{\sqrt{n}} \frac{\mathrm{const}_1(p_1, p_2) \sqrt{\theta_1 \theta_2} L^2}{ \mathrm{area}(P)} \]
where $\mathrm{const}_1(p_1, p_2)$ only depends on the parity of $\frac{2n\theta_1}{\pi}$ and $\frac{2n\theta_2}{\pi}$. Similarly, if $p_1$ is the vertex of angle $\frac{\pi}{2}$ and $p_2$ is a distinct vertex, then 
\[ N_{P; p_1, p_2}(L) ``\sim"  \frac{(1+o(1))}{n} \frac{\mathrm{const}_2(p_2) \theta_2 L^2}{ \mathrm{area}(P)} \]
where $\mathrm{const}_2(p_2)$ only depends on the parity of $\frac{2n\theta_2}{\pi}$. In both cases we see that as the height of the rational numbers that define the vertices of the triangle become larger, there are fewer billiard trajectories between the vertices. 


%

Our last result on billiards in rational right and isosceles triangles is on the \emph{finite blocking problem}, which, given a rational polygon $P$, asks for all pairs of finitely blocked points on $P$. Two (not necessarily distinct) points $p_1$ and $p_2$ are said to be \emph{finitely blocked} if there is a finite collection of points $B \subseteq P - \{p_1, p_2\}$ so that all billiard paths from $p_1$ to $p_2$ pass through points in $B$.

\begin{thm}\label{T:Main:FiniteBlocking}
If $P$ is an unexceptional rational right or isosceles triangle then no two points of $P$ are finitely blocked from each other. 
\end{thm}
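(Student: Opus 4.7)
The plan is to combine the paper's orbit closure classification with the structural correspondence between finite blocking and translation covers. Suppose for contradiction that two distinct points $q_1, q_2$ of an unexceptional rational right or isosceles triangle $P$ are finitely blocked, and let $(X,\omega)$ denote the unfolding of $P$. Finite blocking on $P$ transfers, under the standard unfolding dictionary, to finite blocking of suitable lifts $p_1, p_2 \in X$ under straight-line flow. A theorem of Monteil (refined by Apisa and by Lelievre--Monteil--Weiss) then asserts that such blocking forces the existence of a translation covering $\pi : (X,\omega) \to (Y,\eta)$ with $\pi(p_1) = \pi(p_2)$, the blocking set being contained in $\pi^{-1}(\pi(p_1))$.

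The property of admitting such a cover is preserved by $\GL$, so it propagates through the orbit closure $\cM$ of $(X,\omega, p_1, p_2)$ in the appropriate stratum of surfaces with two marked points. I would then apply the main classification theorem of this paper: because $P$ is unexceptional, $\cM$ has rank at least two in a hyperelliptic component and, up to the marking, is the full hyperelliptic component of the ambient stratum of Abelian or quadratic differentials.

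The final step is to rule out the covering. On a full hyperelliptic component, any $\GL$-invariantly defined translation quotient must factor through the hyperelliptic involution $J$ (and, in the quadratic case, possibly through a subsequent cover of $\CP$ by $\CP$). Hence $\pi$ factors through $J$, and the blocking condition reduces to $J(p_1) = p_2$ together with a compatibility on the other lifts of $q_1, q_2$. The main obstacle is the combinatorial verification that, under the unexceptional hypotheses $\min(a,b) > 2$ in the right triangle case and $a \ne 1$, $b \notin \{1,2,4\}$ in the isosceles case, no distinct pair of points on $P$ lifts to a $J$-paired pair on $X$. This amounts to locating the Weierstrass points of $X$ and comparing them with vertices, edge points, and interior points of $P$; the excluded parameters correspond precisely to the Veech examples of Veech and of Eskin--Marklof--Witte-Morris, in which such identifications do occur and finite blocking is genuinely possible, so the hypotheses are sharp and the contradiction concludes the proof.
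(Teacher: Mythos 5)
Your overall strategy is the same as the paper's: reduce finite blocking to the Apisa--Wright dichotomy (Theorem \ref{T:FiniteBlockingBackground}) between blocking of periodic/singular points and blocking of points identified by the minimal half-translation cover, then feed in the orbit closure classification (Theorem \ref{T:Main:Unfolding}) to identify that cover as the quotient by the hyperelliptic involution $J$ (or, in the even isosceles case, a degree four map). However, there are two genuine gaps.

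First, you only rule out the ``covering'' branch of the dichotomy. The other branch --- two periodic or singular points blocked by the remaining periodic and singular points --- is exactly what governs the vertices of $P$ (and the edge midpoint in the isosceles case), whose lifts are zeros or Weierstrass points of $(X,\omega)$. ``Locating the Weierstrass points and comparing them with the vertices'' does not dispose of this case: one must actually exhibit, for each relevant pair of zeros, a saddle connection or cylinder boundary joining them that avoids every Weierstrass point. This is the content of Propositions \ref{P:FB:Zero} and \ref{P:FB:Hyp}, which rely on the explicit Athreya--Eskin--Zorich configurations (simple envelopes and simple cylinders with prescribed boundary singularities) and a monodromy argument showing that certain saddle connections on $(Y,q)$ lift to single arcs joining the two preimages of a zero. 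Without this step the theorem is not proved for pairs involving vertices.

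Second, your reduction for nonsingular points is miscalibrated. You assume $q_1\ne q_2$ from the outset, but by the paper's definition (and by Theorem \ref{T:Main:FiniteBlocking:Detailed}, which lists self-blocked vertices on exceptional triangles) the statement includes $q_1=q_2$. For $q_1=q_2=q$ the fibre over $q$ does contain $J$-paired pairs, which \emph{are} finitely blocked on $(X,\omega)$; so the criterion ``no pair of lifts is $J$-paired'' fails there. The correct mechanism, used in the paper, is the quantifier in the unfolding dictionary: $q_1$ and $q_2$ are blocked on $P$ only if \emph{every} lift of $q_1$ is blocked from \emph{every} lift of $q_2$, whereas on $(X,\omega)$ a nonsingular point is blocked only from its image under $J$ (and $JT$ in the degree four case). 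Since a non-vertex point of $P$ has at least $n\ge 5$ lifts, almost all pairs of lifts are unblocked, which is what kills the generic case. Conversely, for distinct $q_1\ne q_2$ your observation that lifts are never $J$-paired is correct but essentially immediate ($J$ is a deck transformation over $P$), so the ``main obstacle'' you identify is not where the work lies.
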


A complete classification of finitely blocked pairs of points for every rational right and isosceles triangle is given in Theorem \ref{T:Main:FiniteBlocking:Detailed}.

\subsection{The orbit closures of the unfoldings of rational right and isosceles triangles}

In the sequel translation surfaces will be written as $(X, \omega)$ where $X$ is a Riemann surface and $\omega$ a holomorphic $1$-form. The set of all genus $g$ translation surfaces is stratified by specifying the orders of zeros (with multiplicity) of the holomorphic $1$-form. These strata admit a $\mathrm{GL}(2, \mathbb{R})$ action, generated by Teichm\"uller geodesic flow and complex scalar multiplication. In the sequel, we will use \emph{the orbit closure of $(X, \omega)$} as shorthand for the ``$\mathrm{GL}(2, \mathbb{R})$-orbit closure of $(X, \omega)$ in the stratum containing it". Theorems \ref{T:Counting:ClosedTrajectories},  \ref{T:Counting:GeneralizedDiagonals}, and \ref{T:Main:FiniteBlocking} will all be deduced from the following.

\begin{thm}\label{T:Main:Unfolding}
The unfolding of an exceptional right or isosceles triangle is either Veech or the double cover of a Veech surface. The unfolding of an unexceptional right or isosceles triangle has dense orbit in a locus of holonomy double covers of a component of a stratum of quadratic differentials.
\end{thm}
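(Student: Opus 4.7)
The plan is to deduce Theorem \ref{T:Main:Unfolding} from the classification of $\mathrm{GL}(2,\mathbb{R})$-orbit closures of rank at least two in hyperelliptic components of strata announced in the abstract, treating that classification as an available input.

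First I would identify the ambient locus in which the unfolding lives. The Katok–Zemlyakov unfolding $(X,\omega)$ of a right triangle $P$ with angles $(a/(2n),b/(2n),1/2)\pi$ is, by construction, invariant under a cyclic rotation group of order $n$ and under an extra $\mathbb{Z}/2$ coming from reflection across the hypotenuse. Quotienting by the latter produces a quadratic differential $(Y,q)$ on $\mathbb{CP}^1$ with four marked singularities whose orders can be read off from $(a,b,n)$ via Riemann–Hurwitz. Thus $(X,\omega)$ lies in the locus $\mathcal{L}$ of holonomy double covers of the component of the quadratic stratum containing $(Y,q)$, and this $\mathcal{L}$ is a $\mathrm{GL}(2,\mathbb{R})$-invariant sublocus of a hyperelliptic component in the sense of the paper. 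The analogous analysis for an isosceles triangle with angles $(a/n,a/n,b/n)\pi$ identifies a corresponding locus $\mathcal{L}$.

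Next I would invoke the main classification. The orbit closure $\mathcal{M}$ of $(X,\omega)$ is either of rank one (Veech, or a double cover thereof, which is the first alternative of the theorem) or of rank at least two inside the hyperelliptic component containing $\mathcal{L}$. In the latter case, the classification reduces $\mathcal{M}$ to a finite list of possibilities: either $\mathcal{M}=\mathcal{L}$, which is what we want, or $\mathcal{M}$ is one of a small number of proper higher-rank sublocus of $\mathcal{L}$ distinguished by additional affine automorphisms, by a forced invariant subsurface, or by relations among relative periods not already imposed by $\mathcal{L}$.

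The main obstacle, and the third step, is to rule out every proper higher-rank candidate under the unexceptional hypotheses $\min(a,b)>2$ (right triangle case), and $a\neq 1$, $b\notin\{1,2,4\}$ (isosceles case). For each candidate from the classification I would extract the concrete constraint it imposes — typically an extra cylinder symmetry, a coincidence between moduli of cylinders in different $\mathbb{Z}/n$-orbits, or an extra involution of $(X,\omega)$ not coming from the dihedral group of the unfolding — and then check, using the explicit cylinder decomposition of the triangle unfolding in its horizontal direction, that this constraint cannot hold for a generic direction of an unexceptional triangle. The unexceptional hypotheses should be essentially sharp here: the excluded triangles are exactly those whose extra integer divisibilities in $(a,b,n)$ produce precisely the symmetries that place $(X,\omega)$ in a smaller classified orbit closure. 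To finish, the rank one case is dealt with by citing the classical Veech examples \cite{V} and Eskin–Marklof–Witte–Morris \cite{EMWM}, which account for all exceptional triangles up to the (double cover of) Veech dichotomy in the theorem.
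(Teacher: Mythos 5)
Your overall architecture (partially unfold to a genus zero quadratic differential, place the holonomy double cover in a hyperelliptic locus, invoke the rank-$\geq 2$ classification, treat the rank one cases via Veech) matches the paper's, but there are two genuine gaps in the middle of the argument.

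First, you never establish that an \emph{unexceptional} triangle has orbit closure of rank at least two, and your dichotomy ``rank one, hence Veech or a double cover of a Veech surface'' is false as stated: a rank one orbit closure need not be a Teichm\"uller curve (it can have positive rel). The paper's actual pivot is Lemma \ref{L:UnfoldingRank}, which shows the orbit closure has rank one if and only if $a \in \{1, n-1, n-2\}$, i.e.\ exactly for the exceptional triangles. This is not a soft observation: it uses the cyclic deck group of order $2n$ on the unfolding, the eigenspace dimension count of Lemma \ref{L:EigenspaceDimension}, the flat decomposition of $H^1_{\cM}$ into Galois conjugates of $p(T\cM)$ plus isotypic pieces (Wright, Avila--Eskin--M\"oller), Filip's compatibility with the Hodge decomposition, and the Forni--Matheus--Zorich formula for $B_\omega$ to produce a non-vanishing pairing between distinct eigenforms whenever $a \notin \{1,n-1,n-2\}$. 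Without this step you cannot invoke Theorem \ref{T:Main} at all, and you cannot identify the exceptional list with the Veech/double-of-Veech list. Citing \cite{V} and \cite{EMWM} only handles the exceptional triangles once you already know the unexceptional ones are not rank one.

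Second, you mischaracterize what the classification leaves to check. Theorem \ref{T:Main} does not output ``a small number of proper higher-rank subloci distinguished by extra symmetries'' to be excluded by inspecting cylinder decompositions; it says the orbit closure is a \emph{full locus of covers} of a genus zero stratum, so the remaining task is to show the $\cM$-minimal half-translation cover is trivial. The paper does this (proof of Theorem \ref{T:Unfolding:RightDetailed}) not with cylinders but by again exploiting the eigenforms: since $p(T\cM)$ contains $H^1_k$ for all $k \in (\mathbb{Z}/2n\mathbb{Z})^\times$, the pullbacks $q_{\ell_0}$ and $q_{\ell_1}$ with prescribed poles or marked points at $0$ and $\infty$ lie in the tangent space, forcing the induced map $f$ on the genus zero quotient to be unramified (or at most doubly ramified) at the only candidate branch points, which combined with $\gcd(a,n)=1$ forces $f$ to be the identity. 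Finally, you omit the reduction of isosceles triangles to right triangles (Proposition \ref{P:IsoscelesCovers}), which is needed to get the parity-dependent double-cover statement in the isosceles case rather than merely an ``analogous analysis.''
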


The orbit closure of every rational right and isosceles triangle is explicitly determined in Theorems \ref{T:Unfolding:RightDetailed} and \ref{T:Unfolding:IsoscelesDetailed} respectively.

A fortiori, Theorem \ref{T:Main:Unfolding} pinpoints which rational right and isosceles triangles unfold to Veech surfaces, which is a result of Kenyon-Smillie \cite{KS}. However, this consequence is more accurately said to be an outcome of the techniques developed by Mirzakhani-Wright \cite{MirWri2}, which are used in the proof. Nonetheless, the algorithm devised in Mirzakhani-Wright \cite{MirWri2} for certifying that the orbit closure of the unfolding of a triangle is full rank is not sufficient to determine which rational right and isosceles triangles have an unfolding that is full rank and is not used in the proof of Theorem \ref{T:Main}.

\subsection{Orbit closures in genus zero}\label{SS:OC}

The main observations that facilitate the proof of Theorem \ref{T:Main:Unfolding} are the following. First, since every isosceles triangle is tiled by two isometric right triangles, it suffices to compute the orbit closure of the unfolding of every rational right triangle. Second, a triangle with angles $\left( \frac{a}{2n}, \frac{b}{2n}, \frac{1}{2}\right)\pi$, where $a, b$ and $n$ are positive integers with $\mathrm{gcd}(a,b,2n) =1$, can be partially unfolded to a quadratic differential in the stratum $\cQ(a-2, b-2, -1^n)$. This stratum is a genus zero stratum with at most two zeros. This motivates the following theorem.

\begin{thm}\label{T:Main}
In genus zero strata of quadratic differentials with two or fewer zeros, the only $\mathrm{GL}(2, \mathbb{R})$ orbit closures of rank at least two are full loci of covers of genus zero strata. 
\end{thm}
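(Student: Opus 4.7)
The plan is to reduce Theorem \ref{T:Main} to the classification of $\GL$-orbit closures of rank at least two in hyperelliptic components of strata of Abelian differentials, which is the central classification result of the paper (see the abstract).

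First I would lift to the orientation double cover. For $\cQ$ a genus zero stratum of quadratic differentials with at most two zeros and $(X, q) \in \cQ$, since $X \cong \bC\bP^1$, the orientation (holonomy) double cover $\pi \colon (\wt X, \wt \omega) \to (X, q)$ is a genuine translation surface whose deck transformation is a hyperelliptic involution $\tau$. Tracking the orders and parities of the zeros and simple poles of $q$ shows that $(\wt X, \wt\omega)$ lies in a hyperelliptic component $\cH^{\mathrm{hyp}}(\alpha)$ of a stratum of Abelian differentials. The lift is $\GL$-equivariant, and absolute period coordinates on $\cQ$ correspond to the $\tau$-anti-invariant part of absolute period coordinates on $\cH^{\mathrm{hyp}}(\alpha)$, so rank is preserved. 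Thus a $\GL$-orbit closure $\cM \subseteq \cQ$ of rank $r \geq 2$ lifts to a $\GL$-invariant affine submanifold $\wt{\cM} \subseteq \cH^{\mathrm{hyp}}(\alpha)$ of the same rank.

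Next I would apply the paper's classification of rank at least two orbit closures in hyperelliptic components: $\wt{\cM}$ is either the full hyperelliptic component or a full locus of covers of a lower-complexity component of a stratum. In the first case, $\cM = \cQ$ and the conclusion is immediate. In the second case I would descend the cover structure. The hyperelliptic involution $\tau$ on $\wt X$ is canonical on the hyperelliptic component, so it commutes with any translation cover $p \colon \wt X \to Y$ whose locus of covers realizes $\wt{\cM}$; hence $\tau$ induces an involution on $Y$ whose quotient is a sphere, and $p$ descends to a translation cover $(X, q) \to (Y/\tau, \overline{q})$ from $\cQ$ to a genus zero stratum of quadratic differentials. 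This exhibits $\cM$ as a full locus of covers of a genus zero stratum, as required.

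The main obstacle is the descent step. One must verify that the hyperelliptic involution really descends to every intermediate surface produced by the classification on the hyperelliptic side, and that the quotient again lies in a genus zero stratum with the correct pole/zero profile; the first half hinges on the fact that every affine invariant submanifold inside a hyperelliptic component inherits the hyperelliptic involution as a symmetry, and the second half requires a branching computation that mirrors the one used in step one but in reverse. A secondary but essential technicality is bookkeeping with marked points, labeled versus unlabeled zeros, and the small-complexity exceptional hyperelliptic components, to ensure that the correspondence $\cM \leftrightarrow \wt{\cM}$ is a genuine bijection on orbit closures and that no edge cases slip through.
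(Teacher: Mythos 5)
There is a genuine gap, and it is structural rather than technical: the reduction you propose is circular in the context of this paper. Your plan is to lift $\cM \subseteq \cQ$ to the locus of holonomy double covers and then invoke ``the paper's classification of rank at least two orbit closures in hyperelliptic components.'' But that classification is Corollary \ref{C:Main}, which the paper obtains as an \emph{equivalent reformulation} of Theorem \ref{T:Main} (via Kontsevich--Zorich and Lanneau), not as an independent input. The only case known prior to this paper is the classification in hyperelliptic components of strata of \emph{Abelian} differentials (Apisa \cite{Apisa2}), and by Kontsevich--Zorich those components are exactly the loci of holonomy double covers of genus zero quadratic strata with a \emph{single} zero, namely $\cQ(2g-3,-1^{2g+1})$ and $\cQ(2g-2,-1^{2g+2})$. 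For a genus zero stratum with two zeros, the holonomy double cover lands in a hyperelliptic \emph{locus of codimension one} inside some component of an Abelian stratum (Lemma \ref{L:Codim=Zeros}), not in a hyperelliptic component; so your first step misidentifies the target, and no previously known classification applies there. The two-zero case is precisely the content that has to be proved from scratch.

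The paper's actual argument is the long induction of Sections \ref{S:PeriodicStructure}--\ref{S:ProofTMain:Rigid}: it analyzes rel vectors in twist spaces of horizontally periodic surfaces in hyperelliptic loci via the tree-of-graph-like-surfaces combinatorics, classifies the rank one cylinder rigid invariant subvarieties that arise as boundary components (Proposition \ref{P:Main:Rk1}), proves a diagonal-embedding statement for prime cylinder rigid subvarieties of products of genus zero strata (Proposition \ref{P:Joinings:Rigid}) to handle disconnected boundary surfaces, and then builds the covering map explicitly using cylinder degenerations and the diamond lemma. None of this is replaced by your lift-and-descend scheme. Your descent step (that the hyperelliptic involution commutes with the minimal translation cover and pushes it down to a cover of spheres) is a reasonable observation and does appear in spirit in the paper, but it only becomes available \emph{after} one knows that $\wt{\cM}$ is a locus of covers, which is the whole difficulty. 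To make progress along your lines you would need an independent proof of the classification in codimension-one hyperelliptic loci, which is what the paper's Sections \ref{S:PeriodicStructure} through \ref{S:ProofTMain:Rigid} supply.
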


Intriguingly, nearly every argument in the proof of Theorem \ref{T:Main} holds for arbitrary genus zero strata. The only essential application of the hypothesis on the number of zeros occurs in Section \ref{S:CylinderRigidRankOne} where it is used to simplify the combinatorics in the classification of cylinder rigid rank one subvarieties. This suggests that the conclusion of Theorem \ref{T:Main} may hold for arbitrary genus zero strata and that the proof given here may be a good template for the general proof. 

By the classification of connected components of strata of Abelian differentials (by Kontsevich-Zorich \cite{KZ}) and quadratic differentials (by Lanneau \cite{Lconn}), the following is an equivalent formulation of Theorem \ref{T:Main}.

\begin{cor}\label{C:Main}
In hyperelliptic connected components of strata of Abelian and quadratic differentials, the only $\mathrm{GL}(2, \mathbb{R})$ orbit closures of rank at least two are full loci of covers of genus zero strata.
\end{cor}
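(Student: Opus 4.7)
My plan is to reduce Corollary \ref{C:Main} directly to Theorem \ref{T:Main} by exploiting the tautological description of hyperelliptic components as loci of holonomy double covers of genus zero quadratic strata. The first step is to recall the classifications of Kontsevich-Zorich \cite{KZ} and Lanneau \cite{Lconn}: to each hyperelliptic connected component $\cC$ of a stratum of Abelian or quadratic differentials there is canonically associated a genus zero stratum $\cQ^*$ of quadratic differentials, such that $\cC$ is precisely the locus of holonomy double covers of surfaces in $\cQ^*$. For Abelian differentials this gives $\cH^{\mathrm{hyp}}(2g-2) \leftrightarrow \cQ(2g-3, -1^{2g+1})$ and $\cH^{\mathrm{hyp}}(g-1, g-1) \leftrightarrow \cQ(2g-2, -1^{2g+2})$, each of which has a single zero of positive order. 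For Lanneau's list of hyperelliptic components of strata of quadratic differentials, a case-by-case inspection shows that $\cQ^*$ again has at most two zeros of positive order.

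Next, given an orbit closure $\cN \subseteq \cC$ of rank at least two, I would take the quotient by the hyperelliptic involution to produce a $\mathrm{GL}(2, \mathbb{R})$-invariant subvariety $\cN' \subseteq \cQ^*$. Under the correspondence $\cC \leftrightarrow \cQ^*$, the tangent space to $\cC$ at a point is canonically identified with the anti-invariant part of the cohomology of the double cover, which is precisely the cohomology governing deformations in $\cQ^*$. This identification preserves rank as it is defined for quadratic orbit closures through the orientation double cover, so $\cN'$ has rank at least two in a genus zero stratum with at most two zeros.

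Theorem \ref{T:Main} then implies that $\cN'$ is a full locus of covers of a genus zero stratum via some covering map $X \to Y$. Composing with the hyperelliptic double cover $\hat X \to X$ realizes $\cN$ as a full locus of covers of the same target genus zero stratum, completing the reduction.

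The principal obstacle I anticipate is the combinatorial bookkeeping in the first step: one must systematically pass through Lanneau's list of hyperelliptic components of strata of quadratic differentials and verify in each case that the associated base stratum $\cQ^*$ has at most two zeros of positive order. Once this dictionary is in place, the rank-preservation under the quotient correspondence is standard and the invocation of Theorem \ref{T:Main} is immediate.
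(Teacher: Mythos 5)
Your reduction is correct and is exactly the route the paper takes: Corollary \ref{C:Main} is stated there as an ``equivalent formulation'' of Theorem \ref{T:Main} via the Kontsevich--Zorich and Lanneau classifications, which identify each hyperelliptic component with the locus of double covers of a genus zero stratum having at most two zeros. Your write-up simply makes explicit the dictionary (and the rank-preservation under the quotient by the hyperelliptic involution) that the paper leaves implicit.
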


In the case of hyperelliptic components of strata of Abelian differentials, Corollary \ref{C:Main} was the main result of Apisa \cite{Apisa2}. The proof given here provides a new proof of that result using the techniques developed in Apisa-Wright \cite{ApisaWrightDiamonds}, \cite{ApisaWrightGeminal}, \cite{ApisaWrightHighRank} and Apisa \cite{Apisa-MHD}. 

By Chen-Gendron \cite[Theorem 1.1]{ChenGendron}, if a component of a stratum of holomorphic $k$-differentials is a \emph{hyperelliptic component}, i.e. a component consisting of hyperelliptic Riemann surfaces for which the $k$-differential is a $(-1)^k$-eigenform of the hyperelliptic involution, then it consists of double covers of a genus zero stratum of $k$-differentials for which all but at most two cone points have cone angles different from $\pi$. Theorem \ref{T:Main} therefore implies the following.


\begin{cor}\label{C:Main2}
The orbit closure of the unfolding of a holomorphic $k$-differential in a hyperelliptic component is either rank one or a full locus of covers of a genus zero stratum.
\end{cor}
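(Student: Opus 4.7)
My plan is to deduce Corollary \ref{C:Main2} from Theorem \ref{T:Main} via the Chen--Gendron structure theorem. Let $(X,q)$ be a holomorphic $k$-differential in a hyperelliptic component, with hyperelliptic involution $\iota$ and unfolding $\rho \colon \tilde X \to X$ satisfying $\tilde\omega^k = \rho^* q$. By Chen--Gendron, there is a double cover $\pi \colon X \to Y$ with $Y \cong \bP^1$ and a genus zero $k$-differential $(Y, q_Y)$, satisfying the cone-angle hypothesis recalled above, such that $(X,q)$ is obtained (up to the $(-1)^k$-eigenform twist) from $(Y, q_Y)$. The composite $\pi \circ \rho \colon \tilde X \to Y$ is then a degree-$2k$ branched cover with $\tilde\omega^k = (\pi \circ \rho)^* q_Y$, so $(\tilde X, \tilde\omega)$ automatically belongs to the $\mathrm{GL}(2,\bR)$-invariant full locus $\cL$ of Abelian differentials realized as such covers of surfaces in the genus zero stratum $\cQ_Y$ containing $(Y, q_Y)$.

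The orbit closure $\cM$ of $(\tilde X, \tilde\omega)$ is contained in $\cL$, and the goal is to show $\cM$ is either rank one or equal to $\cL$. By the general theory of orbit closures of covers (as in Apisa--Wright \cite{ApisaWrightDiamonds}), the projection $\cL \to \cQ_Y$ sends $\cM$ to an affine invariant subvariety $\cM_Y \subseteq \cQ_Y$, with $\cM$ recovered as the full preimage and $\rank(\cM) = \rank(\cM_Y)$. So the desired dichotomy for $\cM$ reduces to the analogous statement for $\cM_Y$ in $\cQ_Y$: any affine invariant subvariety of $\cQ_Y$ of rank at least two equals $\cQ_Y$.

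For $k = 2$, this is exactly Theorem \ref{T:Main}, once Chen--Gendron's cone-angle condition is matched to the ``two or fewer zeros'' hypothesis via Lanneau's classification of hyperelliptic components of strata of quadratic differentials. For $k \geq 3$, I would reduce to the $k=2$ case by unfolding $(Y, q_Y)$ to an Abelian differential $(\tilde Y, \tilde\omega_Y)$ and using the combined symmetries (from the deck group of $\tilde Y \to Y$ and the lift of $\iota$) to pass to a quadratic differential sitting in a hyperelliptic component of a stratum of Abelian or quadratic differentials; Corollary \ref{C:Main} then supplies the dichotomy, which transfers back to $\cM_Y$ by the same descent argument.

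The main obstacle is the $k \geq 3$ reduction: identifying the appropriate derived quadratic differential and verifying its membership in a hyperelliptic component requires careful tracking of cone angles and ramification data through the tower of covers, combined with the Kontsevich--Zorich and Lanneau classifications. This bookkeeping is what ensures the ``full locus of covers of a genus zero stratum'' conclusion is uniform in $k$ and compatible with Chen--Gendron's hypothesis.
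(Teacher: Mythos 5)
Your proposal only establishes the case $k=2$; the genuine gap is the case $k\geq 3$, which you explicitly defer as ``the main obstacle.'' Moreover, the route you sketch for it is misdirected. You propose to classify rank at least two invariant subvarieties of the genus zero $k$-differential stratum $\cQ_Y$ and then descend, citing Corollary \ref{C:Main} at the final step; but the hyperelliptic locus that arises is in general the quadratic double of a genus zero stratum with \emph{two} zeros, which by Lemma \ref{L:Codim=Zeros} is a codimension one hyperelliptic locus and not a connected component of a stratum, so Corollary \ref{C:Main} does not apply to it. The deduction the paper intends is shorter and uniform in $k$, and avoids any classification of subvarieties of $k$-differential strata: the canonical cyclic cover of $(Y,q_Y)$ factors through a genus zero quadratic differential $(S,\tilde q)$ whose zeros correspond to the at most two cone points of $(Y,q_Y)$ of angle different from $\pi$ (exactly as in the passage from the pillowcase double $P_{a,n}$ to $\cQ(a-2,b-2,-1^n)$ in Section \ref{S:ProofTUnfolding}), and the unfolding $\tilde X$ is either the holonomy double cover of $(S,\tilde q)$ or a further double cover of it lying over a hyperelliptic component of a stratum of quadratic differentials (this is the parity dichotomy of Proposition \ref{P:IsoscelesCovers} and Theorem \ref{T:Main:GeneralSphere}). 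In either case the orbit closure $\cM$ sits inside a hyperelliptic locus of codimension at most one, and Theorem \ref{T:Main} (more precisely, the statement proved in Section \ref{S:ProofTMain} for such loci) or Corollary \ref{C:Main} applies to $\cM$ directly. Carrying out this cone-angle and parity bookkeeping is precisely the content of the corollary for $k\geq 3$, so leaving it as a plan leaves the corollary unproved.

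Two further points on the part you do carry out. First, you misquote Theorem \ref{T:Main}: its conclusion is that rank at least two orbit closures are \emph{full loci of covers of genus zero strata}, not that they equal the ambient stratum; proper such loci of rank at least two do occur (e.g.\ pullbacks under $z\mapsto z^d$), and indeed ruling them out for specific unfoldings is the hard part of Theorem \ref{T:Unfolding:RightDetailed}. Your reduction survives with the correct statement, since a full locus of covers of a full locus of covers of a genus zero stratum is again one, but as written it targets a false claim. Second, the assertions that $\cM$ is the full preimage of $\cM_Y$ and that $\mathrm{rank}(\cM)=\mathrm{rank}(\cM_Y)$ need justification: they follow because the branch data of the Chen--Gendron double cover is discrete, so the projection $\cL\to\cQ_Y$ is finite-to-one and $T\cM$ is the pullback of $T\cM_Y$, but this is exactly the kind of step one cannot wave at in this subject.
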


\subsection{Outline of the proof of Theorems \ref{T:Main:Unfolding} and \ref{T:Main}}

We will now outline the strategy of the proof of Theorem \ref{T:Main}. In Section \ref{S:PeriodicStructure} we encode the combinatorics of horizontally periodic translation surfaces in hyperelliptic loci using graphs (this could be seen as an extension of the work of Lindsey \cite{Lindsey}, on hyperelliptic components of strata of Abelian differentials, to all hyperelliptic loci). In Section \ref{S:Joinings}, we show that an invariant subvariety in a product of genus zero strata that surjects onto each factor is, under mild hypotheses and up to scaling each component, a diagonal embedding. In Section \ref{S:CylinderRigidRankOne}, we show (using Section \ref{S:PeriodicStructure}) that the rank one cylinder rigid invariant subvarieties in genus zero strata with at most two zeros are loci of covers of strata of quadratic differentials. This is useful for running an inductive argument since it was shown in Apisa \cite{Apisa-MHD} that every component of the boundary of a rank two rel zero invariant subvariety is a rank one cylinder rigid invariant subvariety. This inductive argument is the content of Section \ref{S:ProofTMain:Rigid} (in the cylinder rigid case) and Section \ref{S:ProofTMain} (in general). The relevance of Section \ref{S:Joinings} is that the boundary of invariant subvarieties often consist of disconnected surfaces. 

Finally the passage from Theorem \ref{T:Main} to Theorem \ref{T:Main:Unfolding} is given in Section \ref{S:ProofTUnfolding}. It relies on the techniques of Mirzakhani-Wright \cite{MirWri2} to show that unexceptional triangles have an orbit closure of rank at least two and to constrain the location of the zeros of the holomorphic 1-forms in the tangent space of the orbit closure. Deep results on the structure of the bundle of first cohomology over an invariant subvariety due to Avila-Eskin-M\"oller \cite{AEM}, Wright \cite{Wfield}, and Filip \cite{Fi2} are all used in this deduction, as is the formula of Forni-Matheus-Zorich \cite{FMZ} for the second fundamental form of the Hodge bundle. 

The deduction of Theorems \ref{T:Counting:ClosedTrajectories}, \ref{T:Counting:GeneralizedDiagonals}, and \ref{T:Main:FiniteBlocking} from Theorem \ref{T:Main:Unfolding} is explained in Subsection \ref{SS:History}

\subsection{Further applications to billiards and to strata of holomorphic $k$-differentials on the sphere}

We will complete our discussion of billiards by considering the following polygons: parallelograms, isosceles trapezoids, right trapezoids, and polygons with at most two angles not integer multiples of $\frac{\pi}{2}$. This is a natural collection of polygons to consider since, along with right and isosceles triangles, they constitute the only polygons that unfold to hyperelliptic translation surfaces (by Apisa \cite[Theorem 1.5]{Apisa-PeriodicPointsG=2}).


\begin{defn}
Given a cyclically ordered set $\theta$ of $k$ positive rational multiples of $\pi$ that sum to $(k-2)\pi$, let $\cM(\theta)$ be the smallest affine invariant subvariety that contains every unfolding of a polygon with vertices whose angles (appearing clockwise) are the entries of $\theta$.
\end{defn}

By Mirzakhani-Wright \cite[Lemma 6.3]{MirWri2}, $\cM(\theta)$ is independent of the order in which the angles appear and almost every polygon with angles $\theta$ has orbit closure $\cM(\theta)$. 


\begin{thm}\label{T:Unfolding:Parallelogram}
If $\theta := \left( \frac{a}{n}, \frac{b}{n}, \frac{a}{n}, \frac{b}{n} \right)\pi$, where $a, b$ and $n$ are positive integers with $\mathrm{gcd}(a,b,n) = 1$ and $a+b = n$, then $\cM(\theta)$ is the quadratic double of $\cQ(2a-2, 2b-2, -1^{2n})$ if $n$ is odd and of $\cQ^{hyp}((a-2)^2, (b-2)^2)$ if $n$ is even. 

If $n \ne 2$,  $3$, $4$, or $6$, then the set of unit area quadrilaterals with angles $\theta$ whose unfolding has orbit closure $\cM(\theta)$ has a discrete complement. Let $Q$ be any polygon with angles $\theta$ whose unfolding has orbit closure $\cM(\theta)$.

If $n \ne 2$, then $Q$ has finitely blocked pairs of points if and only if $\min(a,b) = 1$, in which case the only blocking is between the two vertices of angle $\frac{\pi}{n}$ (resp. that are distinct) when $n$ is even (resp. odd). The blocking set is the barycenter for parallelograms and the two midpoints of edges joining vertices of equal angle for trapezoids. 

If $n \ne 2$ and $\min(a, b) \ne 1$, then, 
\[ N_{Q}(L) ``\sim" \frac{1}{2\pi} \left(1- \frac{(n,2)}{2n} \right)\left( 1 + \frac{1}{ab(n,2)}\right)\frac{L^2}{\mathrm{area}(Q)}. \]
\end{thm}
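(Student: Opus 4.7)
The plan is to identify, for each choice of $\theta$, a locus $\cL$ inside a hyperelliptic component so that every unfolding with angles $\theta$ lies in $\cL$, and then invoke Corollary \ref{C:Main2} to conclude $\cM(\theta) = \cL$ generically. A parallelogram admits a central $\pi$-rotation, and an isosceles trapezoid a reflection symmetry; both induce a $\mathbb{Z}/2$-action on the unfolded translation surface $(X, \omega)$ that sends $\omega$ to $-\omega$. Passing to the quotient yields a quadratic differential $(Y, q)$ with $(X, \omega)$ its holonomy double cover. A direct angle count at vertices, edge midpoints, and the center, which splits into two cases depending on the parity of $n$, shows that $(Y, q)$ lies in $\cQ(2a-2, 2b-2, -1^{2n})$ for $n$ odd and in $\cQ^{hyp}((a-2)^2, (b-2)^2)$ for $n$ even, defining the locus $\cL$ in each case.

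To obtain $\cM(\theta) = \cL$, I would first note that by Corollary \ref{C:Main2} any orbit closure of rank at least two contained in $\cL$ equals $\cL$ itself. To certify rank at least two for a generic polygon with angles $\theta$, I would adapt the cylinder-deformation strategy of Mirzakhani-Wright \cite{MirWri2} used in the proof of Theorem \ref{T:Main:Unfolding} (see Section \ref{S:ProofTUnfolding}): a generic unfolding carries enough independent horizontal cylinders to produce a nontrivial cylinder deformation escaping any rank-one locus. The exclusion $n \in \{2,3,4,6\}$ is the crystallographic restriction: for those $n$ the parallelogram admits a lattice of rotational symmetries that collapses the cylinder decomposition and traps the generic orbit closure in a lower-dimensional Veech locus. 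For other $n$, only a discrete set of unit-area polygons — those with additional arithmetic coincidences producing extra Veech symmetries — fail to have orbit closure $\cL$.

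Once $\cM(\theta) = \cL$ is established, the weak asymptotic count follows from the Eskin-Mirzakhani-Mohammadi equidistribution theorem \cite[Theorem 2.12]{EMM} combined with the Siegel-Veech constants for genus zero strata of quadratic differentials from Athreya-Eskin-Zorich \cite{AEZ}: pulling these back along the degree-two quotient $(X, \omega) \to (Y, q)$ and tracking preimages of horizontal cylinders produces the displayed constant. For finite blocking I would use that the only translation-cover factorization of $(X, \omega)$ compatible with $\cL$ is the canonical quotient $(X, \omega) \to (Y, q)$; thus $(p_1, p_2)$ is finitely blocked on $Q$ precisely when the preimages on $(X, \omega)$ become identified on $(Y, q)$. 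A direct check shows this forces $\min(a, b) = 1$, identifies the two blocked points as the vertices of angle $\pi/n$, and gives the blocking set as the barycenter of the parallelogram or the two midpoints of the relevant edges of the trapezoid.

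The main obstacle will be certifying rank at least two for all but a discrete set of polygons; this requires a careful classification of the horizontal cylinder decompositions of the unfolding together with verification that the associated cylinder deformations do not all lie in a common lower-dimensional Veech locus. A secondary obstacle is the blocking analysis: ruling out spurious blockings requires combining the double cover structure of $\cL$ with a classification of periodic points on hyperelliptic surfaces, drawing on the techniques of Apisa \cite{Apisa-PeriodicPointsG=2}.
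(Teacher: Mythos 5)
Your identification of the target locus $\cL$ via the $\pi$-rotation (resp.\ reflection) symmetry matches the paper, but there is a genuine gap at the step where you claim that ``by Corollary \ref{C:Main2} any orbit closure of rank at least two contained in $\cL$ equals $\cL$ itself.'' Corollary \ref{C:Main2} (equivalently Theorem \ref{T:Main}) only says that such an orbit closure is a full locus of covers of \emph{some} genus zero stratum; it could perfectly well be a proper sublocus of $\cL$ consisting of higher-degree covers. This possibility is not hypothetical: for $n$ even the answer is \emph{not} the full quadratic double of $\cQ(2a-2,2b-2,-1^{2n})$ but the strictly smaller locus $\cQ^{hyp}((a-2)^2,(b-2)^2)$ of double covers of $\cQ(a-2,b-2,-1^n)$, so the degree of the cover genuinely varies with the parity of $n$ and must be computed. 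The paper pins it down in Proposition \ref{P:Parallelogram} by exploiting the extra symmetry of the \emph{rhombus}: that symmetry shows $\cM(\theta)$ contains a full locus of covers of $\cQ(a-2,b-2,-1^n)$ (Sublemma \ref{SL:BigSublocus}), which bounds the cover degree by two, and then for $n$ odd a degeneration argument shows that a degree-two cover would force an involution exchanging two saddle connections of generically unequal length, i.e.\ would force every parallelogram in the family to be a rhombus. Your proposal has no mechanism for ruling out (or, for $n$ even, confirming) the intermediate cover. Relatedly, the rank bound is obtained in the paper by the ``trivial rank bound'' of Mirzakhani--Wright (an eigenspace count depending only on $\theta$), not by exhibiting cylinder deformations; the latter is exactly the approach the paper warns is insufficient in general.

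The second gap is in the discreteness claim. Attributing the exceptional polygons to ``extra Veech symmetries'' and proposing to classify cylinder decompositions does not yield discreteness: a priori the set of polygons with smaller orbit closure could accumulate. The paper's Proposition \ref{P:Discrete} needs two external inputs: the finiteness theorem of Eskin--Filip--Wright, which gives finitely many proper invariant subvarieties of rank at least two containing all non-torus-cover exceptional orbit closures, and the observation that the unit-area family of polygons with fixed $\theta$ is a \emph{line} in period coordinates, hence meets each of these finitely many linear subvarieties discretely. The exclusion $n\in\{2,3,4,6\}$ is there to rule out Gaussian and Eisenstein polygons, i.e.\ torus covers, which escape the finiteness theorem --- not because of a collapse of the cylinder decomposition. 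The counting and finite-blocking portions of your outline are essentially the paper's (via Theorem \ref{T:AEZ:Corollary}, Lemma \ref{L:UnfoldingsVsPolygons}, and Propositions \ref{P:FB:Zero} and \ref{P:FB:Hyp}), though the blocking analysis for $n$ even must be run in the hyperelliptic-component cover, where points other than the vertices of angle $\pi/n$ (namely the special Weierstrass points) have to be excluded by explicit cylinder constructions.
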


%
%

In the case that $n = 2$, the unfolding is a torus with four marked points and it is a simple matter to determine the orbit closure. In the case that $n = 3$ or $4$, $\cM(\theta)$ is $\cH(1,1)$ or consists of double covers of $\cH(2)$ (respectively) and so it is possible to determine the orbit closure of every polygon with these angles using the work of McMullen \cite{Mc5}. Moreover, the finite blocking problem can be solved for these polygons using M\"oller \cite{M2} and Apisa \cite{Apisa,Apisa-PeriodicPointsG=2}.

\begin{thm}\label{T:Main:SpecificSphere}
If $\theta := \left( \frac{a}{n}, \frac{b}{n}, \frac{1}{2}, \frac{1}{2} \right)\pi$, where $a, b$ and $n$ are positive integers with $\mathrm{gcd}(a,b,n) = 1$ and $a+b = n$, then $\cM(\theta)$ is the quadratic double of $\cQ(2a-2, 2b-2, -1^{2n})$ if $n$ is odd and of $\cQ(a-2, b-2, -1^n)$ when $n$ is even. 

If $n \ne 2$,  $3$, $4$, or $6$, then the set of unit area right trapezoids with angles $\theta$ whose unfolding has orbit closure $\cM(\theta)$ has a discrete complement. Let $R$ be any polygon with angles $\theta$ whose unfolding has orbit closure $\cM(\theta)$.
 
If $n \ne 2$, then $R$ has finitely blocked pairs of points if and only if $\min(a,b) = 1$, in which case the only such blocking is the vertex of angle $\frac{\pi}{n}$ from itself. The blocking set is empty.

If $n \ne 2$ and $\min(a,b) \ne 1$, then
\[N_{R}(L) ``\sim" \frac{1}{4\pi} \left( 1 - \frac{(n,2)}{2n} \right) \left( 1 + \frac{(n,2)^2}{2ab}\right) \frac{L^2}{\mathrm{area}(R)}. \]
\end{thm}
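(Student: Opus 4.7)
The strategy mirrors the proofs of Theorems \ref{T:Unfolding:Parallelogram} and \ref{T:Main:Unfolding} and rests on Theorem \ref{T:Main}. First I would identify the ambient stratum of the unfolding. Since $\gcd(a,b,n)=1$ and $a+b=n$ force $\gcd(a,b)=1$ and hence $\gcd(a,n)=\gcd(b,n)=1$, the holonomy group has order $N=\mathrm{lcm}(n,2)$, and vertex-by-vertex bookkeeping places the full translation-surface unfolding in $\cH((a-1)^{N/n},(b-1)^{N/n},0^{N})$. Passing to the quadratic double via the hyperelliptic involution $\iota:z\mapsto -z$ requires determining the action of $\iota$ on each cone point. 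At a right-angle vertex the local dihedral stabilizer $D_2$ contains rotation by $\pi$, so every right-angle marked point is $\iota$-fixed and a local calculation shows that each contributes a simple pole of the quotient quadratic differential. At an oblique vertex $a,b\neq n/2$ when $n>2$, so rotation by $\pi$ lies outside the local $D_n$: for $n$ odd the two $\omega$-zeros over the vertex are swapped by $\iota$ and merge into a single $\cQ$-zero of order $2a-2$ (respectively $2b-2$); for $n$ even the unique $\omega$-zero is $\iota$-fixed and contributes a $\cQ$-zero of order $a-2$ (respectively $b-2$). This yields the claimed $\cQ$-stratum and shows that $\cM(\theta)$ is contained in its quadratic double.

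For the reverse inclusion, the base $\cQ$ has genus zero and exactly two zeros, so Theorem \ref{T:Main} implies that every orbit closure of rank at least two in $\cQ$ equals $\cQ$, and lifting through the quadratic double gives the whole quadratic double; it therefore suffices to produce, for $n\notin\{2,3,4,6\}$, one right trapezoid with angles $\theta$ whose orbit closure has rank at least two, since $\cM(\theta)$ then inherits this rank. I would follow the Mirzakhani-Wright \cite{MirWri2} cylinder degeneration strategy already used for triangles in Section \ref{S:ProofTUnfolding}, constructing an explicit horizontal cylinder collapse whose boundary orbit closure is identifiable and of full rank exactly when $n\notin\{2,3,4,6\}$. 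Discreteness of the exceptional set in the one-parameter family of unit-area right trapezoids then follows by combining the cylinder-rigid rank-one classification of Section \ref{S:CylinderRigidRankOne} with Theorem \ref{T:Main}: every proper invariant subvariety of the quadratic double has rank one, and quantitative finiteness for such subvarieties (translated to the base genus-zero $\cQ$-stratum) forces their traces on the one-parameter family to form a discrete subset of the parameter space.

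The counting formula follows by applying the Athreya-Eskin-Zorich \cite{AEZ} area Siegel-Veech constants for $\cQ(2a-2,2b-2,-1^{2n})$ (for $n$ odd) or $\cQ(a-2,b-2,-1^n)$ (for $n$ even) and transporting them to the billiard through the unfolding degree, the parity split producing the $(n,2)$-factor exactly as in the deduction of Theorem \ref{T:Counting:ClosedTrajectories}. For finite blocking, when $\min(a,b)\geq 2$ every oblique vertex is a genuine zero of the Abelian cover and the periodic-point and marked-point results of Apisa \cite{Apisa,Apisa-PeriodicPointsG=2} for covers of genus-zero strata preclude any finite blocking; when $\min(a,b)=1$ the vertex of angle $\pi/n$ becomes a regular point of the Abelian cover and a direct flat-geometric computation on the quadratic double rules out billiard loops from this vertex to itself, so the empty set blocks it from itself. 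I expect the main obstacle to be the discreteness claim: Mirzakhani-Wright genericity gives only a measure-zero exceptional set, so excluding accumulation requires leveraging the cylinder-rigid classification and finiteness of low-rank subvarieties in the hyperelliptic stratum; the $\iota$-bookkeeping in the stratum identification, while combinatorially elementary, is also easy to get wrong between the two parity cases.
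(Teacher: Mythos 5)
Your argument for the reverse inclusion contains a genuine gap. You assert that ``Theorem \ref{T:Main} implies that every orbit closure of rank at least two in $\cQ$ equals $\cQ$,'' but Theorem \ref{T:Main} says only that such an orbit closure is a \emph{full locus of covers} of a genus zero stratum --- it may well be a proper locus of half-translation covers of a smaller genus zero stratum, still of rank at least two. Ruling out a nontrivial covering is exactly the hard step, and the paper does not do it by a rank count: it reduces the right trapezoid to the isosceles trapezoid/parallelogram case (doubling across the edge joining the two right angles gives a degree-two correspondence, so $\cM(\theta')$ with $\theta'=(\frac{a}{n},\frac{b}{n},\frac{a}{n},\frac{b}{n})\pi$ is a full locus of covers of $\cM(\theta)$), and then Proposition \ref{P:Parallelogram} pins down $\cM(\theta')$ by an explicit symmetry and ramification analysis (the rhombus symmetry $S$, the induced involution $S'$, and the observation that a putative extra degree-two cover would force two specific saddle connections to have equal length for \emph{every} parallelogram, which fails). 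Your sketch has no substitute for this step. Relatedly, the rank lower bound in the paper comes from the ``trivial rank bound'' of Mirzakhani--Wright \cite[Lemma 7.1]{MirWri2}, valid for all $n$, not from an explicit cylinder degeneration valid only for $n\notin\{2,3,4,6\}$; your degeneration-based certification is both unnecessary for the first paragraph (which must hold for all $n$) and too vague to check.

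Two smaller points. For discreteness, the finiteness input is Eskin--Filip--Wright \cite{EFW} (finitely many subvarieties of rank $\ge 2$, and all but finitely many rank one subvarieties other than loci of torus covers lie inside one of them), combined with the fact that $\cM(\theta)$ has rank at least three and that no polygon with these angles is Gaussian or Eisenstein when $n\ne 2,3,4,6$ (to exclude the torus-cover loci); the cylinder-rigid rank one classification of Section \ref{S:CylinderRigidRankOne} does not by itself yield the needed finiteness, and you omit the Gaussian/Eisenstein exclusion entirely. The counting and finite blocking deductions you outline do match the paper's (Corollary \ref{C:T:AEZ} with Lemma \ref{L:UnfoldingsVsPolygons}, and Proposition \ref{P:FB:Zero}, respectively).
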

%
%
%
%
%

As before, when $n \in \{3, 4\}$, the orbit closure of the unfolding of every polygon with angles as in Theorem \ref{T:Main:SpecificSphere} can be deduced from results of McMullen \cite{Mc5}. 

More generally we have the following.

\begin{thm}\label{T:Main:GeneralSphere}
If a holomorphic $k$-differential on a sphere unfolds to a hyperelliptic curve, then one of the following occurs:
\begin{enumerate}
    \item The holonomy cover of almost every $k$-differential in the stratum has dense orbit in the hyperelliptic locus containing it.
    \item The cone angles of the $k$-differential are either $\left( \frac{a}{n}, \frac{a}{n}, \frac{b}{n}, \frac{b}{n} \right)2\pi$ or $\left( \frac{a}{n}, \frac{a}{n}, \frac{b}{n} \right)2\pi$ where $\mathrm{gcd}(a,b,n) = 1$ and $n$ is even. Then the holonomy cover of almost every $k$-differential in the stratum has dense orbit in a quadratic double of a hyperelliptic component of a stratum.
    \item The $k$-differential is the pillowcase double of an exceptional right or isosceles triangle.
\end{enumerate}
In particular, the unfolding of almost every rational polygon with at least five sides and at most two angles that are not multiples of $\frac{\pi}{2}$ has dense orbit in the hyperelliptic locus containing it.
\end{thm}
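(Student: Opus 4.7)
The plan is to apply Corollary \ref{C:Main2} to the holonomy cover of the $k$-differential and then classify the resulting orbit closures by their covering structure over the base.

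Let $(X, \omega)$ denote the holonomy cover of $(\bC P^1, q)$ and let $\cN := \overline{\GL \cdot (X,\omega)}$. By hypothesis, $(X,\omega)$ lies in some hyperelliptic component $\cN_0$ of its ambient stratum. By the Chen-Gendron description recalled before Corollary \ref{C:Main2}, $\cN_0$ is itself a full locus of double covers over a genus zero stratum of $k$-differentials, and $(\bC P^1, q)$ plays the role of the base in this covering. Applying Corollary \ref{C:Main2}, one finds that $\cN$ is either rank one or a full locus of covers of a genus zero stratum.

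In the full-locus-of-covers case, $\cN$ either equals $\cN_0$, which is case (1), or is a proper full sublocus of $\cN_0$. In the latter case I would argue we are in case (2) by the following mechanism: using the affine-manifolds-of-covers framework developed in Sections \ref{S:Joinings} and \ref{S:ProofTMain} (following Apisa-Wright), a proper full sublocus of $\cN_0$ corresponds to an additional finite-order conformal automorphism of $(X,\omega)$ that commutes with the hyperelliptic involution. Descending through the double cover, this gives a nontrivial finite-order automorphism of the base $(\bC P^1, q)$ permuting its cone points while preserving the cone-angle partition. The dimension of the would-be sub-cover forces $(\bC P^1, q)$ to have three or four cone points, and a direct enumeration of finite-order M\"obius automorphisms respecting the cone-angle partition then yields exactly the two symmetric angle patterns listed in case (2); the parity condition that $n$ is even is precisely the compatibility needed for the quotient construction to produce a genuine $k$-differential, giving the quadratic double of a hyperelliptic component.

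If instead $\cN$ is rank one, then Theorem \ref{T:Main:Unfolding} implies it is generated by a Veech surface or a double cover thereof. Using the rank-one structure together with the covering constraints from Theorem \ref{T:Main:Unfolding}, I would conclude $(\bC P^1, q)$ has exactly three cone points and is the pillowcase double of a triangle, necessarily exceptional, yielding case (3). The in-particular clause is then immediate: a rational polygon with at least five sides and at most two angles not multiples of $\pi/2$ doubles to a $k$-differential on $\bC P^1$ with at least five cone points, so cases (2) (three or four cone points) and (3) (three cone points) are excluded and case (1) applies; that such a polygon unfolds to a hyperelliptic component is Apisa \cite[Theorem 1.5]{Apisa-PeriodicPointsG=2}.

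The main obstacle is the classification step in the proper-sublocus case: carefully combining the affine-covers machinery with an enumeration of finite-order automorphisms of $(\bC P^1, q)$ compatible with its cone-angle data, and verifying that only the two symmetric angle patterns of case (2) can produce proper sub-loci of $\cN_0$.
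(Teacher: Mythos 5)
Your high-level skeleton --- invoke Corollary \ref{C:Main2} to get ``rank one or a full locus of covers of a genus zero stratum'', then sort the full-locus case into ``all of $\cN_0$'' versus ``a proper sublocus'' --- does match the paper's starting point, but the step where you dispose of proper subloci is where the proposal breaks down, and it is exactly where all of the work in the paper's proof lives. You assert that the dimension of a would-be proper sub-cover forces $(\mathbb{P}^1, q)$ to have three or four cone points. There is no such dimension obstruction: a full locus of covers of a genus zero stratum $\cQ'$ has dimension $\dim \cQ'$, which can be arbitrarily large, so nothing a priori prevents the covering degree from exceeding two when there are many cone points. Ruling this out is precisely the content of Section \ref{S:AlmostRightPolygons}: after reducing, via the classification of which differentials unfold to hyperelliptic curves together with Theorems \ref{T:Main:Unfolding}, \ref{T:Unfolding:Parallelogram}, and \ref{T:Main:SpecificSphere}, to almost-right polygons with at least five sides, the paper runs an induction on the number of sides, realizing each signature by gluing squares onto a right triangle (Lemma \ref{L:AlmostRightConstruction}) and collapsing disjoint special cylinders (Sublemmas \ref{SL:FewObtuse1}--\ref{SL:FewObtuse3} and \ref{SL:StrangeDiamond}) to force the covering degree down to one. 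Your proposal contains no substitute for this degeneration argument.

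A second, related gap is the mechanism ``proper sublocus $\Rightarrow$ extra finite-order automorphism of $(X,\omega)$ descending to a M\"obius automorphism of $(\mathbb{P}^1, q)$.'' The minimal half-translation cover produced by Theorem \ref{T:Main} need not be Galois, and even when a symmetry exists it lives over the genus zero quotient by the hyperelliptic involution, which is an intermediate partial unfolding rather than $(\mathbb{P}^1, q)$ itself; no argument is given that anything descends all the way down. Note also that case (2) of the statement actually arises from parallelograms and isosceles triangles with $n$ even, where the holonomy cover has degree four or more over a genus zero surface, and recovering exactly these two angle patterns from an enumeration of M\"obius symmetries is not something your sketch sets up. Finally, in the rank one branch you appeal to Theorem \ref{T:Main:Unfolding}, which concerns only right and isosceles triangles; concluding that rank one forces three cone points already requires the Mirzakhani--Wright rank bound together with the polygon-by-polygon case analysis your approach is trying to bypass.
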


Work of Athreya-Eskin-Zorich \cite{AEZ} imply counting formulas, as in Theorems \ref{T:Counting:ClosedTrajectories} and \ref{T:Counting:GeneralizedDiagonals}, for a full measure set of polygons with angles described by Theorem \ref{T:Main:GeneralSphere}. Similarly a solution to the finite blocking problem can be deduced as in Theorem \ref{T:Main:FiniteBlocking}.

\subsection{History}\label{SS:History}

The existence of periodic orbits in rational polygons was established by Masur \cite{Masur-cylinder}, who deduced this fact from the existence of cylinders on every translation surface. Masur \cite{Masur-LowerBounds} \cite{Masur-UpperBounds} also showed that the number of cylinders of length at most $L$ was bounded below and above by a positive constant multiple of $L^2$. 

In Eskin-Masur \cite{EMa}, these quadratic upper and lower bounds were improved by showing that, given an $\mathrm{SL}(2, \mathbb{R})$-invariant probability measure $\mu$ on a stratum of translation surfaces, there are constants $c_{cyl, \mu}$ and $c_{sc, \mu}$ so that for $\mu$-almost every translation surface $(X, \omega)$, the number of cylinders (resp. saddle connections) of length at most $L$ is exactly asymptotic to $c_{cyl, \mu} L^2$ (resp. $c_{sc, \mu} L^2$). These formulas are called \emph{Siegel-Veech formulas}, see also Veech \cite{V4}. In Eskin-Mirzakhani \cite{EM} these probability measures were classified and shown to be supported on invariant subvarieties. Using the Siegel-Veech formulas and the measure classification in Eskin-Mirzakhani, Eskin-Mirzakhani-Mohammadi \cite{EMM} showed that every $\mathrm{GL}(2, \mathbb{R})$ orbit closure of a translation surface $(X, \omega)$ is an invariant subvariety and that this orbit closure completely determines the weak asymptotics of the number of cylinders and saddle connections on $(X, \omega)$ (see \cite[Theorem 2.12]{EMM}). This explains how the weak asymptotics in Theorems \ref{T:Counting:ClosedTrajectories} and \ref{T:Counting:GeneralizedDiagonals} can be deduced from computing the orbit closure of the unfolding in Theorem \ref{T:Main:Unfolding}.

In Eskin-Masur-Zorich \cite{EMZboundary}, the notion of a \emph{configuration} was defined in order to refine the Siegel-Veech formulas to allow for the counting of specific types of cylinders and saddle connections. Moreover, the authors related Siegel-Veech constants for surfaces with dense orbit in strata to ratios of Masur-Veech volumes\footnote{Masur-Veech volumes are finite by work of Masur \cite{Ma2} and Veech \cite{V2}; Goujard has explicitly computed them \cite{Goujard-Volume} and computed associated Siegel-Veech constants \cite{Goujard-SiegelVeech}; Aggarwal has computed the large-genus asymptotics of volumes \cite{Aggarwal-Volume} and Siegel-Veech constants \cite{Aggarwal-SiegelVeech}; Chen-M\"oller-Sauvaget-Zagier\cite{ChenMollerSauvagetZagier} have related these volumes to intersection theory.}. Boissy \cite{Boissy} and Masur-Zorich \cite{MZ} computed the configurations present on genus zero and general quadratic differentials respectively. Combined with the formulas for sums of Lyapunov exponents (and their connection to area Siegel-Veech constants) in Eskin-Kontsevich-Zorich \cite{EKZbig}, this body of work enabled Athreya-Eskin-Zorich \cite{AEZ} to compute Siegel-Veech constants for configurations on genus zero quadratic differentials. These constants are precisely the ones that appear in the formulas in Theorem \ref{T:Counting:ClosedTrajectories} and \ref{T:Counting:GeneralizedDiagonals}.

\subsubsection*{Orbit Closures and Compactifications}

Unlike McMullen’s work in genus two \cite{Mc5} or Apisa \cite{Apisa-Rk1Hyp}, the classification in Theorem \ref{T:Main} does not classify rank one orbit closures. An essential tool in its proof is the use of cylinder degenerations to pass to the boundary of an invariant subvariety. The utility of cylinder degenerations for probing an invariant subvariety was observed by Wright \cite{Wcyl}, whose work is based on Smillie-Weiss' work on horocycle flow \cite{SW2}. 

To run the inductive argument in the proof of Theorem \ref{T:Main}, we employ the theory of the boundary of an invariant subvariety developed, principally, in Mirzakhani-Wright \cite{MirWri}, Chen-Wright \cite{ChenWright}, and Bainbridge-Chen-Gendron-Grushevsky-M\"oller \cite{BCGGM-Incidence2}. 

\subsubsection*{The Finite Blocking Problem}

Many results on the finite blocking problem in translation surfaces were obtained for surfaces with nontrivial Veech groups in Gutkin \cite{GutkinBlocking1} \cite{GutkinBlocking2}, Hubert-Schmoll-Troubtezkoy \cite{HST}, and Monteil \cite{Mont1} \cite{Mont2}. In Leli\`evre-Monteil-Weiss \cite{LMW}, it was shown that almost every point illuminates almost every other. In Apisa-Wright \cite{ApisaWright}, it was shown that the finite blocking problem on a translation surface can be solved by determining the minimal half-translation cover and the set of periodic points (see Section \ref{S:FB} for details and definitions). Since Apisa-Wright \cite{ApisaWright} classified periodic points for strata of quadratic differentials and Apisa-Saavedra-Zhang \cite{ApisaSaavedraZhang} did the same for the Veech $n$-gon and double $n$-gon loci, Theorem \ref{T:Main:FiniteBlocking} follows from Theorem \ref{T:Main:Unfolding}. Periodic points were also computed in Apisa \cite{Apisa}, M\"oller \cite{M2}, and computed and connected to billiards in Apisa \cite{Apisa-PeriodicPointsG=2}.

A brief proof determining which vertices in rational right and isosceles triangles do not illuminate themselves (i.e. are not finitely blocked from themselves with empty blocking set) appears in Tokarsky-Boese \cite{TokarskyBoese}.

\subsubsection*{Billiards in rational polygons}

One of the earliest sources of inspiration in the study of billiards in triangles was Veech \cite{V}, who discovered an infinite family of triangles that unfold to Veech surfaces. Ward \cite{W}, Kenyon-Smillie \cite{KS}, and Hooper \cite{HooperVeechTriangle} discovered new Veech triangles. A new perspective on the Veech-Ward triangles was provided by Bouw-Moller \cite{BM} who also presented a family of quadrilaterals that unfold to Veech surfaces. Finally, Eskin-Marklof-Witte-Morris \cite{EMWM} studied a collection of isosceles triangles whose unfolding is not Veech, but which cover Veech surfaces.

Billiard dynamics in right-angled rational polygons were further studied in McMullen \cite{Mc} and Athreya-Eskin-Zorich \cite{AEZ}. A rich new family of polygons with surprisingly small orbit closures were constructed in Eskin-McMullen-Mukamel-Wright \cite{EMMW}. 

In Kenyon-Smillie \cite{KS} and Puchta \cite{Pu}, the acute, right, and isosceles triangles that unfold to Veech surfaces were computed. Using work of Mirzakhani-Wright \cite{MirWri2}, Larsen-Norton-Zykoski \cite{LarsenNortonZykoski} classified all the obtuse triangles with largest angle at least 135 degrees that have a Veech unfolding. 

\subsubsection*{Irrational Billiards}

While much less is known about irrational billiards, some results are still available for irrational right and isosceles triangles. For instance, Schwartz-Hooper \cite{SchwartzHooper} found that all triangles sufficiently close to isosceles triangles contain a periodic orbit and Hooper \cite{HooperRightTriangles} showed that any periodic trajectory on a right triangle is unstable, i.e. fails to persist on arbitrarily nearby triangles. For more work on billiards in irrational right triangles, see Cipra-Hanson-Kolan \cite{CipraHansonKolan}, Galperin-Stepin-Vorobets \cite{GalperinStepinVorobets}, Galperin-Zvonkine \cite{GalperinZvonkine}, and Troubetzkoy \cite{Troubetzkoy-RightTriangles}. 

\bold{Acknowledgments.}  
The author is grateful to Vincent Delecroix and Julian Rueth for generously performing computer experiments on the orbit closures of unfoldings of rational right triangles. During the preparation of this paper, the  author was partially supported by NSF Postdoctoral Fellowship DMS 1803625.


%
%
%
%

\section{The structure of horizontally periodic surfaces in hyperelliptic loci}\label{S:PeriodicStructure}

Throughout this section we will suppose that $\cL$ is a component of a hyperelliptic locus in a component $\cH$ of a stratum of Abelian differentials. Let $k$ denote the codimension of $\cL$ in $\cH$. Let $(X, \omega) \in \cL$ be any horizontally periodic surface in $\cL$. Let $J$ denote the hyperelliptic involution on $(X, \omega)$. We will permit $(X, \omega)$ to have marked points. If a fixed point of $J$ is marked, then we will say that its image on $(X, \omega)/J$ is a \emph{marked pole}.  

\begin{lem}\label{L:Codim=Zeros}
$k+1$ is the number of zeros, marked points, and marked poles on $(X, \omega)/J$. 
\end{lem}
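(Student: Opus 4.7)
The horizontal periodicity hypothesis plays no role and can be ignored; the lemma is a pure dimension count. The strategy is to identify $\cL$ with a stratum of meromorphic quadratic differentials on $\bP^1 \cong X/J$ and then invoke the standard formulas $\dim_\bC \cH(k_1, \ldots, k_n) = 2g + n - 1$ and $\dim_\bC \cQ(d_1, \ldots, d_r) = 2g + r - 2$, where in each case $n$, respectively $r$, counts all labeled points of the Abelian/quadratic differential (zeros, marked points, and, in the quadratic case, simple poles).

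The first step is to describe how the singular structure passes to the quotient. Since $J^*\omega = -\omega$, the square $q := \omega^2$ descends to a meromorphic quadratic differential on $\bP^1$. A local coordinate computation at a Weierstrass point $p$ (with $w$ upstairs and $z = w^2$ downstairs) shows that if $\omega$ has order $k$ at $p$, then $q$ has order $k - 1$ at $\pi(p)$; in particular, a Weierstrass point at which $\omega$ is nonvanishing gives a simple pole of $q$. At a non-Weierstrass point, $\pi$ is unramified and $p$ pairs with $J(p)$, so each $J$-orbit of zeros (resp.\ marked points) of $\omega$ descends to a single zero (resp.\ marked point) of $q$. A marked fixed point of $J$ at which $\omega$ is nonvanishing descends to a marked simple pole of $q$ in the sense of the lemma, i.e., a marked pole.

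With this correspondence, the count is bookkeeping. Let $m$ denote the number to be computed (zeros, marked points, and marked poles of $q$), and let $n_W$ denote the number of Weierstrass points of $X$ that are zeros or marked points of $\omega$; all of these contribute to $m$. The only singular points of $q$ not already counted in $m$ are the unmarked simple poles coming from the remaining $2g + 2 - n_W$ Weierstrass points, so the stratum of $q$ on $\bP^1$ has $m + (2g + 2 - n_W)$ labeled points, giving $\dim_\bC \cL = m + 2g - n_W$. Meanwhile, upstairs each of the $m - n_W$ non-Weierstrass points contributing to $m$ lifts to a $J$-orbit of size two while the $n_W$ Weierstrass contributors lift singly, so $n = 2(m - n_W) + n_W = 2m - n_W$. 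Therefore
\[ k = \dim_\bC \cH - \dim_\bC \cL = (2g + n - 1) - (m + 2g - n_W) = m - 1, \]
which gives $k + 1 = m$ as desired.

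I expect no serious obstacles; the only nuance is keeping careful track of which singular points upstairs pair under $J$ and which are fixed, and remembering that the unmarked Weierstrass points contribute to $\dim_\bC \cL$ via the unmarked simple poles of $q$ but are not counted in $m$.
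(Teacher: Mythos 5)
Your proof is correct and follows essentially the same route as the paper's: both reduce to identifying $\cL$ locally with the stratum of the quotient quadratic differential on $\bP^1$ and comparing dimensions via the count of singular/marked points upstairs versus downstairs (one preimage for fixed points of $J$, two otherwise). The paper phrases the comparison as a difference of rel, using that $\cL$ is full rank so the ranks cancel, whereas you compare total dimensions directly with the standard formulas; since $\dim = 2\,\mathrm{rank} + \mathrm{rel}$, the two computations are equivalent.
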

\begin{proof}
Let $z_{odd}$ (resp. $z_{even}$) denote the number of odd (resp. even) order zeros, marked points, and marked poles on $(X, \omega)/J$. Since $\cL$ is full rank, its codimension is 
\[ \mathrm{rel}(\cH) - \mathrm{rel}(\cL) = (z_{odd} + 2z_{even}-1) - z_{even} = z_{odd} + z_{even} -1. \] (See Mirzakhani-Wright \cite{MirWri2} for a discussion of ``full rank" and Apisa-Wright \cite[Lemma 4.2]{ApisaWright} for the computation of $\mathrm{rel}(\cL)$). 
\end{proof}

\begin{defn}\label{D:CutReglue}
Given a pair of parallel cylinders $C_1$ and $C_2$ of equal circumference on a translation surface we will say that \emph{cutting and regluing $C_1$ and $C_2$} is the translation surface that results from deleting the core curves $\gamma_1$ and $\gamma_2$ of the cylinders $C_1$ and $C_2$ to form a translation surface with boundary and then identifying the boundary corresponding to $\gamma_1$ with the boundary corresponding to $\gamma_2$ to form a new translation surface (see Figure \ref{F:cut-reglue} for an illustration).  Since there is not a canonical way to reglue the boundaries, the surfaces formed from cutting and regluing are only well-defined up to shearing the cylinders formed from $C_1$ and $C_2$ on the reglued surfaces. 
\end{defn}

\begin{figure}[h]\centering
\includegraphics[width=0.75\linewidth]{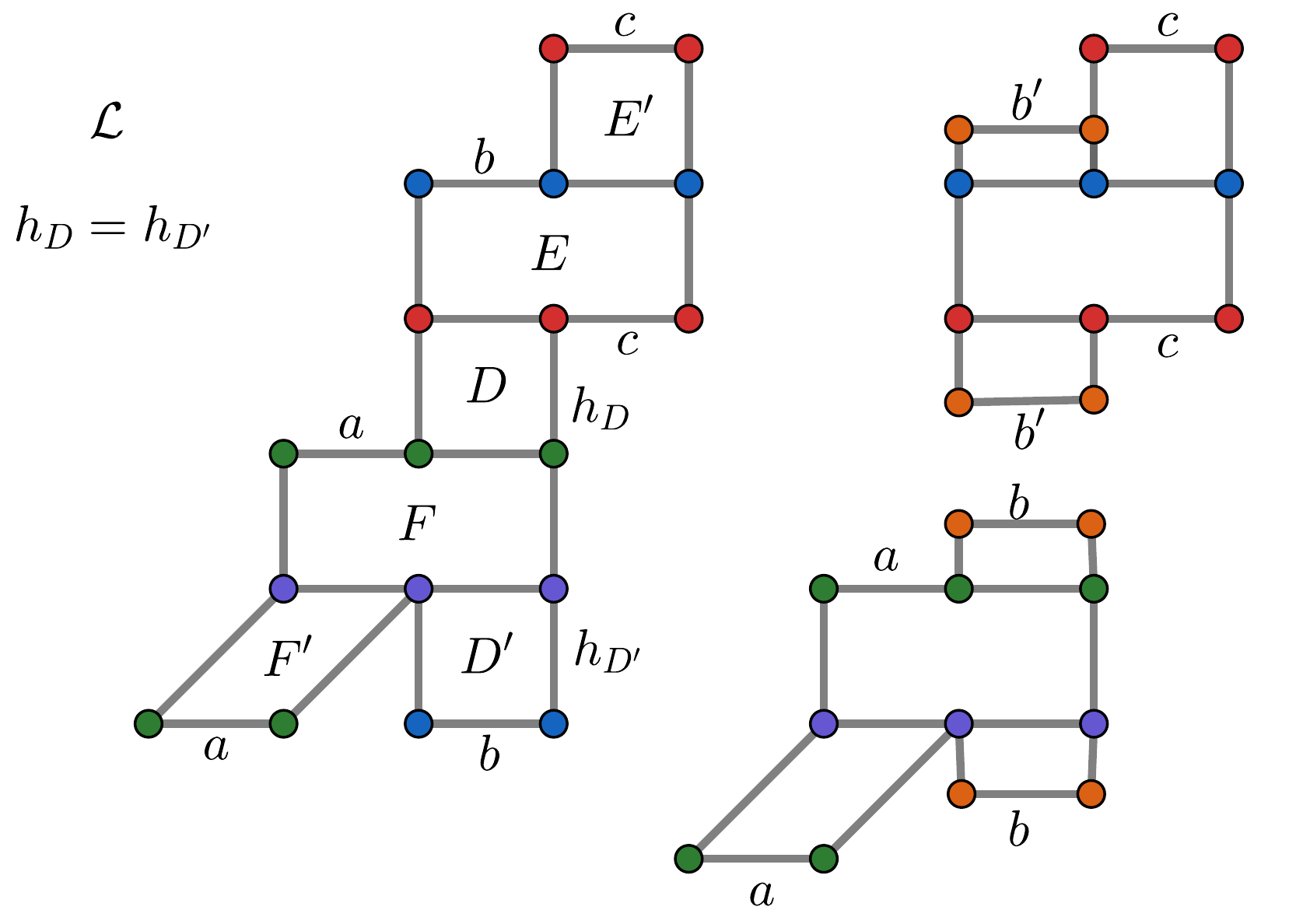}
\caption{The surface on the left belongs to a codimension one hyperelliptic locus $\cL \subseteq \cH(1^4)$ defined in period coordinates by the equation $h_{D} = h_{D'}$. The disconnected surface on the right is the result of cutting and regluing $\{D, D'\}$. A basis for the subspace of rel in the twist space of the horizontal cylinders is given by $\frac{\gamma_D^* + \gamma_{D'}^*}{2} - \gamma_F^* + \gamma_{F'}^*$ and $\frac{\gamma_D^* + \gamma_{D'}^*}{2} - \gamma_E^* + \gamma_{E'}^*$.}
\label{F:cut-reglue}
\end{figure}

\begin{lem}\label{L:CutReglue}
Cutting and regluing pairs of cylinders exchanged by $J$ produces a pair of surfaces in hyperelliptic loci of codimensions $k_1$ and $k_2$ where $k_1 + k_2 + 1 = k$.
\end{lem}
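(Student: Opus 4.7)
My plan is to reduce the codimension formula $k_1 + k_2 + 1 = k$ to a count of distinguished points on the quotient by $J$, using Lemma~\ref{L:Codim=Zeros}. The starting observation is that since $J$ exchanges $C_1$ and $C_2$, their union projects to a single cylinder $\bar{C}$ on $(X,\omega)/J$ with core curve $\bar{\gamma}$ (the common image of $\gamma_1$ and $\gamma_2$). The cut-reglue of $(C_1,C_2)$ is manifestly $J$-equivariant, so $J$ descends to an involution on the resulting surface, and the operation itself descends to a well-defined modification of $(X,\omega)/J$.

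I would then identify this quotient modification as a cut-and-fold. Write $\gamma_i^{\pm}$ for the two boundary circles produced by deleting $\gamma_i$. The non-trivial regluing $\gamma_1^{\pm} \sim \gamma_2^{\mp}$ agrees, up to a translational shift, with the identifications $\gamma_1^{\pm} \sim_J \gamma_2^{\mp}$ already imposed by $J$. Composing the two yields a self-map of each boundary circle on $(X,\omega)/J$ that is a reflection $t \mapsto -t + \text{const}$ (the local form $z \mapsto -z$ of $J$ composed with a translational identification). This reflection has exactly two fixed points. Consequently, the operation on the quotient cuts $(X,\omega)/J$ along $\bar{\gamma}$ and closes each resulting boundary component by folding at its two fixed points, introducing two unmarked simple poles of the quadratic differential (cone angles $\pi$) per side.

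In the disconnected case $Y = Y_1 \sqcup Y_2$, the curve $\bar{\gamma}$ is separating on $(X,\omega)/J$, so each $Y_i/J_i$ is a genus zero closed surface obtained by folding one of the two pieces. The induced involution $J_i$ on $Y_i$ is therefore a hyperelliptic involution, placing $Y_i$ in a hyperelliptic locus. Since the fold points are unmarked and lie in the interior of the new cylinders, they are not distinguished points, so the zeros, marked points, and marked poles on $(X,\omega)/J$ partition bijectively between $Y_1/J_1$ and $Y_2/J_2$. Writing $n, n_1, n_2$ for the respective counts, we have $n_1 + n_2 = n$, and Lemma~\ref{L:Codim=Zeros} applied to each factor gives $k+1 = n$ and $k_i + 1 = n_i$, which sum to $k_1 + k_2 + 1 = k$.

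The main obstacle is the second step: verifying that the induced self-identification on each boundary circle is a reflection (with two fold points) rather than a free rotation, since only the former yields the required fold description. Once this local computation on the horizontal core curves of the $J$-exchanged cylinders is in hand, the remainder is bookkeeping of distinguished points combined with Lemma~\ref{L:Codim=Zeros}.
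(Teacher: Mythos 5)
Your proposal is correct and follows essentially the same route as the paper: identify the $J$-exchanged pair with a separating cylinder on $(X,\omega)/J$, observe that cutting and regluing descends to the quotient and produces two spheres with hyperelliptic involutions, and then apply Lemma~\ref{L:Codim=Zeros} to the additive count of zeros, marked points, and marked poles. The only difference is that you spell out the local verification the paper leaves implicit — that $J_i$ restricted to the reglued core curve is a reflection, so the two new fixed points become unmarked simple poles that do not enter the count — which is exactly the right point to check.
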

\begin{proof}
Two cylinders exchanged by the hyperelliptic involution on $(X, \omega)$ correspond to a separating cylinder on $(X, \omega)/J$. Therefore, cutting and regluing produces two surfaces $(X_i, \omega_i)$ with a hyperelliptic involution $J_i$ for $i \in \{1, 2\}$. If there are $z_i$ zeros, marked points, and marked poles on $(X_i, \omega_i)/J$, then $z_1 + z_2$ is the number of such points on $(X, \omega)/J$. The result now follows from Lemma \ref{L:Codim=Zeros}. 
%
%
\end{proof}

\begin{defn}\label{D:Graphlike}
Given a horizontally periodic surface with horizontal cylinders $\bfC$, the \emph{cylinder graph} is the directed graph whose vertices correspond to cylinders in $\bfC$ and where there is a directed edge between $C$ and $C'$ if the top boundary of $C$ shares a saddle connection with the bottom boundary of $C'$. If $C$ and $C'$ are horizontal cylinders, let $d(C, C')$ be the distance in the cylinder graph between $C$ and $C'$. 

Say that a horizontally periodic surface is \emph{graph-like} if there is an involution $J'$ that fixes each horizontal cylinder. The terminology is chosen since the directed cylinder graph of a graph-like surface can be taken to be undirected since $J'$ induces a bijection between edges joining vertices $v$ to $w$ and edges joining $w$ to $v$.

In the sequel, given a cylinder $C$, we will let $\gamma_C$ denote its (oriented) core curve and $\gamma_C^*$ its Poincare dual. If we are simultaneously considering a collection of parallel cylinders, all core curves will be assumed to be oriented in the same direction.
\end{defn}

%
%
%

\begin{lem}\label{L:GraphlikeRel}
If $(Y, \eta)$ is a graph-like surface in a stratum $\cH$, then $\Twist( (Y, \eta), \cH)$ contains a rel subspace of dimension at most one. 

If this rel subspace is nonempty and $\bfC$ denotes the horizontal cylinders on $(Y, \eta)$, then the rel subspace of $\Twist( (Y, \eta), \cH)$ is generated by 
\[ \sum_{C \in \bfC} (-1)^{d(C_0, C)} \gamma_C^* \]
where $C_0$ is a fixed cylinder in $\bfC$.
\end{lem}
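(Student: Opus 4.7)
\emph{Proof plan.} The plan is to show that an element of $\Twist((Y,\eta),\cH)$ lies in the rel subspace exactly when the corresponding combination of core curves vanishes in $H_1(Y;\bR)$, and then to exploit the involution $J'$ to argue that such relations form a space of dimension at most one.

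First I would observe that under the restriction map $H^1(Y,\Sigma;\bR) \to H^1(Y;\bR)$, the class $\gamma_C^*$ is sent to the Poincar\'e dual of $[\gamma_C]$. Hence $v = \sum_{C \in \bfC} a_C \gamma_C^*$ lies in the rel subspace iff $\sum_C a_C [\gamma_C] = 0$ in $H_1(Y;\bR)$. To detect when this holds, I would slice each cylinder $C$ along its core curve into a top half $T_C$ and a bottom half $B_C$, yielding a cell decomposition of $Y$. A general $2$-chain then has the form $c = \sum_C (t_C T_C + b_C B_C)$, and a direct computation in appropriate orientations gives
\[ \partial c = \sum_C (t_C - b_C)\,\gamma_C + \sum_s \bigl(b_{C_{\mathrm{bot}}(s)} - t_{C_{\mathrm{top}}(s)}\bigr)\, s, \]
where $s$ ranges over horizontal saddle connections and $C_{\mathrm{top}}(s)$, $C_{\mathrm{bot}}(s)$ denote the cylinders having $s$ on their top, bottom respectively. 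Consequently $\sum a_C \gamma_C$ is a boundary iff there exist $t_C, b_C$ with $a_C = t_C - b_C$ for all $C$ and $t_{C_{\mathrm{top}}(s)} = b_{C_{\mathrm{bot}}(s)}$ for every $s$.

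Next I would exploit the graph-like hypothesis. Since $J'$ is an orientation-preserving involution fixing each cylinder setwise, on each $C$ it must act as a $180^\circ$ rotation and so swap the top and bottom boundaries. Thus for each saddle $s$ with $C_{\mathrm{top}}(s) = C$ and $C_{\mathrm{bot}}(s) = C'$, the image $J'(s)$ satisfies $C_{\mathrm{top}}(J'(s)) = C'$ and $C_{\mathrm{bot}}(J'(s)) = C$. Applying the saddle condition to both $s$ and $J'(s)$ yields the paired constraints $b_{C'} = t_C$ and $b_C = t_{C'}$, whose sum combined with $a_C = t_C - b_C$ gives $a_C + a_{C'} = 0$ for every edge of the cylinder graph. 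Hence $a$ must antialternate along every edge; this forces $a \equiv 0$ unless the cylinder graph is bipartite, in which case $a_C$ is determined by $a_{C_0}$ as $a_C = a_{C_0} \cdot (-1)^{d(C_0, C)}$. Either way, the rel subspace has dimension at most one.

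Conversely, when the cylinder graph is bipartite I would witness the rel element by setting $t_C = \tfrac{1}{2}(-1)^{d(C_0,C)}$ and $b_C = -\tfrac{1}{2}(-1)^{d(C_0,C)}$, for which every compatibility condition is immediate. The main technical obstacle is pinning down the orientations in the boundary formula so that the signs work out as stated; once this is done the rest is a pleasant combinatorial observation that $J'$-symmetry promotes each saddle condition into an edge-antisymmetry condition on $a$.
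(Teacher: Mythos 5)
Your proof is correct, and it arrives at the paper's key relation $a_C + a_{C'} = 0$ for adjacent cylinders by a genuinely different, essentially dual, mechanism. The paper's argument is purely cohomological and very short: graph-likeness guarantees that adjacent cylinders $C$, $C'$ share saddle connections on both the top and the bottom of $C$, so there is a simple closed curve $\gamma \subseteq \overline{C \cup C'}$ meeting each core curve exactly once, and evaluating the rel vector on $\gamma$ gives $a_C + a_{C'} = 0$ at a stroke; connectedness of the cylinder graph then finishes. You instead translate ``rel'' into ``$\sum_C a_C [\gamma_C] = 0$ in $H_1(Y;\bR)$,'' write down the full system of linear conditions for this cycle to bound (one condition per horizontal saddle connection, via the half-cylinder $2$-chains), and use the $J'$-pairing of saddle connections to collapse each pair of conditions into the edge-antisymmetry. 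Both arguments rest on the same structural input: that $J'$ interchanges the top and bottom boundaries of every cylinder, hence pairs the directed edges $C \to C'$ with the directed edges $C' \to C$. Your justification of this (``orientation-preserving involution fixing each cylinder must be a $180^\circ$ rotation'') is slightly glib — a translation involution also fixes each cylinder setwise but fixes its boundaries — so you should say explicitly that $J'$ acts by $-\mathrm{Id}$ on the $1$-form, which is the reading the paper's definition intends and its own proof also uses tacitly. What your route buys is the converse for free: the explicit $2$-chain with $t_C = \tfrac{1}{2}(-1)^{d(C_0,C)}$, $b_C = -t_C$ shows that when the cylinder graph is bipartite the alternating sum genuinely is rel, which the lemma does not assert but is good to know. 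The cost is the orientation bookkeeping you acknowledge, plus one unstated (routine) point: a $1$-cycle supported on the union of core curves and saddle connections bounds iff it is a combination of the boundaries of the complementary regions, and since those regions are half-open annuli rather than disks this needs a word (e.g.\ via the long exact sequence of the pair $(Y, G)$ and excision) rather than a naive cellular computation.
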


Given an equivalence class of cylinders $\bfC$ on a translation surface $(Y, \eta)$ in an invariant subvariety $\cM$, recall that the twist space of $\bfC$, denoted by $\Twist(\bfC, \cM)$ and defined in Apisa \cite[Definition 2.2]{Apisa-MHD}, is defined to be the collection of linear combinations of Poincare duals of core curves of cylinders in $\bfC$ that belong to $T_{(Y, \eta)} \cM$. If $(Y, \eta)$ is horizontally periodic, $\Twist( (Y, \eta), \cM)$ denotes $\Twist(\bfC, \cM)$ where $\bfC$ is the collection of horizontal cylinders on $(Y, \eta)$. The \emph{support} of an element $v \in \Twist(\bfC, \cM)$ is the collection of cylinders $C \in \bfC$ so that $\gamma_C^*$ has nonzero coefficient in $v$.


\begin{proof}
Let $v = \sum_{C \in \bfC} a_C \gamma_C^*$ be a rel vector in $\Twist( (Y, \eta), \cH)$. Suppose that $C$ and $C'$ are two cylinders in $\bfC$ that share a boundary saddle connection. Since $(Y, \eta)$ is graph-like, $C$ shares a saddle connection with $C'$ in both its top and bottom boundary. Therefore, there is a simple closed curve $\gamma$ contained in $\overline{C \cup C'}$ that intersects the core curve of $C$ and $C'$ exactly once. Since $v$ is rel, $0 = v(\gamma) = a_C + a_{C'}$, i.e. $a_C = -a_{C'}$. Therefore, $v$ is a multiple of the indicated rel vector. 
\end{proof}

\begin{defn}\label{D:CodimesionTreeGraph}
The following data will be called a \emph{(codimension $k$) tree of graph-like surfaces}:
\begin{enumerate}
    \item A connected tree $T$ with $n$ vertices each of which is assigned a non-negative integer $\{k_1, \hdots, k_n\}$ so that $\sum_{i=1}^n (k_i+1)  = k+1$.
    \item A horizontally periodic graph-like surface $(X_i, \omega_i)$ in a codimension $k_i$ hyperelliptic locus associated to each vertex $i$.
    \item An edge joining vertex $i$ to vertex $j$ corresponds to a pair of horizontal cylinders of equal circumference on $(X_i, \omega_i)$ and $(X_j, \omega_j)$. Each cylinder on $(X_i, \omega_i)$ appears in at most one edge. 
\end{enumerate}
To each horizontally periodic surface $(X, \omega)$ in a codimension $k$ hyperelliptic locus we can associate a codimension $k$ tree of graph-like surfaces as follows:
\begin{enumerate}
    \item Cut and reglue all pairs of horizontal cylinders exchanged by the hyperelliptic involution to form a collection of surfaces $(X_i, \omega_i)$ for $i \in \{1, \hdots, n\}$. These surfaces belong to codimension $k_i$ hyperelliptic loci and $\sum_{i=1}^n (k_i+1)  = k+1$ by Lemma \ref{L:Codim=Zeros}. Moreover, these surfaces are graph-like since every horizontal cylinder is fixed by the hyperelliptic involution. 
    \item Let $T$ be a graph whose vertices correspond to the surfaces $(X_i, \omega_i)$ for $i \in \{1, \hdots, n\}$. Add an edge from $(X_i, \omega_i)$ to $(X_j, \omega_j)$ if they contain horizontal cylinders $C_i$ and $C_j$ that were formed by cutting and regluing a pair of horizontal cylinders on $(X, \omega)$. Since cylinders exchanged by the hyperelliptic involution are homologous, cutting any edge in the graph $T$ disconnects the graph, implying that $T$ is a tree. 
\end{enumerate}
In the construction above, the surfaces $(X_i, \omega_i)$ are well-defined only up to shearing the cylinders associated to the edges of the tree (see the final sentence in Definition \ref{D:CutReglue}). 

It may be useful to refer to Figure \ref{F:cut-reglue} where the horizontally periodic surface on the left is associated to a tree with two vertices - one for each of the two surfaces on the right.
\end{defn}

\subsection{Rel in the twist space of a horizontally periodic surface in a hyperelliptic locus}

In this subsection we will study the rel subspace of the twist space of a horizontally periodic surface $(X, \omega)$ in a hyperelliptic locus $\cL$. Let $T$ be the tree of graph-like surfaces associated to $(X, \omega)$ (as in Definition \ref{D:CodimesionTreeGraph}) and let $V(T)$ denote its set of vertices. Let $\bfC$ be the collection of horizontal cylinders on $(X, \omega)$.

\begin{lem}\label{L:HypTwist}
The element $\sum_{C \in \bfC} a_C \gamma_C^*$, where $a_C \in \mathbb{C}$, belongs to $\Twist((X, \omega), \cL)$ if and only if $a_C = a_{C'}$ whenever $C$ and $C'$ are exchanged by the hyperelliptic involution.
\end{lem}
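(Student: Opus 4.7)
The plan is to identify $T_{(X,\omega)} \cL$ inside $T_{(X,\omega)} \cH = H^1(X, \Sigma; \bC)$ as the $(-1)$-eigenspace of the hyperelliptic involution $J$, and then to compute how $J^*$ acts on the classes $\gamma_C^*$. This reduces the lemma to the single observation that $J^* \gamma_C^* = -\gamma_{J(C)}^*$.

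First I would recall that, near $(X, \omega)$, the hyperelliptic locus $\cL$ is cut out inside $\cH$ by the condition $J^* \eta = -\eta$; this is a linear condition in period coordinates. Consequently $T_{(X,\omega)} \cL$ is exactly the $(-1)$-eigenspace of $J^*$ acting on $H^1(X, \Sigma; \bC)$. Since the twist space is by definition the intersection of $T_{(X,\omega)} \cL$ with the linear span of $\{\gamma_C^* : C \in \bfC\}$, and the latter always lies in $T\cH$, the lemma reduces to determining when a sum $\sum_{C \in \bfC} a_C \gamma_C^*$ lies in $\ker(J^* + I)$.

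Next I would compute $J^*\gamma_C^*$. The identity $J^*\omega = -\omega$ forces $J$ to reverse the horizontal direction, so that if all horizontal core curves are oriented consistently (say, with positive $\omega$-period) then $J_* \gamma_C = -\gamma_{J(C)}$. Because $J$ is holomorphic, hence orientation-preserving on $X$, Poincar\'e duality then yields $J^* \gamma_C^* = -\gamma_{J(C)}^*$.

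Combining the two steps,
\[ J^*\left(\sum_{C \in \bfC} a_C \gamma_C^*\right) \;=\; -\sum_{C \in \bfC} a_C \gamma_{J(C)}^* \;=\; -\sum_{C \in \bfC} a_{J(C)} \gamma_C^*, \]
which equals $-\sum_{C \in \bfC} a_C \gamma_C^*$ if and only if $a_{J(C)} = a_C$ for every $C$. This is the asserted characterization (note that cylinders fixed by $J$ give the tautological condition $a_C = a_C$, while cylinders in a nontrivial $J$-orbit give the equality between the two coefficients). The only subtlety I anticipate is the sign and orientation bookkeeping that gives $J^*\gamma_C^* = -\gamma_{J(C)}^*$; nothing deeper is involved.
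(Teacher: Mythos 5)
Your argument is correct, but it runs along a genuinely different track from the paper's. The paper proves the lemma geometrically in two separate directions: for necessity it observes that cylinders exchanged by $J$ must keep equal heights under any small deformation in $\cL$ (applied to $v$ and $iv$ this pins down both real and imaginary parts of the coefficients), and for sufficiency it exhibits explicit generators --- $i\gamma_B^*$ for a fixed cylinder and $i(\gamma_B^*+\gamma_{B'}^*)$ for an exchanged pair --- and checks that the hyperelliptic involution extends to the deformed surfaces. You instead collapse both directions into one linear-algebra computation by identifying $T_{(X,\omega)}\cL$ with the $(-1)$-eigenspace of $J^*$ on $H^1(X,\Sigma;\bC)$ and verifying $J^*\gamma_C^* = -\gamma_{J(C)}^*$; your sign bookkeeping ($J_*\gamma_C=-\gamma_{J(C)}$ because $J^*\omega=-\omega$ reverses periods, and $J$ is orientation-preserving so Poincar\'e duality is equivariant) is right. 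What your route buys is uniformity and symmetry between the two implications; what it costs is reliance on two background facts you should flag explicitly: that the tangent space to the hyperelliptic locus really is the full anti-invariant relative cohomology (standard, but not proved in the paper, which only uses the resulting dimension count), and that the classes $\{\gamma_C^*\}_{C\in\bfC}$ for distinct parallel cylinders are linearly independent in $H^1(X,\Sigma;\bC)$ --- without that, the final step ``equals $-\sum a_C\gamma_C^*$ if and only if $a_{J(C)}=a_C$'' does not follow. Both facts are standard (the second because the imaginary and real parts of $a_C$ control the change in height and shear of $C$ in period coordinates), so there is no gap, only two implicit citations to make.
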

\begin{proof}
If $\sum_{C \in \bfC} a_C \gamma_C^*$ belongs to $\Twist((X, \omega), \cL)$ and $B$ and $B'$ are two horizontal cylinders exchanged by the hyperelliptic involution, then, under any sufficiently small deformation, these cylinders must have the same heights. This implies that $a_B = a_{B'}$.

Conversely, if $B$ is a horizontal cylinder fixed by the hyperelliptic involution, then it is clear that $i\gamma_B^*$ belongs to $\Twist((X, \omega), \cL)$  since the hyperelliptic involution extends to the surface $(X, \omega) + it\gamma_B^*$ for sufficiently small $t$. Similarly, if $B$ and $B'$ are exchanged by the hyperelliptic involution, $i(\gamma_B^* + \gamma_{B'}^*)$ belongs to $\Twist((X, \omega), \cL)$. This implies that any element of the form $\sum_{C \in \bfC} a_C \gamma_C^*$ belongs to $\Twist((X, \omega), \cL)$ as long as $a_B = a_{B'}$ whenever $B$ and $B'$ are exchanged by the hyperelliptic involution.
\end{proof}

For each vertex $i$ of $T$, let $(X_i, \omega_i)$ be the corresponding surface in hyperelliptic locus $\cL_i$. Let $\bfC^F_i$ be the horizontal cylinders on $(X_i, \omega_i)$ that are not part of any edge relation in $T$. We may think of these cylinders as belonging to either $(X_i, \omega_i)$ or to $(X, \omega)$. Let $\bfC^{NF}_i$ be the horizontal cylinders on $(X, \omega)$ that correspond to edges in $T$ that have $i$ as an endpoint. (The superscripts ``F" and ``NF" indicate whether the cylinders are fixed or not fixed by the hyperelliptic involution.) We can identify each pair of cylinders exchanged by the hyperelliptic involution in $\bfC^{NF}_i$ with a cylinder in $(X_i, \omega_i)$. Let $d_i(\cdot, \cdot)$ denote the distance between two cylinders in the cylinder graph of $(X_i, \omega_i)$. Now if $\Twist( (X_i, \omega_i), \mathcal{L}_i)$ has rel, then define the following vector in $\Twist( (X, \omega), \cL)$, 
\[ \rho_i := \frac{1}{2} \sum_{C \in \bfC_i^{NF}} (-1)^{d_i(C_0, C)} \gamma_C^* + \sum_{C \in \bfC_i^{F}} (-1)^{d_i(C_0, C)} \gamma_C^*  \]
where $C_0$ is an arbitrary horizontal cylinder on $(X_i, \omega_i)$. If $\Twist( (X_i, \omega_i), \mathcal{L}_i)$ has no rel, set $\rho_i := 0$.



\begin{lem}\label{L:RelInHyp}
The nonzero elements of $\{\rho_i\}_{i \in V(T)}$ form a basis for the rel subspace of $\Twist( (X, \omega), \cL)$.
\end{lem}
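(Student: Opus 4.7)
The plan is to establish three facts: every nonzero $\rho_i$ lies in the rel subspace of $\Twist((X,\omega), \cL)$, the nonzero $\rho_i$ are linearly independent, and they span.

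For the first claim, membership in $\Twist((X,\omega), \cL)$ is immediate from Lemma \ref{L:HypTwist}: each $J$-exchanged pair in $\bfC_i^{NF}$ receives equal coefficients by construction. To verify $\rho_i$ is rel, I would evaluate on an absolute cycle $\alpha \in H_1(X; \bC)$. Under the orientation convention implicit in the proof of Lemma \ref{L:HypTwist} (where $J_*\gamma_C = -\gamma_{C'}$, so that the $-1$-eigenspace condition $J^*v = -v$ defining tangents to $\cL$ agrees with $a_C = a_{C'}$ on pairs), the core curves of any $J$-pair satisfy $[\gamma_C] = [\gamma_{C'}]$ in $H_1(X; \bC)$, so $\tfrac{1}{2}(\gamma_C^* + \gamma_{C'}^*)$ pairs with $\alpha$ as $\alpha \cdot \gamma_C$. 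The identity $\rho_i(\alpha) = \alpha \cdot c_i$ then holds, where
\[ c_i := \sum_{\hat C \in \bfC_i^F}(-1)^{d_i(C_0, \hat C)}\gamma_{\hat C} + \sum_{\{C,C'\}} (-1)^{d_i(C_0, \tilde C)}\gamma_C \]
is the lift to $Y_i \subset X$ of the rel generator cycle of $(X_i, \omega_i)$ from Lemma \ref{L:GraphlikeRel} (one representative $C$ chosen from each pair). By that lemma, the rel generator bounds a 2-chain on $(X_i, \omega_i)$; lifting this 2-chain to $Y_i$ via the cut-and-reglue, with consistent choices of side at each identified circle, yields a 2-chain on $X$ whose boundary is exactly $c_i$, so $c_i$ is null-homologous on $X$ and $\rho_i$ is rel.

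For linear independence, suppose $\sum_i c_i \rho_i = 0$. When $\bfC_i^F \neq \emptyset$, the support of $\rho_j$ on $\bfC_i^F$ is trivial for $j \neq i$, so reading off the coefficient on any $\hat C \in \bfC_i^F$ forces $c_i = 0$. The residual case $\bfC_i^F = \emptyset$ is handled by induction on $T$: a leaf vertex with empty $\bfC_i^F$ has $(X_i, \omega_i)$ consisting of a single cylinder whose putative rel generator $\gamma_{C_0}^*$ pairs to $\pm 1$ with a transverse cycle and hence is not rel, so $\rho_i = 0$ automatically; for the remaining internal vertices the $c_i$ are forced to vanish by propagating along edges of $T$ the linear relations among adjacent $c_i, c_j$ extracted from coefficients on paired cylinders.

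For spanning, given a rel $v = \sum_C a_C \gamma_C^* \in \Twist((X,\omega), \cL)$, I would construct candidates $\lambda_i$ by setting $\lambda_i := (-1)^{d_i(C_0, \hat C)} a_{\hat C}$ for any $\hat C \in \bfC_i^F$; well-definedness across different choices of $\hat C$ follows from combining the rel condition on $v$ with the nullhomology identity from Step I, which forces coefficients on $\bfC_i^F$ to alternate in sign according to $d_i$. For vertices with $\bfC_i^F = \emptyset$, $\lambda_i$ is determined recursively from pair-cylinder coefficients and the values of $\lambda_j$ at neighbors. The equality $v = \sum_i \lambda_i \rho_i$ is then verified cylinder by cylinder: on $\bfC^F$-cylinders it holds by construction, while on a pair cylinder on edge $\{i, j\}$ the contributions from $\rho_i$ and $\rho_j$ combine to $\tfrac{\lambda_i}{2}(-1)^{d_i} + \tfrac{\lambda_j}{2}(-1)^{d_j}$, matching $a_C$ by the recursive construction. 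The principal technical difficulty is the lift in the first claim: understanding precisely how a 2-chain on $(X_i, \omega_i)$ certifying the nullhomology of the rel generator transports through the cut-and-reglue surgery to a 2-chain on $Y_i$, with the correct accounting of signs from the hyperelliptic orientation convention---this is exactly what motivates the factor of $\tfrac{1}{2}$ on paired cylinders in the definition of $\rho_i$.
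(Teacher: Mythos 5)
Your architecture (membership, independence, spanning, each verified directly) is genuinely different from the paper's, which instead inducts on the number of $J$-exchanged pairs: it cuts and reglues along the pair $\{B,B'\}$ at a leaf of $T$, builds explicit maps $\pi_i(v) = p_i(v) - (p_i(v))(\eta_i)\gamma_{D_i}^*$ and $\iota_i$ (which sends $b\gamma_{D_i}^*$ to $\tfrac{b}{2}(\gamma_B^* + \gamma_{B'}^*)$), shows these are mutually inverse isomorphisms between the rel subspace $R$ of $\Twist((X,\omega),\cL)$ and $R_1\oplus R_2$, and then reads the basis off of Lemma \ref{L:GraphlikeRel} applied to the leaf. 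The payoff of that route is that both facts you defer are absorbed into two short Mayer--Vietoris computations, $H_1(S_i) = H_1(S_i-\gamma_{D_i}) + \bZ\eta_{D_i}$ and $H_1(X) \subseteq H_1(S_1-\gamma_{D_1})+H_1(S_2-\gamma_{D_2})+\bZ\eta$, so that no $2$-chain ever has to be constructed.

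Two steps of your plan are asserted rather than proved, and both are exactly where the paper's machinery does real work. First, for spanning, the well-definedness of $\lambda_i$ requires knowing that the coefficients of an arbitrary rel $v$ on the cylinders attached to vertex $i$ alternate with $d_i$; you attribute this to ``the rel condition on $v$,'' but the simple closed curve in $\overline{C\cup C'}$ witnessing $a_C=-a_{C'}$ for adjacent cylinders of $(X_i,\omega_i)$ is closed only on $X_i$ --- when $C'$ is a pair-cylinder $D_e$ it becomes an arc on $X$ with endpoints on the former core curves, so the rel condition on $v$ cannot be applied to it directly. One must concatenate the two arcs from opposite sides of the pair into the closed curve $\eta$ and correct by its pairing, which is precisely the role of the term $-(p_i(v))(\eta_i)\gamma_{D_i}^*$ in the paper's $\pi_i$ (Sublemma \ref{SL:PiStillRel}). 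Second, for membership, the transport of the nullhomology of the rel generator from $X_i$ to $X$ is nontrivial because the bounding $2$-chain on $X_i$ will in general cross $\gamma_{D_e}$; cutting it open leaves residual boundary that must be capped off in $X$ using $[\gamma_{C_e}]=[\gamma_{C_e'}]$. You flag this as the principal difficulty but do not resolve it; the paper's Sublemma \ref{SL:IotaStillRel} does, again by pairing against Mayer--Vietoris generators rather than building the chain. Neither gap is fatal --- your outline can be completed --- but as written the two load-bearing steps are the ones left open, and filling them essentially reconstructs the paper's two sublemmas.
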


It may be useful to refer to Figure \ref{F:cut-reglue} where we have already noted that, for the surface on the left, $\{\rho_1, \rho_2\}$ is a basis of the rel subspace of the twist space.

\begin{proof}
We will prove the statement by induction on the number $m$ of pairs of horizontal cylinders that are exchanged by the hyperelliptic involution. The base case of $m = 0$ is Lemma \ref{L:GraphlikeRel}. Suppose now that $m > 0$. 

Up to re-indexing let $(X_1, \omega_1)$ be a leaf of $T$. Let $\{B, B'\}$ be the pair of cylinders exchanged by the hyperelliptic involution on $(X, \omega)$ that correspond to the edge whose endpoint is the leaf. Note that $(X, \omega) - \{B, B'\}$ has two components, one of which corresponds to $(X_1, \omega_1)$. The horizontal cylinders on this (resp. the other) component we denote by $\bfC_1$ (resp. $\bfC_2$). After cutting and regluing $\{B, B'\}$, let $S_1$ and $S_2$ denote respectively the resulting translation surfaces in hyperelliptic loci $\cL_i$. We emphasize that $B$ and $B'$ do not belong to $\bfC_1 \cup \bfC_2$.

Recall that every element of $\Twist( (X, \omega), \cL)$ is a linear combination of elements of $\{\gamma_C^*\}_{C \in \bfC}$. Define a map $p_i: \Twist( (X, \omega), \cL) \ra \Twist(S_i, \cL_i)$ given by 
\[ p_i\left( \sum_{C \in \bfC} a_C \gamma_C^* \right) = \sum_{C \in \bfC_i} a_C \gamma_C^* \]
where the cylinders in $\bfC_i$ are taken to belong to $(X, \omega)$ on the left hand side and to $S_i$ on the right. Notice that Lemma \ref{L:HypTwist} implies that $p_i$ has the indicated domain and codomain.

Let $D_i$ be the horizontal cylinder on $S_i$ coming from cutting and regluing $\{B, B'\}$. Notice that $S_i - \gamma_{D_i}$ is a subsurface of both $S_i$ and $(X, \omega)$. By the Mayer-Vietoris sequence, 
\[ H_1(S_i) = H_1(S_i - \gamma_{D_i}) + \mathbb{Z}\eta_{D_i} \] where $\eta_{D_i}$ is any simple closed curve so that $\gamma_{D_i}^*(\eta_{D_i}) = 1$. Since $\eta_{D_i}$ is a simple closed curve on $(X_i, \omega_i)$ it corresponds to a simple arc on $(X, \omega)$ that joins the core curve of $B$ to the core curve of $B'$. Let $\eta$ denote the concatenation of these two arcs (possibly after adding in a segment of the core curves of $B$ and $B'$ in order to ensure that the endpoints of the two arcs agree).  

We additionally define a map $\pi_i: \Twist( (X, \omega), \cL) \ra \Twist(S_i, \cL_i)$ by 
\[ \pi_i(v) = p_i(v) - \left( p_i(v)\right)(\eta_i)\gamma_{D_i}^*. \]
Lemma \ref{L:HypTwist} again implies that $\pi_i$ has the indicated domain and codomain.

Define $R$ (resp. $R_i$) to be the rel subspace of $\Twist((X, \omega), \cL)$ (resp. $\Twist(S_i, \cL_i)$). We will show that $\pi_1 \oplus \pi_2$ induces an isomorphism between $R$ and $R_1 \oplus R_2$. 

\begin{sublem}\label{SL:PiStillRel}
If $v \in R$, then $\pi_i(v) \in R_i$. 
\end{sublem}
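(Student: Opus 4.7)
The plan is to check the rel condition on $\pi_i(v)$ directly against a generating set of $H_1(S_i)$, furnished by the Mayer–Vietoris decomposition $H_1(S_i) = H_1(S_i - \gamma_{D_i}) + \mathbb{Z}\eta_{D_i}$ already recorded in the excerpt. A vector in $\Twist(S_i, \cL_i)$ is rel precisely when it evaluates to zero on every absolute cycle, so it suffices to verify vanishing on each piece of that decomposition.

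First I would handle the cycle $\eta_{D_i}$. Since $\eta_{D_i}$ lies in $S_i - \gamma_{D_i}$ union a transverse arc through $D_i$, the key numerical input is $\gamma_{D_i}^*(\eta_{D_i}) = 1$, which is built into the choice of $\eta_{D_i}$. Then by definition
\[
\pi_i(v)(\eta_{D_i}) = p_i(v)(\eta_{D_i}) - \bigl(p_i(v)\bigr)(\eta_{D_i})\cdot \gamma_{D_i}^*(\eta_{D_i}) = 0,
\]
so the subtraction of the $\gamma_{D_i}^*$ term is precisely engineered to kill the $\eta_{D_i}$ pairing.

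Next I would treat cycles $\gamma \in H_1(S_i - \gamma_{D_i})$. The subsurface $S_i - \gamma_{D_i}$ is canonically identified with one of the two components of $(X,\omega) - (\gamma_B \cup \gamma_{B'})$, so such a $\gamma$ represents an absolute cycle on $(X,\omega)$ that avoids the core curves of all cylinders in $\bfC \setminus \bfC_i$ (it avoids $B$ and $B'$ by construction, and it misses the cylinders in $\bfC_{3-i}$ because they lie in the other component). Consequently the Poincar\'e pairings $\gamma_C^*(\gamma)$ vanish for every $C \in \bfC \setminus \bfC_i$, which gives the identity $v(\gamma) = p_i(v)(\gamma)$ where on the left $v$ is regarded as a cochain on $(X,\omega)$ and on the right $p_i(v)$ is regarded as a cochain on $S_i$. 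Because $v \in R$ is rel on $(X,\omega)$ and $\gamma$ is absolute, $v(\gamma) = 0$; and $\gamma_{D_i}^*(\gamma) = 0$ since $\gamma$ is disjoint from $\gamma_{D_i}$. Therefore $\pi_i(v)(\gamma) = p_i(v)(\gamma) - 0 = 0$.

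Combining the two cases, $\pi_i(v)$ annihilates a generating set of $H_1(S_i)$, so $\pi_i(v) \in R_i$. The only real subtlety, and the place I would be most careful, is the identification of $S_i - \gamma_{D_i}$ with a subsurface of $(X,\omega)$ and the parallel identification of the cylinders in $\bfC_i$ on the two surfaces; once this bookkeeping is nailed down, the verification reduces to the two pairing computations above.
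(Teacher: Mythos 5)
Your proof is correct and follows essentially the same route as the paper's: both check $\pi_i(v)$ against the Mayer--Vietoris generating set $H_1(S_i) = H_1(S_i - \gamma_{D_i}) + \mathbb{Z}\eta_{D_i}$, observing that the $\gamma_{D_i}^*$ correction term kills the pairing with $\eta_{D_i}$ while cycles in $S_i - \gamma_{D_i}$ pair with $\pi_i(v)$ exactly as they pair with the rel vector $v$ on $(X,\omega)$. Your write-up simply spells out the bookkeeping (which core curves $\gamma$ can meet) in slightly more detail than the paper does.
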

\begin{proof}
Let $\gamma$ be any closed curve in $S_i - \gamma_{D_i}$, which we can identify with a subsurface of $X$. Since the only curves in $\{\gamma_C\}_{C \in \bfC}$ that $\gamma$ intersects are core curves of cylinders in $\bfC_i$, we have
\[ \left(\pi_i(v)\right)(\gamma) = v(\gamma) = 0. \]
Moreover, 
\[  \left(\pi_i(v)\right)(\eta_i) = \left(p_i(v)\right)(\eta_i) - \left(p_i(v)\right)(\eta_i) \gamma_{D_i}^*(\eta_i) = 0.\]
Since, by the Mayer-Vietoris sequence, 
\[ H_1(S_i) = H_1(S_i - \gamma_{D_i}) + \mathbb{Z}\eta_{D_i} \]
it follows that since $v$ belongs to the rel subspace, so does $\pi_i(v)$. 
\end{proof}

We will define one final map, $\iota_i: \Twist(S_i, \cL_i) \ra \Twist((X, \omega), \cL)$, as follows:
\[ \iota_i\left( b\gamma_{D_i}^* + \sum_{C \in \bfC_i} a_C \gamma_C^* \right) = \frac{b}{2}\left( \gamma_B^* + \gamma_{B'}^* \right) + \sum_{C \in \bfC_i} a_C \gamma_C^*.  \]
where the cylinders in $\bfC_i$ belong to $S_i$ on the left-hand side and $X$ on the right. As above, Lemma \ref{L:HypTwist} implies that $\iota_i$ has the indicated domain and codomain.

\begin{sublem}\label{SL:IotaStillRel}
If $v \in R_i$, then $\iota_i(v) \in R$. 
\end{sublem}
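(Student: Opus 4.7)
The plan is to verify that $\iota_i(v)$ vanishes on all absolute homology classes of $(X,\omega)$ by testing against a Mayer-Vietoris generating set for $H_1(X)$. Write $X^{up,cl}$ and $X^{down,cl}$ for the closures of the two components of $X \setminus (\gamma_B \cup \gamma_{B'})$; these are exchanged by the cut-and-reglue into $S_1$ and $S_2$. Without loss of generality take $i=1$, so $\bfC_1$ lies in the interior of $X^{up,cl}$ and $S_1$ is obtained by identifying the boundary circles $\gamma_B$ and $\gamma_{B'}$ of $X^{up,cl}$ to form the core curve $\gamma_{D_1}$ of the cylinder $D_1 \subset S_1$. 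The Mayer-Vietoris sequence for $X = X^{up,cl} \cup X^{down,cl}$ with intersection $\gamma_B \sqcup \gamma_{B'}$ then presents $H_1(X)$ as generated by the images of $H_1(X^{up,cl})$ and $H_1(X^{down,cl})$, together with the class $[\eta]$, which detects the kernel element $[\gamma_B] - [\gamma_{B'}] \in H_0(\gamma_B \sqcup \gamma_{B'})$ under the connecting homomorphism.

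For $\gamma \in H_1(X^{up,cl})$, the key observation is that $[\gamma_B]$ and $[\gamma_{B'}]$, being boundary classes of $X^{up,cl}$, vanish in $H_1(X^{up,cl}, \partial X^{up,cl})$ and hence have trivial Poincar\'e-Lefschetz duals on $X^{up,cl}$. In particular $\gamma_B^*(\gamma) = \gamma_{B'}^*(\gamma) = 0$, while $\gamma_{D_1}^*(\hat\gamma) = 0$ in $S_1$ as well, since $\gamma_{D_1}$ sits in the interior of $D_1$ and any representative of $\gamma$ may be pushed into $X^{up,cl} \setminus D_1$. The definition of $\iota_1$ then collapses to $\iota_1(v)(\gamma) = \sum_{C \in \bfC_1} a_C \gamma_C^*(\gamma) = v(\hat\gamma)$, where $\hat\gamma$ denotes the image of $\gamma$ in $H_1(S_1)$, and this vanishes because $v \in R_1$. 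For $\gamma \in H_1(X^{down,cl})$ the same duality argument gives $\gamma_B^*(\gamma) = \gamma_{B'}^*(\gamma) = 0$, and the cylinders $\gamma_C$ with $C \in \bfC_1$ are disjoint from $X^{down,cl}$, so $\iota_1(v)(\gamma) = 0$ directly from the formula.

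The essential case is evaluation on $[\eta]$. Since $\gamma_B - \gamma_{B'}$ bounds $X^{up,cl}$ (the two boundary components of $X^{up,cl}$ inherit opposite induced orientations), the classes $[\gamma_B]$ and $[\gamma_{B'}]$ coincide in $H_1(X)$, forcing $\gamma_B^*(\eta) = \gamma_{B'}^*(\eta)$; a representative of $\eta$ may be chosen so that this common value equals $1$. Moreover, the portion of $\eta$ lying in $X^{up,cl}$ is by construction the very arc that closes through $D_1$ to form $\eta_{D_1}$ in $S_1$, while the arc of $\eta$ in $X^{down,cl}$ and the closing arc through $D_1$ are both disjoint from every $\gamma_C$ with $C \in \bfC_1$. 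Hence $\gamma_C^*(\eta)_X = \gamma_C^*(\eta_{D_1})_{S_1}$ for each such $C$, and
\[ \iota_1(v)(\eta) = \tfrac{b}{2}(1+1) + \sum_{C \in \bfC_1} a_C\, \gamma_C^*(\eta_{D_1}) = b\,\gamma_{D_1}^*(\eta_{D_1}) + \sum_{C \in \bfC_1} a_C\, \gamma_C^*(\eta_{D_1}) = v(\eta_{D_1}) = 0. \]

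I expect the primary bookkeeping obstacle to be the orientation conventions: the factor $\tfrac{1}{2}$ in the definition of $\iota_i$ is calibrated precisely so that the two coherent crossings of $\gamma_B$ and $\gamma_{B'}$ by $\eta$ in $X$ compensate for the single crossing of $\gamma_{D_1}$ by $\eta_{D_1}$ in $S_1$. Once one verifies that $\gamma_B^*(\eta)$ and $\gamma_{B'}^*(\eta)$ have \emph{equal} signs (rather than opposite ones, as a naive picture of $\eta$ entering and exiting a strip might suggest), the three cases above exhaust the Mayer-Vietoris generators and the sublemma follows.
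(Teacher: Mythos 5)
Your proof is correct and follows essentially the same route as the paper's: decompose $H_1(X)$ via Mayer--Vietoris into classes supported on the two complementary subsurfaces together with $\eta$, observe that classes in the subsurfaces pair trivially (reducing to $v\in R_i$ on the $i$th side), and then compute $(\iota_i(v))(\eta)=\frac{b}{2}(1+1)+\sum_C a_C\gamma_C^*(\eta_{D_i})=v(\eta_{D_i})=0$. The orientation point you flag — that $\gamma_B^*(\eta)=\gamma_{B'}^*(\eta)=1$ because $\gamma_B$ and $\gamma_{B'}$ are homologous — is exactly the fact the paper invokes "by construction."
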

\begin{proof}
The proof will be quite similar to that of Sublemma \ref{SL:PiStillRel}. By the Mayer-Vietoris sequence, $H_1(X)$ is contained in $H_1(S_1 - \gamma_{D_1}) + H_1(S_2 - \gamma_{D_2}) + \mathbb{Z}\eta$. As in Sublemma \ref{SL:PiStillRel}, any closed curve in $S_j - \gamma_{D_j}$ for $j \in \{1, 2\}$ pairs trivially with $\iota_i(v)$, so it suffices to consider the pairing of $\iota_i(v)$ and $\eta$. By construction, $\gamma_B^*(\eta) = \gamma_{B'}^*(\eta) = 1$. Since the restriction of $\eta$ to $S_i - \gamma_{D_i}$ can be identified with $\eta_i$ we have that $(\iota_i(v))(\eta) = v(\eta_i) = 0$. Therefore, $\iota_i(v) \in R$ as desired.
\end{proof}

By Sublemmas \ref{SL:PiStillRel} and \ref{SL:IotaStillRel} we have two linear maps $\pi = \pi_1 \oplus \pi_2: R \ra R_1 \oplus R_2$ and $\iota = \iota_1 + \iota_2: R_1 \oplus R_2 \ra R$. 

\begin{sublem}
$\pi$ and $\iota$ are inverses of each other. 
\end{sublem}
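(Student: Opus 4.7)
The plan is to verify $\pi \circ \iota = \mathrm{Id}$ and $\iota \circ \pi = \mathrm{Id}$ by direct unwinding of the definitions, using at the critical step the identity $v(\eta) = 0$ (for $v \in R$) or $v_i(\eta_i) = 0$ (for $v_i \in R_i$).

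First I would verify $\pi \circ \iota = \mathrm{Id}$. Given $(v_1, v_2) \in R_1 \oplus R_2$, write $v_i = b_i \gamma_{D_i}^* + \sum_{C \in \bfC_i} a_C \gamma_C^*$. Since $B, B' \notin \bfC_1 \cup \bfC_2$ and $\bfC_1 \cap \bfC_2 = \emptyset$, the projection $p_i$ kills the $(\gamma_B^* + \gamma_{B'}^*)$ term of $\iota_i(v_i)$ and sees nothing coming from $\iota_j(v_j)$ for $j \ne i$. Thus $p_i(\iota(v_1,v_2)) = \sum_{C \in \bfC_i} a_C \gamma_C^*$. The rel condition $v_i(\eta_i) = 0$ on $S_i$, together with $\gamma_{D_i}^*(\eta_i) = 1$, yields $\sum_{C \in \bfC_i} a_C \gamma_C^*(\eta_i) = -b_i$, so subtracting the correction term $(p_i(\iota(v_1,v_2)))(\eta_i)\gamma_{D_i}^* = -b_i \gamma_{D_i}^*$ gives back $v_i$.

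Next I would verify $\iota \circ \pi = \mathrm{Id}$. Given $v \in R$, Lemma \ref{L:HypTwist} lets me write $v = b(\gamma_B^* + \gamma_{B'}^*) + \sum_{C \in \bfC_1 \cup \bfC_2} a_C \gamma_C^*$ for a common coefficient $b$, since $B$ and $B'$ are exchanged by the hyperelliptic involution. A direct computation gives
\[ \iota(\pi(v)) \;=\; \tfrac{c_1 + c_2}{2}(\gamma_B^* + \gamma_{B'}^*) \,+\, \sum_{C \in \bfC_1 \cup \bfC_2} a_C \gamma_C^*, \qquad c_i := -(p_i(v))(\eta_i), \]
so it suffices to show $c_1 + c_2 = 2b$. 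For this I would evaluate $v$ on the closed curve $\eta$ (the concatenation of the arcs produced by $\eta_1$ and $\eta_2$): since cylinders in $\bfC_i$ are contained in the component of $X - (\gamma_B \cup \gamma_{B'})$ corresponding to $S_i$, any $\gamma_C^*$ with $C \in \bfC_i$ pairs with $\eta$ only through its sub-arc $\eta_i$, giving $\gamma_C^*(\eta) = \gamma_C^*(\eta_i)$; and by the construction of $\eta$ as two arcs from $\gamma_B$ to $\gamma_{B'}$ oriented compatibly, $\gamma_B^*(\eta) = \gamma_{B'}^*(\eta) = 1$. The rel condition $v(\eta) = 0$ then reads $2b + (p_1(v))(\eta_1) + (p_2(v))(\eta_2) = 0$, which is exactly $2b = c_1 + c_2$.

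The only subtle point I expect is the intersection-number bookkeeping in the second step: one must justify that $\eta$ can be chosen so that $\gamma_B^*(\eta) = \gamma_{B'}^*(\eta) = +1$ with compatible orientations, and that the restriction of $\eta$ to the $S_i$-side really recovers $\eta_i$ in homology modulo $\gamma_{D_i}$. This is a consequence of the Mayer--Vietoris description already invoked in Sublemmas \ref{SL:PiStillRel} and \ref{SL:IotaStillRel}, together with a careful choice of orientation of $\eta_1$ and $\eta_2$; once fixed, both composite identities reduce to the single-equation arguments above.
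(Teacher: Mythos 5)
Your proof is correct and uses essentially the same mechanism as the paper: the paper observes that the coefficients on $\bfC_1\cup\bfC_2$ agree and that rel-ness then \emph{determines} the remaining coefficients (on $\gamma_B^*,\gamma_{B'}^*$, resp.\ $\gamma_{D_i}^*$), while you make that determination explicit by evaluating the rel condition on $\eta$ and $\eta_i$ — which is exactly the curve that detects those coefficients. The bookkeeping ($\gamma_B^*(\eta)=\gamma_{B'}^*(\eta)=1$ and the identification of $\eta\restriction_{S_i-\gamma_{D_i}}$ with $\eta_i$) is already established in Sublemma \ref{SL:IotaStillRel}, so your computation goes through.
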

\begin{proof}
Fix $v \in R$. Notice that the coefficients of $\{\gamma_C^*\}_{C\in \bfC_1 \cup \bfC_2}$ are the same for $v$ and $\iota(\pi(v))$. Since $v$ and $\iota(\pi(v))$ belong to the rel subspace, and hence are zero in absolute cohomology, these coefficients determine the coefficients of $\gamma_B^*$ and $\gamma_{B'}^*$ (which are equal to each other by Lemma \ref{L:HypTwist}). Therefore, $v = \iota(\pi(v))$. The argument that $\pi\circ\iota$ is the identity is identical. 
\end{proof}

Therefore, if $\beta$ is a basis of $R_2$, then $\{ \iota_1(\rho_1) \} \cup \iota_2(\beta)$ is a basis of $R$. The induction hypothesis applied to $S_2$ now allows us to conclude.  
\end{proof}

\begin{lem}\label{L:NonAdjacentHomologous}
Suppose that $\cL$ is a codimension one hyperelliptic locus. If $\Twist((X, \omega), \cL)$ contains rel supported on every horizontal cylinder and $C$ and $C'$ are two horizontal cylinders exchanged by the hyperelliptic involution that share a boundary saddle connection that belongs to the top boundary of $C$, then the top boundary of $C$ coincides with the bottom boundary of $C'$. 
\end{lem}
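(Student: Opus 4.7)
The plan is to argue by contradiction: suppose $\partial^+C$ does not coincide with $\partial^- C'$, so some saddle connection $s'\subset\partial^+C$ lies in $\partial^- E$ for a cylinder $E\ne C'$. The goal is to derive $a_E=0$ (or $a_C=0$ when $E=C$), contradicting the hypothesis that the rel vector $\rho=\sum_D a_D\gamma_D^*$ is supported on every horizontal cylinder. First, a structural remark: because $\cL$ has codimension one, Lemma~\ref{L:Codim=Zeros} gives exactly two special points on $(X,\omega)/J$, so in the tree of graph-like surfaces from Definition~\ref{D:CodimesionTreeGraph} we have $\sum_i(k_i+1)=2$. A single vertex with $k_1=1$ would make $(X,\omega)$ itself graph-like, contradicting the existence of the exchanged pair $\{C,C'\}$; hence the tree has two vertices of weight zero joined by a single edge. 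Thus $\{C,C'\}$ is the unique pair of $J$-exchanged horizontal cylinders on $(X,\omega)$, and every other horizontal cylinder is $J$-invariant.

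The key tool is the cohomological interpretation of $\rho$ as a function on zeros. Since $[\rho]=0$ in $H^1(X;\bC)$, there is a function $\lambda$ on the zero set $Z$ with $\int_\alpha\rho=\lambda_{z'}-\lambda_z$ for any path $\alpha$ from $z$ to $z'$. Pairing $\rho$ with a horizontal saddle connection shows $\lambda$ is constant on each horizontal connectivity class of zeros, yielding well-defined constants $\lambda_D^+$ on $\partial^+D$ and $\lambda_D^-$ on $\partial^- D$; pairing with a vertical crossing of $D$ gives $a_D=\lambda_D^+-\lambda_D^-$. Since $\rho\in T\cL$ and $J^*\omega=-\omega$, we have $J^*\rho=-\rho$, which after fixing the additive normalization becomes $\lambda\circ J=-\lambda$. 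Because $J$ swaps $\partial^+D$ with $\partial^- D$ when $D$ is $J$-fixed and swaps $\partial^\pm C$ with $\partial^\mp C'$ for the exchanged pair, we obtain $\lambda_D^+=-\lambda_D^-$ for every $J$-fixed $D$ and $\lambda_{C'}^\pm=-\lambda_C^\mp$.

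The contradiction follows by a short chase. The shared saddle connection $s\subset\partial^+C\cap\partial^- C'$ has both endpoints in the horizontal connectivity classes of $\partial^+C$ and $\partial^- C'$, forcing $\lambda_C^+=\lambda_{C'}^-=-\lambda_C^+$ and hence $\lambda_C^+=0$. Then $s'\subset\partial^+C\cap\partial^- E$ similarly forces $\lambda_E^-=\lambda_C^+=0$. If $E=C$ this also yields $\lambda_C^-=0$, so $a_C=\lambda_C^+-\lambda_C^-=0$; if $E\ne C,C'$ then $E$ is $J$-invariant by the first step, so $\lambda_E^+=-\lambda_E^-=0$ and $a_E=0$. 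Each conclusion contradicts the support hypothesis on $\rho$. The main subtlety is the normalization and sign in the $J$-anti-equivariance of $\lambda$; the structural step follows immediately from the tree decomposition, and the final case analysis is entirely formal.
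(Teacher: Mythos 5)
Your proof is correct, but it reaches the conclusion by a genuinely different route from the paper's. The paper cuts and reglues along $\{C,C'\}$: by Lemma~\ref{L:CutReglue} the component $(X_0,\omega_0)$ containing the top boundary of $C$ lies in a codimension zero hyperelliptic locus and is therefore graph-like, the reglued cylinder $D$ has the shared saddle connection on both its top and bottom boundary, so by Lemma~\ref{L:GraphlikeRel} the twist space of $(X_0,\omega_0)$ carries no rel, and Lemma~\ref{L:RelInHyp} then forces $\overline{D}=(X_0,\omega_0)$, which is exactly the desired coincidence of boundaries. You instead stay on $(X,\omega)$ and encode the rel vector as a potential function $\lambda$ on the zeros, using the anti-invariance of $T\cL$ under $J$ (equivalently $\lambda\circ J=-\lambda$ after normalizing) together with the fact that adjacency merges the values of $\lambda$; the shared saddle connection gives $\lambda^+_C=\lambda^-_{C'}=-\lambda^+_C=0$, and any cylinder other than $C'$ bordering the top of $C$ then acquires coefficient zero, contradicting the support hypothesis. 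Both arguments exploit the codimension-one hypothesis identically (via Lemma~\ref{L:Codim=Zeros} and the tree, $\{C,C'\}$ is the unique exchanged pair, so every other horizontal cylinder is $J$-fixed), and your inference that non-coincidence produces a saddle connection of $\partial^+C$ lying below some $E\ne C'$ is legitimate precisely because $J(\partial^+C)=\partial^-C'$ makes containment in either direction equivalent to equality. What the paper's version buys is reuse of the structural machinery (cut-and-reglue, graph-like rel) developed for the rest of Section~\ref{S:PeriodicStructure}; what yours buys is a self-contained computation that makes explicit where the $(-1)$-eigenspace description of $T\cL$ enters, at the cost of re-deriving in the potential-function language the coefficient relations that Lemmas~\ref{L:GraphlikeRel} and~\ref{L:RelInHyp} already package.
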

\begin{proof}
Cut and reglue $(X, \omega)$ along $\{C, C'\}$. Let $(X_0, \omega_0)$ be the resulting component that contains the top boundary of $C$ and let $D$ denote the cylinder on $(X_0, \omega_0)$ that was formed by cutting and regluing $\{C, C'\}$. 

Since $\cL$ is codimension one, it follows that $(X_0, \omega_0)$ belongs to a codimension zero hyperelliptic locus (by Lemma \ref{L:CutReglue}). In particular, $(X_0, \omega_0)$ is graph-like and so, if its twist space contains rel, the rel vector is given by Lemma \ref{L:GraphlikeRel}; however this vector cannot be rel since $D$ contains a saddle connection on its top and bottom boundary (since $C$ and $C'$ share a saddle connection that belongs to the top boundary of $C$). 

Since $\Twist((X, \omega), \cL)$ contains rel supported on every horizontal cylinder, it follows that the twist space of $(X_0, \omega_0)$ also contains rel unless $\overline{D} = (X_0, \omega_0)$ (by Lemma \ref{L:RelInHyp}). (This uses that, since $\cL$ is a codimension one locus, at most one cylinder on $(X_0, \omega_0)$ can be part of an edge relation). We now conclude by noting that, since $\overline{D} = (X_0, \omega_0)$, the top boundary of $C$ coincides with the bottom boundary of $C'$. 
%
\end{proof}

\subsection{The boundary of hyperelliptic loci}

We conclude this section with a useful fact about the boundary of hyperelliptic loci in the Mirzakhani-Wright partial compactification (see \cite{MirWri}).

\begin{prop}\label{P:HypBdry}
Any component of the boundary of a codimension $k$ hyperelliptic locus $\cL$ is contained in a product of hyperelliptic loci of codimension at most $k$.
\end{prop}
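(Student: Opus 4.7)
The plan is to identify a boundary component of $\cL$ with the result of a cylinder degeneration, and then use the $J$-invariance of the degenerating cylinders together with Lemmas \ref{L:CutReglue} and \ref{L:Codim=Zeros}.

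First I would apply \cite{MirWri} to realize a given boundary component $\cM_0$ of $\cL$ as the limit of surfaces in $\cL$ obtained by sending the heights of some equivalence class of parallel cylinders to infinity. Specializing to a horizontally periodic $(X, \omega) \in \cL$ with these cylinders horizontal, the degenerating collection $\bfC$ is $J$-invariant (since $J$ preserves the horizontal direction and permutes horizontal cylinders), and so partitions into $\bfC^F$, consisting of cylinders fixed by $J$, and $\bfC^{NF}$, consisting of pairs exchanged by $J$.

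Next I would degenerate $\bfC^{NF}$ first. Each swap-pair corresponds to a separating cylinder on the quotient $(X, \omega)/J$, and pinching such a cylinder is captured by the cut-and-reglue of Definition \ref{D:CutReglue}. Iterating Lemma \ref{L:CutReglue} over all pairs in $\bfC^{NF}$ decomposes $(X, \omega)$ into pieces $(X_i, \omega_i) \in \cL_i$ for $i = 1, \ldots, n$, where $\cL_i$ is a hyperelliptic locus of codimension $k_i$; by Lemma \ref{L:Codim=Zeros} applied iteratively, $\sum_{i=1}^n (k_i+1) = k+1$, so in particular each $k_i \le k$.

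Then I would degenerate the remaining cylinders in $\bfC^F$ within each piece. These are fixed by the inherited involution $J_i$, which therefore extends across the degeneration; the resulting boundary surface stays in a hyperelliptic locus, of codimension at most $k_i \le k$. Combining the factors from all pieces realizes $\cM_0$ inside a product of hyperelliptic loci of codimension at most $k$.

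The main obstacle will be to rigorously match the Mirzakhani-Wright boundary degeneration of a $J$-exchanged pair of cylinders with the cut-and-reglue operation. One must check that the defining equations of $\cL$ in local period coordinates near $\cM_0$ factor through the defining equations of the $\cL_i$'s in the period coordinates of the cut-and-reglued pieces. This is a local calculation that should follow from tracking $J$-equivariance on the period coordinates, but it requires careful bookkeeping to verify that the $J$-symmetry survives the limiting procedure and distributes correctly across components.
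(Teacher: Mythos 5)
Your argument has a genuine gap at its first step. You propose to realize an arbitrary boundary component $\cM_0$ of $\cL$ as the limit of a cylinder degeneration (sending the moduli of an equivalence class of parallel cylinders to infinity). But in the Mirzakhani--Wright partial compactification, not every boundary component arises this way: a general boundary point is the limit of an arbitrary degenerating sequence in $\cL$, in which the collection of pinched curves need not be the core curves of an equivalence class of cylinders on a horizontally periodic representative, and the degenerating subsurfaces need not be unions of cylinders at all. Cylinder degenerations such as $\Col_{\bfC}(X,\omega)$ produce \emph{some} boundary components with good properties, which is why they are used elsewhere in the paper, but Proposition \ref{P:HypBdry} is a statement about \emph{every} component of the boundary, and your reduction silently discards the rest. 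The paper's proof avoids this by taking an arbitrary convergent sequence $(X_n,\omega_n) \to (X,\omega)$, passing to the Deligne--Mumford and incidence variety compactifications of the genus zero quotients $(\mathbb{P}^1, q_n)$, and exploiting the fact that the stable limit is a tree of marked spheres; the codimension bound then comes from a combinatorial count (Sublemma \ref{SL:SeparatingNode}) showing that each node carrying a zero or marked point on one side forces a genuine zero, marked point, or marked pole on the other side, so that no component of the limit can carry more than $k+1$ such points. Your use of Lemma \ref{L:Codim=Zeros} to convert a point count into a codimension bound is the same final move as the paper's, but the mechanism that produces the count is entirely different.

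A secondary issue: even for the boundary components that do arise from cylinder degenerations, identifying the degeneration of a $J$-exchanged pair with the cut-and-reglue of Definition \ref{D:CutReglue} is not accurate. Cut-and-reglue is a surgery at finite time that produces disconnected surfaces still containing the cylinders $D$, $D'$ (reassembled into one cylinder on each piece), and the codimension count of Lemma \ref{L:CutReglue} refers to those surfaces; pinching the core curves instead removes the cylinders and can merge or delete special points on the quotient, so the two codimension counts need not agree a priori. You flag this as bookkeeping at the end of your proposal, but together with the first issue it means the argument as written does not establish the proposition.
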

\begin{proof}
Let $(X_n, \omega_n)$ be a sequence in $\cL$ converging to a limit $(X, \omega)$ in the Mirzakhani-Wright partial compactification. By Chen-Wright \cite[Lemma 2.1]{ChenWright}, we may pass to a subsequence to assume that $(X_n, \omega_n)$ converges to $(X', \omega')$ in the Deligne-Mumford compactification of the Hodge bundle over $\cL$ and so that $(X, \omega)$ is precisely the union of the components of $(X', \omega')$ on which $\omega'$ is nonzero. Let $\cH_g$ denote the hyperelliptic locus in the moduli space $\cM_g$ of genus $g$ Riemann surfaces. The Deligne-Mumford compactification of $\cH_g$ is isomorphic to $\overline{\cM_{0, 2g+2}}$ and consists precisely of stable curves that are double covers of stable rational curves (see Avritzer-Lange \cite[Corollary 2.5]{AvritzerLange}). In particular, if $(X_n, \omega_n)$ (resp. $(X', \omega')$) is the holonomy double cover of the (resp. stable) quadratic differential $(\mathbb{P}^1, q_n)$ (resp. $(Y, q)$), then $(\mathbb{P}^1, q_n)$ converges in the Deligne-Mumford compactification to $(Y, q)$. 

By lifting to the incidence variety compactification (see \cite{BCGGM-Incidence2}) and passing to a subsequence, we may suppose that $(\mathbb{P}^1, q_n)$ converges to a twisted quadratic differential $(Y, \eta)$ where $\eta$ coincides with $q$ on any irreducible component where $q$ is nonzero (by \cite[Theorem 1.5]{BCGGM-Incidence2}). 




Let $M$ denote the number of elements of $D(q_n)$, the set of zeros, poles, and marked points of $(\mathbb{P}^1, q_n)$. One advantage of passing to the incidence variety compactification is that the sequence $(\mathbb{P}^1 - D(q_n))$ converges in $\overline{\cM_{0, M}}$ to $Y - D$ where $D$ consists of the zeros, poles, and marked points of $\eta$ not occurring at nodes. Therefore, $Y$ is a union of $s$ marked spheres $Y_1, \hdots, Y_s$ joined along $s-1$ nodes, each of which is separating. The divisor of $\eta \restriction_{Y_i}$ is supported on points in $D \cup N$ where $N$ consists of the nodes of $Y$ (by \cite[Definition 3.2 (0)]{BCGGM-Incidence2}). 


\begin{sublem}\label{SL:SeparatingNode}
Fix $i$. Let $p$ be a node of $Y_i$ and let $Y^c_i$ denote the component of $Y-\{p\}$ that does not contain the subsurface corresponding to $Y_i$. 

If the order of vanishing of $\eta \restriction_{Y_i}$ at $p$ is at least $1$, then $Y^c_i$ contains a zero that belongs to $D$. 

If $q$ is nonvanishing on $Y_i$ and $\mathrm{ord}_p(\eta \restriction_{Y_i}) \in \{0, -1\}$, then if $p$ is marked on $(Y_i, q \restriction_{Y_i})$ , $Y^c_i$ contains a zero, marked point, or marked pole that belongs to $D$.
\end{sublem}
\begin{proof}
Suppose that $p$ connects $Y_i$ to $Y_j$. Suppose first that $\mathrm{ord}_p(\eta \restriction_{Y_i}) \geq 1$. By \cite[Definition 3.2 (1)]{BCGGM-Incidence2}, $\mathrm{ord}_p(\eta \restriction_{Y_i}) + \mathrm{ord}_{p}(\eta \restriction_{Y_j}) = -4$. Therefore, $\mathrm{ord}_{p}(\eta \restriction_{Y_j}) \leq -5$ and hence either $Y_j$ contains a zero located at either a point in $D$, in which case we are done, or at a node different from $p$, in which case we iterate the argument until we find the desired zero.

Suppose now that $q$ is nonvanishing on $Y_i$ and $\mathrm{ord}_p(\eta \restriction_{Y_i}) \in \{0, -1\}$. By definition of the Mirzakhani-Wright partial compactification, $p$ is marked only if $Y_i^c$ contains an zero, marked point, or marked pole belonging to $D$. 
\end{proof}

Notice there are $(k+1)$ zeros, marked points, and marked poles in $D$ by assumption (see Lemma \ref{L:Codim=Zeros}). Fix $i$ so that $q$ is nonvanishing on $Y_i$. By Sublemma \ref{SL:SeparatingNode}, for each node of $Y_i$ that is a zero, marked point, or marked pole, there is a corresponding zero, marked point, or marked pole in $D$ that belongs to $Y_j$ for $j \ne i$. Therefore, the number of zeros, marked points, and marked poles on $(Y_i, q \restriction_{Y_i})$ is at most $(k+1)$, which, by Lemma \ref{L:Codim=Zeros}, concludes the proof.
\end{proof}




\section{Joinings}\label{S:Joinings}

The main result of this section is the following special case of the quasi-diagonal conjecture in Apisa-Wright \cite[Conjecture 8.35]{ApisaWrightDiamonds}.

\begin{prop}\label{P:Joinings:Rigid}
Suppose that $\cM$ is a cylinder rigid prime invariant subvariety of $\cQ_1 \times \hdots \times \cQ_n$ where $\cQ_i$ is a genus zero stratum for all $i$ and where $\cM$ projects to a dense subset of each $\cQ_i$. If $\cM$ is at least three-dimensional, then $\cQ_1 = \hdots = \cQ_n$ and, up to rescaling components, $\cM$ is the diagonal embedding. 
\end{prop}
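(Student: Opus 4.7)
The plan is to induct on $n$, reducing to $n = 2$, and then to exploit the cylinder rigid and prime hypotheses together with the Mirzakhani-Wright boundary theory and the simple combinatorics of genus zero strata.

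First, since the projection $\cM \to \cQ_i$ is $\mathrm{GL}(2,\mathbb{R})$-equivariant with dense image, and every closed $\mathrm{GL}(2,\mathbb{R})$-invariant subset of a stratum is an invariant subvariety by Eskin-Mirzakhani-Mohammadi, each projection is in fact surjective. In particular $\dim \cM \geq \dim \cQ_i$ for all $i$. To induct, I would consider the projection $\pi: \cM \to \cQ_1 \times \cdots \times \cQ_{n-1}$; primality forces the closure of its image either to be a proper factor, or to surject onto each of the first $n-1$ factors, in which case the inductive hypothesis gives a diagonal structure on those factors. One then applies the $n=2$ case to the resulting joining of $\overline{\pi(\cM)}$ and $\cQ_n$.

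For the base case $n = 2$, fix a generic horizontally periodic surface $(X_1, \omega_1) \sqcup (X_2, \omega_2) \in \cM$. Cylinder rigidity means that every $\cM$-equivalence class of parallel horizontal cylinders can be individually twisted and stretched without leaving $\cM$. Primality then implies that no equivalence class is supported entirely on one factor: otherwise the associated cylinder deformations would act trivially on the other factor, and combined with the surjections this would force $\cM$ to split as a product of invariant subvarieties in $\cQ_1$ and $\cQ_2$. Hence every equivalence class contains cylinders from both factors, and the ratio of circumferences under the corresponding simultaneous stretching is a fixed constant that must be preserved under all $\mathrm{GL}(2,\mathbb{R})$ deformations.

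To bootstrap this cylinder-by-cylinder matching into a full identification of strata, I would iteratively collapse equivalence classes and descend into the Mirzakhani-Wright partial compactification. Each resulting boundary component remains a joining between boundary strata of $\cQ_1$ and $\cQ_2$, and by induction on dimension inherits a diagonal structure. Reassembling along the collapsed cylinders produces a combinatorial bijection between the zeros and cylinders of the two factors that is compatible with the cone-angle data; since a genus zero stratum is determined by its unordered list of cone angles, this forces $\cQ_1 = \cQ_2$ and identifies $\cM$ with the diagonal up to the fixed scaling ratio. The main obstacle I anticipate is the boundary analysis: cylinder degenerations in $\cQ_1 \times \cQ_2$ can produce disconnected boundary components in which the factors split in a priori incompatible ways, and matching the inductively-produced diagonal structures across these pieces, without circular use of the statement being proved, is the delicate step. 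The hypothesis $\dim \cM \geq 3$ is precisely what rules out low-dimensional degenerate cases (such as a single matched pair of cylinders with unconstrained residual deformations) where the diagonal conclusion would otherwise fail.
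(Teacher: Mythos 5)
Your overall strategy -- match cylinders across components via primality, collapse subequivalence classes, pass to the Mirzakhani--Wright boundary, and induct -- is the same skeleton as the paper's proof, but two essential steps are missing and one reduction does not work as stated. First, the base case is absent. The paper inducts on rank, and the rank one case (where every factor is $\cQ(0^m,-1^4)$) occupies most of the proof: one must show directly, by moving the non-fixed marked points at slope $\pm 1$ and tracking when they collide with fixed points of the involution, that the components are literally isometric. This is exactly where the hypothesis $\dim \cM \geq 3$ does its work, because the conclusion genuinely fails in dimension two (the locus of torus covers in $\cQ(-1^4)\times\cdots\times\cQ(-1^4)$ is cylinder rigid, prime, and surjects onto each factor without being diagonal -- see Remark \ref{R:Counterexample1}). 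Observing that the dimension hypothesis ``rules out degenerate cases'' does not substitute for proving the lowest-dimensional cases where it barely holds.

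Second, your inductive step silently assumes that after collapsing a subequivalence class $\bfC$ the boundary component $\cM_{\bfC}$ still projects \emph{densely} onto each degenerated factor $(\cQ_i)_{C_i}$; without this the induction hypothesis cannot be invoked. This is not automatic: a priori the projection could be a proper free invariant subvariety. The paper closes this gap with a separate lemma that genus zero strata contain no free codimension one invariant subvarieties, whose proof goes through the classification of geminal loci -- a substantive input your sketch omits. Relatedly, the mechanism for upgrading a combinatorial matching of cylinders to an isometry of components is Lemma \ref{L:HypParallelism} (in genus zero, generically parallel saddle connections bound a cylinder), which pins down the lengths of the boundary saddle connections of the collapsed class; ``compatible with the cone-angle data'' is not enough to conclude the surfaces agree. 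Finally, your outer induction on $n$ reducing to $n=2$ does not directly apply: the closure of the projection to the first $n-1$ factors is a joining, not a genus zero stratum, so the $n=2$ case's hypotheses are not satisfied by $\overline{\pi(\cM)}\times\cQ_n$. The paper avoids this by working with all factors at once, using that each subequivalence class contains exactly one cylinder on each component.
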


For a definition of \emph{prime} see Chen-Wright \cite{ChenWright}. For a definition of \emph{cylinder rigid} and the associated notion of a \emph{subequivalence class} see Apisa \cite[Section 4]{Apisa-MHD}.

\begin{rem}\label{R:Counterexample1}
The conclusion of Proposition \ref{P:Joinings:Rigid} does not hold when $\cM$ is two-dimensional. For example, let $\cN$ be a locus of surfaces in $\cH(\emptyset) \times \hdots \times \cH(\emptyset)$ so that, for any surface $(X, \omega) \in \cN$, there is a flat torus $(E, dz)$ so that $(X, \omega)$ is a translation cover of $(E, dz)$. Let $\cN' \subseteq \cH(0^4) \times \hdots \times \cH(0^4)$ consist of surfaces in $\cN$ with four two-torsion points marked. Finally, let $\cM$ be the subset of $\cQ(-1^4) \times \hdots \times \cQ(-1^4)$ whose holonomy double covers are surfaces in $\cN'$. $\cM$ is cylinder rigid, prime, and each projection is a surjection; however, $\cM$ is not necessarily a diagonal embedding. 
\end{rem}

%
%
%
%
%

Before proceeding we will need the following results.

\begin{lem}\label{L:HypParallelism}
In genus zero strata without marked points, two distinct saddle connections are generically parallel if and only if they bound a cylinder.

Moreover, in genus zero strata with marked points, two cylinders are equivalent if and only if their core curves are homotopic after forgetting marked points. 
\end{lem}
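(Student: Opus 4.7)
The backward direction of the first claim is immediate: two saddle connections on the boundary of a common cylinder $C$ are parallel to the core of $C$, and $C$ survives any small deformation in $\cQ$, so the two saddle connections remain parallel throughout $\cQ$.

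For the forward direction I would pass to the holonomy double cover $\pi\colon (X,\omega)\to (Y,q)$, with hyperelliptic involution $J$ satisfying $J^*\omega=-\omega$. The holonomy-lift identifies $\cQ$ with an invariant subvariety $\cL(\cQ)$ of a hyperelliptic component of an Abelian stratum, whose tangent space at $(X,\omega)$ is the anti-invariant relative cohomology $H^1(X,\Sigma;\bC)^-$. Two saddle connections $\sigma_1,\sigma_2\subset Y$ are $\cQ$-generically parallel if and only if the classes of their full preimages $\pi^{-1}(\sigma_i)\in H_1(X,\Sigma;\bZ)$ are proportional when viewed in $H_1(X,\Sigma;\bC)^-$; primitivity of saddle-connection classes forces $[\pi^{-1}(\sigma_1)]=\pm[\pi^{-1}(\sigma_2)]$ in the anti-invariant subspace. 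Since $Y$ is a sphere, $\pi^*H^1(Y;\bC)=0$, so $J$ acts as $-1$ on absolute homology $H_1(X;\bC)$; the invariant part of $H_1(X,\Sigma;\bC)$ is therefore supported on the cokernel of $H_1(X)\to H_1(X,\Sigma)$, i.e.\ on $J$-orbits of points of $\Sigma$. A short case-check in the two possibilities for a lift (a single $J$-invariant saddle connection, or a pair swapped by $J$) shows that the preimage cycles $\pi^{-1}(\sigma_i)$ lie in the anti-invariant subspace with no invariant boundary correction, so the equality in $H_1(X,\Sigma;\bC)^-$ is actually an equality $[\pi^{-1}(\sigma_1)]=\pm[\pi^{-1}(\sigma_2)]$ in $H_1(X,\Sigma;\bZ)$.

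Once the preimages are (anti-)homologous in relative integer homology, the standard geometric fact that two distinct homologous saddle connections on a translation surface cobound a cylinder applies. The resulting cylinder is either $J$-invariant or is mapped by $J$ to another such cylinder with the same boundary; in both cases it descends through $\pi$ to a cylinder on $(Y,q)$ whose boundary contains $\sigma_1$ and $\sigma_2$.

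For the second statement I would use the forgetful map $f\colon \cQ\to\cQ'$ removing marked points, which is $\GL$-equivariant and linear in period coordinates. $\cQ$-equivalent cylinders $C_1,C_2$ push forward to $\cQ'$-equivalent cylinders $f(C_1),f(C_2)$ on the underlying unmarked surface; applying the first claim to pairs of their boundary saddle connections in $\cQ'$ forces $f(C_1)=f(C_2)$ as cylinders on the unmarked surface, which is exactly the assertion that the core curves of $C_1$ and $C_2$ are homotopic after forgetting marked points. The converse is immediate: homotopic core curves modulo marked points define a single cylinder on the unmarked surface, so their holonomies coincide on every deformation, and this parallelism pulls back to $\cQ$. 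The main obstacle I anticipate is the careful bookkeeping in the fixed-lift versus swapped-pair case analysis that promotes equality in the anti-invariant projection to a genuine equality of relative homology classes on $X$; everything else reduces to standard facts about cylinders on translation surfaces and about the forgetful map.
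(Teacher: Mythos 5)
The backward direction and your treatment of the second claim are fine and essentially parallel to the paper's (the paper likewise reduces the marked-point statement to the unmarked one by perturbing so the cylinders in an equivalence class are generic and applying the first claim to their boundary saddle connections). The problem is the last step of your forward direction. The assertion you lean on --- that two distinct homologous saddle connections on a translation surface cobound a cylinder --- is not a standard fact; it is false in general. For example, in $\cH(1,1)$ a pair of homologous saddle connections joining the two zeros can separate the surface into two one-holed tori, and this is exactly the kind of ``configuration'' that the Eskin--Masur--Zorich/Masur--Zorich theory classifies. Moreover on the double cover the preimage of each $\sigma_i$ may consist of two saddle connections, so you would really be dealing with up to four pairwise (anti-)homologous saddle connections, for which the conclusion is even further from automatic. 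The genus-zero hypothesis, which is the only thing that can rescue the claim, is used in your argument only to control the $J$-action on absolute homology, not at the step where it is actually essential.

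The paper closes this gap by working directly on the genus-zero quadratic differential: it cuts along the two generically parallel saddle connections, invokes Masur--Zorich (Theorem 1) to get a unique complementary component with trivial linear holonomy, reglues the boundary of that component to obtain a closed translation surface $(X,\omega)$, and then uses the fact that the original sphere carries no nonseparating simple closed curve to rule out every topological possibility except that $(X,\omega)$ is a torus whose complement of the distinguished saddle connections is a cylinder. If you want to keep your double-cover framework, you would need to replace your final step with an analogous topological analysis (or a citation to the configuration classification for genus-zero strata, as in Boissy or Masur--Zorich) showing that in this situation the only possible configuration is the one bounding a cylinder; as written, the proof does not go through.
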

\begin{proof}
Suppose that $s_1$ and $s_2$ are two distinct generically parallel saddle connections on a genus zero quadratic differential without marked points. Cutting them produces a unique component with trivial linear holonomy (by Masur-Zorich \cite[Theorem 1]{MZ}). Glue together the boundary of this component to form a translation surface $(X, \omega)$ without boundary and let $S$ denote the collection of saddle connections on $(X, \omega)$ corresponding to $s_1$ and $s_2$. Note that $S$ contains at most two saddle connections, all of which have identical length. These saddle connections are either arcs, i.e. the endpoints are distinct, or loops. 

%
%
%



If $S$ consists of two loops, then, since cutting these loops does not disconnect $(X, \omega)$, $(X, \omega)$ has genus at least two. Therefore, our original surface is formed by cutting two nonhomologous nonseparating simple closed curves on $(X, \omega)$ and regluing the boundary components. This implies that our original surface also has genus at least two, which is a contradiction. 

If $(X, \omega) - S$ contains a nonseparating simple closed curve, then so does our original genus zero surface, which is impossible. It follows that $(X, \omega)$ is a torus and $S$ contains either one loop or two arcs whose concatenation is a loop. In either case, $(X, \omega) - S$ is a cylinder, as desired. (Note that this uses that all marked points on $(X, \omega)$ are endpoints of saddle connections in $S$, which follows since the surfaces in the genus zero stratum have no marked points.)

The first claim implies that in a genus zero stratum without marked points each equivalence class of cylinders consists of a single cylinder. To see this, perturb so that all the cylinders in an equivalence class are generic (see Apisa-Wright \cite{ApisaWrightHighRank} for a definition of ``generic equivalence classes") and then apply the result to their boundary saddle connections. This implies the second claim in strata without marked points. The result in strata with marked points is immediate.
\end{proof}

\begin{lem}\label{L:SameConstants}
Suppose that $\cM$ is a cylinder rigid prime invariant subvariety in a product of strata $\cH_1 \times \hdots \times \cH_n$. Let $\cM_i$ denote the closure of the projection of $\cM$ onto the $i$th factor. Then the rank, rel, and dimension of $\cM_i$ do not depend on $i$.
\end{lem}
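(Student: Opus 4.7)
The plan is to leverage cylinder rigidity, together with primality, to force a symmetry between the factors. Everything will take place at a suitably generic horizontally periodic surface $(X,\omega) = ((X_1,\omega_1), \ldots, (X_n,\omega_n)) \in \cM$, whose existence is ensured by density of the projection onto each $\cM_i$ and by Smillie--Weiss.

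First, I would study how subequivalence classes of horizontal cylinders in $\cM$ distribute across the factors. For each subequivalence class $\bfC$ of cylinders in $\cM$, let $\bfC^{(i)} := \bfC \cap (X_i,\omega_i)$ be its restriction to the $i$-th component. The subequivalence class structure is intrinsic to $\cM$ (Apisa \cite{Apisa-MHD}), and the Poincar\'e duals of core curves in $\bfC$ span a line in $\Twist((X,\omega),\cM)$. The critical claim is that every subequivalence class $\bfC$ satisfies $\bfC^{(i)} \neq \emptyset$ for all $i$: otherwise, applying Wright's cylinder deformation theorem \cite{Wcyl} (or the cylinder preservation principle in the cylinder rigid setting) to $\bfC$ would produce a nontrivial $GL(2,\mathbb{R})$-invariant decomposition of $T_{(X,\omega)}\cM$ that respects the product decomposition, yielding a splitting of $\cM$ that contradicts primality.

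Second, once every subequivalence class on $\cM$ restricts nontrivially to every factor, each $\bfC^{(i)}$ is itself an equivalence class on $\cM_i$ (as equivalence classes on $\cM_i$ are at least as fine as restrictions of subequivalence classes on $\cM$, and cylinder rigidity together with the density of the projection identifies the two). This yields, for each pair $i,j$, a bijection between equivalence classes of cylinders on $\cM_i$ and on $\cM_j$ induced by the subequivalence class structure of $\cM$. Combined with the $GL(2,\mathbb{R})$-equivariance of $\pi_i: \cM \to \cM_i$, which respects the absolute/relative decomposition of the tangent space, this implies that the dimension of the twist space, and hence the rank and rel of each $\cM_i$, can be read off from the same combinatorial data (the subequivalence class structure of $\cM$) and must therefore agree across $i$. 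The equality $\dim \cM_i = 2\,\mathrm{rank}(\cM_i) + \mathrm{rel}(\cM_i)$ then delivers the third invariant.

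The main obstacle is the first step: precisely showing that a subequivalence class with empty restriction to some factor forces a product decomposition contradicting primality. This is the crux, because it must be done using only the hypotheses of cylinder rigidity and primality in a product, without any hyperellipticity or genus-zero assumption. The key technical input is that the twist space of $\cM$ splits in a $GL(2,\mathbb{R})$-equivariant way compatible with the factors, so that a subequivalence class supported on a proper subset of factors sits inside an invariant subvariety that is itself a product in a nontrivial way.
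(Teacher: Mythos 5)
Your overall shape matches the paper's: both arguments hinge on the fact that every $\cM$-subequivalence class contains a cylinder on each component, and then count classes. But there are two problems. First, the fact you call ``the crux'' --- that a subequivalence class cannot have empty restriction to some factor --- is not something you need to reprove from scratch; it is exactly Chen--Wright \cite[Theorem 1.3 (2)]{ChenWright}, which the paper simply cites. Your sketch of how to derive it (a subequivalence class supported on a proper subset of factors ``would produce a nontrivial $\mathrm{GL}(2,\mathbb{R})$-invariant decomposition of $T_{(X,\omega)}\cM$ \ldots yielding a splitting of $\cM$'') is not an argument: passing from an invariant splitting of a twist space at one surface to a global product decomposition of $\cM$ is precisely the nontrivial content of the Chen--Wright structure theory, and nothing in your outline supplies it.

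The second, more serious gap is in your concluding step. The combinatorial data you extract --- a bijection between (sub)equivalence classes across factors at a horizontally periodic surface --- yields at best that the maximum number of disjoint $\cM_i$-subequivalence classes is independent of $i$; by Wright's cylinder deformation theorem this quantity equals $\mathrm{rank}(\cM_i)+\mathrm{rel}(\cM_i)$ (see Apisa--Wright \cite[Theorem 7.10 (2)]{ApisaWrightHighRank}). That gives constancy of the \emph{sum}, not of rank and rel separately. Your appeal to ``$\mathrm{GL}(2,\mathbb{R})$-equivariance of $\pi_i$ respecting the absolute/relative decomposition'' does not separate them: the projection of $T\cM$ to the absolute cohomology of the $i$th factor could a priori have different (symplectic) dimension for different $i$, and ruling this out is again a theorem --- Chen--Wright \cite[Theorem 1.3 (3)]{ChenWright}, which says the rank of each factor of a prime invariant subvariety is the same. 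The paper invokes that result first to fix the rank, then uses the disjoint-class count to fix $\mathrm{rank}+\mathrm{rel}$, and only then deduces rel and dimension. Without the rank input your argument cannot close.
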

\begin{proof}
%
`Since $\cM$ is prime, the rank of $\cM_i$ is independent of $i$ (by Chen-Wright \cite[Theorem 1.3 (3)]{ChenWright}). If $(X, \omega) \in \cM$, each $\cM$-subequivalence class contains a cylinder on each component of $(X, \omega)$ (by Chen-Wright \cite[Theorem 1.3 (2)]{ChenWright}). Therefore, the maximum number of disjoint $\cM_i$-subequivalence classes, which is $\mathrm{rank}(\cM_i) + \mathrm{rel}(\cM_i)$ (by Wright \cite{Wcyl}; see Apisa-Wright \cite[Theorem 7.10 (2)]{ApisaWrightHighRank}), is also independent of $i$. It follows that the dimension, rank, and rel of $\cM_i$ is independent of $i$. 
\end{proof}

\begin{proof}[Proof of Proposition \ref{P:Joinings:Rigid}:]
Let $(X, q)$ be a surface in $\cM$ and let $(X_i, q_i)$ be the $i$th component. Since $\cM$ projects densely to $\cQ_i$ for all $i$, it follows that no cylinder on $(X_i, q_i)$ is subequivalent to any other. Since $\cM$ is prime, the absolute periods on $(X_i, q_i)$ determine the absolute periods on every component of $(X, q)$ (by Chen-Wright \cite[Theorem 1.3 (2)]{ChenWright}. These two results imply that each $\cM$-subequivalence class contains exactly one cylinder on each component of $(X, q)$.

\begin{sublem}\label{SL:JointPeriodicity}
One component of $(X, q)$ is horizontally periodic if and only if every component is. 
\end{sublem}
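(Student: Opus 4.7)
The plan is to use the structural fact, established in the paragraph immediately preceding the sublemma, that each $\cM$-subequivalence class contains exactly one cylinder on each component of $(X,q)$. By the symmetric role of the components it suffices to show that if $(X_j, q_j)$ is horizontally periodic for some $j$, then every other $(X_k, q_k)$ is as well.

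First I would observe that cylinders belonging to a common $\cM$-subequivalence class are parallel on $(X,q)$, since the locally constant ratio of their heights on $\cM$ forces their directions to coincide at $(X,q)$. Combined with the structural fact, this implies that each horizontal cylinder on $(X_j, q_j)$ is paired, via the $\cM$-subequivalence relation, with a unique horizontal cylinder on every other component $(X_k, q_k)$. Consequently every component carries the same number $m$ of horizontal cylinders.

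Next, as was already noted in the paragraph above the sublemma, the density of the projection $\cM \to \cQ_i$ implies that no cylinder on any component is $\cM$-subequivalent to another cylinder on the same component. Hence on each $(X_k, q_k)$ every horizontal cylinder constitutes its own $\cM_k$-subequivalence class, so the number of horizontal $\cM_k$-subequivalence classes on $(X_k, q_k)$ is exactly $m$. By the Wright bound recalled in the proof of Lemma \ref{L:SameConstants}, this count is at most $\mathrm{rank}(\cM_k) + \mathrm{rel}(\cM_k)$, with equality characterizing horizontal periodicity of $(X_k, q_k)$. By Lemma \ref{L:SameConstants}, this upper bound is the same integer $R$ for every $k$. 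Since $(X_j, q_j)$ is horizontally periodic, $m = R$, and so each $(X_k, q_k)$ also attains the bound and is horizontally periodic.

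The main point requiring care is the ``equality $\Leftrightarrow$ horizontal periodicity'' half of Wright's bound. In our setting, where each cylinder is already its own subequivalence class, this reduces to the elementary fact that a horizontally periodic surface in a genus zero stratum realizes the maximal possible count of horizontal cylinders in that direction.
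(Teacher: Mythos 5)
Your reduction to counting horizontal cylinders breaks on the forward half of the asserted equivalence. It is true that the number of pairwise disjoint subequivalence classes on a surface in $\cM_k$ is at most $R:=\mathrm{rank}(\cM_k)+\mathrm{rel}(\cM_k)$, and a surface attaining $R$ disjoint horizontal subequivalence classes can indeed be shown to be horizontally periodic. But the converse --- that a horizontally periodic surface in a genus zero stratum has exactly $R$ horizontal cylinders --- is false, and it is not the ``elementary fact'' you appeal to: every stratum contains horizontally periodic surfaces with a single horizontal cylinder, and already in $\cQ(0,-1^4)$ (rank one, rel one, so $R=2$) a surface whose marked point lies on the boundary of the unique cylinder of the underlying pillowcase is horizontally periodic with only one horizontal cylinder. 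So from ``$(X_j,q_j)$ is horizontally periodic'' you cannot conclude $m=R$, and when $m<R$ your criterion is silent about the other components. This is not a degenerate case the sublemma can afford to ignore: it is invoked later (e.g.\ in the proof of Sublemma \ref{SL:Joinings:SameType}) for a direction in which one component is merely known to be periodic, with no control on how many cylinders it decomposes into.

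The argument that works runs through the non-periodic component rather than through a count. Suppose $(X_1,q_1)$ is horizontally periodic and $(X_2,q_2)$ is not. By Smillie--Weiss there is a cylinder $C$ on $(X_2,q_2)$ contained in the complement of the horizontal cylinders $\bfC_2$. Its subequivalence class contains a cylinder $C'$ on $(X_1,q_1)$, and the cylinder proportion theorem (which holds for subequivalence classes) forces $C'$ to be disjoint from every horizontal cylinder of $(X_1,q_1)$, contradicting the fact that those cylinders cover $(X_1,q_1)$. Note that this uses only that the periodic component is covered by its horizontal cylinders, never that it attains the maximal count; your opening observations (the bijection between horizontal cylinders of the components, and each cylinder being its own subequivalence class) are correct and are exactly what makes the transport of $C$ to the first component possible, but the counting step that follows them cannot be salvaged as stated.
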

\begin{proof}
Suppose to a contradiction that, up to re-indexing, $(X_1, q_1)$ is horizontally periodic and $(X_2, q_2)$ is not. Let $\bfC_i$ denote the collection of horizontal cylinders on $(X_i, q_i)$. Notice that every cylinder in $\bfC_1$ is subequivalent to a unique cylinder in $\bfC_2$ and vice versa. It follows from Smillie-Weiss \cite[Corollary 6]{SW2} that there is a cylinder $C$ in $(X_2, q_2) - \bfC_2$. The subequivalence class of $C$ necessarily contains a cylinder $C'$ on $(X_1, q_1)$, which cannot intersect any cylinder in $\bfC_1$ since $C$ does not intersect any cylinder in $\bfC_2$ (by the cylinder proportion theorem, see Nguyen-Wright \cite[Proposition 3.2]{NW}; note that while the result is stated for equivalence classes of cylinders, it holds for subequivalence classes as well). However, $(X_1, q_1)$ is covered by cylinders in $\bfC_1$, so this is a contradiction. 
%
\end{proof}


Since $\cQ_i$ is a genus zero stratum, every cylinder is generically either a simple envelope or a simple cylinder (see Apisa-Wright \cite[Section 4]{ApisaWrightDiamonds} for a review of this terminology). 

\begin{sublem}\label{SL:Joinings:SameType}
Given a subequivalence class $\bfC$, either every cylinder in $\bfC$ is generically a simple cylinder or every cylinder in $\bfC$ is generically a simple envelope.
%
\end{sublem}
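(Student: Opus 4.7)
The plan is to pass to orientation double covers componentwise and exploit the hyperelliptic symmetry together with cylinder rigidity. Recall from Apisa--Wright~\cite{ApisaWrightDiamonds} that on a genus zero quadratic differential $(X_i, q_i)$, a generic cylinder $C_i$ is either a simple cylinder or a simple envelope, and these two types are distinguished on the orientation double cover $(Y_i, \omega_i)$ (equipped with its hyperelliptic involution $J_i$) by the topology of the preimage: a simple cylinder lifts to a pair of cylinders exchanged by $J_i$, whereas a simple envelope lifts to a single cylinder fixed by $J_i$.

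Lifting $\cM$ componentwise, one obtains an invariant subvariety $\wt{\cM}$ inside a product of hyperelliptic components of Abelian strata, and the subequivalence class $\bfC = \{C_1, \ldots, C_n\}$ lifts to a collection $\wt{\bfC}$ of cylinders on $\wt{\cM}$ that is invariant under the componentwise involution $J = (J_1, \ldots, J_n)$, since $J$ commutes with the $\mathrm{GL}(2,\mathbb{R})$-action. I would then apply Lemma~\ref{L:HypTwist} factor-by-factor: the $J_i$-action constrains the twist coefficients of any pair of cylinders exchanged by $J_i$ to be equal, while cylinders fixed by $J_i$ carry independent coefficients. Combined with the cylinder rigidity of $\cM$ (which lifts to rigidity of $\wt{\cM}$), this forces the twist direction in $T\wt{\cM}$ associated to $\wt{\bfC}$ to be determined by a single scalar across all factors.

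Finally, I would use the prime hypothesis, via Chen--Wright~\cite{ChenWright}, to obtain a period identification across components: the absolute periods on one component determine those on every other. An envelope contributes an absolute class supported on the single fixed lifted cylinder, while a simple cylinder contributes a class supported on a $J_i$-orbit of size two with equal coefficients. These two contributions lie in geometrically distinct configurations in absolute cohomology, and a $J$-equivariant period identification across factors cannot equate them unless every $C_i$ in $\bfC$ has the same type.

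The main obstacle will be turning this ``configuration'' comparison into a rigorous contradiction. Concretely, I expect it to come from a dimension count on the appropriate $J$-eigenspaces of absolute cohomology restricted to the span of $\wt{\bfC}$: an envelope contributes a single generator, whereas a simple cylinder contributes a generator supported on a two-element $J$-orbit whose $J$-invariant combination has a different relationship to the relative cohomology of the rest of the surface. Matching these contributions across factors under the prime identification (using the structure developed in Section~\ref{S:PeriodicStructure}) should rule out mixed configurations and complete the proof.
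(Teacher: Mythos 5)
Your proposal stalls exactly where you say it does, and the gap is real: the assertion that an envelope and a simple cylinder ``lie in geometrically distinct configurations in absolute cohomology'' that the prime identification cannot match is never substantiated, and the repair you suggest does not work. A dimension count on $J$-eigenspaces of the twist space cannot distinguish the two cases: a simple envelope lifts to a single $J$-fixed cylinder contributing the one-dimensional span of $\gamma_C^*$, while a simple cylinder lifts to a $J$-exchanged pair contributing the one-dimensional span of $\gamma_C^*+\gamma_{C'}^*$ (by Lemma~\ref{L:HypTwist}), so both contribute exactly one $J$-invariant dimension and cylinder rigidity already forces a single scalar either way. Nor does the Chen--Wright period identification see the difference directly: the core curve of the lift of a simple envelope bounds the preimage of a disk containing exactly the two poles on its boundary, hence is null-homologous on the double cover, so its circumference is a purely relative period and the statement that ``absolute periods on one component determine those on every other'' imposes no constraint on it. To make your route work you would need an additional argument comparing how the two circumference classes sit in relative versus absolute cohomology across factors, and that argument is the whole content of the sublemma; as written, the proof is a plan, not a proof.

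The paper's argument is combinatorial and avoids cohomology entirely. One perturbs so that the cylinders in $\bfC$ are generic, then moves to a nearby surface that is periodic in the direction of $\bfC$ on one component (possible because square-tiled covers are dense in $\cQ_1$), whence periodic on every component by Sublemma~\ref{SL:JointPeriodicity}. On such a surface a simple envelope shares boundary saddle connections with exactly one other subequivalence class, while a simple cylinder shares them with exactly two; and by \cite[Lemma 4.4]{Apisa-MHD}, if one cylinder of $\bfC$ borders a cylinder of a subequivalence class $\bfC'$ then every cylinder of $\bfC$ does. The count of bordering subequivalence classes is therefore constant over $\bfC$, which forces all its cylinders to have the same type. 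If you want to salvage your approach, the adjacency count is the invariant you were looking for; it is visible on the base and requires no passage to the double cover.
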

\begin{proof}
Perturb $(X, q)$ to a nearby surface $(X', q') \in \cM$ on which the cylinders in $\bfC$ are generic (this is possible on each $\cQ_i$ and hence on $\cM$, by Chen-Wright \cite[Theorem 1.3 (1)]{ChenWright}, since it is prime and hence projects onto an open dense subset of each $\cQ_i$). There is an open neighborhood $U \subseteq \cM$ of $(X', q')$ on which the cylinders in $\bfC$ remain generic. Let $(X'', q'')$ be a surface in $U$ on which the first component is periodic in the direction of $\bfC$ (this is possible since there is a dense collection of surfaces whose holonomy double covers are square-tiled in $\cQ_1$). By Sublemma \ref{SL:JointPeriodicity}, $(X'', q'')$ is periodic in the direction of $\bfC$. If a cylinder in $\bfC$ is a simple envelope (resp. cylinder) it shares a boundary saddle connection with exactly one (resp. two) other subequivalence classes on $(X'', q'')$. However, if one cylinder in $\bfC$ shares a boundary saddle connection with a cylinder in a subequivalence class $\bfC'$, every cylinder in $\bfC$ must (by Apisa \cite[Lemma 4.4]{Apisa-MHD}). Therefore, either every cylinder in $\bfC$ is generically a simple cylinder or every cylinder in $\bfC$ is generically a simple envelope. 
\end{proof}

To run an inductive argument we will require the following result. Recall that an invariant subvariety $\cN$ is called \emph{free} if every cylinder $C$ on any surface $(Y, q_Y) \in \cN$ may be \emph{freely dilated}, i.e. dilated while fixing the remainder of the surface and remaining in $\cN$.


\begin{sublem}
Given a generic subequivalence class $\bfC$ that contains a cylinder $C_i$ on $(X_i, q_i)$ for each $i$, $\cM_{\bfC} \subseteq (\cQ_1)_{C_1} \times \hdots \times (\cQ_n)_{C_n}$ is prime, cylinder rigid, and its projection to each factor is a dense subset of a free invariant subvariety.
\end{sublem}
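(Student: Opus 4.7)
The plan is to verify each of the three properties by pulling the corresponding property of $\cM$ through the cylinder degeneration along $\bfC$, using the fact, established just above, that $\bfC$ contains exactly one cylinder $C_i$ on each component $(X_i,q_i)$, and that these cylinders are generically all simple cylinders or all simple envelopes (Sublemma \ref{SL:Joinings:SameType}).

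First I would handle cylinder rigidity. Since $\bfC$ is a subequivalence class of the cylinder rigid invariant subvariety $\cM$, the degeneration $\cM_{\bfC}$ is itself an invariant subvariety appearing as a component of the boundary of $\cM$ in the Mirzakhani-Wright partial compactification, and the cylinder rigidity of boundary components inherited from a cylinder rigid parent is exactly the content of the relevant result in Apisa \cite{Apisa-MHD}. The $\cM_{\bfC}$-subequivalence classes of cylinders on a boundary surface are precisely those of $\cM$ restricted to cylinders that persist under the degeneration.

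Second I would establish primeness. Since $\cM$ is prime as a subvariety of $\cQ_1 \times \cdots \times \cQ_n$, the absolute periods on any single factor determine those on every other factor (Chen-Wright \cite[Theorem 1.3 (2)]{ChenWright}). Because $\bfC$ meets each component in a single cylinder $C_i$ and the cylinders $C_1,\ldots,C_n$ form a single subequivalence class, the degeneration along $\bfC$ simultaneously degenerates exactly one cylinder on each factor. The absolute periods on the degenerated factor $(\cQ_i)_{C_i}$ are those of $\cQ_i$ minus the period of $C_i$, and the linear relations among absolute periods across factors on $\cM$ pass through the degeneration. This gives primeness of $\cM_{\bfC}$, and hence, by Chen-Wright, the projection onto each factor is dense in an invariant subvariety $(\cM_i)_{C_i}$ of $(\cQ_i)_{C_i}$, and this is moreover the degeneration of $\cM_i$ along $C_i$.

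Third, and this is the step I expect to require the most care, I would show that $(\cM_i)_{C_i}$ is free inside $(\cQ_i)_{C_i}$. Because $\cM$ projects densely onto each $\cQ_i$, each $\cM_i$ is all of $\cQ_i$, so $(\cM_i)_{C_i} = (\cQ_i)_{C_i}$. In a genus zero stratum the degeneration of a single generic simple cylinder or a single generic simple envelope is again a genus zero stratum (possibly a product of two), and every cylinder in such a stratum can be freely dilated because the ambient stratum has no nontrivial period relations forcing coupled dilation. The main obstacle is to verify that after the degeneration the resulting subvariety really is the full ambient (product of) genus zero strata in each factor, rather than a proper free invariant subvariety; this is where the Chen-Wright density statement for projections of prime subvarieties, combined with the fact that in genus zero the cylinder structure is governed by Lemma \ref{L:HypParallelism}, lets me conclude that each projection coincides with the ambient stratum and is therefore free.
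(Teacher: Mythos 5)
Your first two parts are fine in outline: the paper disposes of primeness and cylinder rigidity of $\cM_{\bfC}$ by direct citation (Apisa-Wright \cite[Lemma 9.1]{ApisaWrightHighRank} and Apisa \cite[Proposition 4.12]{Apisa-MHD}), and your sketches for them are heuristics for those results. The genuine gap is in the third part, which is the only substantive content of the sublemma. You argue freeness by asserting that, since $\cM$ projects densely onto $\cQ_i$, the projection of $\cM_{\bfC}$ to the $i$th factor is all of $(\cQ_i)_{C_i}$, and then freeness is automatic because a full stratum is free. But $(\cM_{\bfC})_i$ is a priori only \emph{some} invariant subvariety contained in $(\cQ_i)_{C_i}$: taking the boundary along $\bfC$ does not obviously commute with projection, and density of $\cM$ in $\cQ_i$ does not give density of $\cM_{\bfC}$ in $(\cQ_i)_{C_i}$. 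That density is exactly the statement of Corollary \ref{C:DenseProjection}, which is proved \emph{after} this sublemma and \emph{uses} it: the corollary's proof rules out a proper projection by showing there are no free codimension-one invariant subvarieties in genus zero strata, and that argument needs the freeness you are trying to establish. So your route is circular. You flag the obstacle yourself at the end, but ``Chen-Wright density for projections of prime subvarieties'' only yields density in some invariant subvariety, not in the full stratum, so it does not close the loop. Note also that the sublemma deliberately claims only ``dense in a \emph{free} invariant subvariety,'' not ``dense in $(\cQ_i)_{C_i}$'' --- a sign that the full-stratum statement is not available at this stage.

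The paper's actual argument avoids this entirely. Given an arbitrary cylinder $D$ on $\Col_{C_1}(X_1,q_1)$, one lifts $D$ to cylinders $D^m$ on a sequence $(Y^m,q^m)\in\cM$ converging to $\Col_{\bfC}(X,q)$ (Mirzakhani-Wright \cite[Lemma 2.15]{MirWri}); cylinder rigidity of $\cM$ bounds the circumferences and moduli of the whole subequivalence class $\bfD^m$ of $D^m$; pushing back to the boundary, the collapse maps carry $\bfD^m$ to a collection $\bfD$ with $\Twist(\bfD^m,\cM)\cong\Twist(\bfD,\cM_{\bfC})$ (Apisa \cite[Theorem 2.9]{Apisa-MHD}), so $\bfD$ is an $\cM_{\bfC}$-subequivalence class. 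Since each $\cM$-subequivalence class meets each component in exactly one cylinder (established at the start of the proof of Proposition \ref{P:Joinings:Rigid}), $\bfD$ meets the first component only in $D$, so the standard dilation of $\bfD$ restricts to a free dilation of $D$ in $(\cM_{\bfC})_1$. You would need to supply an argument of this kind --- one that works without knowing what $(\cM_{\bfC})_i$ is --- to repair the proof.
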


Recall that $\Col_\bfC(X, \omega)$ denotes the result of collapsing the cylinders in $\bfC$ and $\cM_{\bfC}$ is the component of the boundary of $\cM$ containing it (see Apisa-Wright \cite[Section 2]{ApisaWrightDiamonds} for precise definitions and properties).

\begin{proof}
Since $\cM_{\bfC}$ is prime (by Apisa-Wright\cite[Lemma 9.1]{ApisaWrightHighRank}) and cylinder rigid (by Apisa \cite[Proposition 4.12]{Apisa-MHD}), it remains to show that its projection to each factor is free. Let $(\cM_{\bfC})_i$ be the projection to the $i$th factor.

Let $D$ be a cylinder on $\Col_{C_1}(X_1, q_1)$. We will show that $D$ may be freely dilated on $(\cM_{\bfC})_1$. Let $(Y^m, q^m)$ be a sequence of surfaces in $\cM$ converging to $\Col_{\bfC}(X, q)$. (To avoid confusion, we will index the sequence with superscripts and index components of surfaces with subscripts). Let $f^m: (Y^m, q^m) \ra \Col_{\bfC}(X, q)$ be the collapse map (see Mirzakhani-Wright \cite[Proposition 2.4]{MirWri} for a definition). By Mirzakhani-Wright \cite[Lemma 2.15]{MirWri}, there are cylinders $D^m$ on $(Y^m, q^m)$ so that $f^m(D^m)$ has a core curve homotopic to that of $D$. Moreover, the circumferences and moduli of $D^m$ converge to those of $D$. This implies that the circumferences and moduli of all cylinders in the subequivalence class $\bfD^m$ of $D^m$ have circumferences and moduli that are bounded by definition of cylinder rigid (see Apisa \cite[Definition 4.1]{Apisa-MHD}). By Mirzakhani-Wright \cite[Lemma 2.15]{MirWri}, there is a collection of cylinders $\bfD$ on $\Col_{\bfC}(X, q)$ so that $f^m$ takes cylinders in $\bfD^m$ to $\bfD$ and induces an isomorphism from $\Twist(\bfD^m, \cM)$ to $\Twist(\bfD, \cM_{\bfC})$ (by Apisa \cite[Theorem 2.9]{Apisa-MHD}). In particular, $\bfD$ is a subequivalence class, which implies that $D$ may be freely dilated on $(\cM_{\bfC})_1$ as desired.
%
%
%
\end{proof}

\begin{cor}\label{C:DenseProjection}
Given a generic subequivalence class $\bfC$ that contains a cylinder $C_i$ on $(X_i, q_i)$ for each $i$, $\cM_{\bfC} \subseteq (\cQ_1)_{C_1} \times \hdots \times (\cQ_n)_{C_n}$ is prime, cylinder rigid, and its projection to the $i$th factor is dense in $(\cQ_i)_{C_i}$. 
\end{cor}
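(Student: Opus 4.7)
The key remaining step is to upgrade the density statement of the previous sublemma: we already know that $\cM_\bfC$ is prime, cylinder rigid, and projects densely to a free invariant subvariety $\cN_i \subseteq (\cQ_i)_{C_i}$. What remains is to show $\cN_i = (\cQ_i)_{C_i}$.

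The plan is to exploit the fact that cylinder collapse commutes with the coordinate projections $\pi_i$. Since the subequivalence class $\bfC$ contains a unique cylinder $C_i(Y)$ on the $i$th component of each $(Y, q) \in \cM$, and since the collapse operation is local to the cylinder being collapsed, we have the identity
\[ \pi_i\bigl(\Col_\bfC(Y,q)\bigr) \;=\; \Col_{C_i(Y)}\bigl(\pi_i(Y,q)\bigr) \]
for every $(Y, q) \in \cM$ generic enough that each cylinder of $\bfC$ is simple (a simple cylinder or simple envelope, as dictated by Sublemma \ref{SL:Joinings:SameType}); this is an open dense condition on $\cM$ by the genericity of $\bfC$ as a subequivalence class already established earlier in the proof of Proposition \ref{P:Joinings:Rigid}.

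Since $\cM$ is prime, its projection $\pi_i(\cM)$ has closure $\cQ_i$, and so as $(Y, q)$ varies over a dense subset of $\cM$, the surface $\pi_i(Y, q)$ varies over a dense subset of $\cQ_i$ on which the distinguished cylinder $C_i(Y)$ is available for collapse. By the displayed identity, the collapses $\Col_{C_i(Y)}(\pi_i(Y, q))$ lie in $\pi_i(\cM_\bfC)$ and form a dense subset of $(\cQ_i)_{C_i}$, establishing $\overline{\pi_i(\cM_\bfC)} = (\cQ_i)_{C_i}$ as desired.

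The main subtlety to verify is that as $(Y, q)$ varies in $\cM$, the collapsed cylinder $C_i(Y)$ on the projected surface $\pi_i(Y, q)$ genuinely ranges over enough cylinders on $\cQ_i$-surfaces to realize a dense subset of $(\cQ_i)_{C_i}$, rather than merely tracing out a proper subvariety of the boundary. This will follow from the $\mathrm{GL}^+(2, \bR)$-equivariance of $\pi_i$, together with the fact that within a fixed equivalence class of cylinders, collapses sweep out a dense open subset of the associated boundary component of $\cQ_i$.
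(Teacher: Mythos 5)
Your route is genuinely different from the paper's, and as written it has a gap at exactly the point you flag as ``the main subtlety.'' The paper does not push a dense set through the collapse map. It uses the preceding sublemma, which shows that $(\cM_{\bfC})_i$ is dense in a \emph{free} invariant subvariety of $(\cQ_i)_{C_i}$, together with a dimension count via Apisa--Wright's Lemma 6.5: in the main case this already forces $(\cM_{\bfC})_i$ and $(\cQ_i)_{C_i}$ to have the same dimension, and in the remaining case $(\cM_{\bfC})_i$ would be a free invariant subvariety of codimension one in a genus zero stratum. The bulk of the paper's proof is then a dedicated sublemma ruling this out --- \emph{there are no free codimension one invariant subvarieties in genus zero strata} --- whose proof is a nontrivial classification running through $h$-geminal loci, Riemann--Hurwitz rank bounds, and a case analysis on the genus of the holonomy double cover. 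Your argument makes no use of freeness at all, which is a strong signal that content is being skipped.

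The skipped content is the assertion that the collapses $\Col_{C_i}(\pi_i(Y,q))$ are dense in $(\cQ_i)_{C_i}$. Unwinding it, you need: (i) that $\pi_i$ restricted to $\cM$ is an open map near $(X,q)$, which does follow from the density hypothesis via Chen--Wright since $T\cQ_i$ is the projection of $T\cM$; and (ii) that the single-cylinder degeneration $\Col_{C_i}$, on the locus of $\cQ_i$ where $C_i$ persists with fixed combinatorics, is an open map onto $(\cQ_i)_{C_i}$ --- equivalently, that every small deformation of $\Col_{C_i}(X_i,q_i)$ inside $(\cQ_i)_{C_i}$ arises by collapsing a nearby surface of $\cQ_i$. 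Fact (ii) is where the corollary's content lives; it is not a formal consequence of equivariance, but requires identifying the tangent space of the boundary component with the image of $H^1(X_i,\Sigma_i)$ under restriction and knowing that $\{C_i\}$ is a generic subequivalence class of $\cQ_i$ in its own right (which, with marked points present, needs Lemma \ref{L:HypParallelism}). Even granting (i) and (ii), the conclusion should be drawn from openness plus $\mathrm{GL}(2,\mathbb{R})$-invariance and connectedness of $(\cQ_i)_{C_i}$, not from global density: the cylinder ``$C_i(Y)$'' only makes sense on the open locus where $\bfC$ persists and collapses into the fixed boundary component $\cM_{\bfC}$, so you only control an open subset of $(\cQ_i)_{C_i}$. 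Finally, the paper's own proof explicitly isolates a case in which the naive dimension count fails and must be excluded by the classification sublemma; your argument does not see this case at all, which is the clearest indication that the deferred ``fact'' cannot simply be cited.
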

\begin{proof}
If $(\cQ_i)_{C_i}$ belongs to the boundary of $\cQ_i$, then $(\cM_{\bfC})_i$ and $(\cQ_i)_{C_i}$ have the same dimension (by Apisa-Wright \cite[Lemma 6.5]{ApisaWrightHighRank}) and there is nothing to show. Suppose in order to deduce a contradiction that this is not the case. The following sublemma will produce the desired contradiction.

\begin{sublem}
There are no free codimension one invariant subvarieties in genus zero strata.  
\end{sublem}
\begin{proof}
Suppose in order to derive a contradiction that $\cN$ is a free codimension one invariant subvariety in a genus zero stratum $\cQ$. Let $\wt{\cN}$ and $\wt{\cQ}$ denote the loci of holonomy double covers of surfaces in $\cN$ and $\cQ$ respectively. Let $g$ be the genus of the surfaces in these loci. Since $\cN$ is codimension one, 
\[ \mathrm{rank}(\cN) \geq \mathrm{rank}(\cQ)-1  = g-1. \]
Since $\cQ$ and $\cN$ are free and consist of genus zero surfaces, $\wt{\cN}$ and $\wt{\cQ}$ are $h$-geminal (see Apisa-Wright \cite{ApisaWrightGeminal} for a definition). In particular, $\wt{\cN}$ is a full locus of covers of a hyperelliptic locus in a stratum of Abelian differentials (noting that the only Abelian or quadratic doubles that are $h$-geminal are quadratic doubles of genus zero strata, this follows from Apisa-Wright \cite[Proposition 8.1]{ApisaWrightGeminal}). Since $\wt{\cN}$ is contained in a locus of covers its rank is strictly less than $\frac{g}{2} + \frac{1}{2}$ (this is by a Riemann-Hurwitz computation, see, for example, Apisa-Wright \cite[Lemma 2.1]{ApisaWrightHighRank}). In summary, 
\[ g-1 \leq \mathrm{rank}(\cN) \leq \frac{g}{2} + \frac{1}{2}, \]
which implies that $g \leq 3$. 

Assume without loss of generality that every fixed point of the hyperelliptic involution on surfaces in $\wt{\cQ}$ is marked. Since additionally the collection of marked points on surfaces in $\wt{\cN}$ is invariant by the hyperelliptic involution, two cylinders are subequivalent if and only if they are exchanged by the hyperelliptic involution. Therefore, the collection, $\For(\wt{\cN})$, of surfaces in $\wt{\cN}$ formed by forgetting the marked points, is also $h$-geminal. We will use this to ignore marked points when $g \geq 2$.

Suppose first that $g = 3$. Then $\For(\wt{\cN})$ has rank two and is contained in a locus of covers of surfaces in $\cH(2)$ or $\cH(1,1)$. The covers are necessarily unbranched double covers. This implies that $\cN$ is a locus of double covers of surfaces in a genus zero stratum $\cQ_0$, where $\cQ_0$ is $\cQ(1, -1^5)$ or $\cQ(2, -1^6)$, with branching occurring over $2$ odd order zeros or poles. By finding a simple envelope (see Apisa-Wright \cite[Section 4]{ApisaWrightDiamonds} for a definition) on a surface in $\cQ_0$ with a boundary saddle connection joining two poles that do not belong to the branch locus, its preimage is two $\cN$-generically parallel isometric cylinders, which contradicts the assumption that $\cN$ is free.



Suppose next that $g = 2$. In this case, every cylinder is fixed by the hyperelliptic involution. Since subequivalent cylinders are exchanged by the involution, $\For(\wt{\cN})$ is free. By Mirzakhani-Wright \cite{MirWri2}, the only such locus in components of strata of Abelian differentials is the entire component, a contradiction.

Suppose finally that $g = 1$. By Apisa-Wright \cite[Proposition 6.5]{ApisaWrightGeminal}, $\wt{\cN}$ is a quadratic double of $\cQ(-1^4, 0^k)$, for some nonnegative integer $k$. Hence $\wt{\cN} = \wt{\cQ}$, which is a contradiction.  
%
%
%
\end{proof}
\end{proof}

We will proceed by induction on rank beginning with the rank one case as the base case.

\begin{sublem}\label{SL:Joining:BaseCase}
Proposition \ref{P:Joinings:Rigid} holds when $\cM$ has rank one.
\end{sublem}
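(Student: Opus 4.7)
The plan is to first pin down the structure of each factor and then to synchronize the factors using primeness and cylinder rigidity.

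The first step is to determine each $\cQ_i$. By Lemma \ref{L:SameConstants} the rank of $\cM_i$ equals the rank of $\cM$, which is one, and density of the projection gives $\cM_i = \cQ_i$. So each $\cQ_i$ has rank one. A Riemann--Hurwitz calculation on the holonomy double cover, which being rank one and arising from a genus zero stratum must be the genus-one stratum $\cH(0^{m_i})$ for some $m_i$, forces $\cQ_i = \cQ(0^{k_i}, -1^4)$: the four simple poles correspond to the four branch points of the double cover, and all other singularities are marked points of order zero. Thus each component in a surface of $\cM$ is, on its holonomy double cover, a flat torus with marked points.

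The second step is to synchronize the absolute periods. Since $\cM$ and each $\cQ_i$ have rank one, the pullback of the absolute cohomology from each factor to $\cM$ must span the same complex one-dimensional $GL(2,\bR)$-equivariant subspace. Equivalently, on any surface in $\cM$ the lifted tori $(\wt{X}_i, \wt{\omega}_i)$ are all homothetic as flat surfaces, related by component-wise scalars $\lambda_i \in \bC^*$. I would then use primeness of $\cM$ in the sense of Chen--Wright to rule out nontrivial variation among the $\lambda_i$: a partition of the components by equality of $\lambda_i$ (after absorbing the scalars) would yield a splitting of $\cM$ as a product, contradicting primeness. In particular, all $k_i$ coincide and $\cQ_1 = \cdots = \cQ_n$.

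The third step is to synchronize the rel periods, that is the positions of the marked points (including the pole preimages) on each torus. Here I would work on a generic horizontally periodic surface $(X, q) \in \cM$. By Sublemma \ref{SL:JointPeriodicity} all components are horizontally periodic, and as established in the argument preceding this sublemma each $\cM$-subequivalence class contains exactly one cylinder per component. Cylinder rigidity identifies the twist-space data of subequivalent cylinders across components, which in turn equates the twist coordinates (and hence the infinitesimal rel deformations) on each factor. Combined with primeness, these identifications should force the relative positions of the marked points on each component to coincide, exhibiting $\cM$ as the diagonal embedding in $\cQ_1 \times \cdots \times \cQ_n$.

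The main obstacle I expect is the third step. Since $\dim \cM \geq 3$ and the rank is one, the rel dimension is at least two, and a priori the rel periods on different components can vary independently. The hard part is to leverage the cylinder rigid subequivalence structure, together with primeness, in order to force all rel variations to happen diagonally. The rank one nature of the problem means one cannot rely on the kinds of cylinder deformation arguments available in the higher-rank case, so the combinatorics of the subequivalence classes on jointly periodic surfaces will have to carry the full weight of the argument.
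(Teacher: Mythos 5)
Your step 1 matches the paper: Lemma \ref{L:SameConstants} forces all factors to be the same rank one genus zero stratum, namely $\cQ(0^m,-1^4)$ for a single $m$. After that, however, there is a genuine gap. Your step 2 is shaky on two counts: rank one means $p(T\cM)$ has complex dimension two (twice the rank), not one, so the "same one-dimensional equivariant subspace" argument does not stand as written; and primeness cannot rule out nontrivial scalars $\lambda_i$, since the diagonal embedding with components rescaled independently is still prime --- which is exactly why the proposition's conclusion is only "up to rescaling components." More seriously, step 3 --- synchronizing the positions of the marked points --- is the entire content of the statement in the rank one case, and you do not prove it: you correctly flag it as the main obstacle and then assert that cylinder rigidity "combined with primeness \dots should force" the conclusion. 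That is not an argument, and the rel directions really can a priori move marked points on different components independently; something concrete must kill this.

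For comparison, the paper's mechanism is quite different from what you sketch. It passes to the holonomy double covers (tori with $2m+4$ marked points) and inducts on $m$. The key tool is Corollary \ref{C:DenseProjection}: collapsing a subequivalence class of cylinders is a move that happens coherently on \emph{all} components at once and lands back in the same class of loci, which reduces the number of marked points by two on every component and lets the induction hypothesis apply (absolute periods are preserved up to a standard shear). The base case $m=1$ is handled by an explicit moving-points argument: rescale so the exchanged pairs $Q_i$ move at slope $\pm 1$ on all components, slide $Q_1$ along a saddle connection between fixed points of $J_1$, and use the fact that collisions of $Q_i$ with $P_i$ happen simultaneously on all components to match both absolute and relative periods. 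The inductive step then rules out misaligned marked points by a counting contradiction: if two marked points on one component fail to lie on the common vertical line they occupy on another, a small shear produces six vertical cylinders on one component but only four on another, contradicting the fact that every subequivalence class contains exactly one cylinder per component. You would need to supply arguments of this concreteness to close your step 3.
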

\begin{proof}
The only rank one genus zero stratum of dimension $2+m$ is $\cQ(0^m, -1^4)$. By Lemma \ref{L:SameConstants}, there is some constant $m$ so $\cQ_1 = \hdots = \cQ_n = \cQ(0^m, -1^4)$. For simplicity, we pass to the locus of holonomy double covers $\wt{\cM} \subseteq \cH(0^{2m}, 0^4) \times \hdots \times \cH(0^{2m}, 0^4)$, where the four distinguished points are fixed points of the involution. Fix $(Y, \eta) \in \wt{\cM}$ and let $(Y_i, \eta_i)$ denote the $i$th component. 

Since almost every surface in $(\wt{\cM})_i$ contains a direction that is covered by $2m+2$ cylinders, suppose that $(Y_1, \eta_1)$ is such a surface and that the horizontal is such a direction. By Sublemma \ref{SL:JointPeriodicity}, each component of $(Y, \eta)$ is covered by $2m+2$ cylinders. Label the horizontal cylinders by the equivalence class to which they belong. The cylinders that border the fixed points are distinguished by only bordering one other equivalence class. (Recall that two cylinders are said to \emph{border one another} if they share a boundary saddle connection). This observation and Apisa \cite[Lemma 4.4]{Apisa-MHD}, imply that the subequivalence classes of horizontal cylinders appear in the same cyclic order on each component of $(Y, \eta)$. 

We will now proceed by induction on $m$ to show that, up to scaling, each component of $(Y, \eta)$ is the same translation surface. Let $P_i$ denote the four preimages of poles on $(Y_i, \eta_i)$. These are fixed points of the holonomy involution $J_i$. 

We begin with the base case of $m=1$. In particular, there are pairs of points $Q_i$ exchanged by $J_i$ on $(Y_i, \eta_i)$. By Corollary \ref{C:DenseProjection}, if moving the points in $Q_i$ causes them to merge with points in $P_i$ for one fixed $i$, then this occurs for every $i$. Rescale all components of $(Y, \eta)$ so that the points in $Q_i$ and those in $Q_j$ move at slope $\pm 1$ (see Apisa-Wright \cite[Definition 2.3]{ApisaWright} for a definition of slope). We will now argue as in Apisa-Wright \cite[Proof of Theorem 2.8]{ApisaWright} that the absolute periods of $(Y_i, \eta_i)$ are independent of $i$. Let $\alpha_1$ be any saddle connection on $(Y_1, \eta_1)$ whose endpoints belong to $P_1$ and move $Q_1$ so that it lies on $\alpha_1$. The saddle connection $\alpha_1$ has two endpoints belonging to $P_1$. Move $Q_1$ to one of these endpoints. At this point, all of the $Q_i$ are a subset of $P_i$. Now move the points in $Q_1$ along $\alpha_1$ to the opposite endpoint. When the points in $Q_1$ reach the opposite endpoint, every point in $Q_i$ must coincide with a point in $P_i$ again. Since all points in $Q_i$ and $Q_j$ move at slope $\pm 1$ with respect to each other, it follows that each $(Y_i, \eta_i)$ contains a saddle connection that has the same period as $\alpha_1$. This argument shows that the absolute periods of $(Y_i, \eta_i)$ are independent of $i$. Finally, since the points in $Q_i$ and $Q_j$ move at slope $\pm 1$ and have the property that $Q_i \subseteq P_i$ if and only if $Q_j \subseteq P_j$, it follows that if the period of an arc from a point in $Q_i$ to a point in $P_i$ is $v$, then there is an arc from a point in $Q_j$ to a point in $P_j$ of period $v$. Therefore, the components of $(Y, \eta)$ are pairwise isomorphic and so $(Y, \eta)$ is a diagonal embedding, as desired. The base case is now established.

Suppose now that $m > 1$. One may collapse a horizontal subequivalence class of cylinders $\bfC$ to reduce the number of marked points by two on one component. By Corollary \ref{C:DenseProjection}, this reduces the number of marked points by two on every component. Since, up to a standard shear in a horizontal equivalence class, $\Col_{\bfC}(Y, \eta)$ has the same absolute periods as $(Y, \eta)$ we see by the induction hypothesis that, up to forgetting marked points and after perhaps rescaling, any two components of $(Y, \eta)$ are isometric to each other. Moreover, by applying the induction hypothesis to $\Col_{\bfC}(Y, \eta)$ (which is possible by Corollary \ref{C:DenseProjection}), any two subequivalent horizontal cylinders are isometric.



By shearing the horizontal subequivalence classes, we may suppose that on $(Y_1, \eta_1)$ there are exactly two vertical cylinders. Suppose moreover, that all marked points, except for two fixed points of the involution, lie on the same vertical line. Showing that each component of $(Y, \eta)$ is the same translation surface now reduces to showing that on each component of $(Y, \eta)$, all marked points, except for two fixed points of the involution, lie on the same vertical line.

Suppose in order to derive a contradiction that, after perhaps re-indexing, there are points $p_1$ and $p_2$ that are not fixed points of the holonomy involution on $(Y_2, \eta_2)$ and that do not lie on the same vertical line. Moving both points $\epsilon$ to the right (by shearing subequivalence classes of horizontal cylinders), for sufficiently small $\epsilon$, creates six vertical cylinders on $(Y_2, \eta_2)$, but the resulting surface only has four vertical cylinders on $(Y_1, \eta_1)$. This contradicts the fact that any surface in $\cM$ has the same number of cylinders in any given direction on each component.
\end{proof}

Suppose now that $\cM$ has rank at least two. After perhaps perturbing $(X, q)$ we may assume that there are two disjoint generic subequivalence classes of cylinders, $\bfC_1$ and $\bfC_2$ on $(X, q)$, that share no boundary saddle connections (by Apisa-Wright \cite[Lemma 3.31]{ApisaWrightDiamonds}). After perhaps moving marked points, we may additionally assume that $\bfC_1$ and $\bfC_2$ are not just subequivalence classes, but equivalence classes (by Lemma \ref{L:HypParallelism}). By Corollary \ref{C:DenseProjection}, $\cM_{\bfC_i}$ satisfies the hypotheses imposed on $\cM$ in Proposition \ref{P:Joinings:Rigid}. The induction hypothesis implies that any two cylinders in $\Col_{\bfC_i}(\bfC_j)$, and hence in $\bfC_i$, have identical moduli for $i \ne j$. Moreover, up to rescaling components of $(X, q)$, $\cM_{\bfC_i}$ is a diagonal embedding by the induction hypothesis.


Rescale each component of $(X, q)$ so that the cylinders in $\bfC_1$ are isometric. By Sublemma \ref{SL:Joinings:SameType}, every cylinder in $\bfC_1$ is a simple envelope or every cylinder is a simple cylinder. It follows that each saddle connection in $\Col_{\bfC_1}(\bfC_1)$ has the same length. Since $\bfC_1$ is an equivalence class containing one cylinder on each component of $(X, q)$, on any component of $\Col_{\bfC_1}(X, q)$ there is no saddle connection outside of $\Col_{\bfC_1}(\bfC_1)$ that is generically parallel to the saddle connections in $\Col_{\bfC_1}(\bfC_1)$ (by Lemma \ref{L:HypParallelism}). Thus, all saddle connections in $\Col_{\bfC_1}(\bfC_1)$ are identical when the components of $\Col_{\bfC_1}(X, q)$ are identified with each other (by Lemma \ref{L:HypParallelism}). Since these saddle connections all have the same length, $\cM_{\bfC_1}$ is a diagonal embedding (without having to rescale components). Since the cylinders in $\bfC_1$ are all isometric simple envelopes or all isometric simple cylinders (by Sublemma \ref{SL:Joinings:SameType}), it follows that each component of $(X, q)$ is the same half-translation surface and hence $\cM$ is a diagonal embedding into $\cQ_1 \times \hdots \times \cQ_n$ where $\cQ_1 = \hdots = \cQ_n$.
\end{proof}

\section{Cylinder rigid rank one subvarieties with rel}\label{S:CylinderRigidRankOne}

Before stating the main result of the section we will make the following two definitions. 

\begin{defn}\label{D:WP}
If $(X, \omega)$ is a translation surface in a hyperelliptic locus then we will let $(X, \omega)^{WP}$ denote $(X, \omega)$ with all of its Weierstrass points marked. This is well-defined if the genus of $X$ is at least two; however, if $X$ has genus one there may be multiple choices of a hyperelliptic involution. In the sequel, we will only work with genus one surfaces that have marked points, which specifies a choice of a hyperelliptic involution (see Apisa-Wright \cite[Lemma 4.5]{ApisaWrightDiamonds}) and allows us to meaningfully discuss $(X, \omega)^{WP}$.

If $\bfC$ is a collection of cylinders on $(X, \omega)$ then we will let $\bfC^{WP}$ denote the corresponding cylinders on $(X,\omega)^{WP}$. Finally, if $\cM$ is an invariant subvariety, then we will let $\cM^{WP} = \{ (X, \omega)^{WP} : (X, \omega) \in \cM\}$, which is also an invariant subvariety. Throughout the sequel we will often use the fact that $(\cM_{\bfC})^{WP} = (\cM^{WP})_{\bfC^{WP}}$.
\end{defn}

\begin{defn}
Given a translation surface $(X, \omega)$ that is not a torus cover, there is a unique map $\pi_{X_{min}}$ (resp. $\pi_{Q_{min}}$) to a translation (resp. half-translation) cover of which all other such covers are factors. This follows from M\"oller \cite[Theorem 2.6]{M2} and its extension in Apisa-Wright \cite[Lemma 3.3]{ApisaWright}. When $(X, \omega)$ is a torus cover, let $\pi_{X_{min}}$ denote the minimal degree map to a torus. We will say that a translation (resp. half-translation) cover with domain $(X, \omega) \in \cM$ is \emph{$\cM$-minimal} if it can be deformed to all nearby surfaces in $\cM$ and for any such surface with dense orbit in $\cM$ it is the minimal translation (resp. half-translation) cover. Let $\cM_{min} := \{ \pi(X, \omega) : (X, \omega) \in \cM \text{ and $\pi$ is the $\cM$-minimal translation cover}\}$.
\end{defn}

For a definition of (half)-translation cover see Apisa-Wright \cite[Definition 3.2]{ApisaWrightDiamonds}.

The main result of this section is the following.

\begin{prop}\label{P:Main:Rk1}
If $\cM$ is a rank one cylinder rigid subvariety with rel $r > 0$ contained in a codimension zero or one hyperelliptic locus, then the following hold.
\begin{enumerate}
    \item\label{I:P:Main:Rk1:Rel=1:IETCase} If $r = 1$ and some surface in $\cM$ contains two generic subequivalent cylinders that are glued together along an IET, then the $\cM^{WP}$-minimal translation cover satisfies Assumption CP, and $(\cM^{WP})_{min}$ is a quadratic double of $\cQ(0, -1^4)$. Moreover, $\cM_{min}$ is a quadratic double of $\cQ(0, -1^4)$ with exactly one preimage of a pole marked.
    \item\label{I:P:Main:Rk1:Rel=1:NoIETCase} If $r =1$ and $\cM$ does not contain a surface with two generic subequivalent cylinders that border one another, then the $\cM$-minimal translation cover satisfies Assumption CP and $\cM_{min}$ is either $\cH(0,0)$ or an Abelian double of it. 
    \item\label{I:P:Main:Rk1:Rel=2} If $r=2$ then no two generic subequivalent cylinders border one another, the $\cM$-minimal translation cover satisfies Assumption CP, and $\cM_{min}$ is a quadratic double of $\cQ(0^2, -1^4)$ with no preimage of a pole marked. 
    
    Moreover, any periodic direction on a surface $(X, \omega) \in \cM$ that is covered by subequivalence classes $\bfC_1, \bfC_2, \bfC_3$ has, up to re-indexing, $\bfC_1$ consisting of a pair of cylinders exchanged by the hyperelliptic involution and $\bfC_2$ and $\bfC_3$ consisting of cylinders that only border those in $\bfC_1$.
\end{enumerate}
\end{prop}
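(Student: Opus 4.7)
The plan is to combine the hyperelliptic periodic-structure analysis of Section \ref{S:PeriodicStructure} with the cylinder rigidity toolkit of Apisa \cite{Apisa-MHD} to enumerate the possible combinatorial types of horizontally periodic surfaces in $\cM$, and then recognize $\cM$ as a known locus of covers. Fix a horizontally periodic $(X,\omega) \in \cM$ with horizontal cylinders $\bfC$. Since $\cM$ has rank one and rel $r$, the twist space $\Twist((X,\omega), \cM)$ has dimension $1+r$, of which an $r$-dimensional subspace is rel. By Lemma \ref{L:RelInHyp}, this rel subspace is spanned by the nonzero $\rho_i$ attached to the vertices of the tree $T$ of graph-like surfaces associated to $(X,\omega)$, so exactly $r$ vertices of $T$ carry a nontrivial rel contribution. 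The number of subequivalence classes of horizontal cylinders equals $\mathrm{rank}(\cM) + \mathrm{rel}(\cM) = 1+r$, by the Wright–Smillie–Weiss counting recalled in the proof of Lemma \ref{L:SameConstants}.

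For part \eqref{I:P:Main:Rk1:Rel=1:IETCase}, there are two subequivalence classes, and by hypothesis one of them contains two generic subequivalent cylinders $B, B'$ sharing a boundary saddle connection. Since $\cM$ has rank one, rel is supported on every horizontal cylinder in the remaining subequivalence class as well, and Lemma \ref{L:NonAdjacentHomologous} forces the top of $B$ to coincide with the bottom of $B'$. Cutting and regluing $\{B,B'\}$ therefore produces a graph-like surface of codimension zero whose twist rel vector from Lemma \ref{L:GraphlikeRel} must align with $\rho_1$. Tracking this, and using that cylinders exchanged by $J$ descend to a single cylinder on $(X,\omega)/J$, shows $(X,\omega)/J$ is a sphere with four poles and one marked point, i.e.\ $\cM_{min}$ is a quadratic double of $\cQ(0, -1^4)$ with one pole preimage marked; forgetting that marked point and adding all Weierstrass points yields $(\cM^{WP})_{min}$. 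Assumption CP is then a direct verification on this explicit two-sheeted cover. For part \eqref{I:P:Main:Rk1:Rel=1:NoIETCase}, the two subequivalence classes share no boundary saddle connection by hypothesis, so each cylinder in one class only borders cylinders in its own class. Applying Lemma \ref{L:NonAdjacentHomologous} inside each class collapses each to a torus worth of periodic data, and one checks directly that $(X,\omega)$ is a (possibly branched) cover of a torus with two marked points, giving $\cM_{min} \in \{\cH(0,0), \text{Abelian double of }\cH(0,0)\}$.

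For part \eqref{I:P:Main:Rk1:Rel=2}, there are three subequivalence classes $\bfC_1, \bfC_2, \bfC_3$. The tree $T$ has two vertices with nonzero $\rho_i$, which by the rel formula in Lemma \ref{L:RelInHyp} forces two distinct sign-alternating patterns on the horizontal cylinder graph. The only way to realize two independent such patterns in codimension at most one, together with the $J$-equivariance constraint of Lemma \ref{L:HypTwist}, is for one class (say $\bfC_1$) to be a single pair of $J$-exchanged cylinders that separates $(X,\omega)$, and for $\bfC_2$ and $\bfC_3$ to live on opposite sides of $\bfC_1$; hence $\bfC_2$ and $\bfC_3$ border only $\bfC_1$. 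Lemma \ref{L:NonAdjacentHomologous} applied on each side forces each side to be a torus with one marked point, so $(X,\omega)/J$ is a sphere with four poles and two marked points, identifying $\cM_{min}$ with the quadratic double of $\cQ(0^2, -1^4)$ (no pole preimage marked, since the two $J$-exchanged cylinders in $\bfC_1$ come from a single regular cylinder downstairs and the remaining marked structure comes from the non-pole marked points). Assumption CP is again checked on the explicit cover.

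The principal obstacle is the combinatorial enumeration in part \eqref{I:P:Main:Rk1:Rel=2}: showing that no exotic configuration of the three subequivalence classes is compatible with both the $1+r$-dimensional twist space structure of Lemma \ref{L:HypTwist} and the codimension-at-most-one hypothesis on the ambient hyperelliptic locus. The key tension is that $\bfC_2$ and $\bfC_3$ could a priori border one another, or a single class could contain both a $J$-fixed cylinder and a $J$-exchanged pair; ruling these out requires chasing the tree $T$ of Definition \ref{D:CodimesionTreeGraph} under cutting-and-regluing and applying Lemma \ref{L:CutReglue} to bound the codimension of the resulting graph-like pieces, then appealing to Lemma \ref{L:NonAdjacentHomologous} to collapse adjacencies. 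Once this combinatorial classification is in hand, identifying $\cM_{min}$ reduces to recognizing a small explicit cover, and Assumption CP follows mechanically.
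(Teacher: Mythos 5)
Your setup (tree of graph-like surfaces, Lemma \ref{L:RelInHyp}, counting $1+r$ subequivalence classes) matches the paper's, and your combinatorial picture for the rel two case (one class a $J$-exchanged separating pair, the other two on opposite sides) is essentially Sublemma \ref{SL:PureSubsurfaces}. But there are two genuine gaps.

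First, the heart of the paper's argument is missing: before one can identify $\cM_{min}$ at all, one must show that $\cM$ is a locus of torus covers, i.e.\ that the absolute periods form a lattice. Cylinder rigidity only gives that subequivalent cylinders have \emph{constant ratios} of heights; the paper's Lemma \ref{L:Main} upgrades this to the statement that all cylinders in a subequivalence class have \emph{identical} heights (or half-height when glued along an IET), and its proof requires the delicate overcollapse arguments of Sublemma \ref{SL:Alternating} and the subsequent height comparison — not just Lemmas \ref{L:RelInHyp} and \ref{L:NonAdjacentHomologous}, which say nothing about heights. Without the equal-heights statement there is no lattice of absolute periods, no quotient torus, and no ``minimal translation cover to recognize.'' Your proposal jumps straight from the combinatorics of the cylinder graph to the conclusion.

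Second, you conflate the hyperelliptic quotient $(X,\omega)/J$ with the codomain of the $\cM$-minimal translation cover. These are different maps: $(X,\omega)/J$ is a genus zero half-translation surface that can lie in a large stratum such as $\cQ(a-2,b-2,-1^n)$, whereas the codomain of $\pi_{X_{min}}$ is (once the torus-cover property is established) the quotient by the absolute period lattice — a torus with marked points — and the assertion that $\cM_{min}$ is a quadratic double of $\cQ(0,-1^4)$ or $\cQ(0^2,-1^4)$ is a statement about \emph{that} torus, not about $(X,\omega)/J$. The paper gets this identification by showing (Corollary \ref{C:R1R1:Starter}, and the analogous step in Lemma \ref{L:Rank1Rel2}) that each subequivalence class is the full preimage of a one- or two-cylinder subequivalence class downstairs, so that $\cM_{min}$ is a geminal locus of tori, and then invoking the Apisa--Wright classification of such loci. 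Your ``recognize a small explicit cover'' step has no substitute for this classification, and as written the object you propose to recognize is the wrong one.
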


Two cylinders are said to be \emph{glued together along an IET} if there is a boundary component of one cylinder that is also a boundary component of the other. Assumption CP is defined in Apisa-Wright \cite{ApisaWrightDiamonds}. Given a collection of cylinders $\bfC$, define $\tau_{\bfC} = \sum_{C \in \bfC} \gamma_C^*$. We will prove the following first.

\begin{lem}\label{L:Main}
Suppose that $\cM$ is a rank one rel one cylinder rigid subvariety contained in a codimension zero or one hyperelliptic locus. Let $(X, \omega) \in \cM$ be a horizontally periodic surface with two horizontal subequivalence classes, $\bfA$ and $\bfB$. If $\bfA \cup \bfB$ contains a pair of cylinders exchanged by the hyperelliptic involution, then call them $\bfD = \{D, D'\}$ and, up to relabelling, suppose that $\bfD \subseteq \bfA$. One of the following holds:
\begin{enumerate}
    \item If every horizontal cylinder on $(X, \omega)$ is fixed by the involution then $\sigma_{\bfA} = \tau_{\bfA}$ and $\sigma_{\bfB} = \tau_{\bfB}$ (up to scaling); $\tau_{\bfA} - \tau_{\bfB}$ is rel; and no cylinder in $\bfA$ (or $\bfB$) borders another such cylinder.
    \item If the cylinders in $\bfD$ share no boundary saddle connections, then the previous conclusion holds.
    \item Suppose that the cylinders in $\bfD$ border each other. Then they are glued together along an IET; no two cylinders in $\bfB$ (resp. $\bfA)$ share boundary saddle connections (resp. except for the two cylinders in $\bfD$); and, up to scaling, 
\[ \sigma_{\bfB} = \tau_\bfB \qquad \sigma_{\bfA} = \frac{\gamma_D^* + \gamma_{D'}^*}{2} + \tau_{\bfA - \bfD}\]
and the difference of these two classes is rel.
\end{enumerate}
\end{lem}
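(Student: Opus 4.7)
The plan is to combine Lemma \ref{L:RelInHyp}'s description of the rel subspace of $\Twist((X,\omega), \cL)$ with the constraint that, since $\cM$ has rank one and rel one, $\Twist((X,\omega), \cM)$ is two-dimensional with basis $\{\sigma_\bfA, \sigma_\bfB\}$ and a one-dimensional rel subspace. Because $\cM \subseteq \cL$, the rel vector of $\cM$ is in particular a rel vector of $\cL$, so its form is constrained by Lemma \ref{L:RelInHyp}; since $\cL$ has codimension at most one, the tree of graph-like surfaces associated to $(X,\omega)$ has at most two vertices.

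In Case 1, $(X,\omega)$ is itself graph-like, so by Lemma \ref{L:GraphlikeRel} the unique rel direction is (up to scaling) the alternating vector $\sum_C (-1)^{d(C_0,C)}\gamma_C^*$. Since this vector must also lie in $\langle \sigma_\bfA, \sigma_\bfB \rangle$, and since the cylinder-rigid subequivalence structure together with Lemma \ref{L:HypTwist} forces $\sigma_\bfA$ to assign equal coefficients to all cylinders of $\bfA$, every cylinder of $\bfA$ (resp.\ $\bfB$) must sit at a single parity of graph distance from $C_0$. This at once forbids two cylinders of $\bfA$ (or of $\bfB$) from bordering one another, and yields $\sigma_\bfA = \tau_\bfA$, $\sigma_\bfB = \tau_\bfB$, with rel vector $\tau_\bfA - \tau_\bfB$. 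Case 2 reduces to Case 1 after cutting and regluing along $\bfD$: the existence of $\bfD$ forces $\cL$ to have codimension one by Lemma \ref{L:CutReglue}'s relation $k_1+k_2+1=k$, and the two resulting components are graph-like in codimension-zero hyperelliptic loci. Lemma \ref{L:RelInHyp} expresses the rel subspace of $\cL$ in terms of $\rho_1, \rho_2$, and matching with $\langle \sigma_\bfA, \sigma_\bfB \rangle$ (which requires integer coefficients on $\bfD$, forced by the $J$-exchange via Lemma \ref{L:HypTwist}) demands both $\rho_i$ nonzero so that the two $\tfrac{1}{2}$-contributions on $\bfD$ add to $1$; the parity argument of Case 1 then delivers the stated conclusion.

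In Case 3 we proceed similarly but must first verify the hypothesis of Lemma \ref{L:NonAdjacentHomologous}: that the rel vector of $\cM$ is supported on every horizontal cylinder. Otherwise it would be proportional to $\sigma_\bfA$ or $\sigma_\bfB$ alone, which would then be a nontrivial element of $\Twist((X,\omega),\cM)$ vanishing in absolute cohomology---contradicting the fact that the rank-one direction is generated by $i(\sigma_\bfA + \sigma_\bfB)$-type shears that move absolute periods. Lemma \ref{L:NonAdjacentHomologous} then forces the top of $D$ to coincide with the bottom of $D'$, i.e.\ $D$ and $D'$ are glued along an IET. In the cut-and-reglue picture, the side of $(X,\omega)$ enclosed by $\bfD$ now contains no $\bfC^F$ cylinders, so only one side of the tree contributes a nontrivial $\rho_i$ and the $\tfrac{1}{2}$-coefficient on $\bfD$ is not doubled; this produces $\sigma_\bfA = \tfrac{\gamma_D^* + \gamma_{D'}^*}{2} + \tau_{\bfA - \bfD}$ and $\sigma_\bfB = \tau_\bfB$, with the parity argument applied to $\bfA - \bfD$ and to $\bfB$ separately. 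The main obstacle throughout is this coefficient bookkeeping---matching the $\tfrac{1}{2}$'s in Lemma \ref{L:RelInHyp}'s formula for $\rho_i$ with the integer coefficients natural to the subequivalence classes, particularly in Case 3 where the geometric conclusion of Lemma \ref{L:NonAdjacentHomologous} must be consistently woven into the parity/support analysis.
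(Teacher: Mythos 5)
Your treatment of cases (1) and (3) tracks the paper's: case (1) is immediate from Lemma \ref{L:GraphlikeRel}, and in case (3) the hypothesis of Lemma \ref{L:NonAdjacentHomologous} holds because the rel vector is a combination $a\sigma_{\bfA}+b\sigma_{\bfB}$ with $a,b\neq 0$, after which only one vertex of the tree carries a nonzero $\rho_i$ and Lemma \ref{L:RelInHyp} pins down the coefficients. The gap is in case (2), which is the heart of the lemma. Write the rel vector both as $c_1\rho_1+c_2\rho_2$ and as $\sigma_{\bfA}-c\,\sigma_{\bfB}$. The parity argument does work \emph{within} each component $S_i$ of the cut-and-reglued surface: there it forces the cylinders of $\bfA\cap S_i$ to one parity with common height $|c_i|$ and those of $\bfB\cap S_i$ to the other. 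But it says nothing \emph{across} the two components. Nothing in the linear algebra forces $|c_1|=|c_2|$, nor prevents the cylinders bordering $D$ from above from lying in $\bfB$ while those bordering it from below lie in $\bfA$. For instance, coefficients $c_1=1$, $c_2=3$ on the two sides with $h_D=2$ are perfectly compatible with the vector having the form $\sigma_{\bfA}-c\,\sigma_{\bfB}$, since the heights are free parameters at this stage. Your claim that the two $\tfrac{1}{2}$-contributions on $\bfD$ ``add to $1$'' is precisely the statement $c_1=c_2$ (up to sign) that must be proved, and the appeal to ``integer coefficients forced by Lemma \ref{L:HypTwist}'' is not a real constraint: that lemma only forces the coefficients on $D$ and $D'$ to agree with \emph{each other}, which is automatic from the form of the $\rho_i$.

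The missing input is geometric rather than algebraic, and it is where cylinder rigidity is used dynamically. The paper establishes two facts by overcollapse arguments: first, that $D$ cannot border a cylinder of $\bfA$ on one side and a cylinder of $\bfB$ on the other (one overcollapses the $\bfB$-neighbour into $\{D,D'\}$ and uses the constancy of height ratios of subequivalent cylinders along the deformation path to force a cylinder of height zero); second, that the cylinders bordering $D$ from above and from below have equal rel-coefficients (overcollapsing $\{D,D'\}$ and again tracking how heights change). These two deformation arguments are exactly what link the two sides of the tree and yield $c_1=c_2$, hence equal heights throughout $\bfA$ and throughout $\bfB$. Your proposal contains no substitute for them, so as written case (2) does not close.
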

Recall that $\sigma_{\bfC}$ is the notation for the \emph{standard shear} in a collection of cylinders (the primary motivation for considering this class comes from the cylinder deformation theorem of Wright \cite{Wcyl}; see Apisa-Wright \cite[Section 3]{ApisaWrightDiamonds} for a definition and discussion).
\begin{proof}
Since $\cM$ is rank one rel one and cylinder rigid there is some positive real number $c$ so that $\sigma_\bfA - c\sigma_\bfB = \sum_{C \in \bfA \cup \bfB} a_C \gamma_C^*$ is rel where $a_C$ is a real number for each $C \in \bfA \cup \bfB$. If each horizontal cylinder is fixed by the involution, then the result follows by Lemma \ref{L:GraphlikeRel}. If the cylinders in $\bfD$ share a boundary saddle connection then they are glued together along an IET (by Lemma \ref{L:NonAdjacentHomologous}) and so the claim follows by Lemma \ref{L:RelInHyp}.

Therefore, suppose that the cylinders in $\bfD$ do not share boundary saddle connections. By Lemma \ref{L:RelInHyp}, it follows that no two cylinders in $\bfA-\bfD$ (or $\bfB$) border one another. Let $h_C$ denote the height of $C \in \bfA \cup \bfB$ on $(X, \omega)$ in the sequel. 



\begin{sublem}\label{SL:Alternating}
$D$ does not border both a cylinder in $\bfA$ and $\bfB$.
\end{sublem}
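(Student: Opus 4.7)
The plan is to suppose toward contradiction that $D$ shares a boundary saddle connection with some $A \in \bfA - \bfD$ and with some $B \in \bfB$, and to derive a contradiction from the explicit form of the rel subspace given by Lemma \ref{L:RelInHyp}. First I would cut and reglue along $\bfD = \{D, D'\}$ to decompose $(X,\omega)$ into graph-like pieces $(X_1, \omega_1)$ and $(X_2, \omega_2)$ in codimension-zero hyperelliptic loci, with reglued cylinders $D_1$ on $X_1$ and $D_2$ on $X_2$. Writing the rel vector $v = \sigma_\bfA - c\sigma_\bfB$ as $\alpha_1 \rho_1 + \alpha_2 \rho_2$, and matching the sign of $v$'s coefficient on each cylinder (positive for $C \in \bfA$ since it equals the modulus $m_C$, negative for $C \in \bfB$) against the formula $\alpha_i(-1)^{d_i(D_i, C)}$ contributed by $\rho_i$ to $C \in \bfC_i^F$, one finds that on each $X_i$ the cylinders of $\bfA \cap \bfC_i^F$ and $\bfB \cap \bfC_i^F$ must lie at opposite parities of cylinder-graph distance from $D_i$.

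Without loss of generality, $A$ meets the top boundary of $D$, so $A$ lies on $X_1$ at distance one from $D_1$. If $B$ also meets the top boundary of $D$, then $B$ too lies on $X_1$ at distance one from $D_1$; but this places $A \in \bfA$ and $B \in \bfB$ at the same parity, contradicting the bipartition above. Otherwise $B$ meets the bottom boundary of $D$ and lies on $X_2$ at distance one from $D_2$. Since $X_1$ and $X_2$ are graph-like, both $A$ and $B$ are fixed by the hyperelliptic involution $J$, so $A$ also borders $D_1$ through the bottom of $D'$ on $X_1$, and $B$ borders $D_2$ through the top of $D'$ on $X_2$. The bipartition then forces every neighbor of $D_1$ on $X_1$ to lie in $\bfA$, and every neighbor of $D_2$ on $X_2$ to lie in $\bfB$.

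The main obstacle is now to rule out this second configuration. I would trace the vertical direction on $(X,\omega)$: starting on $\gamma_D$, moving upward crosses into $A$ through the top of $D$, continues up into $D'$ through the bottom of $D'$, then up into $B$ through the top of $D'$, and finally returns down into $D$ through the bottom of $D$, closing up into a vertical closed curve $\gamma$ that crosses each of $\gamma_D, \gamma_A, \gamma_{D'}, \gamma_B$ exactly once. The rel condition then yields the arithmetic identity $m_A + 2 m_D - c m_B = 0$. To upgrade this to a contradiction, I would exploit the rigidity of the codimension-zero hyperelliptic structure of $X_i$: by Lemma \ref{L:Codim=Zeros} there is at most one zero, marked point, or marked pole on $(X_i, \omega_i)/J_i$, which severely restricts the combinatorics of the cylinder graph of $X_i$. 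Iterating the bipartition -- for instance, by identifying a $\bfB$-cylinder at distance two from $D_1$ on $X_1$ (which must border an $\bfA$-cylinder at distance one, i.e. $A$ or a sibling) and applying the rel vanishing to a second independent vertical cycle -- should produce a second linear relation among the moduli that together with $m_A + 2 m_D = c m_B$ forces a contradiction with the positivity $m_C > 0$. Pinning down this second independent relation is the hardest step, since the bipartition alone is consistent with the setup of the second case.
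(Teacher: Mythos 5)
Your reduction of the easy case is sound and matches the paper's: by Lemma \ref{L:RelInHyp} all cylinders meeting the top boundary of $D$ sit at distance one from the reglued cylinder in the same graph-like piece, hence carry coefficients of the same sign in $v=\sigma_{\bfA}-c\sigma_{\bfB}$ and so lie in the same class. The problem is the second configuration, which you correctly identify as the hard case and then do not close. Worse, the route you propose for closing it cannot work. Every relation of the form $v(\gamma)=0$ with $\gamma$ an absolute cycle is automatically satisfied by \emph{any} vector in the rel subspace, and Lemma \ref{L:RelInHyp} tells you that subspace is exactly $\span\{\rho_1,\rho_2\}$; so a ``second independent vertical cycle'' can only reproduce consequences of $v=\alpha_1\rho_1+\alpha_2\rho_2$, never refute it. The only genuinely new constraints are the sign and height conditions, and these are satisfiable in the bad configuration: taking $\alpha_1<0<\alpha_2$ with $\alpha_1+\alpha_2=2h_D>0$, the coefficients are positive of the right size on $\bfD$ and on the distance-one cylinders of $X_1$, and negative on the distance-one cylinders of $X_2$, with all heights positive. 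No amount of combinatorial restriction on the cylinder graphs of the codimension-zero pieces changes this, since the obstruction you need is not linear-algebraic at the level of a single surface.

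The missing input is dynamical, and it is where cylinder rigidity actually gets used. The paper's proof overcollapses the class $\bfB$ (after shearing so that a vertical separatrix from the top of the $\bfB$-cylinder $E$ lands on the top boundary of $D$), producing a path $(X_t,\omega_t)$ on which, for $t>1$, the height of $D$ decreases at rate $h_E$ while the height of the $\bfA$-cylinder $F$ on the other side of $D$ decreases at rate $2h_{F'}$ for some $\bfB$-neighbor $F'$ of $F$ (the factor $2$ because $F$ is fixed by the involution and meets $F'$ on both boundaries). Since $D$ and $F$ are subequivalent, their height ratio is constant along the path; combining $h_D/h_F=h_E/2h_{F'}$ with the coefficient identities from Lemma \ref{L:RelInHyp} forces $a_F=0$, i.e.\ $F$ has height zero, a contradiction. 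If you want to complete your write-up, you need to import this deformation argument (or an equivalent use of the constant-height-ratio property of subequivalent cylinders along cylinder degenerations); the homological relation $2h_D+h_A=ch_B$ you extract is true but vacuous.
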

\begin{proof}
Suppose not. Suppose without loss of generality that $E$ is a horizontal cylinder in $\bfB$ that borders the top boundary of $D$. By Lemma \ref{L:RelInHyp}, all the horizontal cylinders that border the top boundary of $D$ belong to $\bfB$ and have identical height. Therefore, by assumption, there is a cylinder $F$ that borders the bottom boundary of $D$ and that belongs to $\bfA$. We will now use $\sigma_{\bfB}$ to overcollapse $E$ into $\{D, D'\}$.

Recall that an overcollapse is defined as follows. Suppose that $(Y, \eta)$ is a surface with a collection of horizontal cylinders $\bfC$ so that $\sigma_{\bfC} \in T_{(Y, \eta)} \cN$ where $\cN$ is an invariant subvariety containing $(Y, \eta)$. Applying $\sigma_{\bfC}$ we may shear the cylinders in $\bfC$ so they have no vertical saddle connections. Let $(Y_t, \eta_t) := (Y, \eta) - it\sigma_{\bfC}$; this is called an \emph{overcollapse path}. Notice that $(Y, \eta) = (Y_0, \eta_0)$ and that on $(Y_1, \eta_1)$ the heights of the cylinders in $\bfC$ have all reached zero. Since no vertical saddle connection has collapsed, $(Y_t, \eta_t)$ continues to be defined for $t \in [1, 1+\epsilon)$ for some $\epsilon > 0$. The surfaces in $(Y_t, \eta_t)$ for $t > 1$ are called \emph{overcollapsed surfaces}. This construction is illustrated in Figure \ref{F:overcollapse1}. In our case, we will shear the cylinders in $\bfB$ so that there is a vertical line that begins at a singularity on the top boundary of $E$ and travels along a segment entirely contained in $E$ to arrive at a saddle connection on the top boundary of $D$. This ensures that the height of $D$ changes with $t$ for $t > 1$.


\begin{figure}[h]\centering
\includegraphics[width=0.75\linewidth]{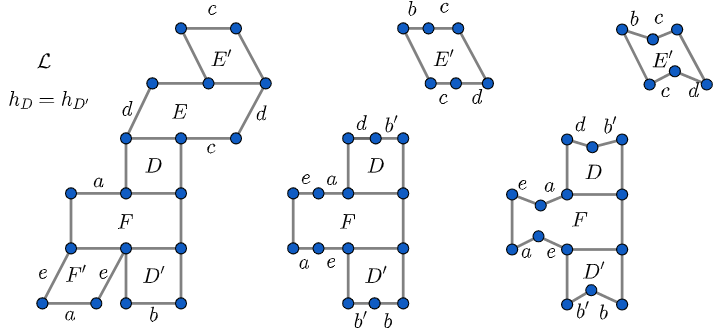}
\caption{The surface on the left corresponds to our original surface $(X, \omega)$ in a hyperelliptic locus $\cL$, which is defined, in this example, by the equation $h_D = h_{D'}$. In the notation of the previous paragraph, the surface in the middle corresponds to $(X_1, \omega_1)$ and the surface on the right to $(X_t, \omega_t)$ for some $t > 1$.}
\label{F:overcollapse1}
\end{figure}

Since $D$ only borders cylinders in $\bfB$ along its top boundary (and since all these cylinders have the same height and border no cylinders in $\bfB$), it follows that on $(X_t, \omega_t)$, for $t > 1$, $D$ has height $h_D - th_E$.

Since $D$ and $F$ are subequivalent they have a constant ratio of heights along the path $(X_t, \omega_t)$. Therefore, there must be a horizontal cylinder $F' \in \bfB$ that shares a boundary saddle connection with $F$ so that on $(X_t, \omega_t)$, for $t > 1$, $F$ has height $h_F - 2th_{F'}$ (the factor of $2$ appears since $F' \notin \{D, D'\}$ and hence shares a saddle connection with $F$ on its top and bottom boundary). 

Since $D$ and $F$ have a constant ratio of heights along the path $(X_t, \omega_t)$, this justifies the third equality in the following:
\[ \frac{a_D}{-a_{F'}} = \frac{a_D}{a_F} = \frac{h_D}{h_F} = \frac{h_E}{2h_{F'}} = \frac{a_E}{2a_{F'}} \] 
(the first equality follows from Lemma \ref{L:RelInHyp} and the second and fourth from the fact that $\sigma_\bfA - c\sigma_{\bfB}=\sum_{C \in \bfA \cup \bfB} a_C \gamma_C^*$). These equalities yield $a_D = -\frac{a_E}{2}$. However, since $\sigma_\bfA - c\sigma_{\bfB}$ is rel, Lemma \ref{L:RelInHyp} implies that 
\[ a_D = \frac{-a_E - a_F}{2}  \]
and so $a_F = 0$, implying that $F$ has height zero, a contradiction. 
%
\end{proof}


By Sublemma \ref{SL:Alternating} and Lemma \ref{L:RelInHyp}, it follows that no horizontal cylinder in $\bfA$ (resp. $\bfB$) shares a boundary saddle connection with any other such cylinder. In particular, all the cylinders that share boundary saddle connections with the cylinders in $\{D, D'\}$ belong to $\bfB$. Let $E$ (resp. $F$) be a horizontal cylinder that borders the top (resp. bottom) boundary of $D$. By Lemma \ref{L:RelInHyp}, it suffices to show that $a_E = a_F$. Suppose to a contradiction that $|a_E| > |a_F|$ (in particular this implies that $h_E > h_F$). Note that $a_E$ and $a_F$ are both negative real numbers.

Use $\sigma_{\bfA}$ to overcollapse $\{D, D'\}$ into $F$. As in Sublemma \ref{SL:Alternating}, let $(X_t, \omega_t)$ denote the overcollapse path. We may suppose without loss of generality (perhaps after reselecting $E$ and $F$) that $D$ contains a vertical line that begins at a singularity on the bottom (resp. top) boundary of $D$ and travels along a vertical segment entirely contained in $D$ to arrive at a saddle connection belonging to the bottom (resp. top) boundary of $E$ (resp. $F$). In particular, this assumption implies that the height of $E$ (resp. $F$) on $(X_t, \omega_t)$, for $t > 1$, is bounded above by $h_E - 2th_D$ (resp. $h_F - 2th_D$).

First observe that if $F' \notin \{D, D'\}$ is a horizontal cylinder bordering $F$, then 
\[ h_{F'} = |a_{F'}| = |a_F| < \frac{|a_E| + |a_F|}{2} =  |a_D| = h_D \] (the first and final equalities by definition; the second and third equalities are by Lemma \ref{L:RelInHyp}; the inequality follows from $|a_E| > |a_F|$). This shows that the height of $F$ on $(X_t, \omega_t)$, for $t > 1$, is $h_F - 2th_D$ (note that we have tacitly used the fact that no two distinct cylinders in $\bfA$ (or $\bfB$) border each other). Since $h_E > h_F$ and since the ratio of $h_E$ to $h_F$ is constant along $(X_t, \omega_t)$ it follows that on $(X_t, \omega_t)$, for $t>1$, the height of $E$ is strictly less than $h_E - 2th_D$. Therefore, there is a horizontal cylinder $E' \notin \{D, D'\}$ that borders $E$ on $(X, \omega)$ so that the height of $E$ on $(X_t, \omega_t)$, for $t > 1$, is $h_E - 2th_{E'}$. This implies the following,
\[\frac{a_D}{-a_E} = \frac{a_D}{a_{E'}} = \frac{h_D}{h_{E'}} = \frac{h_F}{h_E} = \frac{a_F}{a_E}\]
(the first equality is by Lemma \ref{L:RelInHyp}, the second and fourth equalities are by definition of the coefficients $\{a_C\}_{C \in \bfA \cup \bfB}$, and the third equality is by the fact that $h_E - 2th_{E'}$ and $h_F - 2th_D$ have constant ratio for $t > 1$). Therefore, $a_D = -a_F$, which contradicts the fact that $a_D = \frac{-a_E - a_F}{2}$ (by Lemma \ref{L:RelInHyp}) and the assumption that $a_E \ne a_F$.
%
%
%
\end{proof}


\begin{cor}\label{C:R1R1:Starter}
If $\cM$ is a cylinder rigid invariant subvariety of rank one rel one in a codimension zero or one hyperelliptic locus, then $\cM$ is a locus of torus covers.

Moreover, given a surface in $\cM$, if $\pi$ is the quotient by the absolute period lattice and $\bfA$ is a generic subequivalence class of cylinders, one of the following occurs:
\begin{enumerate}
    \item No two cylinders in $\bfA$ are glued together along an IET and there is an $\cM_{min}$-subequivalence class $\bfC_\bfA$ so that $\pi^{-1}(\bfC_{\bfA}) = \bfA$. $\bfC_\bfA$ consists of either one cylinder or two isometric cylinders that do not border each other.
    \item Two cylinders in $\bfA$ are glued together along an IET and there is an $\cM_{min}$-subequivalence class $\bfC_\bfA$ consisting of two isometric cylinders that border one another and so that $\pi^{-1}(\overline{\bfC_\bfA}) = \overline{\bfA}$.
\end{enumerate}
\end{cor}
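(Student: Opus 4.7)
The plan is to first show $\cM$ is a locus of torus covers so that the quotient map $\pi$ makes sense, and then to identify $\pi(\bfA)$ as a single $\cM_{min}$-subequivalence class, treating the non-IET and IET cases of Lemma \ref{L:Main} separately. I would begin by fixing a horizontally periodic $(X,\omega) \in \cM$ with exactly two generic horizontal subequivalence classes $\bfA$ and $\bfB$; such surfaces are dense since $\dim \Twist((X,\omega), \cM) = \mathrm{rank}(\cM) + \mathrm{rel}(\cM) = 2$. Lemma \ref{L:Main} then supplies a rel vector $v$ supported on $\bfA \cup \bfB$ whose coefficients are constant (equal to $\pm 1$ after scaling) on each subequivalence class, with the one exception that in the IET case the involution-exchanged pair $\bfD = \{D, D'\} \subseteq \bfA$ carries coefficient $\pm \tfrac{1}{2}$. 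Since $v$ vanishes in $H^1(X;\bC)$, this translates into a homological relation between the core curves of $\bfA$ and those of $\bfB$.

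For the torus cover conclusion, I would combine this homological relation with the rank one hypothesis, which places the absolute periods of $(X,\omega)$ in a fixed two-dimensional $\bR$-subspace of $\bC$, and with the codimension $\leq 1$ hyperelliptic hypothesis, which rules out higher-genus Veech bases (such a base would require the existence of more than two independent parallel-cylinder classes in some direction, contradicting the rigid two-subequivalence-class decomposition coming from $\mathrm{rank}+\mathrm{rel}=2$ together with Lemma \ref{L:Main}). This would force the absolute period subgroup to be a rank two lattice $\Lambda$, making $\pi\colon X \to \bC/\Lambda$ a well-defined translation cover and $\cM_{min}$ a sublocus of $\cH(0^k)$ for some $k$. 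For the non-IET case, I would use the fact (from cases 1 and 2 of Lemma \ref{L:Main}) that $\sigma_\bfA = \tau_\bfA$ up to scaling, so all cylinders of $\bfA$ share a modulus and their core curves descend under $\pi$ to parallel homotopic simple closed curves on the torus; this means $\pi(\bfA)$ lies in a single torus cylinder $\bfC_\bfA$. The equality $\pi^{-1}(\bfC_\bfA) = \bfA$ would follow because any cylinder on $X$ mapping to $\bfC_\bfA$ is generically parallel to $\bfA$ and, by rank one, subequivalent to it. Whether $\bfC_\bfA$ has one or two cylinders depends on whether $\bfA$ contains an involution-exchanged pair; the non-bordering conclusion in the two-cylinder case is exactly Lemma \ref{L:Main}'s statement that no two cylinders of $\bfA$ share boundary saddle connections in cases 1 and 2.

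In the IET case (Lemma \ref{L:Main} case 3), the half-coefficients on $\bfD$ in the rel vector would be interpreted as saying that $\overline{D \cup D'}$ is the preimage of a single torus cylinder under a degree two cover, ramified along the IET where $D$ and $D'$ are glued to each other. Together with the remaining isolated cylinders of $\bfA \setminus \bfD$ this would show that $\pi(\overline{\bfA})$ is the closure of two isometric bordering cylinders forming an $\cM_{min}$-subequivalence class $\bfC_\bfA$, with $\pi^{-1}(\overline{\bfC_\bfA}) = \overline{\bfA}$ following from the same rank-one subequivalence argument applied at the level of closures. The step I expect to be the main obstacle is establishing that the base of the translation cover really is a torus (and not a higher-genus Veech surface), since rank one plus rel one does not force this in general; the argument has to leverage the very restricted cylinder combinatorics produced by Lemma \ref{L:Main} together with the codimension $\leq 1$ hyperelliptic hypothesis to rule out all other possibilities.
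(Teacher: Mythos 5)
Your strategy matches the paper's in outline --- Lemma \ref{L:Main} is indeed the engine, and the split between the IET and non-IET configurations is the right one --- but the step you yourself flag as ``the main obstacle'' is exactly the load-bearing step, and the substitute argument you offer for it is not valid. You claim a higher-genus base is impossible because it ``would require more than two independent parallel-cylinder classes in some direction.'' That is false: a rank one rel one invariant subvariety in a codimension zero hyperelliptic locus can fail to be a locus of torus covers while still having exactly $\mathrm{rank}+\mathrm{rel}=2$ subequivalence classes in every cylindrically stable periodic direction (the eigenform loci in $\cH(1,1)$ are of this type; what they fail is cylinder rigidity, which is precisely the hypothesis Lemma \ref{L:Main} exploits). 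The argument that actually works is quantitative: Lemma \ref{L:Main} gives that every cylinder in a subequivalence class $\bfA$ has the same height $h_{\bfA}$ (or $h_{\bfA}/2$ for an IET-glued pair), and the vanishing of the rel vector $\tau_{\bfA}-\tau_{\bfB}$ in absolute cohomology forces every absolute homology class to cross as many cylinders of $\bfA$ as of $\bfB$ (counting an IET-glued pair once). Hence the imaginary part of every absolute period lies in $(h_{\bfA}+h_{\bfB})\mathbb{Z}$, and running the same computation in a second transverse periodic direction shows the absolute period group is discrete, i.e.\ a lattice. Your proposal never performs this computation, so the first assertion of the corollary --- on which everything else, including the existence of $\pi$, depends --- is not established.

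A secondary issue: you tie the number of cylinders in $\bfC_{\bfA}$ to whether $\bfA$ itself contains an involution-exchanged pair. The correct dichotomy is governed by the entire periodic direction, via the minimal imaginary part of an absolute period. If some bordering pair in the direction consists of involution-fixed cylinders, or if some pair is glued along an IET, there is a closed curve of period $i(h_{\bfA}+h_{\bfB})$ and each subequivalence class maps to a single cylinder (or to a bordering pair in the IET case). If instead the only non-fixed pair $\{D,D'\}$ neither borders a fixed cylinder appropriately nor is IET-glued, then every closed curve crosses $D$ and $D'$ equally often, the minimal imaginary period doubles to $2(h_{\bfA}+h_{\bfB})$, and then every subequivalence class in that direction --- including one made entirely of involution-fixed cylinders --- maps to two isometric non-bordering cylinders. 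So the case analysis cannot be read off from $\bfA$ alone.
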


Cylinders are assumed to be open. Thus, there is an important but subtle distinction between the two claims about the preimage of $\bfC_\bfA$. For a discussion and illustration see Apisa-Wright \cite[Remark 2.1]{ApisaWrightDiamonds}.

\begin{proof}
Fix $(X, \omega) \in \cM$. For the first claim, it suffices to show that the absolute periods of $(X, \omega)$ form a lattice. Let $\bfA$ be any subequivalence class of horizontal cylinders on $(X, \omega)$. By Lemma \ref{L:Main}, there is a constant $h_\bfA$ so that every cylinder in $\bfA$ has height $h_{\bfA}$ unless it is generically glued to another cylinder in $\bfA$ along an IET, in which case it has height $\frac{h_{\bfA}}{2}$. 

If $(X, \omega)$ is covered by $\bfA$, then the imaginary part of every absolute period is an integer multiple of $h_\bfA$ (note that if $\{A, A'\}$ are two cylinders in $\bfA$ glued together along an IET, then any closed curve that crosses $A$ also crosses $A'$). 

If there is a second subequivalence class $\bfB$ parallel to $\bfA$ then, defining $h_\bfB$ analogously to $h_\bfA$, the imaginary part of each absolute period is an integer multiple of $h_{\bfA} + h_{\bfB}$ (by Lemma \ref{L:Main}). This is because the rel vector identified in Lemma \ref{L:Main} implies that every absolute period crosses the same number of cylinders in $\bfA$ as cylinders in $\bfB$, up to considering two cylinders glued along an IET to be the same cylinder. 
By considering two transverse periodic directions we see that the absolute periods of $(X, \omega)$ form a lattice, as desired. 

Now we will turn to the claim in the second paragraph and assume without loss of generality that $\bfA$ is a generic subequivalence class and that $\bfA$ and $\bfB$ are the two subequivalence classes in the direction of $\bfA$. We have the following two cases.

First, suppose that there are horizontal cylinders, $A \in \bfA$ and $B \in \bfB$, that border one another and so that each is fixed by the hyperelliptic involution. Then, up to applying $\sigma_{\bfA}$ and $\sigma_{\bfB}$, there is a simple closed curve that is contained in $\overline{A \cup B}$, that crosses the core curve of $A$ and $B$ once, and that has period $i(h_\bfA + h_{\bfB})$. This is the smallest nonzero magnitude of the imaginary part of an absolute period of $(X, \omega)$. Therefore, the claim about the image of $\bfA$ under the quotient by the absolute period lattice is clear. An identical argument works in the case that $\bfA \cup \bfB$ contains two cylinders generically glued together along an IET by treating those two cylinders as a single cylinder fixed by the hyperelliptic involution. 


Finally, suppose that $\bfA \cup \bfB$ does not contain two horizontal cylinders glued together along an IET and does not contain two horizontal cylinders that are fixed by the hyperelliptic involution that border one another. Up to re-labelling, $\bfB$ consists of exactly two cylinders that are exchanged by the hyperelliptic involution. Since each absolute period must intersect each cylinder in $\bfB$ the same number of times, it follows that the imaginary part of each absolute period is an integral multiple of $2h_{\bfA} + 2h_{\bfB}$. After shearing, it is straightforward to see that we may take $i(2h_{\bfA} + 2h_{\bfB})$ to be the period of a closed curve. Therefore, if $\pi$ is the minimal degree map to a torus, there is a collection of two isometric cylinders $\bfC_{\bfA}$ on the codomain of $\pi$ that do not border one another and so that $\bfA = \pi^{-1}(\bfC_{\bfA})$. The same holds for $\bfB$.
\end{proof}

\begin{cor}\label{C:Rank1Rel1}
If $\cM$ is a cylinder rigid invariant subvariety of rank one rel one in a codimension zero or one hyperelliptic locus, then $\cM$ satisfies the conclusion of Proposition \ref{P:Main:Rk1}.
%
%
\end{cor}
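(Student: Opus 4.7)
The plan is to bootstrap directly from Corollary \ref{C:R1R1:Starter}, which already bifurcates $\cM$ according to the same dichotomy that appears in the statements of parts (\ref{I:P:Main:Rk1:Rel=1:IETCase}) and (\ref{I:P:Main:Rk1:Rel=1:NoIETCase}) of Proposition \ref{P:Main:Rk1}: namely, whether or not some generic subequivalence class contains a pair of cylinders glued along an IET. Since Corollary \ref{C:R1R1:Starter} already establishes that $\cM$ is a locus of torus covers and describes the preimage $\pi^{-1}(\bfC_\bfA)$ of a generic subequivalence class on the base, the work left is to name $\cM_{min}$ precisely in each case and to verify Assumption CP.

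In the no-IET case, the corollary says that each generic subequivalence class $\bfA$ descends under the period-lattice quotient to either a single cylinder on the torus base or to a pair of isometric non-bordering cylinders. The rel-one hypothesis forces $\cM_{min}$ itself to be rank one rel one, so its base must be a marked torus with exactly the right marked-point count. In the single-cylinder subcase this base is $\cH(0,0)$; in the paired subcase, the cylinder-exchanging symmetry on $\cM$ descends to a translation involution of the torus, and $\cM_{min}$ is an Abelian double of $\cH(0,0)$. In the IET case, Corollary \ref{C:R1R1:Starter} produces on $\cM_{min}$ a pair of isometric cylinders that border each other along their full boundary, and the involution exchanging them is a surface involution of the base. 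Passing to $\cM^{WP}$ and quotienting by this involution yields a genus zero half-translation surface with four poles (the Weierstrass images) together with one marked zero of order zero (accounting for the single rel direction), i.e.~$\cQ(0,-1^4)$. Hence $(\cM^{WP})_{min}$ is the quadratic double of $\cQ(0,-1^4)$. Unmarking all but one Weierstrass point, as dictated by the marked-point count on $\cM$ itself, yields the claimed description of $\cM_{min}$ as a quadratic double of $\cQ(0,-1^4)$ with exactly one preimage of a pole marked.

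Assumption CP in each case will follow from $\cM$-minimality of the cover -- which guarantees that the covering map deforms consistently with $\cM$ -- together with the explicit preimage description of Corollary \ref{C:R1R1:Starter}, which shows that $\cM$-subequivalence classes correspond canonically to subequivalence classes on $\cM_{min}$. The main obstacle I anticipate is the marked-point bookkeeping in the IET case: verifying that the rel-one hypothesis forces exactly one non-Weierstrass marked point downstairs, that exactly four Weierstrass points become poles rather than marked regular points, and that the resulting dimension count matches that of $\cM$ via Lemma \ref{L:Codim=Zeros}. Once this is settled, the statement of Proposition \ref{P:Main:Rk1} in the rel-one case follows case-by-case from the two scenarios treated above.
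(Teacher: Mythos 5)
Your outline follows the same broad shape as the paper's argument (bootstrap from Corollary \ref{C:R1R1:Starter}, split on whether some generic subequivalence class is glued along an IET), but it has two genuine gaps. First, the identification of $\cM_{min}$ is not something you can do ``by hand'' the way you describe. You assert that in the paired no-IET subcase ``the cylinder-exchanging symmetry on $\cM$ descends to a translation involution of the torus,'' and in the IET case that ``the involution exchanging them is a surface involution of the base.'' But Corollary \ref{C:R1R1:Starter} only gives you, direction by direction, that a subequivalence class downstairs consists of one cylinder or two isometric cylinders; it does not produce a global involution of the base, nor does it guarantee that the pairings in different directions are induced by a single symmetry. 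That is exactly the content of the classification of geminal loci of tori (Apisa--Wright, Proposition 6.5 of the geminal paper), which the paper invokes after observing that $\cM_{min}$ is a geminal locus of tori of rank one rel one. Without that classification (or an argument replacing it), your identification of $\cM_{min}$ as $\cH(0,0)$, an Abelian double of $\cH(0,0)$, or a quadratic double of $\cQ(0,-1^4)$ is unsupported. Relatedly, in the no-IET paired subcase you need the bound on the number of zeros of surfaces in $\cM$ to pin down \emph{which} Abelian double occurs; you do not address this.

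Second, your treatment of Assumption CP in the IET case is too quick. The statement of Proposition \ref{P:Main:Rk1}\eqref{I:P:Main:Rk1:Rel=1:IETCase} concerns the $\cM^{WP}$-minimal cover, and the issue is that Corollary \ref{C:R1R1:Starter} allows a generic subequivalence class $\bfC$ consisting of two cylinders exchanged by the hyperelliptic involution but \emph{not} glued along an IET, whose image downstairs is a \emph{single} cylinder. For Assumption CP on $(\cM^{WP})_{min}$ you need $\bfC^{WP}$ to be the full preimage of a two-cylinder subequivalence class, so this case must be excluded. The paper does this in Sublemma \ref{SL:CPforWP} by a contradiction: if it occurred, the corresponding periodic direction on a surface in a quadratic double of $\cQ(0,-1^4)$ with one pole-preimage marked would be covered by two cylinders forming two subequivalence classes, which does not happen. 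Your appeal to ``the explicit preimage description of Corollary \ref{C:R1R1:Starter}'' does not engage with this case, and it is precisely the case where the canonical correspondence you invoke could fail.
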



\begin{proof}

By Corollary \ref{C:R1R1:Starter}, $\cM_{min}$ has rank one rel one and is a geminal locus of tori, which are classified in Apisa-Wright \cite[Proposition 6.5]{ApisaWrightGeminal}.

Suppose first that every periodic direction on a surface in $\cM_{min}$ contains at most two cylinders. Then each $\cM_{min}$-subequivalence class contains exactly one cylinder and so the $\cM$-minimal translation covers satisfies Assumption CP by Corollary \ref{C:R1R1:Starter} and $\cM_{min}$ is $\cH(0,0)$ by Apisa-Wright \cite[Proposition 6.5]{ApisaWrightGeminal}. This is possibility \eqref{I:P:Main:Rk1:Rel=1:NoIETCase} in Proposition \ref{P:Main:Rk1}.

Suppose next that there is a periodic direction with four cylinders on $\cM_{min}$, no two of which border each other. Since the surface in $\cM$ have at most four zeros the same is true of $\cM_{min}$, which implies that $\cM_{min}$ is an Abelian double of $\cH(0,0)$ by Apisa-Wright \cite[Proposition 6.5]{ApisaWrightGeminal}. It follows from Corollary \ref{C:R1R1:Starter} that the $\cM$-minimal translation cover satisfies Assumption CP. This is again possibility \eqref{I:P:Main:Rk1:Rel=1:NoIETCase} in Proposition \ref{P:Main:Rk1}.


The only remaining case to consider is the one in which $\cM_{min}$ contains a surface with a generic subequivalence class consisting of two cylinders glued together along an IET, in which case $\cM_{min}$ is a quadratic double of $\cQ(0, -1^4)$ by Apisa-Wright \cite[Proposition 6.5]{ApisaWrightGeminal}. By Corollary \ref{C:R1R1:Starter}, each cylinder direction on a surface in $\cM_{min}$ has at most three cylinders, so exactly one preimage of a pole is marked. To show that $\cM$ is described by Proposition \ref{P:Main:Rk1} \eqref{I:P:Main:Rk1:Rel=1:IETCase}, it remains to show that the $\cM^{WP}$-minimal translation cover $\pi$ satisfies Assumption CP and that $(\cM^{WP})_{min}$ is a quadratic double of $\cQ(0, -1^4)$.

\begin{sublem}\label{SL:CPforWP}
If $\bfC$ is a generic subequivalence class on $(X, \omega) \in \cM$, then there is a subequivalence class $\bfC'$ on $\pi((X, \omega)^{WP})$ consisting of two isometric cylinders so that $\bfC^{WP} = \pi^{-1}(\bfC')$. In particular, $\pi$ satisfies Assumption CP.
\end{sublem}
\begin{proof}
If $\bfC$ does not contain two cylinders exchanged by the hyperelliptic involution, then the claim is immediate from Corollary \ref{C:R1R1:Starter} (and the assumption that $\cM_{min}$ does not have a surface with a periodic direction comprised of four cylinders). Suppose therefore that $\bfC$ contains two cylinders exchanged by the hyperelliptic involution. If these two cylinders are glued together along an IET, then the result again follows from Corollary \ref{C:R1R1:Starter}. Therefore, suppose that $\bfC$ contains two cylinders that are exchanged by the hyperelliptic involution, but not glued together along an IET. By Corollary \ref{C:R1R1:Starter}, the image of $\bfC$ and the other subequivalence class of cylinders parallel to it under $\pi$ are both single cylinders. However, a surface in a quadratic double of $\cQ(0, -1^4)$ with the preimage of one pole marked does not have any periodic direction covered by two cylinders in two subequivalence classes. This is a contradiction.
\end{proof}

By Sublemma \ref{SL:CPforWP}, $(\cM^{WP})_{min}$ is geminal and, after forgetting marked points, is a quadratic double of $\cQ(0, -1^4)$ with one preimage of a pole marked. It follows that $(\cM^{WP})_{min}$ is also a quadratic double of $\cQ(0, -1^4)$ by Apisa-Wright \cite[Proposition 6.5]{ApisaWrightGeminal}.
\end{proof}


\begin{lem}\label{L:Rank1Rel2}
If $\cM$ is cylinder rigid of rank one rel two in a codimension one hyperelliptic locus, then $\cM$ satisfies the conclusion of Proposition \ref{P:Main:Rk1}.
%
%
\end{lem}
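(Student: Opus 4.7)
The plan is to follow the blueprint of Corollary \ref{C:Rank1Rel1}, suitably extended to accommodate the additional rel dimension. Since $\cM$ is rank one rel two and cylinder rigid, any horizontally periodic surface $(X,\omega) \in \cM$ has at most three horizontal subequivalence classes in its decomposition, and (generically) exactly three, yielding two independent rel vectors in the twist space supported on these classes. The first task is to extract, via Lemma \ref{L:RelInHyp}, normal forms for these rel vectors. Since $\cL$ has codimension one, Lemma \ref{L:Codim=Zeros} restricts $(X,\omega)/J$ to have exactly two zeros, marked points, or marked poles, and Lemma \ref{L:CutReglue} says at most one pair of horizontal cylinders is exchanged by $J$ on $(X,\omega)$.

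Next I would run pairwise Lemma \ref{L:Main}-style arguments on the three subequivalence classes $\bfC_1, \bfC_2, \bfC_3$. The goal is to show that, up to re-indexing, $\bfC_1$ is the unique subequivalence class whose cylinders are exchanged by the hyperelliptic involution (and hence each of $\bfC_2$ and $\bfC_3$ consists of cylinders fixed by $J$), and moreover that no two generic subequivalent cylinders border one another. The overcollapse argument from the proof of Lemma \ref{L:Main} should be carried out twice: once to rule out any $\bfC_i$-cylinder bordering another $\bfC_i$-cylinder for $i \in \{2,3\}$, and once to rule out the IET gluing case within $\bfC_1$. The constraint from having two independent rel vectors (rather than one) forces the heights to match in such a way that any candidate adjacency between two cylinders in the same $\bfC_i$ or an IET gluing in $\bfC_1$ produces a cylinder of height zero, giving a contradiction. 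The fine point is that one must combine the rel identities from Lemma \ref{L:RelInHyp} for all three classes simultaneously, rather than just comparing two at a time as in Lemma \ref{L:Main}.

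Granting that structure, I would then mimic Corollary \ref{C:R1R1:Starter} to argue that on any generic periodic direction with classes $\bfC_1, \bfC_2, \bfC_3$, the heights satisfy $h_{\bfC_1} = h_{\bfC_2} + h_{\bfC_3}$ (the rel relation forced by crossing the two $J$-exchanged cylinders) and every absolute period has imaginary part an integer multiple of $h_{\bfC_1} + h_{\bfC_2} + h_{\bfC_3}$. Considering two transverse periodic directions then shows the absolute periods of $(X,\omega)$ form a lattice, so $\cM$ is a locus of torus covers. The image of each $\bfC_i$ under the absolute period quotient is a single cylinder on the torus, and the structure "$\bfC_2$ and $\bfC_3$ only border $\bfC_1$" descends to the torus, producing a periodic direction with exactly three pairwise bordering cylinders downstairs. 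By Apisa-Wright \cite[Proposition 6.5]{ApisaWrightGeminal}, the only rank one rel two geminal torus locus with this combinatorial pattern is the quadratic double of $\cQ(0^2, -1^4)$; the codimension one restriction prevents any pole preimages from being marked. Assumption CP for the $\cM$-minimal translation cover is then a direct consequence of the bijection between subequivalence classes upstairs and downstairs established in the preceding step.

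The main obstacle will be step two: verifying that the pairwise overcollapse arguments can be coherently combined when three subequivalence classes are present, particularly in ruling out borderline configurations such as an IET gluing inside $\bfC_1$ simultaneous with one of $\bfC_2, \bfC_3$ bordering both cylinders of $\bfC_1$. Controlling these mixed configurations requires tracking how heights evolve along an overcollapse path $(X_t, \omega_t)$ that involves cylinders constrained by two independent rel relations, and exploiting the codimension one hypothesis to rule out the surviving cases via Lemma \ref{L:NonAdjacentHomologous}.
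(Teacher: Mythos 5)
Your overall architecture (structure of a periodic direction $\Rightarrow$ equal heights within each class $\Rightarrow$ absolute periods form a lattice $\Rightarrow$ classify the geminal torus locus via Apisa--Wright and read off Assumption CP) matches the paper's. But the decisive structural step is left as a sketch, and it is exactly the step you flag as "the main obstacle." You propose to re-run the overcollapse arguments of Lemma \ref{L:Main} pairwise on $\bfC_1,\bfC_2,\bfC_3$ and to "coherently combine" them; you do not actually carry this out, and the mixed configurations you worry about are real. The paper avoids overcollapse here entirely: since $\Twist((X,\omega),\cM)$ contains \emph{two} dimensions of rel, Lemma \ref{L:RelInHyp} forces the tree of graph-like surfaces to have two vertices each contributing a basis rel vector $\rho_i$; the codimension-one hypothesis then gives exactly one exchanged pair $\bfD$ (the single edge), and cutting along $\bfD$ produces two sides $S_1,S_2$ each carrying rel. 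The only remaining point is Sublemma \ref{SL:PureSubsurfaces} --- no two fixed cylinders inside one side border each other --- which is a three-line argument: such a pair would have to lie in distinct subequivalence classes (by the two rel vectors), and then the \emph{third} rel vector combination $\sigma_{\bfC_1}-c_3\sigma_{\bfC_3}$ changes the period of the closed curve crossing them once each, a contradiction. This is where the second rel dimension does its work; your plan substitutes a much heavier and unverified mechanism for it.

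Two of your concrete claims are also incorrect. First, the relation $h_{\bfC_1}=h_{\bfC_2}+h_{\bfC_3}$ is not forced by anything (the rel vectors of Lemma \ref{L:RelInHyp} constrain the \emph{ratios} of coefficients within the standard shears, i.e.\ force equal heights within each class, not a linear relation among the three heights), and it is not needed: the correct statement is that the imaginary part of every absolute period is an integer multiple of $h_1+h_2+2h_3$, the $2$ reflecting that any transverse closed curve must cross \emph{both} homologous cylinders of the exchanged pair. Second, the image of the exchanged pair under the quotient by the absolute period lattice is \emph{two} non-adjacent isometric cylinders, not one, so the quotient torus has four horizontal cylinders (two free ones and one subequivalence class of two non-adjacent cylinders), not "three pairwise bordering cylinders." Getting this wrong breaks the match with Apisa--Wright \cite[Proposition 6.5]{ApisaWrightGeminal} --- the configuration you describe does not occur in the quadratic double of $\cQ(0^2,-1^4)$ with no pole preimages marked --- and it is precisely the four-cylinder picture that makes Assumption CP immediate.
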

\begin{proof}
Let $(X, \omega)$ be any surface in $\cM$ with three horizontal subequivalence classes of cylinders $\{\bfC_i\}_{i=1}^3$. By assumption, there are positive real constants $c_2$ and $c_3$ so that $\sigma_{\bfC_1} - c_i \sigma_{\bfC_i}$ is rel for $i \in \{2, 3\}$. Since $\Twist((X, \omega), \cM)$ contains two dimensions of rel it follows from Lemma \ref{L:RelInHyp} that there is a pair of horizontal cylinders $\bfD = \{D, D'\}$ so that cutting and regluing along $\bfD$ produces two surfaces $(X_i, \omega_i)$ in hyperelliptic components admitting rel for $i \in \{1, 2\}$ (by Lemmas \ref{L:CutReglue} and \ref{L:RelInHyp}). Let $S_i$ denote the subsurfaces corresponding to $(X_i, \omega_i)$ on $(X, \omega) - \bfD$ for $i \in \{1, 2\}$. 

\begin{sublem}\label{SL:PureSubsurfaces}
$S_i$ does not contain two distinct horizontal cylinders that are each fixed by the hyperelliptic involution and that share a boundary saddle connection. 
\end{sublem}
\begin{proof}
Suppose not. Since the union of these two cylinders contains a closed curve that crosses each cylinder exactly once, it follows (from the two rel vectors that we identified in the twist space) that these two cylinders belong to distinct subequivalence classes, say $\bfC_1$ and $\bfC_2$. But then the closed curve that is contained in their union has its period increase under the rel vector $\sigma_{\bfC_1} - c_3 \sigma_{\bfC_3}$, which is a contradiction. 
\end{proof}

Sublemma \ref{SL:PureSubsurfaces} implies that, up to reindexing, the cylinders in $\bfC_i$ are precisely the horizontal cylinders contained in $S_i$ and $\bfC_3 = \bfD$. Moreover, cylinders in $\bfC_1 \cup \bfC_2$ only border cylinders in $\bfC_3$ and no cylinder in $\bfC_3$ borders any other cylinder in $\bfC_3$. 

These observations, along with the fact that $\sigma_{\bfC_1} - c_i \sigma_{\bfC_i}$ is rel for $i \in \{2, 3\}$, imply that all cylinders in $\bfC_i$ have identical heights, call it $h_i$, by Lemma \ref{L:RelInHyp}. Therefore, the imaginary part of every absolute period is an integer multiple of $h_1 + h_2 + 2h_3$. 

Any surface with dense orbit in a rank one rel $r$ cylinder rigid invariant subvariety $\cN$ will contain infinitely many periodic directions that consist of $r+1$ subequivalence classes (since there is an open set of cylindrically stable surfaces when $\cN$ has rank one; see Apisa-Wright \cite[Lemma 7.10]{ApisaWrightHighRank}). By considering two such transverse periodic directions, we see that the absolute periods form a lattice for any $(X, \omega) \in \cM$ with dense orbit in $\cM$ and hence $\cM$ is a locus of torus covers. 

The quotient by the absolute period lattice necessarily sends the previously considered surface to a torus with four horizontal cylinders - one of height $h_1$, one of height $h_2$, and two of height $h_3$ - and so the preimages of the cylinder(s) of height $h_i$ consists of the cylinders in $\bfC_i$. This implies that the $\cM$-minimal map satisfies Assumption CP and that $\cM_{min}$ is geminal in a locus of tori. Therefore, periodic directions consisting of generic cylinders on surfaces in $\cM_{min}$ are covered by two free cylinders and one subequivalence class consisting of two isometric non-adjacent cylinders. It follows that $\cM_{min}$ is the quadratic double of $\cQ(0^2, -1^4)$ with no preimages of poles marked (by Apisa-Wright \cite[Proposition 6.5]{ApisaWrightGeminal}).
%
%
%
%
\end{proof}

\begin{proof}[Proof of Proposition \ref{P:Main:Rk1}:]
This is immediate from Corollary \ref{C:Rank1Rel1} and Lemma \ref{L:Rank1Rel2}.
\end{proof}

%
%
%

\section{Proof of the Theorem \ref{T:Main} assuming Theorem \ref{T:Main:Rigid:Simple}}\label{S:ProofTMain}

This section is devoted to the proof of Theorem \ref{T:Main}. We will begin with the following preliminary result (see Figure \ref{F:Theorem1-5} for an illustration).

\begin{lem}\label{L:RelInEC}
Suppose that $\cM$ is an invariant subvariety of rank at least two in a codimension zero or one hyperelliptic locus. Suppose that $(X, \omega) \in \cM$ is horizontally periodic and cylindrically stable. Let $\bfC_1$ be a horizontal equivalence class so that $\Twist(\bfC_1, \cM)$ contains a nonzero purely imaginary rel vector $v$. Let $\bfC_+$ (resp. $\bfC_-$) denote the cylinders in $\bfC_1$ whose heights increase (resp. decrease) along the path $(X, \omega) + tv$ for $t > 0$. Then, up to replacing $v$ with $-v$, the following holds:
\begin{enumerate}
    \item\label{I:C+} $\bfC_+$ consists of two cylinders exchanged by the hyperelliptic involution.
    \item\label{I:C-} Every cylinder in $\bfC_-$ only borders cylinders in $\bfC_+$.
    \item\label{I:Rel} Up to scaling, $v = i(\tau_{\bfC_+} - \tau_{\bfC_-})$.
\end{enumerate}
\end{lem}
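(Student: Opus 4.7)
The plan is to apply Lemma~\ref{L:RelInHyp}, which describes the rel subspace of $\Twist((X,\omega), \cL)$ as spanned by the vectors $\rho_j$ indexed by vertices of the tree $T$ of graph-like surfaces associated to $(X,\omega)$. Since $\cL$ has codimension zero or one, the equation $\sum_j (k_j+1) = k+1 \leq 2$ from Lemma~\ref{L:Codim=Zeros} forces $T$ to have at most two vertices and hence at most one edge. I would write $v = i\sum_j c_j \rho_j$ with $c_j \in \mathbb{R}$, using that $v$ is purely imaginary, and then extract the three structural conclusions from the requirement that the support of $v$ lie in the single equivalence class $\bfC_1$.

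First I would rule out the case that $T$ consists of a single vertex. In this case $(X,\omega)$ is graph-like, so by Lemma~\ref{L:GraphlikeRel} any rel vector in $\Twist((X,\omega), \cL)$ has support on every horizontal cylinder; the requirement that $v$ be supported in $\bfC_1$ would then force $\bfC_1$ to be the set of all horizontal cylinders. The rank-at-least-two hypothesis combined with cylindrical stability provides a transverse periodic direction in $\cM$, and the standard interplay between cylinder deformations and equivalence classes in transverse directions (together with the fact that rank at least two produces multiple subequivalence classes on a cylindrically stable surface) would yield a contradiction. Hence $T$ has exactly one edge, corresponding to a unique pair $\{D, D'\}$ of horizontal cylinders on $(X,\omega)$ exchanged by $J$; this is the only candidate for $\bfC_+$.

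Next I would show that $\{D,D'\} \subseteq \bfC_1$. If not, the coefficient of $\gamma_D^* = \gamma_{D'}^*$ in $v$, which equals $\pm c_{j_1}/2 \pm c_{j_2}/2$, would have to vanish; propagating this constraint along the alternating-sign structure of each $\rho_{j_i}$ from Lemma~\ref{L:GraphlikeRel} (in which adjacent cylinders in the cylinder graph of $(X_{j_i}, \omega_{j_i})$ have opposite signs), together with the requirement that every $J$-fixed cylinder in the support lies in $\bfC_1$, would yield $c_{j_1} = c_{j_2} = 0$, contradicting $v \neq 0$. Therefore $\{D,D'\} \subseteq \bfC_1$, and after possibly replacing $v$ by $-v$ this pair carries positive coefficient, so $\bfC_+ \supseteq \{D,D'\}$. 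In each $\rho_{j_i}$ the pair $\{D,D'\}$ contributes coefficient $\pm 1/2$ while each $J$-fixed cylinder contributes coefficient $\pm 1$; rescaling $v$ to normalize the coefficient of $\gamma_D^*$ to $1$ then gives $v = i(\tau_{\bfC_+} - \tau_{\bfC_-})$ with $\bfC_+ = \{D,D'\}$, yielding conclusions~\ref{I:C+} and~\ref{I:Rel}.

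Conclusion~\ref{I:C-} then follows from the same alternating sign pattern: if two cylinders in $\bfC_-$ shared a boundary saddle connection, they would be at distance one in the cylinder graph of the relevant graph-like piece and hence carry opposite signs in $\rho_{j_i}$, contradicting that both have coefficient $-1$ in $v$. Hence every cylinder in $\bfC_-$ borders only cylinders in $\bfC_+$. The main obstacle is the exclusion of the graph-like case in the second paragraph; this requires combining the rel structure from Section~\ref{S:PeriodicStructure} with a genuine use of the rank-at-least-two hypothesis to force the support of $v$ to be a proper subset of the horizontal cylinders of $(X,\omega)$.
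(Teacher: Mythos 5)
Your setup (reduce to Lemma~\ref{L:RelInHyp}, note that codimension at most one forces the tree $T$ to have at most one edge and hence at most one pair $\{D,D'\}$ exchanged by $J$, and rule out the graph-like case because the support of $v$ would then be every horizontal cylinder, contradicting rank at least two plus cylindrical stability) matches the paper's opening moves. But there are two genuine gaps. First, your argument that $\{D,D'\}\subseteq\bfC_1$ does not work: if the coefficient of $\gamma_D^*$ vanishes, the only linear constraint you get is $c_{j_1}=\pm c_{j_2}$, and the vector $c(\rho_{j_1}-\rho_{j_2})$ (with signs arranged so the $\gamma_D^*$ terms cancel) is a perfectly good nonzero rel vector supported on exactly the $J$-fixed horizontal cylinders; no amount of ``propagating the alternating signs'' forces $c_{j_1}=c_{j_2}=0$. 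What actually rules this case out is that $\gamma_D$ and $\gamma_{D'}$ then participate in a homology relation with the core curves of the cylinders in the support, which forces $D$ and $D'$ to be generically parallel to the cylinders of $\bfC_1$ and hence to lie in $\bfC_1$ after all --- a contradiction with the assumption. That equivalence-class argument is missing from your write-up.

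The more serious gap is conclusion~\eqref{I:C+} itself. Knowing $v=c\rho_{j}$ only tells you that the $J$-fixed cylinders in the support carry coefficients $(-1)^{d}$ alternating with distance in the cylinder graph of the graph-like piece; nothing in the formula prevents a $J$-fixed cylinder at even distance at least two from $\{D,D'\}$ from carrying a positive coefficient and hence lying in $\bfC_+$. You simply assert $\bfC_+=\{D,D'\}$ after normalizing, and your argument for~\eqref{I:C-} (two adjacent cylinders of $\bfC_-$ would have opposite signs) is fine but does not substitute for this. The paper spends most of its proof exactly here: if the component $S$ of the complement of the core curves of $\{D,D'\}$ contained cylinders of both signs, one overcollapses a bordering equivalence class to arrange three cylinders of one equivalence class with rationally independent moduli; by Mirzakhani--Wright this forces two dimensions of rel in that twist space, which by Lemma~\ref{L:RelInHyp} forces every horizontal cylinder into a single equivalence class, again contradicting rank at least two and cylindrical stability. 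Some argument of this kind --- genuinely using that all cylinders in the support belong to one equivalence class on a cylindrically stable surface --- is indispensable, and it is absent from your proposal.
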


The terminology \emph{cylindrically stable} was introduced in Aulicino-Nguyen \cite[Definition 2.4]{AN2} (see Apisa-Wright \cite[Lemma 7.10]{ApisaWrightHighRank} for a summary of relevant properties). In the sequel we will use the following property of cylindrically stable surfaces.


\begin{lem}\label{L:MovingToCS}
If $(X, \omega)$ is a surface in an invariant subvariety $\cM$ with a horizontal equivalence class $\bfC$, then there is path from $(X, \omega)$ to a cylindrically stable surface $(Y, \eta)$ along which $\bfC$ persists and remains horizontal. 
\end{lem}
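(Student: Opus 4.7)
The plan is to first use horocycle flow to reach a horizontally periodic surface on which $\bfC$ is still horizontal, and then to perturb within that locus to achieve cylindrical stability.

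First I would apply the horocycle flow $u_t = \begin{pmatrix} 1 & t \\ 0 & 1 \end{pmatrix}$ to $(X,\omega)$. Because $u_t$ fixes horizontal vectors, each cylinder in $\bfC$ persists as a horizontal cylinder with unchanged circumference and height along the entire orbit $\{u_t(X,\omega)\}_{t\geq 0}$, and hence also in any limit taken in $\cM$ (since neither circumference nor height can degenerate to $0$ or $\infty$). By Smillie-Weiss \cite{SW2}, every horocycle-minimal set $K\subseteq\cM$ consists of horizontally periodic surfaces, so the orbit closure of $(X,\omega)$ contains a horocycle-minimal set $K$ all of whose surfaces are horizontally periodic and contain $\bfC$ among their horizontal cylinders.

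Next, on $K$ the cylindrically stable surfaces form an open and dense subset: locally $K$ is affinely modelled on the twist and real-rel directions of the horizontal cylinders, and cylindrical stability fails only along the (finitely many) proper affine subspaces where two non-equivalent horizontal cylinders would become $\cM$-parallel. This is part of the standard package summarised in Apisa-Wright \cite[Lemma 7.10]{ApisaWrightHighRank}. Pick a cylindrically stable $(Y,\eta)\in K$. Since ``$\bfC$ is horizontal'' is an open condition, there is a neighbourhood $U$ of $(Y,\eta)$ in $\cM$ on which $\bfC$ remains horizontal, and by minimality of the horocycle flow on $K$ the orbit $\{u_t(X,\omega)\}_{t\ge 0}$ enters $U$ at some time $t_0$. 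Concatenating the horocycle arc from $t=0$ to $t=t_0$ with a short straight-line path in $U$ from $u_{t_0}(X,\omega)$ to $(Y,\eta)$ (using local period coordinates within $U$) yields a continuous path in $\cM$ along which $\bfC$ stays horizontal.

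The main technical point is the density of cylindrically stable surfaces inside a horocycle-minimal set, which comes from the affine-subspace description of the failure locus of cylindrical stability; everything else is a routine application of the invariance properties of horocycle flow and the openness of the condition that a given collection of horizontal cylinders persists.
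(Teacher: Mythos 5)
Your first step is fine and matches the paper's: pushing $(X,\omega)$ with the horocycle flow preserves the imaginary parts of all periods, so the cylinders in $\bfC$ persist and stay horizontal, and Smillie--Weiss produces a horizontally periodic surface in the orbit closure carrying $\bfC$ (this is exactly Wright's argument in \cite[Proof of Theorem 1.1]{Wcyl}, which the paper cites). The gap is in your second step. It is not true that cylindrically stable surfaces are open and dense in a horocycle-minimal set $K$; in fact, since $u_t$ preserves the imaginary part of every (absolute and relative) period, every surface in the horocycle orbit closure of a horizontally periodic surface has the same horizontal cylinder decomposition, and cylindrical stability is a property of that decomposition (whether the twist space of the horizontal cylinders equals the cylinder preserving space), not a condition that fails only on proper affine subspaces of moduli. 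So either all of $K$ is cylindrically stable or none of it is. A concrete failure: if $(X,\omega)$ is a one-cylinder horizontally periodic surface in a rank two stratum, its horocycle orbit is a closed circle, which is itself the minimal set $K$, and no surface in $K$ is cylindrically stable. You appear to be conflating cylindrical stability with genericity of cylinders (rational independence of moduli), which is a different notion.

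The missing idea is the iteration in \cite[Proof of Lemma 8.6]{Wcyl}, which is what the paper's proof uses: if the horizontally periodic surface $(X',\omega')$ reached in step one is not cylindrically stable, there is a deformation in the cylinder preserving space but outside the twist space; applying it keeps every horizontal cylinder (in particular those in $\bfC$) horizontal but destroys horizontal periodicity, so that a further application of Smillie--Weiss produces strictly more horizontal cylinders. Since the number of horizontal cylinders is bounded, iterating terminates at a cylindrically stable surface, and concatenating the paths gives the required path along which $\bfC$ persists and remains horizontal. Without this iteration your argument only reaches a horizontally periodic surface, which is strictly weaker than the lemma.
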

\begin{proof}[Sketch of proof:]
By Wright \cite[Proof of Theorem 1.1]{Wcyl}, there is a path in $\cM$ starting at $(X, \omega)$, along which the cylinders in $\bfC$ persist, remain horizontal, and that ends at a horizontally periodic surface $(X', \omega')$. By Wright \cite[Proof of Lemma 8.6]{Wcyl}, if $(X', \omega')$ is not cylindrically stable it is possible to perturb $(X', \omega')$ so that all of its horizontal cylinders remain horizontal, but no longer cover the surface. Iterating this construction produces a path from $(X, \omega)$ to a horizontally periodic cylindrically stable surface $(Y, \eta) \in \cM$ along which the cylinders in $\bfC$ persist. 
\end{proof}

\begin{proof}[Proof of Lemma \ref{L:RelInEC}:]
We begin with the following preliminary result.
\begin{sublem}\label{SL:StructureOfC1-0}
$\bfC_{\pm}$ contains a pair of cylinder exchanged by the hyperelliptic involution. One component of the complement of the core curves of these cylinders has the property that the horizontal cylinders contained in it are exactly the remaining cylinders in $\bfC_{\pm}$.  
\end{sublem}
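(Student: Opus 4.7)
The plan is to apply Lemma \ref{L:RelInHyp} to decompose the rel vector $v$ in terms of the tree of graph-like surfaces attached to $(X,\omega)$, and then track how the signs of the coefficients of $v$ propagate across that tree.

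Viewing $v$ as a rel element of $\Twist((X,\omega), \cL)$, where $\cL$ is the ambient (codimension zero or one) hyperelliptic locus containing $\cM$, Lemma \ref{L:RelInHyp} gives an expression
\[ v = i\sum_{j \in V(T)} c_j \rho_j, \]
where $T$ is the tree of graph-like surfaces associated to $(X,\omega)$. Since the supports of distinct $\rho_j$'s are disjoint subsets of the horizontal cylinders on $(X,\omega)$, and $v$ is supported on the single equivalence class $\bfC_1$, each vertex $j$ with $c_j \neq 0$ has the property that every horizontal cylinder of the graph-like piece $(X_j, \omega_j)$ lies in $\bfC_1$.

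The first step is to show that $T$ must contain at least one edge $e$ whose associated pair $\{D, D'\}$ of cylinders exchanged by the hyperelliptic involution lies in $\bfC_1$. If no such pair existed, each contributing $\rho_j$ would reduce to the purely graph-like expression $\sum_{C \in \bfC_j^F} (-1)^{d_j(C_0, C)} \gamma_C^*$ of Lemma \ref{L:GraphlikeRel}, forcing any two adjacent horizontal cylinders in $\bfC_1$ to have opposite-sign coefficients in $v$---that is, to lie on opposite sides of $\bfC_+, \bfC_-$. In the codimension-zero case there is no rel at all, yielding an immediate contradiction, and in the codimension-one case Lemma \ref{L:NonAdjacentHomologous} combined with the hypothesis that $v$ is supported on a single equivalence class forces such a configuration to actually contain a pair exchanged by $J$, contradicting the assumption. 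Hence some edge $e$ of $T$ corresponds to $\{D, D'\} \subseteq \bfC_1$; by Lemma \ref{L:HypTwist} the coefficients of $\gamma_D^*$ and $\gamma_{D'}^*$ in $v$ agree, so both sit in $\bfC_+$ or both in $\bfC_-$, and after possibly replacing $v$ with $-v$ we take $\{D, D'\} \subseteq \bfC_+$.

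Removing $e$ from $T$ disconnects it into two subtrees, and correspondingly $(X,\omega) \setminus (\gamma_D \cup \gamma_{D'})$ splits into two components $S_1$ and $S_2$; each $\rho_j$ with $c_j \neq 0$ is supported in exactly one $S_\iota$. The main obstacle, which I expect to be the crux of the argument, is to show that after possibly swapping $S_1$ and $S_2$ the horizontal cylinders in one of them coincide exactly with the remaining cylinders in $\bfC_{\pm}$. I would proceed by propagating signs outward from $e$: at the two endpoints of $e$ the requirement that $D, D'$ have positive coefficient fixes the signs of the adjacent $c_j$'s (via the $\pm\tfrac12$ coefficient of $\{D,D'\}$ in $\rho_j$), and the alternating-distance formula in each $\rho_j$ then dictates the sign of every cylinder further from $e$. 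The codimension-at-most-one constraint on $\cL$ together with cylindrical stability should be just enough to rule out configurations in which positive and negative signs mix arbitrarily on a single side, isolating the claimed one-sided structure.
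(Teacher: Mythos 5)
Your overall framework -- decomposing $v$ via Lemma \ref{L:RelInHyp} over the tree of graph-like surfaces -- is the right starting point and matches the paper's, but the proof as written has a false premise and leaves the decisive step unproven. First, the supports of the $\rho_j$ are \emph{not} disjoint: by construction each $\rho_j$ carries the term $\tfrac{1}{2}\sum_{C\in\bfC_j^{NF}}(-1)^{d_j(C_0,C)}\gamma_C^*$, so two vertices joined by an edge both have the edge pair $\{D,D'\}$ in the support of their $\rho_j$'s (this is exactly what makes cancellation at $\{D,D'\}$ possible and is the case your argument must confront). Second, a codimension-zero hyperelliptic locus can certainly have rel (e.g.\ holonomy double covers of $\cQ(2,-1^6)$), so ``in the codimension-zero case there is no rel at all'' is wrong; and Lemma \ref{L:NonAdjacentHomologous} is about boundary identifications of an exchanged pair, not a tool for producing one, so your appeal to it does not establish that $\bfC_1$ contains an exchanged pair.

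The ingredient you are missing, and which drives both halves of the actual argument, is cylindrical stability together with $\mathrm{rank}(\cM)\geq 2$: a cylindrically stable surface in a rank $\geq 2$ subvariety has at least two horizontal equivalence classes, so it is impossible for \emph{every} horizontal cylinder to lie in $\bfC_1$. If all horizontal cylinders were fixed by $J$, Lemma \ref{L:GraphlikeRel} would put every horizontal cylinder in the support of $v$, hence in $\bfC_1$ -- contradiction. If the (unique, by codimension $\leq 1$) exchanged pair $\{D,D'\}$ were outside $\bfC_1$, cancellation at $\{D,D'\}$ in $c_1\rho_1+c_2\rho_2$ would force $v$ to be supported on every other horizontal cylinder, and the resulting homology relation among all the core curves drags $\{D,D'\}$ into $\bfC_1$ anyway -- contradiction. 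Finally, for the second claim your sign-propagation scheme addresses the wrong question: the issue is not how $+$ and $-$ distribute within the support, but why one entire component of the complement of $\gamma_D\cup\gamma_{D'}$ has coefficient zero. That again comes from the same stability argument: since the tree has only two vertices, if both $c_1$ and $c_2$ were nonzero (without cancellation at $\{D,D'\}$) then $v$ would be supported on all horizontal cylinders, which is impossible; hence exactly one $c_i\neq 0$ and the support of $v$ is $\{D,D'\}$ together with precisely the horizontal cylinders of one component.
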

\begin{proof}
If every horizontal cylinder is fixed by the hyperelliptic involution, then every horizontal cylinder has its height altered along $(X, \omega) + tv$ (by Lemma \ref{L:GraphlikeRel}) and hence belongs to $\bfC_1$, which contradicts the fact that $\cM$ has rank at least two and that $(X, \omega)$ is cylindrically stable (see Apisa-Wright \cite[Lemma 7.10 (4)]{ApisaWrightHighRank}). If the two horizontal cylinders exchanged by the hyperelliptic involution, call them $\{D, D'\}$ do not belong to $\bfC_1$, then $v$ is supported on every horizontal cylinder except those in $\{D, D'\}$ (by Lemma \ref{L:RelInHyp}). Since the core curves of $D$ and $D'$ are homologous and part of a homology relation with the core curves of the other horizontal cylinders (by the existence of rel in $\Twist(\bfC_1, \cM)$ and Lemma \ref{L:RelInHyp}), it follows that $\{D, D'\} \subseteq \bfC_1$, which is a contradiction. Therefore, $\bfC_1$ contains a pair of cylinders exchanged by the hyperelliptic involution and that belong to the support of $v$. The second claim is now immediate from Lemma \ref{L:RelInHyp}.
\end{proof}

Suppose without loss of generality that the pair of horizontal cylinders $\bfD := \{D, D'\}$ exchanged by the hyperelliptic involution belongs to $\bfC_+$. The following establishes the claim.  

\begin{sublem}\label{SL:StructureOfC1}
$\bfC_+ = \bfD$ and cylinders in $\bfC_-$ only border cylinders in $\bfD$.
%
\end{sublem}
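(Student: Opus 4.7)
My strategy is to use Lemma \ref{L:RelInHyp} to express the purely imaginary rel vector $v$ as $v = i\sum_{j \in V(T)} c_j \rho_j$ with $c_j \in \bR$, where $T$ is the tree of graph-like surfaces associated to $(X, \omega)$. Each $\rho_j$ is supported on the horizontal cylinders of $(X_j, \omega_j)$, with coefficients whose signs alternate with the parity of the distance in the bipartite cylinder graph of $(X_j, \omega_j)$. For each vertex $j$ of $T$ adjacent to the edge representing $\bfD$, I take the base cylinder on $(X_j, \omega_j)$ to be the image of $\bfD$; then $\bfD$ sits at distance zero and contributes a positive imaginary coefficient.

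For the first claim, suppose for contradiction that $\bfC_+ \setminus \bfD$ is nonempty. By Sublemma \ref{SL:StructureOfC1-0}, the distinguished component $S$ of $X \setminus (\gamma_D \cup \gamma_{D'})$ has horizontal cylinders exactly equal to $\bfC_+ \setminus \bfD$. Let $j$ be a vertex of $T$ corresponding to a graph-like piece sitting on the $S$-side with $c_j \neq 0$. If $(X_j, \omega_j)$ contains any horizontal cylinder besides the image $D_j$ of $\bfD$, then it has a nearest such cylinder at distance one from $D_j$; this cylinder is a horizontal cylinder of $X$ lying in $S$ and, by sign alternation, belongs to $\bfC_-$. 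This contradicts Sublemma \ref{SL:StructureOfC1-0}, which places every horizontal cylinder of $S$ inside $\bfC_+$. Consequently $(X_j, \omega_j)$ consists of $D_j$ alone, no horizontal cylinders of $X$ lie in $S$, and so $\bfC_+ \setminus \bfD = \emptyset$.

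For the second claim, the same sign-alternation analysis applied to vertices $k$ on the opposite component $S'$ shows that the cylinders of $\bfC_-$ are exactly those sitting at distance one from the image of $\bfD$ in a graph-like piece $(X_k, \omega_k)$ with $c_k \neq 0$. Indeed, since $\bfC_+ = \bfD$ now rules out any horizontal cylinder at even distance $\geq 2$, the cylinder graph of $(X_k, \omega_k)$ has depth one from $D_k$. Two distance-one cylinders cannot border each other in a bipartite graph (such an adjacency would close a $3$-cycle, violating bipartiteness), so each cylinder in $\bfC_-$ borders only $D_k$ in $(X_k, \omega_k)$. Translating back to $(X, \omega)$, each cylinder in $\bfC_-$ borders only $\bfD$.

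The main obstacle I expect is handling trees $T$ in which several vertices on a given side of the edge $\bfD$ carry nonzero $c_j$; in that case the coefficient of a distance-$\geq 2$ cylinder in $v$ can in principle receive contributions from several $\rho_j$'s, and one must rule out a conspiracy in which these contributions cancel on horizontal cylinders outside $\bfC_1$ while remaining nonzero on distance-$\geq 2$ cylinders that would have to belong to $\bfC_+$. An inductive argument traveling outward from $\bfD$ along $T$, using the equivalence-class hypothesis $\operatorname{supp}(v) \subseteq \bfC_1$ at each step to propagate the constraint, should resolve this.
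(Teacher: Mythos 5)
Your sign-alternation reading of the rel vector via Lemma \ref{L:RelInHyp} is correct, and your argument for the second claim (two cylinders of $\bfC_-$ carry coefficients of the same sign, so they cannot share a boundary saddle connection) is essentially the argument the paper leaves implicit. But your proof of the first claim rests on a misreading of Sublemma \ref{SL:StructureOfC1-0}. That sublemma says the horizontal cylinders of the distinguished component are exactly the remaining cylinders of $\bfC_+\cup\bfC_-$, i.e.\ of the support of $v$ --- not of $\bfC_+$ alone. It cannot say what you want it to: since $v$ is rel and nonzero, $\bfC_-$ is nonempty, and since $\bfC_-$ is disjoint from $\bfD$ it lies entirely in $S$. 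So when you exhibit a distance-one cylinder of $S$ lying in $\bfC_-$, there is no contradiction; taken at face value your argument would conclude that $S$ contains no horizontal cylinders at all, hence that $\bfC_-=\emptyset$, which is absurd.

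The deeper point is that $\bfC_+=\bfD$ is not a combinatorial consequence of $v$ being rel and supported in a single equivalence class: a graph-like piece whose cylinder graph has a vertex $B$ at distance two from $D_1$ (so $B\in\bfC_+$) is perfectly consistent with everything established up to this point. Ruling it out genuinely uses that $\cM$ has rank at least two and that $(X,\omega)$ is cylindrically stable. The paper argues dynamically: if $S$ contained both a $C_+$ and a $C_-$, one could overcollapse a neighboring equivalence class $\bfC_2$ to arrange that $C_+$, $C_-$, and a third equivalent cylinder $C_0$ have moduli satisfying no rational linear relation; by Mirzakhani--Wright this forces the twist space of their equivalence class to be at least three-dimensional with two dimensions of rel, and Lemma \ref{L:RelInHyp} then forces every horizontal cylinder into that one equivalence class, contradicting rank at least two together with cylindrical stability. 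Some argument invoking the rank hypothesis is unavoidable here, and your proposed induction along the tree will not supply it (note also that in codimension at most one the tree has a single vertex on each side of the edge $\bfD$, so the cancellation ``conspiracy'' you worry about in your last paragraph does not arise).
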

\begin{proof}
Let $S$ be the component of the complement of the core curves of $\bfD$ that contains a cylinder $C_- \in \bfC_-$. Suppose to a contradiction that $S$ also contains a cylinder $C_+ \in \bfC_+$. Let $m_-$ and $m_+$ be the moduli of these two cylinders respectively. We may suppose without loss of generality that $m_+$ and $m_-$ are not rational multiples of each other. There is some equivalence class, say $\bfC_2$ that borders a cylinder $C_0$, of modulus $m_0$, in $\bfC_1$. By overcollapsing the cylinders in $\bfC_2$ we can continuously change the modulus of $C_0$ while fixing the moduli of $m_+$ and $m_-$ (since they do not border any cylinder in $\bfC_2$ since they belong to a component of $(X, \omega) - \bfD$ whose only horizontal cylinders belong to $\bfC_1$ by Sublemma \ref{SL:StructureOfC1-0}). Therefore, we can form a surface on which the cylinders in $\bfC_1$ persist and contain three cylinders of moduli $m_+$, $m_-$, and $m_0$ that satisfy no rational linear relations. Let $(X', \omega')$ denote this new surface and $\bfC_3$ the equivalence class containing these three cylinders. Without loss of generality, by Lemma \ref{L:MovingToCS}, we may suppose that $(X', \omega')$ is also cylindrically stable. Since $\Twist(\bfC_3, \cM)$ contains three cylinders with moduli satisfying no rational linear relation it follows that $\Twist(\bfC_3, \cM)$ is at least three dimensional and has two dimensions of rel (by Mirzakhani-Wright \cite[Theorem 1.5]{MirWri}). But then every horizontal cylinder on $(X', \omega')$ belongs to $\bfC_3$ (by Lemma \ref{L:RelInHyp}), contradicting the fact that $\cM$ has rank at least two and that $(X', \omega')$ is cylindrically stable (see Apisa-Wright \cite[Lemma 7.10 (4)]{ApisaWrightHighRank}). Therefore, $\bfC_+ = \bfD$; all the horizontal cylinders in $S$ belong to $\bfC_-$; and, by Sublemma \ref{SL:StructureOfC1-0}, every cylinder in $\bfC_-$ belongs to $S$.
\end{proof}
\end{proof}

\begin{lem}\label{L:Rigid:NotAdjacent}
If $\cM$ is a cylinder rigid invariant subvariety of dimension at least three in a codimension zero or one hyperelliptic locus, then two generic subequivalent cylinders are glued together along an IET if and only if they share a boundary saddle connection. 
\end{lem}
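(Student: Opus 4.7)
The forward direction---that two cylinders glued along an IET share a boundary saddle connection---is immediate from the definition of ``glued along an IET'', so the focus will be the converse. Suppose for contradiction that $C$ and $C'$ are two distinct generic subequivalent cylinders on some $(X,\omega) \in \cM$ that share a boundary saddle connection, say on the top of $C$ and the bottom of $C'$, but that are not glued along an IET. By Lemma \ref{L:MovingToCS} I pass to a horizontally periodic cylindrically stable surface $(Y, \eta) \in \cM$ along which $C$ and $C'$ persist as horizontal cylinders: the property ``share a boundary saddle connection without being glued along an IET'' is open and therefore persists. Let $\bfC_1$ denote the $\cM$-equivalence class of $C$ and $C'$.

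The strategy is to reduce to a rank one cylinder rigid invariant subvariety in a codimension at most one hyperelliptic locus, where the explicit classification of Proposition \ref{P:Main:Rk1} applies. I successively collapse the horizontal equivalence classes $\bfD \ne \bfC_1$ via standard shears $\sigma_\bfD$. By Apisa \cite[Proposition 4.12]{Apisa-MHD} each boundary subvariety remains cylinder rigid, and by Proposition \ref{P:HypBdry} each of its factors lies in a hyperelliptic locus of codimension at most one. The twist space isomorphism of Apisa \cite[Theorem 2.9]{Apisa-MHD} ensures that $\bfC_1$ descends to an equivalence class in the boundary, so $C$ and $C'$ persist as subequivalent cylinders still sharing their boundary saddle connection and not glued along an IET. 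Iterating, I arrive at a subvariety $\cM'$ in which (the image of) $\bfC_1$ is the unique horizontal equivalence class on the surface component containing $C$ and $C'$. Projecting to that component yields a rank one cylinder rigid subvariety in a hyperelliptic locus of codimension at most one, and this subvariety has positive rel since $C$ and $C'$ are two distinct cylinders in a single subequivalence class.

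Applying Proposition \ref{P:Main:Rk1} now gives a contradiction in each case: in the rank one rel two setting conclusion \eqref{I:P:Main:Rk1:Rel=2} forbids any two generic subequivalent cylinders from bordering each other; in the rank one rel one non-IET setting conclusion \eqref{I:P:Main:Rk1:Rel=1:NoIETCase} analogously forbids generic subequivalent cylinders from sharing a boundary saddle connection; and in the rank one rel one IET setting conclusion \eqref{I:P:Main:Rk1:Rel=1:IETCase} forces every generic subequivalence class to consist of two cylinders glued along an IET, directly contradicting the standing hypothesis on $C$ and $C'$.

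The main obstacle will be the bookkeeping underlying the iterated cylinder collapse: verifying (i) that cylinder rigidity and the codimension-one bound on the ambient hyperelliptic locus propagate to the boundary at each step, (ii) that the subequivalence and boundary-sharing of $C$ and $C'$ persist, and (iii) that the degeneration terminates at a rank one subvariety rather than stalling at a rank two rel zero Teichm\"uller curve (where the rank-one classification would not apply). Items (i) and (ii) are handled by the twist-space machinery from Apisa \cite{Apisa-MHD} together with Proposition \ref{P:HypBdry}; for (iii) one selects, whenever possible, an equivalence class $\bfD$ whose collapse reduces rel rather than rank, using that the dimension hypothesis $\dim \cM \geq 3$ and the presence of the distinct subequivalent pair $C, C'$ in $\bfC_1$ force enough rel to remain after collapsing. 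A secondary subtlety---that boundary subvarieties may be joinings of disconnected surfaces---is bypassed by projecting throughout to the unique component containing $C$ and $C'$.
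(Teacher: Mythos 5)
Your overall strategy --- degenerate down to the rank one classification of Proposition \ref{P:Main:Rk1} --- matches the paper's, which runs an induction on $\dim \cM$ with that proposition as the base case. But there is a genuine gap in your reduction step: you propose to collapse the \emph{horizontal} equivalence classes $\bfD \ne \bfC_1$ on a horizontally periodic surface. On such a surface every boundary saddle connection of a cylinder in $\bfC_1$ is shared with some other horizontal cylinder, so the classes you collapse are typically adjacent to $\bfC_1$, and collapsing an adjacent class changes exactly the combinatorial data you are tracking: boundary saddle connections of $C$ and $C'$ merge, new adjacencies appear, and two cylinders that were not glued along an IET can become so in the limit (this is precisely the phenomenon the paper has to guard against later, e.g.\ in the proofs of Lemmas \ref{L:Well-behaved} and \ref{L:SidesAreCovers}). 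The twist-space isomorphism of Apisa \cite[Theorem 2.9]{Apisa-MHD} and Proposition \ref{P:HypBdry}, which you invoke for your item (ii), control the tangent space and the ambient hyperelliptic locus but say nothing about this boundary combinatorics. The ingredient you are missing is the paper's key choice: at each step one perturbs and collapses a single \emph{generic} subequivalence class $\bfD$ that shares no boundary saddle connections with $\bfC$ (such a $\bfD$ exists by Apisa-Wright \cite[Lemma 3.28]{ApisaWrightDiamonds}); then $\Col_{\bfD}$ leaves the boundary structure of $\bfC$ untouched, and the induction hypothesis applied to $\Col_{\bfD}(\bfC)$ transfers back verbatim.

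Two smaller points. Your worry (iii) about stalling at a rank two rel zero subvariety is misplaced: such a subvariety is $4$-dimensional, and collapsing one more generic subequivalence class drops the dimension to $3$, which forces rank one rel one (since $2r+k=3$); so an induction on dimension never stalls, and there is no freedom to ``choose a collapse that reduces rel rather than rank'' --- each collapse reduces dimension by exactly one (Apisa-Wright \cite[Lemma 6.5]{ApisaWrightHighRank}) and the resulting rank and rel are determined. Also, ``sharing a boundary saddle connection without being glued along an IET'' is not an open condition, so your passage to a cylindrically stable surface via Lemma \ref{L:MovingToCS} needs justification; the paper avoids this by phrasing everything in terms of generic cylinders and running the dimension induction directly, without first normalizing to a cylindrically stable representative.
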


In the sequel we will employ both cylinder collapses, which we already encountered in Section \ref{S:Joinings}, and more general cylinder degenerations. Given a collection of cylinders $\bfC$ on a translation surface $(X, \omega)$ in an invariant subvariety $\cM$ we will denote the more general cylinder degenerations by $\Col_v(X, \omega) \in \cM_v$ where $v \in \mathrm{Twist}(\bfC, \cM)$ (see Apisa-Wright \cite[Section 4.2]{ApisaWrightHighRank} for a precise definition and properties). 

\begin{proof}
Proceed by induction on the dimension of $\cM$. For rank one cylinder rigid invariant subvarieties of dimension at least three this result holds by Proposition \ref{P:Main:Rk1} (it is also explicitly stated for rank one rel one invariant subvarieties in Lemma \ref{L:Main}; note that Proposition \ref{P:Main:Rk1} implies that for rank one rel two invariant subvarieties, generic subequivalent cylinders never border each other).

Suppose now that $\cM$ has rank at least two. Let $\bfC$ be a subequivalence class of cylinders on $(X, \omega) \in \cM$. After perhaps perturbing $(X, \omega)$, let $\bfD$ be a generic subequivalence class that shares no boundary saddle connections with $\bfC$ (this exists by Apisa-Wright \cite[Lemma 3.28]{ApisaWrightDiamonds}). Then $\cM_{\bfD}$ is cylinder rigid (by Apisa \cite[Proposition 4.12]{Apisa-MHD}), prime (by Apisa-Wright \cite[Lemma 9.1]{ApisaWrightHighRank}), and of dimension exactly one less than $\cM$ (by Apisa-Wright \cite[Lemma 6.5]{ApisaWrightHighRank}; this uses the fact that in cylinder rigid subvarieties, subequivalent cylinders have a constant ratio of heights as long as they persist), and hence of dimension at least three (since $\cM$ has rank at least two). The same holds for the projection of $\cM_{\bfD}$ to any component (by the definition of cylinder rigid and Chen-Wright \cite[Theorem 1.3 (2)]{ChenWright}). Finally, the projection of $\cM_{\bfD}$ to any component still belongs to a codimension zero or one hyperelliptic locus (by Proposition \ref{P:HypBdry}). By the induction hypothesis, two cylinders in $\Col_{\bfD}(\bfC)$ share a boundary saddle connection if and only if they are glued together along an IET. Since $\bfD$ is disjoint from $\bfC$ and shares no boundary saddle connections with it, the same is true of $\bfC$, as desired. 
\end{proof}

\begin{cor}\label{C:Not-Rigid:C-SameHeight}
If $\cM$ is an invariant subvariety of rank at least two in a codimension zero or one hyperelliptic locus and $\bfC$ is an equivalence class of cylinders on $(X, \omega) \in \cM$ so that $\Twist(\bfC, \cM)$ contains a rel vector, then, using the notation of Lemma \ref{L:RelInEC}, all the cylinders in $\bfC_-$ have identical heights.
\end{cor}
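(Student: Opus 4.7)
The plan is to invoke Lemma \ref{L:RelInEC} to pin down the precise form of a purely imaginary rel vector in $\Twist(\bfC, \cM)$, and then to exploit the fact that the moduli of cylinders in an equivalence class have constant ratios on $\cM$.

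First, by Lemma \ref{L:MovingToCS}, I may move along a path in $\cM$ along which $\bfC$ persists as a horizontal equivalence class until reaching a horizontally periodic, cylindrically stable surface. Since the modulus ratios of cylinders in $\bfC$ are $\cM$-invariants, equal heights among cylinders in $\bfC_-$ at the endpoint of the path propagate back to $(X,\omega)$, so it suffices to establish the conclusion in the setup of Lemma \ref{L:RelInEC}. That lemma then supplies, up to sign and scaling, a rel vector
\[
v \;=\; c \cdot i(\tau_{\bfC_+} - \tau_{\bfC_-}) \;\in\; \Twist(\bfC, \cM)
\]
for some $c > 0$. Because $v$ is purely imaginary, the deformation $(X,\omega) + tv$ leaves every circumference fixed while uniformly decreasing, at rate $c$, the height of each cylinder in $\bfC_-$.

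Now fix $C_1, C_2 \in \bfC_-$, with heights $h_1, h_2$ and circumferences $\ell_1, \ell_2$. Since $C_1$ and $C_2$ lie in the equivalence class $\bfC$, the ratio $(h_1/\ell_1)/(h_2/\ell_2)$ is an $\cM$-invariant, and in particular is constant along the path $(X,\omega) + tv$ for small $t \geq 0$. After the deformation this ratio becomes $(h_1 - ct)\ell_2/((h_2 - ct)\ell_1)$, so constancy in $t$ reduces (after cross-multiplying and cancelling $\ell_1, \ell_2$) to
\[
(h_1 - ct)\, h_2 \;=\; h_1 (h_2 - ct) \qquad \text{for all small } t \geq 0,
\]
which simplifies to $ct(h_1 - h_2) = 0$. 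Since $c > 0$, this forces $h_1 = h_2$. Applying the same argument to arbitrary pairs gives the claim.

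The main point to double-check is the transfer from the cylindrically stable endpoint back to $(X,\omega)$: equal heights at the endpoint translate into equal moduli ratios equal to $\ell_1/\ell_2$, and because this ratio is an $\cM$-invariant of the pair $(C_1, C_2) \in \bfC$, the same relation $h_{C_1}/h_{C_2} = 1$ must already hold at $(X,\omega)$. This is the place where one genuinely uses that $\bfC$ is an equivalence class and not merely a subequivalence class.
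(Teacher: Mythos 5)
There is a genuine gap, and it sits at the heart of your argument. You assert that because $C_1, C_2$ lie in the same equivalence class $\bfC$, the ratio of their moduli $(h_1/\ell_1)/(h_2/\ell_2)$ is an $\cM$-invariant. That is not what $\cM$-equivalence gives you: $\cM$-parallel cylinders have a locally constant ratio of \emph{circumferences}, not of moduli or heights. Constancy of modulus ratios is the defining property of \emph{subequivalence classes in cylinder rigid subvarieties} (this is exactly the property invoked in the proof of Lemma \ref{L:Rigid:NotAdjacent}, with the caveat ``as long as the cylinders persist''), and the hypothesis of the corollary --- that $\Twist(\bfC,\cM)$ contains a rel vector --- places you precisely in the non-rigid situation where this fails. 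Indeed the rel vector $v = i(\tau_{\bfC_+}-\tau_{\bfC_-})$ itself increases the heights of the cylinders in $\bfC_+$ and decreases those in $\bfC_-$ while fixing all circumferences, so modulus ratios within $\bfC$ are manifestly not preserved along paths in $\cM$. Restricting the claim to pairs inside $\bfC_-$ does not rescue it: nothing in the definitions singles out $\bfC_-$ as a set on which moduli ratios are rigid, and assuming so is essentially assuming the conclusion. The same unjustified invariance is used again in your ``transfer back'' step from the cylindrically stable endpoint to $(X,\omega)$.

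For comparison, the paper's proof has to work considerably harder. After moving to a cylindrically stable surface and perturbing so that height coincidences in $\bfC$ are generic, it degenerates along $v$ to pass to $\cM_v$, which has rel exactly one less than $\cM$. In the rel one case, $\cM_v$ is rel zero and hence cylinder rigid, and the two cylinders of $\Col_v(\bfC_+)$ become adjacent on the collapsed surface; Lemma \ref{L:Rigid:NotAdjacent} then forces them to be glued along an IET, and since every cylinder of $\bfC_-$ is sandwiched between the two $\bfC_+$ cylinders (Lemma \ref{L:RelInEC}\,\eqref{I:C-}), all cylinders of $\bfC_-$ must have the same height. The rel two case is reduced to the rel one case by first degenerating a typical element of the twist space of another equivalence class involved in rel. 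If you want to salvage your write-up, you need to replace the invariance claim with an argument of this boundary-degeneration type; there is no shortcut via modulus ratios.
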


It may be useful to keep the surface depicted on the top of Figure \ref{F:Theorem1-5} in mind for the following proof.

\begin{proof}
Let $v$ be the rel vector as in the notation of Lemma \ref{L:RelInEC}. By Lemma \ref{L:MovingToCS} we may assume that $(X, \omega)$ is horizontally periodic and cylindrically stable. Assume additionally that two cylinders in $\bfC$ have the same height on $(X, \omega)$ if and only if they have generically identical heights on $\cM$. This perturbation of the original surface may cause the original equivalence class to be contained in a larger equivalence class, which will now be called $\bfC$. The fact that $\mathrm{Twist}(\bfC, \cM)$ contains a rel vector has not changed. 

Since the cylinders in $\bfC$ are generic, the same holds for $\Col_v(\bfC)$ (by Apisa-Wright \cite[Lemma 6.5]{ApisaWright} and the fact that $v = i(\tau_{\bfC_+} - \tau_{\bfC_-})$ by Lemma \ref{L:RelInEC} \eqref{I:Rel}). Moreover, $\cM_v$ has dimension exactly one less than that of $\cM$. Since $\cM_v$ has the same rank as $\cM$ (since $(X, \omega)$ is cylindrically stable and every horizontal equivalence class persists on $\Col_v(X, \omega)$), it follows that $\cM_v$ has rel exactly one less than that of $\cM$.

Suppose first that $\cM$ has rel one. Then $\cM_v$ has rel zero and the surfaces in $\cM_v$ are connected (by Lemma \ref{L:RelInEC}). Since $\Col_v(\bfC_+)$ contains two cylinders that border one another, these two cylinders are glued together along an IET by Lemma \ref{L:Rigid:NotAdjacent} (note that any rel zero invariant subvariety is cylinder rigid by Mirzakhani-Wright \cite{MirWri}, see Apisa \cite[Lemma 4.2]{Apisa-MHD}). This implies that every cylinder in $\bfC_-$ has the same height. 

Suppose now that $\cM$ has rel two. By Lemma \ref{L:RelInEC}, since $\cM$ has rank at least two, $\mathrm{Twist}(\bfC, \cM)$ contains exactly one dimension of rel. Since $(X, \omega)$ is cylindrically stable there is a horizontal equivalence class $\bfD \ne \bfC$ that is involved in rel (see Apisa-Wright \cite[Definition 11.3]{ApisaWrightHighRank}). Perturb $(X, \omega)$ so that $\bfC$ and $\bfD$ become generic and let $w \in \mathrm{Twist}(\bfD, \cM)$ be typical (such a $w$ exists by Apisa-Wright \cite[Lemma 11.2]{ApisaWrightHighRank}, see Apisa-Wright \cite[Lemma 11.1]{ApisaWrightHighRank} for a definition). Therefore, $\cM_w$ has the same rank as $\cM$ (by Apisa-Wright \cite[Lemma 11.4]{ApisaWrightHighRank}) and is prime (by \cite[Lemma 9.1]{ApisaWrightHighRank}). In particular, the projection of $\cM_w$ to each component has rank at least two (by Chen-Wright \cite[Theorem 1.3 (3)]{ChenWright}) and is contained in a hyperelliptic locus of codimension zero or one (by Proposition \ref{P:HypBdry}). Since $\mathrm{Twist}(\Col_w(\bfC), \cM_w)$ can be identified with $\mathrm{Twist}(\bfC, \cM)$, we can identify $v$ with a rel deformation in $\mathrm{Twist}(\Col_w(\bfC), \cM_w)$. It follows from the rel one case established in the previous paragraph that all the cylinders in $\Col_w(\bfC_-)$ have the same height. Since $\bfD$ is disjoint from $\bfC$, all the cylinders in $\bfC_-$ have the same height.
\end{proof}

We will now record the main result of the next section. 

\begin{thm}\label{T:Main:Rigid:Simple}
If $\cM$ is a cylinder rigid invariant subvariety of rank at least two in a codimension zero or one hyperelliptic locus, then $\cM$ is a full locus of covers of a genus zero stratum. 

Moreover, if $\bfC$ is a generic subequivalence class of cylinders on $(X, \omega)$ containing two cylinders glued together along an IET and $\pi$ is the $\cM$-minimal translation cover, then $\pi(\bfC)$ consists of two simple cylinders that border one another and where the only marked point on their common boundary is a Weierstrass point.
\end{thm}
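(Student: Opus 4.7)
I would argue by induction on $\dim \cM$. The base case consists of cylinder rigid rank one invariant subvarieties in a codimension-zero or -one hyperelliptic locus; these are either rel zero (handled by Mirzakhani--Wright's classification of rank one rel zero varieties as loci of torus covers) or rel one/two, where Proposition \ref{P:Main:Rk1} identifies $\cM_{\min}$ explicitly as $\cH(0,0)$, an Abelian double of it, or a quadratic double of $\cQ(0, -1^4)$ or $\cQ(0^2, -1^4)$, each of which is a full locus of covers of a genus zero stratum.

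For the inductive step, fix $(X,\omega) \in \cM$ and, after perhaps perturbing, select a generic subequivalence class $\bfC$ using Apisa--Wright \cite[Lemma 3.28]{ApisaWrightDiamonds}. The degeneration $\cM_{\bfC}$ is cylinder rigid by Apisa \cite[Proposition 4.12]{Apisa-MHD}, prime by Apisa--Wright \cite[Lemma 9.1]{ApisaWrightHighRank}, has dimension exactly $\dim \cM - 1$ by Apisa--Wright \cite[Lemma 6.5]{ApisaWrightHighRank}, and by Proposition \ref{P:HypBdry} each of its components lies in a product of hyperelliptic loci of codimension at most one. By Chen--Wright \cite[Theorem 1.3 (3)]{ChenWright} primality preserves rank, so each factor of $\cM_{\bfC}$ still has rank at least two. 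The induction hypothesis then applies to each factor, identifying it as a full locus of covers of a genus zero stratum, and Proposition \ref{P:Joinings:Rigid} promotes this to the statement that $\cM_{\bfC}$ itself is, up to rescaling components, a diagonal embedding into a product of copies of a single genus zero stratum---hence a full locus of covers.

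It remains to lift this structure from the boundary back to $\cM$. The undegeneration path from $\cM_{\bfC}$ back into $\cM$ is governed by Mirzakhani--Wright's theory of the twist space (see \cite{MirWri} and Apisa \cite[Theorem 2.9]{Apisa-MHD}): relative periods deforming the cylinders in $\bfC$ extend compatibly with the covering structure on $\cM_{\bfC}$. Combined with the algebraicity of invariant subvarieties (Filip \cite{Fi2}) and the explicit hyperelliptic quotient $\pi_{Q_{\min}} \colon \cM \to \cQ$, one concludes that $\cQ$ is an invariant subvariety of a genus zero stratum whose rank and dimension match those of $\cM$ (accounting for the $2$-to-$1$ cover), which forces $\cQ$ to be a full component of that stratum and identifies $\cM$ as the full locus of its hyperelliptic covers. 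The main technical obstacle will be precisely this undegeneration step: showing that the covering relation is not merely present on the boundary but extends globally over $\cM$.

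For the moreover clause, let $\bfC$ contain two cylinders $A, A'$ glued along an IET. By Lemma \ref{L:Rigid:NotAdjacent} they share a boundary saddle connection, and the analysis of Lemma \ref{L:RelInEC} together with Corollary \ref{C:Not-Rigid:C-SameHeight} forces $\{A, A'\}$ to be precisely the pair exchanged by the hyperelliptic involution $J$ (the ``$\bfC_+$'' of that lemma). Under the minimal translation cover $\pi$ the pair $\{A, A'\}$ pushes forward to two cylinders on the target; these are simple because their preimages form a generic subequivalence class (so removing their core curves does not disconnect the target beyond the expected way), and they border one another because $A$ and $A'$ do. Finally, the IET identification between $A$ and $A'$ must fix at least one point---the midpoint of the IET interval---which is necessarily a fixed point of $J$ and hence a Weierstrass point; tracking this point through $\pi$ gives the required marked Weierstrass point on the common boundary of $\pi(A)$ and $\pi(A')$, and the genericity of $\bfC$ rules out any other marked point there.
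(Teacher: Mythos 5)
Your setup for the induction (base case via Proposition \ref{P:Main:Rk1}, the degeneration $\cM_{\bfC}$ being prime, cylinder rigid, of dimension one less, lying in hyperelliptic loci by Proposition \ref{P:HypBdry}, and Proposition \ref{P:Joinings:Rigid} handling the disconnected boundary) matches the paper. But there are two genuine gaps. First, your claim that ``each factor of $\cM_{\bfC}$ still has rank at least two'' is false: Chen--Wright \cite[Theorem 1.3 (3)]{ChenWright} only says the rank is the same on every \emph{component} of the boundary surface, not that degeneration preserves rank. Collapsing a generic subequivalence class drops the dimension by one, and the rank can drop with it --- indeed when $\cM$ is rank two rel zero, $\cM_{\bfC}$ is rank one rel one. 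This is exactly why the paper proves the stronger Theorem \ref{T:Main:Rigid} by induction on \emph{dimension} (so the rank one cases of Proposition \ref{P:Main:Rk1} serve as the base for the induction, not just as a separate preliminary), and why the rank two rel zero case requires its own argument (Lemma \ref{L:I:FullLocus:BaseCase}), which is the technical heart of the section. Your induction as stated cannot even get started at rank two rel zero.

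Second, the undegeneration step --- which you correctly flag as ``the main technical obstacle'' --- is not actually carried out, and the mechanism you gesture at (twist-space theory plus Filip's algebraicity plus matching of rank and dimension with $\pi_{Q_{min}}$) is not how one recovers a covering of $(X,\omega)$ from a covering of a single boundary component; a single cylinder degeneration does not determine the translation structure of the collapsed cylinders. The paper's solution is the diamond lemma: one chooses \emph{two} transverse generic subequivalence classes $\bfC_1, \bfC_2$ forming a generic diamond, obtains covers $\pi_1, \pi_2$ on the two degenerations, and must then verify that $\Col(\pi_1) = \Col(\pi_2)$ on the common further degeneration (Lemma \ref{L:AgreementAtBase} in high dimension; the delicate labelling argument of Sublemma \ref{SL:Subtle} in the rank two rel zero case, where the codomains are only known to be isomorphic with matching fibers, not equal). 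Only then does Apisa--Wright \cite[Lemma 2.3]{ApisaWrightDiamonds} reassemble a cover of $(X,\omega)$, and \cite[Theorem 7.5]{ApisaWrightDiamonds} identifies its codomain's orbit closure as a full stratum. Your moreover clause also leans on Lemma \ref{L:RelInEC}, which concerns equivalence classes whose twist space contains rel and is therefore not the relevant tool in the cylinder rigid setting; the paper deduces the moreover clause from Lemma \ref{L:CRCover:PreimageOfImage} once the covering structure is established.
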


The proof of Theorem \ref{T:Main:Rigid:Simple} will be deferred to the next section and we will now show how Theorem \ref{T:Main:Rigid:Simple} implies Theorem \ref{T:Main}.

\begin{proof}[Proof of Theorem \ref{T:Main} assuming Theorem \ref{T:Main:Rigid:Simple}:]
By passing to the locus of holonomy double covers, it suffices to show that if $\cM$ is a rank at least two invariant subvariety in a hyperelliptic locus of codimension at most one, then $\cM$ is a locus of covers. We will proceed by induction with Theorem \ref{T:Main:Rigid:Simple} providing the base case and allowing us to assume that $\cM$ is not cylinder rigid. 



Since $\cM$ is not cylinder rigid we may suppose without loss of generality that there is an equivalence class of cylinders $\bfC_1$ on a surface $(X, \omega)$ in $\cM$ so that $\Twist(\bfC_1, \cM)$ contains rel. By Lemma \ref{L:MovingToCS}, we may assume that $(X, \omega)$ is a horizontally periodic cylindrically stable surface with horizontal equivalence classes $\bfC_1, \hdots, \bfC_n$. Let $v$ be a purely imaginary rel vector in $\Twist(\bfC_1, \cM)$. Let $\bfC_+$ (resp. $\bfC_-$, $\bfC_0$) be the cylinders in $\bfC_1$ whose heights increase (resp. decrease, remain the same) along the path $(X, \omega) + tv$ for $t > 0$. By Lemma \ref{L:RelInEC}, up to replacing $v$ with $-v$, we have that: $\bfC_+$ consists of two cylinders exchanged by the hyperelliptic involution; every cylinder in $\bfC_-$ only borders cylinders in $\bfC_+$; and $v = \tau_{\bfC_+} - \tau_{\bfC_-}$. Moreover, every cylinder in $\bfC_-$ has the same height (by Corollary \ref{C:Not-Rigid:C-SameHeight}).

\begin{lem}\label{L:StructureOfMv}
$\cM_v$ has the same rank as $\cM$ and is cylinder rigid. Moreover, the surfaces in $\cM_v$ are connected.
\end{lem}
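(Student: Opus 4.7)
The plan is to establish the three claims in the order: connectedness, rank equality, and cylinder rigidity. Rank equality is immediate from the rel nature of $v$; connectedness is the main challenge, since cylinder degenerations can in principle split a surface into several components; and cylinder rigidity follows by invoking Theorem \ref{T:Main} inductively on dimension.

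For connectedness, I would use Sublemma \ref{SL:StructureOfC1-0} (from the proof of Lemma \ref{L:RelInEC}), which shows that $(X,\omega)\setminus(\gamma_D\cup\gamma_{D'})$ decomposes as $S_-\sqcup S_+$ with the horizontal cylinders in $S_-$ being exactly $\bfC_-$. By Corollary \ref{C:Not-Rigid:C-SameHeight}, all cylinders in $\bfC_-$ share a common height, so the collapse $\Col_v$ contracts them simultaneously; this contracts all of $S_-$ to its horizontal saddle-connection skeleton and identifies the two boundary circles $\gamma_D$ and $\gamma_{D'}$ through this skeleton, fusing the inner sides of $D$ and $D'$. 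The complementary subsurface $S_+$ is connected and remains intact, and both $D$ and $D'$ are attached to it, so $\Col_v(X,\omega)$ is connected.

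For rank, since $v$ is a rel vector its image in absolute cohomology vanishes; as $T_{(X,\omega)}\cM_v$ is a codimension-one subspace of $T_{(X,\omega)}\cM$ complementary to $\mathbb{C}v$, it has the same projection to $H^1(X;\mathbb{C})$ as $T_{(X,\omega)}\cM$, giving $\mathrm{rank}(\cM_v)=\mathrm{rank}(\cM)\geq 2$. For cylinder rigidity, by Apisa-Wright \cite[Lemma 6.5]{ApisaWrightHighRank} we have $\dim\cM_v=\dim\cM-1$, and by Proposition \ref{P:HypBdry} together with the connectedness just established, $\cM_v$ lies in a hyperelliptic locus of codimension at most one. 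Thus $\cM_v$ satisfies the hypotheses of Theorem \ref{T:Main} with strictly smaller dimension than $\cM$; by induction on dimension, Theorem \ref{T:Main} applies to $\cM_v$, giving that $\cM_v$ is a full locus of covers of a genus zero stratum. Every such locus is cylinder rigid, because two subequivalent cylinders on the cover are necessarily preimages of a single cylinder in the base and therefore have identical heights under every deformation within the locus.
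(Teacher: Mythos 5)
Your connectedness argument is fine and matches what the paper treats as immediate, and your appeal to the induction hypothesis for Theorem \ref{T:Main} to conclude that $\cM_v$ is a full locus of covers of a genus zero stratum is also the route the paper takes when $\cM_v$ still has rel (when $\cM_v$ has rel zero, cylinder rigidity is automatic by Mirzakhani--Wright and no induction is needed). The gap is your final sentence: it is not true that every full locus of covers of a genus zero stratum is cylinder rigid. The inductively obtained base stratum $\cQ$ may carry marked points, and by Lemma \ref{L:HypParallelism} an equivalence class in a genus zero stratum is a single cylinder only when there are none: a free marked point in the interior of a cylinder splits it into two equivalent cylinders whose heights vary independently as the point moves, so upstairs the twist space of an equivalence class contains rel and need not be spanned by standard shears of subequivalence classes. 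This is precisely the step the paper has to supply: because $\cM$ has rank at least two, Lemma \ref{L:RelInEC} produces a rel vector on $\cM_v$ that is a linear combination of the standard shears $\sigma_{\bfC_i}$ of several parallel equivalence classes, which forbids free marked points on the codomain of the $\cM_v$-minimal cover; combined with the classification of periodic points this gives no marked points at all, and only then does ``each equivalence class is the preimage of one cylinder, hence has one-dimensional twist space'' follow. Note also that your assertion that subequivalent cylinders upstairs are preimages of a single cylinder downstairs is essentially Lemma \ref{L:CRCover:PreimageOfImage}, whose hypothesis is cylinder rigidity, so as written the step is circular.

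A secondary issue is the rank argument. You treat $T\cM_v$ as a codimension-one subspace of $T_{(X,\omega)}\cM$ and claim it has the same image in $H^1(X;\mathbb{C})$. The tangent space at the boundary lives in the cohomology of the degenerate surface, which is a quotient of (a subspace of) $H^1(X;\mathbb{C})$ -- collapsing cylinders can lower genus -- so absolute classes can die and rank can in principle drop under a rel degeneration. The correct justification, which is the paper's one line, is that $(X,\omega)$ is cylindrically stable, so its horizontal equivalence classes already realize $\mathrm{rank}(\cM)$ independent absolute directions, and all of these classes persist on $\Col_v(X,\omega)$.
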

\begin{proof}
The final claim is immediate. The rank of $\cM_v$ is the same as that of $\cM$ since $(X, \omega)$ is cylindrically stable and every horizontal equivalence class persists on $\Col_v(X, \omega)$. By Apisa-Wright \cite[Lemma 6.5]{ApisaWrightHighRank}, $\cM_v$ has dimension exactly one less than that of $\cM$ and hence has rel exactly one less than that of $\cM$. It remains to show that $\cM_v$ is cylinder rigid.

If $\cM_v$ has rel zero then it is cylinder rigid (by Mirzakhani-Wright \cite{MirWri}, see Apisa \cite[Lemma 4.2]{Apisa-MHD}). If not, then $\cM_v$ has rel one and $\cM$ has rel two (note that two is the maximum rel of an invariant subvariety in a codimension zero or one hyperelliptic locus). Since $\cM$ has rank two, Lemma \ref{L:RelInEC} implies that a linear combination of $\{\sigma_{\bfC_i}\}_{i=1}^n$ is rel on $(X, \omega)$ and remains rel on $\cM_v$. 

The codomain of the $\cM_v$-minimal half-translation cover has no free marked points, since, if it did, no rel vector would be a linear combination of standard shears in multiple parallel equivalence classes. By the induction hypothesis, $\cM_v$ is a full locus of covers of a genus zero stratum $\cQ$ of rank at least two. We have just shown that $\cQ$ has no free marked points and hence (by Apisa-Wright \cite[Theorem 1.4 and Corollary 5.2]{ApisaWright} no marked points. 

By Lemma \ref{L:HypParallelism}, since $\cQ$ has no marked points, each cylinder is its own equivalence class. It follows that each equivalence class of cylinders on surfaces in $\cM_v$ are the preimage of one cylinder under the $\cM_v$-minimal half-translation cover. In particular, the twist space of every equivalence class of cylinders on a surface in $\cM_v$ is one-dimensional and hence $\cM_v$ is cylinder rigid. 
\end{proof}

For the remainder of the proof, it will be useful to keep Figure \ref{F:Theorem1-5} in mind.

We will perturb slightly in order to assume that the cylinders in $\bfC_1$ are generic, which implies that the same holds for the cylinders in $\Col_v(\bfC_1)$ (by Apisa-Wright \cite[Lemma 6.5]{ApisaWright}). Since $\cM_v$ is cylinder rigid, the $\cM_v$-minimal translation cover $\pi: \Col_v(X, \omega) \ra (Y, \eta)$ sends $\Col_v(\bfC_1)$ to two simple cylinders that border one another and where the only marked point on their common boundary is a Weierstrass point $p$ (by Theorem \ref{T:Main:Rigid:Simple}). Let $C$ denote the cylinder formed from these two cylinders once the marked point $p$ forgotten.

On $(X, \omega)$, the cylinders in $\bfC_+$ (resp. $\bfC_-$) all have height $h_+$ (resp. $h_-$). Let $(Y', \eta')$ be the surface $(Y, \eta)$ with two points $\{q, q'\}$ lying at height $h_-/2$ above and below $p$ on the vertical separatrix passing through it marked and with $p$ forgotten. Let $D$ be the subcylinder contained in $C$ on $(Y', \eta')$ with boundary formed by the loops joining $q$ to itself and $q'$ to itself. We will now construct a translation cover $f: (X, \omega) \ra (Y', \eta')$. 

\begin{figure}[h]\centering
\includegraphics[width=\linewidth]{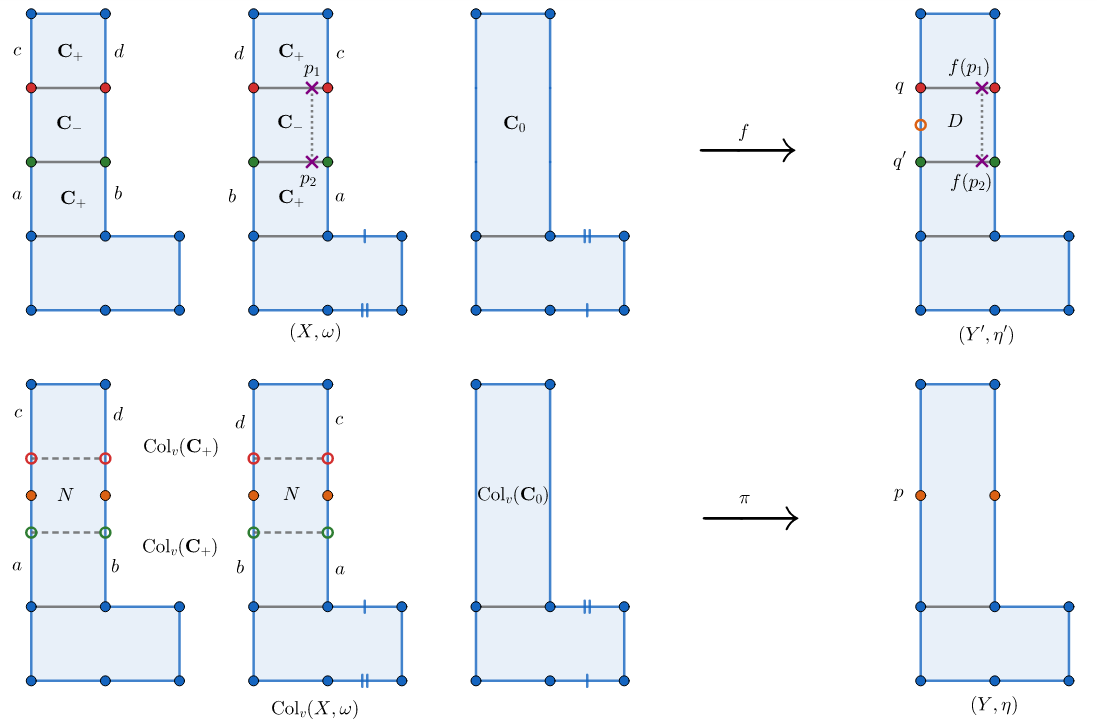}
\caption{This figure depicts the proof of Theorem \ref{T:Main} in a concrete setting. The surface $(X, \omega)$ on the top left belongs to a locus $\cM$ of covers of $\cH(2,0^2)$ whose branch locus consists of the zero of order two and two points exchanged by the hyperelliptic involution. The codomain of this cover is the surface on the top right. All sides are identified to the opposite side unless otherwise indicated. The locus $\cM$ is contained in the codimension one hyperelliptic locus in $\cH(8,1,1)$. On $(X, \omega)$, cylinders belonging to an equivalence class $\bfC$ are labelled. Note that $\mathrm{Twist}(\bfC, \cM)$ contains a purely imaginary rel vector $v$. Whether the heights of cylinders in $\bfC$ increase, decrease, or remain constant along the path $(X, \omega) + tv$ for $t > 0$ are indicated by the subscript. The surface on the bottom left is $\Col_v(X, \omega)$, which belongs to a locus $\cM_v$ of covers of $\cH(2,0)$ whose branch locus consists of the zero of order two and a distinct Weierstrass point. Unshaded points and points marked with an ``x" are unmarked. Dashed and dotted lines indicate ``saddle connections" that join unmarked points.}
\label{F:Theorem1-5}
\end{figure}

Let $N$ be set of points of distance at most $\frac{h_-}{2}$ from the shared boundary of the two cylinders in $\Col_v(\bfC_+)$. Notice that, by Lemma \ref{L:RelInEC} \eqref{I:Rel}, we may identify $\Col_v(X, \omega) - N$ with $(X, \omega) - {\bfC_-}$. Therefore, using this identification, we will define $f$ to agree with $\pi$ on this subset. At this point, $f: (X, \omega) - {\bfC_1} \ra (Y', \eta')$ is a translation cover. (Note that no marked point or zero is sent to $p$ under $f$ since, up to the action of the hyperelliptic involution, there is only one zero or marked point on $(X, \omega)$ that does not lie on the shared boundary of cylinders in $\bfC_-$ and $\bfC_+$; it is straightforward to see that this point is not sent to $p$.)

By construction if two points $p_1$ and $p_2$ on the boundary of a cylinder $\bfC_-$ are joined by a vertical line segment contained in $\bfC_-$, then this line segment has length $h_-$ and $f(p_1)$ and $f(p_2)$ are opposite endpoints of a vertical line segment of length $h_-$ contained in $\overline{D}$. This observation implies that our partial definition of $f$ can be extended over $\bfC_1$ by sending the line segment joining $p_1$ to $p_2$ on $(X, \omega)$ to the one joining $f(p_1)$ to $f(p_2)$ in $\overline{D}$. This extension is the desired translation cover.  
%
\end{proof}

%
%
%

\section{Proof of Theorem \ref{T:Main:Rigid:Simple}}\label{S:ProofTMain:Rigid}

The entirety of this section will be devoted to a proof of Theorem \ref{T:Main:Rigid:Simple}. 

Say that two generic subequivalent cylinders $C$ and $C'$ are \emph{well-behaved} if one of the following occurs:
\begin{enumerate}
    \item Neither cylinder is glued to another cylinder along an IET and both cylinders have the same height.
    \item Both cylinders are glued to each other along an IET and have the same height.
    \item $C$ is glued to another cylinder, $C'' \ne C'$, along an IET and has half the height of $C'$.
\end{enumerate}

\begin{lem}\label{L:Well-behaved}
In a cylinder rigid invariant subvariety $\cM$ of dimension at least three in a codimension zero or one hyperelliptic locus, any two generic subequivalent cylinders are well-behaved.
\end{lem}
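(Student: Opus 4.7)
The plan is to induct on $\dim \cM$. The rank one cases (dimensions three and four) form the base and are handled directly by Proposition \ref{P:Main:Rk1}: in case \eqref{I:P:Main:Rk1:Rel=1:IETCase}, Corollary \ref{C:R1R1:Starter} identifies each generic subequivalence class as a pair of isometric IET-glued cylinders, giving condition (2) of well-behavedness; in cases \eqref{I:P:Main:Rk1:Rel=1:NoIETCase} and \eqref{I:P:Main:Rk1:Rel=2}, the same corollary together with the ``moreover'' clause of Proposition \ref{P:Main:Rk1} describe generic subequivalence classes as either singletons or pairs of isometric cylinders sharing no boundary saddle connections, giving condition (1).

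For the inductive step, suppose $\cM$ has rank at least two, so $\dim \cM \geq 4$. Given a generic subequivalence class $\bfC$ containing $C$ and $C'$, I would apply Apisa-Wright \cite[Lemma 3.31]{ApisaWrightDiamonds}---as in the proof of Lemma \ref{L:Rigid:NotAdjacent}---to find a second generic subequivalence class $\bfD$ disjoint from $\bfC$ and sharing no boundary saddle connections with it. Collapsing $\bfD$ then produces $\cM_\bfD$ of dimension $\dim \cM - 1 \geq 3$ (by Apisa-Wright \cite[Lemma 6.5]{ApisaWrightHighRank}) that is cylinder rigid (Apisa \cite[Proposition 4.12]{Apisa-MHD}) and prime (Apisa-Wright \cite[Lemma 9.1]{ApisaWrightHighRank}); each component lies in a codimension at most one hyperelliptic locus by Proposition \ref{P:HypBdry}, and each factor has the same rank as $\cM$ by primality (Chen-Wright \cite[Theorem 1.3]{ChenWright}) together with Lemma \ref{L:SameConstants}. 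Because $\bfD$ is disjoint from and shares no boundary saddle connections with $\bfC$, collapsing $\bfD$ leaves the heights of $C, C'$ and their IET-gluing partners unchanged, so well-behavedness of $\Col_\bfD(C), \Col_\bfD(C')$ transfers back.

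When $\cM_\bfD$ is connected the induction hypothesis applies directly. When $\cM_\bfD$ instead sits in a genuine product (which occurs when $\bfD$ contains a pair of cylinders exchanged by the hyperelliptic involution whose collapse separates the surface), I would first pass to the quadratic quotients of the components so that each ambient space is a genus zero stratum, and then invoke Proposition \ref{P:Joinings:Rigid} (after replacing each ambient space by the closure of the projection) to realize $\cM_\bfD$, up to rescaling components, as the diagonal embedding of a single cylinder rigid subvariety. The induction hypothesis applied to this one factor, whose dimension is at least three since its rank is at least two, then yields the conclusion for $\Col_\bfD(C), \Col_\bfD(C')$.

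The hard part will be the disconnected case. Proposition \ref{P:Joinings:Rigid} is stated only for full genus zero strata as factors, whereas the factors of $\cM_\bfD$ are in general proper invariant subvarieties. The delicate step is to close up each ambient space to the smallest invariant subvariety containing the projection and verify that the hypotheses---primality, cylinder rigidity, dimension at least three, and density of the projections---survive this reduction so that a joining-type conclusion still holds. Once this reduction is in hand, the induction runs without further obstruction and terminates at the rank one base case, since $\dim \cM_\bfD = 3$ forces rank one (rank two requires dimension at least four).
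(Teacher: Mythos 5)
Your base case and the connected branch of your inductive step are sound and match the paper's logic, but the disconnected case is a genuine gap, not merely a ``delicate step,'' and it is exactly the case that matters: when $\bfD$ is a pair of cylinders exchanged by the hyperelliptic involution, $\Col_{\bfD}(X,\omega)$ disconnects and the two cylinders $C, C'$ you are trying to compare can land on different components, so the induction hypothesis (a statement about a single connected surface) says nothing about their relative heights. Your proposed fix via Proposition \ref{P:Joinings:Rigid} does not go through as stated: that proposition requires each factor to project \emph{densely} onto a full genus zero stratum, whereas the factors $(\cM_{\bfD})_i$ are a priori proper invariant subvarieties of strata of Abelian differentials, and ``closing up the ambient space'' destroys the density hypothesis rather than restoring it. The general statement you would need is essentially the quasi-diagonal conjecture of Apisa--Wright, which is open. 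The only route in the paper to making the factors into genus zero strata is Theorem \ref{T:Main:Rigid} applied to each factor (this is what Lemma \ref{L:SidesAreCovers} does), but that theorem's proof uses Lemma \ref{L:Well-behaved}, so invoking it here would require a careful joint induction that you have not set up and that the paper deliberately avoids.

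The paper's actual argument is structured precisely to sidestep any joining statement. It moves to a cylindrically stable surface, labels the cylinder graph by subequivalence classes, and uses a combinatorial count to find two classes $\bfC_\alpha, \bfC_\beta$ such that deleting either one leaves components each containing representatives of every other class, with $\bfC_\alpha$ containing no exchanged pair (so no two of its cylinders share boundary saddle connections, by Lemma \ref{L:Rigid:NotAdjacent}). For two subequivalent cylinders in different components of the complement of $\bfC_\alpha$ that border a common cylinder $C \in \bfC_\alpha$, it applies a shear in the direction of $\bfC_\alpha$ so that after collapsing they lie on the \emph{same} component, and then chains these comparisons across the whole surface. The residual case of cylinders equivalent to $\bfC_\alpha$ itself is handled either by the rel vector of Lemma \ref{L:RelInHyp} or, in the rel zero case, by an Avila--Eskin--M\"oller deformation that tilts $\bfC_\beta$ out of the horizontal direction and produces a replacement class $\bfC_\gamma$. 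None of this appears in your proposal, and without it (or a proof of the joining reduction you flag) the induction does not close.
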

\begin{proof}
Proceed by induction on the dimension of $\cM$. When $\cM$ has rank one, the result follows from Proposition \ref{P:Main:Rk1} (note that the in the rank one rel one case the result is contained more explicitly in Lemma \ref{L:Main}; in the rank one rel two case, distinct generic subequivalent cylinders do not border one another and they have the same height). Therefore, assume that $\cM$ has rank at least two. Let $\bfC$ be a generic equivalence class of cylinders on a surface $(X, \omega) \in \cM$.

By definition of cylinder rigid, the cylinders in $\bfC$ have a constant ratio of heights along any path in $\cM$ along which the cylinders persist. By Lemma \ref{L:MovingToCS} there is a path from $(X, \omega)$ to a cylindrically stable surface $(Y, \eta)$ along which $\bfC$ persists and, consequently, so that on $(Y, \eta)$ two cylinders in $\bfC$ have the same ratios of heights as on $(X, \omega)$. 

Suppose that there are $n$ horizontal subequivalence classes of cylinders on $(Y, \eta)$ and label them $\{1, \hdots, n\}$. Suppose that $\bfC$ corresponds to the cylinders labelled $1$. Let $\Gamma$ be the cylinder graph of $(Y, \eta)$ (see Definition \ref{D:Graphlike}). Label each vertex of $\Gamma$ according to the subequivalence class to which the corresponding cylinder belongs. If $\alpha$ is a number in $\{1, \hdots, n\}$, then let $\Gamma_\alpha$ denote $\Gamma$ with all the vertices labelled $\alpha$ deleted. 

If a vertex labelled $i$ is connected by an edge in $\Gamma$ to a vertex labelled $j$, for $i \ne j$, then every vertex labelled $i$ is connected by an edge to a vertex labelled $j$ (by Apisa \cite[Lemma 4.4]{Apisa-MHD}). It follows that there are two distinct numbers $\alpha$ and $\beta$ in $\{1, \hdots, n\}$ so that $\Gamma_\alpha$ and $\Gamma_\beta$ are a union of connected components each of which contains at least one vertex labelled by each number in $\{1, \hdots, n\} - \alpha$ (resp. $\{1, \hdots, n\} - \beta$).  

Let $\bfC_\alpha$ and $\bfC_\beta$ denote the subequivalence classes of cylinders corresponding to $\alpha$ and $\beta$. Perturb $(Y, \eta)$ slightly to a surface $(Y', \eta')$ on which $\bfC_\alpha$ and $\bfC_\beta$ are generic and all the cylinders that were horizontal on $(Y, \eta)$ persist; call them $\bfH$. Notice that every component of $(Y', \eta') - \bfC_\alpha$ contains a cylinder in $\bfH$, since, if not, this component contains a cylinder disjoint from any in $\bfH$ (by Apisa-Wright \cite[Lemma 8.3]{ApisaWrightHighRank}), which implies that there is a cylinder subequivalence class disjoint from those in $\bfH$ (by Apisa-Wright \cite[Corollary 8.4]{ApisaWrightHighRank}), which contradicts the fact that $(Y, \eta)$ is cylindrically stable (see Apisa-Wright \cite[Lemma 7.10 (2)]{ApisaWrightHighRank}). Therefore, by our choice of $\alpha$ and $\beta$, every component of $(Y', \eta') - \bfC_\alpha$ contains a cylinder from every subequivalence class in $\bfH - \bfC_\alpha$ (the same holds with $\alpha$ replaced by $\beta$). 

Since $\cM$ belongs to a codimension at most one hyperelliptic locus, at most one of $\bfC_\alpha$ and $\bfC_\beta$ contain a pair of cylinders exchanged by the hyperelliptic involution (by Lemma \ref{L:CutReglue}). Without loss of generality $\bfC_\alpha$ contains no such pair of cylinders and hence contains no pair of cylinders glued together along an IET. By Lemma \ref{L:Rigid:NotAdjacent}, no two cylinders in $\bfC_\alpha$ share boundary saddle connections. 

First, we observe that if two subequivalent cylinders belong to the same component of $(Y', \eta') - \bfC_\alpha$ (and are not equivalent to $\bfC_\alpha$) then they are well-behaved. This follows from the induction hypothesis, since they end up on the same component of $\Col_{\bfC_\alpha}(Y', \eta')$ (notice that if the cylinders were equivalent to $\bfC_\alpha$ then, a priori, it would be possible for them to be glued together along an IET on $\Col_{\bfC_\alpha}(Y', \eta')$, but not on $(Y', \eta')$; this is the reason we exclude cylinders equivalent to $\bfC_\alpha$ from the analysis here). Note that $\cM_{\bfC_{\alpha}}$ has codimension one (by Apisa-Wright \cite[Lemma 6.5]{ApisaWrightHighRank}, since $\bfC_\alpha$ is a generic subequivalence class).

Now suppose that $S_1$ and $S_2$ are two components of $(Y', \eta') - \bfC_\alpha$ and suppose that both components border a cylinder $C \in \bfC_{\alpha}$. Suppose that $S_1$ borders $C$ on its top boundary and $S_2$ borders $C$ along its bottom boundary. There is a shear $u$ in the direction of $\bfC_{\alpha}$ so $S_1$ and $S_2$ belong to the same component of $\Col_{\bfC_{\alpha}}(u \cdot (Y', \eta'))$. The induction hypothesis therefore implies that a cylinder in $S_1$ and a subequivalent cylinder (not equivalent to $\bfC_\alpha$) in $S_2$ are well-behaved when the cylinders are not equivalent to those in $\bfC_\alpha$. It follows that if two components of $(Y', \eta') - \bfC_\alpha$ border the same cylinder in $\bfC_\alpha$, then two subequivalent cylinders (not equivalent to $\bfC_\alpha$) in each component are well-behaved. Since no two cylinders in $\bfC_\alpha$ share a boundary saddle connection it follows that any two subequivalent cylinders (not equivalent to $\bfC_\alpha$) in $\bfH - \bfC_{\alpha}$ are well-behaved. It remains to show that any two subequivalent cylinders equivalent to those in $\bfC_\alpha$ are well-behaved.

If $\bfC_\beta$ is not equivalent to $\bfC_\alpha$ and does not contain two cylinders glued together along an IET, then this is immediate. Suppose now that $\bfC_\beta$ is equivalent to $\bfC_\alpha$. By Lemma \ref{L:RelInEC}, up to exchanging $\alpha$ and $\beta$, deleting the vertices labelled $\bfC_\alpha$ from the cylinder graph produces two types of components - those consisting solely of cylinders in $\bfC_\beta$ and those with no cylinders in $\bfC_\beta$. This contradicts the way that we chose $\alpha$ and $\beta$. Therefore, we may suppose that $\bfC_\beta$ is not equivalent to $\bfC_\alpha$ and that it contains two cylinders glued together along an IET.

We note that if $\cM$ has rel then $\Twist((X, \omega), \cM)$ has one dimension of rel (by Lemma \ref{L:RelInHyp} and the fact that $\bfC_\beta$ contains two cylinders glued together along an IET). By definition of cylinder rigidity, the rel vector in the tangent space of $\cM$ is a linear combination of $\{\sigma_{\bfC_i}\}_{i=1}^n$, no coefficient of which is zero (by Lemma \ref{L:RelInHyp}). Therefore, any two subequivalent cylinders outside of $\bfC_\beta$ have identical heights (by Lemma \ref{L:RelInHyp}). 

It remains to consider the case that $\cM$ has no rel. In this case there is a purely imaginary element $v \in T_{(X, \omega)} \cM$ that pairs trivially with every horizontal cylinder on $(X, \omega)$ except those in $\bfC_\beta$ (by Avila-Eskin-M\"oller \cite{AEM}). Therefore, for sufficiently small $t$, $(X', \omega') := (X, \omega) + tv$ is a surface on which all the horizontal cylinders on $(X, \omega)$ persist and, with the exception of those in $\bfC_\beta$, remain horizontal. The cylinders in $\bfC_\beta$ cease to be horizontal on $(X', \omega')$. By applying a shear to the surface to form a new surface $(X'', \omega'')$ we may suppose without loss of generality that  $\bfC_\beta$ has become vertical and, after applying the standard shear in $\bfC_\beta$, that $\bfC_\beta$ contains a horizontal saddle connection $s$. Suppose without loss of generality that $s$ joins two distinct zeros that are not exchanged by the hyperelliptic involution; which is possible since $\bfC_\beta$ contains two cylinders glued together along an IET.  

\begin{sublem}
On $(X'', \omega'')$ there is a horizontal equivalence class $\bfC_\gamma$ disjoint from those in $\bfH - \bfC_\beta$.
\end{sublem}
\begin{proof}
It suffices to show that $\Col_{\bfC_\beta}(X'', \omega'')$ is horizontally periodic. If not, then it is possible to find a cylinder $C$ in the complement of $\Col_{\bfC_\beta}(\bfH - \bfC_\beta)$ (by Smillie-Weiss \cite[Corollary 6]{SW2}). Since $C$ intersects no cylinder in $\Col_{\bfC_\beta}(\bfH - \bfC_\beta)$, the same is true for every cylinder subequivalent to $C$ (by the cylinder proportion theorem, see Nguyen-Wright \cite[Proposition 3.2]{NW}). Call this subequivalence class $\bfC_\delta$. 

Since cylinders in $\Col_{\bfC_\beta}(\bfH - \bfC_\beta)$ are equivalent if and only if the corresponding cylinders in $\bfH$ were equivalent, it follows that the standard shears on the subequivalence classes in $\Col_{\bfC_\beta}(\bfH - \bfC_\beta)$ project to a Lagrangian subspace of the projection $p$ of $T_{\Col_{\bfC_\beta}(X, \omega)} \cM_{\bfC_\beta}$ to absolute homology (since the rank of $\cM_{\bfC_\beta}$ is at most $\mathrm{rank}(\cM)-1$ since $\cM$ is rel zero). Therefore, $p(\sigma_{\bfC_\delta})$ is a linear combination of $\{ p(\sigma_{\bfC}) \}_{\bfC \in \Col_{\bfC_\beta}(\bfH)}$. Hence, the cylinders in $\bfC_\delta$ must be horizontal since the cylinders in $\Col_{\bfC_\beta}(\bfH)$ are. This is a contradiction. \end{proof}

Since $(X'', \omega'')$ has $\mathrm{rank}(\cM)$ many horizontal equivalence classes of cylinders, it follows that $(X'', \omega'')$ is horizontally periodic. Since $\bfC_\beta$ is a subset of $\overline{\bfC_\gamma}$, it follows that $s$ - the horizontal saddle connection joining two distinct zeros not exchanged by the involution - lies on the boundary of some cylinder in $\bfC_\gamma$. However, if two horizontal cylinders on $(X'', \omega'')$ were glued together along an IET, then no such saddle connection could belong to the boundary of a horizontal cylinder (since $\cM$ belongs to a codimension zero or one hyperelliptic locus). It follows that every horizontal cylinder on $(X'', \omega'')$ is fixed by the hyperelliptic involution. 

Notice that, by construction, the cylinder graph of $(X'', \omega'')$ with the vertices corresponding to $\bfC_\gamma$ deleted is precisely the cylinder graph of $(X, \omega)$ with the vertices corresponding to $\bfC_\beta$ deleted (since $\cM$ has rel zero, the perturbation from $(X, \omega)$ to $(X'', \omega'')$ did not cause new cylinders to become equivalent to those in $\bfH - \bfC_\beta$). In particular, each component contains at least one vertex labelled by every number in $\{1, \hdots, n\} - \beta$. Since we have shown that $\bfC_\gamma$ does not contain two cylinders glued together along an IET, all the hypotheses that held for $\bfC_\alpha$ on $(X, \omega)$ hold for $\bfC_\gamma$ on $(X'', \omega'')$. Therefore, we have already seen that any two subequivalent horizontal cylinders on $(X'', \omega'')$ that do not belong to $\bfC_\gamma$ are well-behaved. In particular, any two cylinders in $\bfC_\alpha$ are well-behaved, as desired.
\end{proof}

Before proceeding we will require a short technical lemma.

\begin{lem}\label{L:CRCover:PreimageOfImage}
Suppose that $\cM$ is a cylinder rigid invariant subvariety of rank at least two that is a full locus of covers of a stratum under its $\cM$-minimal (half)-translation cover. Under this cover, every subequivalence class of cylinders is sent to a single cylinder and its closure is the full preimage of its image.
\end{lem}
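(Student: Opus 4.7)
The plan is to analyze subequivalence classes on $(X,\omega) \in \cM$ via the $\cM$-minimal (half-)translation cover $\pi : (X,\omega) \to (Y,\eta)$, using that ``full locus of covers'' means every tangent vector to the target stratum $\cM_0$ at $(Y,\eta)$ lifts to a tangent vector to $\cM$ at $(X,\omega)$, with the combinatorial type of $\pi$ being constant on $\cM$. Since the branching of $\pi$ occurs only at singularities, any cylinder $D$ on $(Y,\eta)$ has preimage $\pi^{-1}(D) = \bigsqcup_{i=1}^k C_i$, a disjoint union of open cylinders $C_i$ on $(X,\omega)$ with each $C_i$ an unbranched (cyclic) cover of $D$ of a fixed combinatorial degree $d_i$. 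Consequently the circumference of $C_i$ equals $d_i$ times that of $D$ and the height of $C_i$ equals that of $D$. Because the degrees $d_i$ do not vary as one moves in $\cM$, the $C_i$ have constant ratios of circumferences and of heights wherever they persist, so $C_1, \ldots, C_k$ all lie in a single subequivalence class.

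Conversely, I would show that two cylinders lying over distinct cylinders $D \ne D'$ of $(Y,\eta)$ cannot be subequivalent. The class $i\gamma_D^*$ lies in $T_{(Y,\eta)}\cM_0$, since every stratum contains the pure dilation of a single cylinder as a tangent direction. Its lift to $T_{(X,\omega)}\cM$ is, up to a positive scalar, $i\sum_{C \subset \pi^{-1}(D)} \gamma_C^*$, which dilates the cylinders in $\pi^{-1}(D)$ while fixing the heights of those in $\pi^{-1}(D')$. Hence a cylinder in $\pi^{-1}(D)$ and one in $\pi^{-1}(D')$ cannot have constant ratio of heights, and so are not subequivalent. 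Combined with the previous paragraph this shows that the subequivalence classes on $(X,\omega)$ are precisely the sets $\pi^{-1}(D)$ as $D$ ranges over cylinders on $(Y,\eta)$, and in particular every subequivalence class has image a single cylinder on $(Y,\eta)$.

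Finally, for the closure statement let $\bfC = \pi^{-1}(D)$. The map $\pi$ restricts to an unbranched cover $\pi^{-1}(D) \to D$ of the open cylinder $D$, so by continuity of $\pi$ one obtains $\pi^{-1}(\overline{D}) = \overline{\pi^{-1}(D)} = \overline{\bfC}$: any preimage of a boundary point of $D$ is a limit of preimages of interior points of $D$ and therefore lies in $\bigcup_i \overline{C_i}$. The main delicate step in the whole argument is the lifting statement invoked above, namely that $i\gamma_D^* \in T_{(Y,\eta)} \cM_0$ pulls back (up to scalar) to $i \sum_{C \subset \pi^{-1}(D)} \gamma_C^* \in T_{(X,\omega)} \cM$; this is naturality of Poincar\'e duality under $\pi$ combined with the defining property of full cover loci, and it is the key geometric input that makes the base-stratum deformations rich enough to detect non-subequivalence on the cover.
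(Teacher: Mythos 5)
There is a genuine gap, and it sits exactly where the actual difficulty of the lemma lies. Your argument takes for granted that, for a maximal cylinder $D$ on the codomain $(Y,\eta)$, the components of $\pi^{-1}(D)$ are themselves \emph{maximal} cylinders of $(X,\omega)$ (equivalently, that each maximal cylinder upstairs maps onto a single maximal cylinder downstairs, so that ``lying over $D$'' is a well-defined dichotomy). For translation covers in the sense used here this is false in general: a marked point of $(Y,\eta)$ may have unmarked preimages, so a maximal cylinder $C$ upstairs can pass over such a preimage and surject onto a \emph{stack} of several maximal cylinders of $(Y,\eta)$; dually, a component of $\pi^{-1}(D)$ can be a proper subcylinder of a larger maximal cylinder. (This phenomenon is a live concern throughout the paper --- see the warning after Corollary \ref{C:R1R1:Starter} citing Apisa--Wright's Remark 2.1, and the two-simple-cylinders-separated-by-a-marked-Weierstrass-point picture in Theorem \ref{T:Main:Rigid:Simple}.) Once this is in doubt, your first two paragraphs are statements about subcylinders rather than about the maximal cylinders that subequivalence classes actually partition, and your closure step $\overline{\pi^{-1}(D)}=\overline{\bfC}$ presupposes $\bfC=\pi^{-1}(D)$, which is essentially the conclusion. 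A symptom of the gap is that your argument never uses ``rank at least two'' and uses ``cylinder rigid'' only through the existence of subequivalence classes; the lemma is not a formal consequence of the covering structure alone.

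The paper's proof is organized precisely around excluding these failure modes. It perturbs within $\cM$ (using the full-locus-of-covers hypothesis) so that the maximal cylinders in $\pi(\bfC)$ have rationally independent moduli; commensurability of moduli within a subequivalence class then forces every $C\in\bfC$ to surject onto all of $\overline{\pi(\bfC)}$, whose interior is therefore a single cylinder $D$ after forgetting marked points. Rank at least two gives $\overline{D}\ne(Y,q)$ and hence bounds the heights of cylinders in $\bfC$ by twice the height of $D$; a standard dilation of $\bfC$ then produces a maximal cylinder $E\subseteq\pi(\bfC)$ tall enough that $\pi^{-1}(E)$ lies inside $\bigcup\bfC$, and a marked preimage of an internal boundary point of the stack yields a contradiction with the surjectivity established in the first step, forcing $E=D$. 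By contrast, the tangent-vector lifting you single out as ``the main delicate step'' is essentially the definition of a full locus of covers; it does correctly show that the components of $\pi^{-1}(D)$ always have equal heights as subcylinders, but it does not address maximality, which is the real content of the lemma.
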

\begin{proof}
Let $\bfC$ be a subequivalence class of cylinders on $(X, \omega) \in \cM$. Let $\pi: (X, \omega) \ra (Y, q)$ be the $\cM$-minimal (half)-translation cover. Since $\cM$ is a full locus of covers of a stratum, after perturbing, we may assume that the cylinders in $\pi(\bfC)$ have moduli that satisfy no rational linear relation. The moduli of the cylinders in $\bfC$ are integral linear combinations of these moduli. By definition of subequivalence class, all cylinders in $\bfC$ have commensurable moduli. Since the preimage of each cylinder in $\pi(\bfC)$ appears as a subcylinder of some cylinder in $\bfC$, it follows that for each cylinder $C$ in $\bfC$, $\pi: \overline{C} \ra \overline{\pi(\bfC)}$ is a surjection. 

In particular, the interior of $\overline{\pi(\bfC)}$ is a cylinder (although not necessarily a maximal cylinder) after forgetting marked points. Call this cylinder $D$ and let $h$ denote its height. Without loss of generality, after perhaps perturbing, $D$ is generic and so, since $\cM$ has rank at least two, $\overline{D} \ne (Y, q)$. Since each cylinder in $\bfC$ surjects onto $D$ and since $\overline{D} \ne (Y, q)$, each cylinder in $\bfC$ has height $h$ or $2h$ (the second possibility may occur when $D$ is an envelope). 


Let $h_0$ denote the height of the tallest cylinder parallel to $\bfC$ but not contained in it. By applying a standard dilation to $\bfC$ we may assume that $\frac{h}{n} > h_0$ where $n$ is the number of cylinders in $\pi(\bfC)$. This implies that there is a cylinder $E$ in $\pi(\bfC)$ whose height is larger than $h_0$ and hence so that $\pi^{-1}(E) \subseteq \bfC$. It suffices to show that $E = D$. If this is not the case, then there is a point $p$ on the boundary of $E$ that lies in the interior of $D$. Without loss of generality suppose that $p$ lies on the top boundary of $E$. 

There is at least one point $P$ in $\pi^{-1}(p)$ that is marked. Let $E'$ be a component of $\pi^{-1}(E)$ so that $P$ belongs to its top boundary. Let $C$ be the cylinder in $\bfC$ containing $E'$. Since the top boundary of $C$ contains $P$, it follows that $\pi(C)$ only contains points in $D$ that lie below $p$. This contradicts the fact that $\pi: \overline{C} \ra \overline{\pi(C)}$ is a surjection. Therefore, $E = D$ as desired. 

To summarize, $\overline{E} = \overline{\pi(\bfC)}$ and $\pi^{-1}(E) \subseteq \bfC$. Therefore, $\pi^{-1}(\pi(\overline{\bfC})) = \overline{\bfC}$. Moreover, for each cylinder $C \in \bfC$, the interior of $\pi(C)$ is $D$.
\end{proof}

We will now require a slight modification of the definition of $\cM$-minimal. 

\begin{defn}\label{D:M-littlest}
Let $\cM$ be an invariant subvariety containing connected surfaces. Given $(X, \omega)$ in $\cM$ the \emph{$\cM$-littlest translation cover} will be defined to be the $\cM$-minimal translation cover except in the case that $\cM_{min}$ is an Abelian double of $\cH(0,0)$, in which case, it will be defined to be the composition of the $\cM$-minimal translation cover with the double cover. 

Now suppose that $\cM \subseteq \cH_1 \times \hdots \times \cH_n$ is an invariant subvariety where $\cH_i$ is a connected component of a stratum for all $i$. Let $\cM_i$ denote the closure of the projection of $\cM$ onto the $i$th component (as in Chen-Wright \cite[Theorem 1.3 (1)]{ChenWright}). If $(X, \omega) \in \cM$ we will say that the $\cM$-littlest translation cover is the  map whose codomain also has $n$ connected components and so that its restriction to the $i$th component is simply the $\cM_i$-littlest translation cover whose domain is that component. 

Define $\cM_{lit}$ to be the collection of codomains of $\cM$-littlest maps with domain in $\cM$.
\end{defn}

The following result is a strengthening of Theorem \ref{T:Main:Rigid:Simple} and the remainder of the section will be devoted to its proof.

\begin{thm}\label{T:Main:Rigid}
If $\cM$ is a cylinder rigid invariant subvariety of dimesion at least three in a codimension zero or one hyperelliptic locus, then the following hold.
\begin{enumerate}
    \item\label{I:FullLocus} $\cM_{lit}$ is a quadratic double of a genus zero stratum with at most two zeros.
    \item\label{I:PiMinDegen} If $\cM$ is at least four-dimensional, $\bfC$ is a generic subequivalence class of cylinders on $(X, \omega) \in \cM$, and $\pi$ is the $\cM$-littlest translation cover, then, when restricted to any component of $\Col_{\bfC}(X, \omega)$, $\Col_{\bfC}(\pi)$ is the $\cM_{\bfC}$-littlest translation cover. 
\end{enumerate}
\end{thm}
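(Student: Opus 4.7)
I will prove the theorem by strong induction on $\dim \cM$. The rank one cases serve as base cases handled directly by Proposition \ref{P:Main:Rk1}: in each subcase, $\cM_{lit}$ (or $(\cM^{WP})_{lit}$) is a quadratic double of $\cQ(0, -1^4)$ or $\cQ(0^2, -1^4)$, or equals $\cH(0,0)$ (itself the quadratic double of $\cQ(-1^4, 0)$), and the structural description of generic subequivalence classes given there suffices for part (\ref{I:PiMinDegen}). Thus I may assume $\mathrm{rank}(\cM) \geq 2$, and in particular $\dim \cM \geq 4$, for the inductive step.

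Fix $(X, \omega) \in \cM$ and, after perturbing if necessary, a generic subequivalence class $\bfC$ on $(X, \omega)$. By Apisa-Wright \cite[Lemma 6.5]{ApisaWrightHighRank} and Apisa \cite[Proposition 4.12]{Apisa-MHD}, $\cM_\bfC$ is cylinder rigid of dimension $\dim \cM - 1 \geq 3$, and by Proposition \ref{P:HypBdry} the closure of each projection of $\cM_\bfC$ onto a connected component lies in a codimension at most one hyperelliptic locus. By the induction hypothesis, each such projection is a full locus of covers of a quadratic double of a genus zero stratum with at most two zeros. Projecting each component to its corresponding genus zero stratum of quadratic differentials via the hyperelliptic quotient, Proposition \ref{P:Joinings:Rigid} applies (the remaining hypotheses of primality and dense projection follow from Apisa-Wright \cite[Lemma 9.1]{ApisaWrightHighRank} and the construction of $\cM_\bfC$): up to rescaling components, the resulting configuration is a diagonal embedding into a product of copies of a single genus zero stratum. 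Lifting back through the holonomy double covers yields a single well-defined $\cM_\bfC$-littlest cover $\pi_\bfC : \Col_\bfC(X, \omega) \to (Z, q_Z)$ where $(Z, q_Z)$ is a quadratic double of that stratum.

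The central step is to lift $\pi_\bfC$ to a cover $\pi: (X, \omega) \to (Y, q)$ satisfying $\Col_\bfC(\pi) = \pi_\bfC$, with $(Y, q)$ again a quadratic double of a genus zero stratum with at most two zeros. By Lemma \ref{L:Well-behaved}, the cylinders in $\bfC$ are either all of a common height $h$, or contain IET pairs of height $h/2$ together with free cylinders of height $h$. By Lemma \ref{L:CRCover:PreimageOfImage} applied to $\cM_\bfC$, $\overline{\Col_\bfC(\bfC)}$ is the full preimage under $\pi_\bfC$ of its image. Together these determine uniquely how to reinsert a cylinder into $(Z, q_Z)$ to recover $(Y, q)$: a simple cylinder if $\bfC$ contains no IET pair, or a simple envelope whose common boundary carries a new Weierstrass marked point if $\bfC$ contains an IET pair (following the construction in the proof of Theorem \ref{T:Main}). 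The resulting $(Y, q)$ remains a quadratic double of a genus zero stratum, with the two-zero bound preserved by Lemma \ref{L:Codim=Zeros} applied to the codimension at most one hyperelliptic locus $\cL$ containing $\cM$. Part (\ref{I:FullLocus}) follows, and part (\ref{I:PiMinDegen}) is immediate from the construction since $\Col_\bfC(\pi) = \pi_\bfC$ is $\cM_\bfC$-littlest by the induction hypothesis.

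The main obstacle I expect is controlling the number of distinct zero orbits on $(Y, q)$ after reinsertion, since reinserting a cylinder could in principle split a single zero orbit or add new ones, threatening the two-zero cap. Controlling this requires careful accounting via Lemma \ref{L:Codim=Zeros}: since $\cL$ has codimension at most one and its quotient has at most two non-pole, non-regular orbits of marked points, the same must hold for $(Y, q)$. A secondary subtlety is that Proposition \ref{P:Joinings:Rigid} requires $\dim \cM_\bfC \geq 3$, which matches exactly the $\dim \cM \geq 4$ hypothesis of part (\ref{I:PiMinDegen}); careful bookkeeping will be needed to confirm the argument remains valid at this boundary of applicability.
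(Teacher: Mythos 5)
Your overall framework (induction on $\dim\cM$ with Proposition \ref{P:Main:Rk1} as base case, passing to $\cM_{\bfC}$, and invoking Proposition \ref{P:Joinings:Rigid} to identify the components of the boundary codomain) matches the paper through the construction of $\pi_{\bfC}$, which is essentially Lemma \ref{L:SidesAreCovers}. The proof breaks down at what you call the central step.

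The gap is the claim that one can ``lift $\pi_\bfC$ to a cover $\pi: (X,\omega) \to (Y,q)$ satisfying $\Col_\bfC(\pi) = \pi_\bfC$'' by reinserting a cylinder into the codomain $(Z, q_Z)$. Knowing a covering map on the degeneration $\Col_{\bfC}(X,\omega)$ does not by itself produce one on $(X,\omega)$: the cylinders in $\bfC$ carry gluing and twist data (how their boundaries attach to the complement) that must be shown compatible with the covering map on $(X,\omega) - \bfC$, and when $\Col_{\bfC}(X,\omega)$ is disconnected the covers on the separate components must be matched coherently across $\bfC$. This is precisely what the diamond lemma (Apisa-Wright \cite[Lemma 2.3]{ApisaWrightDiamonds}) is built to do, and it is why the paper works with a \emph{generic diamond} $((X,\omega), \cM, \bfC_1, \bfC_2)$: two transverse degenerations whose partial covers $\pi_1$, $\pi_2$ are glued only after verifying the nontrivial compatibility $\Col(\pi_1) = \Col(\pi_2)$ (Lemma \ref{L:AgreementAtBase}, and in the rank two rel zero case the delicate Sublemma \ref{SL:Subtle}, which requires a labelling argument to rule out an affine symmetry of the common codomain intertwining the two maps — having isomorphic codomains and equal fibers is not enough). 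The construction you cite from the proof of Theorem \ref{T:Main} does not transfer: there the extension over a single equivalence class is possible only because of the very specific rel-vector structure of Lemma \ref{L:RelInEC} in the \emph{non}-cylinder-rigid setting, and it presupposes Theorem \ref{T:Main:Rigid:Simple} rather than proving it. You have also identified the wrong obstacle: the zero count on $(Y,q)$ is handled easily once $\pi$ exists (the surfaces in $\cM$ are hyperelliptic, so the codomain stratum is forced to be genus zero with the zero bound coming from Lemma \ref{L:Codim=Zeros}); the real difficulty is the existence of $\pi$ at all. Finally, the four-dimensional case cannot be absorbed into the generic inductive step: the paper devotes Lemma \ref{L:I:FullLocus:BaseCase}, Corollary \ref{C:MinimalCoverCP}, and Lemma \ref{L:PiMinDegenBaseCase} to it because the sides of the diamond there are rank one loci where the $WP$-marking and the ``good map'' analysis are needed to verify both Assumption CP and that $\Col_{\bfC}(\pi)$ is actually the littlest cover rather than a proper factor of it.
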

\begin{rem}\label{R:PreimageOfImage}
Note that in \eqref{I:PiMinDegen}, for $\Col_{\bfC}(\pi)$ to be defined we must have that the closure of every generic subequivalence class $\bfC$ is the preimage of its image under $\pi$ (see Apisa-Wright \cite[Lemma 2.2]{ApisaWrightDiamonds}). In the case that $\cM$ has rank at least two, this follows from \eqref{I:FullLocus} and Lemma \ref{L:CRCover:PreimageOfImage}. When $\cM$ is rank one and four-dimensional, this follows from Proposition \ref{P:Main:Rk1}.
\end{rem}

First we will deduce Theorem \ref{T:Main:Rigid:Simple}. 



\begin{proof}[Proof of Theorem \ref{T:Main:Rigid:Simple} assuming Theorem \ref{T:Main:Rigid}:]
The claim that $\cM$ is a full locus of covers of a genus zero stratum with at most two zeros is Theorem \ref{T:Main:Rigid} \eqref{I:FullLocus}. The claim about the image of a generic subequivalence class under the $\cM$-minimal map follows from Lemma \ref{L:CRCover:PreimageOfImage}.
\end{proof}

\begin{proof}

We will proceed by induction on the dimension of $\cM$. The base case is Proposition \ref{P:Main:Rk1}. The following result will power the induction.

\begin{lem}\label{L:SidesAreCovers}
If $\bfC$ is a generic subequivalence class of cylinders on $(X, \omega) \in \cM$, then $\cM_{\bfC}$ is a full locus of covers of a genus zero stratum. Moreover, letting $(\cM_{\bfC})_i$ denote the closure of the projection of $\cM_{\bfC}$ onto the $i$th component of $\Col_{\bfC}(X, \omega)$ (as in Definition \ref{D:M-littlest}), the codomain of the $(\cM_{\bfC})_i$-littlest translation cover is independent of $i$ up to forgetting fixed points of the hyperelliptic involution. 
\end{lem}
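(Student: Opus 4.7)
The plan is to apply the inductive hypothesis of Theorem \ref{T:Main:Rigid} to the boundary $\cM_{\bfC}$ componentwise and then splice the results together using the joining Proposition \ref{P:Joinings:Rigid}.

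First I collect structural facts. Since $\bfC$ is a generic subequivalence class inside a cylinder rigid invariant subvariety, $\cM_{\bfC}$ has dimension exactly one less than $\cM$ (by Apisa-Wright \cite{ApisaWrightHighRank}), is cylinder rigid (by Apisa \cite{Apisa-MHD} Proposition 4.12), and is prime (by Apisa-Wright \cite{ApisaWrightHighRank} Lemma 9.1). Each component factor $(\cM_{\bfC})_i$ lies in a codimension zero or one hyperelliptic locus by Proposition \ref{P:HypBdry}. Because only pairs of cylinders exchanged by the hyperelliptic involution disconnect the surface when collapsed, $\cM_{\bfC}$ has at most two components; when it has two, Lemma \ref{L:SameConstants} guarantees they share a common rank, rel, and dimension. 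If $\cM_{\bfC}$ is connected, the argument collapses to a single application of the induction below.

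Next I apply the inductive hypothesis componentwise. Whenever $(\cM_{\bfC})_i$ has dimension at least three, Theorem \ref{T:Main:Rigid}(1) identifies $((\cM_{\bfC})_i)_{lit}$ with a quadratic double of a genus zero stratum $\cQ_i$ carrying at most two zeros, so each $(\cM_{\bfC})_i$ is a full locus of covers. The low-dimensional remainders arise only when $\dim \cM = 3$; these are all handled by Proposition \ref{P:Main:Rk1} in the rank one case and, in the rank two rel zero case, by the geminal classification of Apisa-Wright \cite{ApisaWrightGeminal} together with Mirzakhani-Wright \cite{MirWri}. In every such case $(\cM_{\bfC})_i$ is again a full locus of covers of a genus zero stratum with at most two zeros.

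To promote the componentwise statements to $\cM_{\bfC}$ itself and to match the two codomains, I compose with the product of the littlest covers and consider the image of $\cM_{\bfC}$ in $\cQ_1 \times \cQ_2$. This image is cylinder rigid (Apisa \cite{Apisa-MHD} Proposition 4.12), prime (because $\cM_{\bfC}$ is prime), and surjects onto each factor by construction. When it has dimension at least three, Proposition \ref{P:Joinings:Rigid} forces $\cQ_1 = \cQ_2$ and identifies the joining with a diagonal embedding up to rescaling components, so $\cM_{\bfC}$ is a full locus of disconnected covers of $\cQ_1$ and the two codomains coincide. The residual two-dimensional joinings are of the shape described in Remark \ref{R:Counterexample1}, for which $\cQ_1$ and $\cQ_2$ already agree after forgetting marked fixed points of the hyperelliptic involution. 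The main obstacle I expect is the careful bookkeeping around marked points: the two components of $\Col_{\bfC}(X, \omega)$ may carry different sets of marked Weierstrass points depending on the local topology near the collapsed pair, so a priori the two $((\cM_{\bfC})_i)_{lit}$ codomains can disagree at such points. The ``up to forgetting fixed points of the hyperelliptic involution'' clause is designed to absorb exactly this slack, and verifying it cleanly requires revisiting Definition \ref{D:M-littlest} on each component to confirm that any disagreement is confined to Weierstrass-type marked points.
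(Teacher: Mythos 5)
Your overall architecture matches the paper's: apply the induction hypothesis to each factor of $\cM_{\bfC}$, then use Proposition \ref{P:Joinings:Rigid} to identify the factors. However, there are two genuine gaps in the splicing step. First, you assert that the image of $\cM_{\bfC}$ in $\cQ_1\times\cQ_2$ is cylinder rigid by citing Apisa \cite{Apisa-MHD} Proposition 4.12, but that result says the \emph{boundary} $\cM_{\bfC}$ is cylinder rigid; it says nothing about the pushforward of $\cM_{\bfC}$ under the littlest covers. This is a point that must actually be proved: one has to show that for a generic equivalence class $\bfA$ on a surface in $\cM_{\bfC}$, the image $\bfB=\pi(\bfA)$ under the littlest cover is itself a subequivalence class downstairs. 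The paper does this by using the induction hypothesis (in the form $\overline{\bfA}=\pi^{-1}(\overline{\bfB})$, cf.\ Remark \ref{R:PreimageOfImage}) to show that shearing $\bfB$ while fixing the rest of the codomain lifts to an allowed deformation in $\cM_{\bfC}$, so that $\sigma_{\bfB}$ lies in the tangent space of $(\cM_{\bfC})_{lit}$ and the moduli in $\bfB$ are commensurable. Without some such argument the hypotheses of Proposition \ref{P:Joinings:Rigid} are not verified.

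Second, Proposition \ref{P:Joinings:Rigid} only gives a diagonal embedding \emph{up to rescaling the components}, and you jump from that to ``the two codomains coincide.'' That jump is exactly what needs proof, and it occupies the last third of the paper's argument: one takes a subequivalence class $\bfC'$ disjoint from $\bfC$ (sharing no boundary saddle connections with it when $\cM$ has rank at least two), invokes Lemma \ref{L:Well-behaved} to conclude that all cylinders of $\Col_{\bfC}(\bfC')$ have a common height $h$ (treating an IET-glued pair as a single cylinder), and deduces that the image cylinder $D_i$ on the $i$th codomain has height $h$ (if a simple cylinder) or $h/2$ (if a simple envelope) uniformly in $i$; Sublemma \ref{SL:Joinings:SameType} guarantees the type is the same for all $i$, which pins down the scaling. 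The rank one rel two case, where $\bfC'$ may acquire an IET-gluing only after collapsing $\bfC$, is handled separately via Proposition \ref{P:Main:Rk1} (there $\Col_{\bfC}(X,\omega)$ is connected, so nothing to compare). Relatedly, your appeal to Remark \ref{R:Counterexample1} for ``residual two-dimensional joinings'' is misplaced: that remark is a counterexample to the diagonal conclusion in dimension two, so it cannot be used to conclude agreement there; fortunately the case does not arise, since $(\cM_{\bfC})_{lit}$ has the same dimension as $\cM_{\bfC}$, which is at least three in every application of this lemma.
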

\begin{proof}
Suppose that $\cM_\bfC \subseteq \cH_1 \times \hdots \times \cH_n$ where $\cH_i$ is a component of a stratum. Since $\cM_{\bfC}$ is prime (by Apisa-Wright \cite[Lemma 9.1]{ApisaWrightHighRank}) and cylinder rigid (by Apisa \cite[Proposition 4.12]{Apisa-MHD}), it follows that the rank, rel, and dimension of $(\cM_{\bfC})_i$ is independent of $i$ (by Lemma \ref{L:SameConstants}).


Since $\cM_{\bfC}$ has dimension exactly one less than $\cM$ (by Apisa-Wright \cite[Lemma 6.5]{ApisaWrightHighRank}), the same holds for $(\cM_{\bfC})_i$ for all $i$. In particular, each $(\cM_{\bfC})_i$ has dimension at least three and hence, by the induction hypothesis, specifically by \eqref{I:FullLocus}, $((\cM_{\bfC})_i)_{lit}$ is a locus $\wt{\cQ}_i$ of holonomy double covers of a genus zero stratum $\cQ_i$. Define the \emph{$\cM_{\bfC}$-littlest half-translation cover} to be the composition of the $\cM_{\bfC}$-littlest translation cover with the holonomy double cover on each component.


\begin{sublem}
$(\cM_{\bfC})_{lit} \subseteq \wt{\cQ}_1 \times \hdots \times \wt{\cQ}_n$ is a cylinder rigid prime invariant subvariety whose projection to each factor is dense.
\end{sublem}
\begin{proof}
We have already shown that $(\cM_{\bfC})_{lit}$ projects densely onto each factor. The fact that it is prime follows from the fact that $\cM_{\bfC}$ is prime. Let $\bfA$ be a generic equivalence class of horizontal cylinders on a surface $(Y, \eta) \in \cM_{\bfC}$. Let $\pi: (Y, \eta) \ra (Z, \zeta)$ be the $\cM_{\bfC}$-littlest translation cover. To show that $(\cM_{\bfC})_{lit}$ is cylinder rigid it will suffice to show that $\bfB := \pi(\bfA)$ is a subequivalence class. 

Let $(Y_t, \eta_t)$ (resp. $(Z_t, \zeta_t)$) be the result of applying the matrix $u_t := \begin{pmatrix} 1 & t \\ 0 & 1 \end{pmatrix}$ to $\bfA$ (resp. $\bfB$) while fixing the rest of the surface.  By the induction hypothesis (see Remark \ref{R:PreimageOfImage}) $\overline{\bfA} = \pi^{-1}(\overline{\bfB})$. This implies that the $\cM_{\bfC}$-littlest map $\pi_t: (Y_t, \eta_t) \ra (Z_t, \zeta_t)$ is defined piecewise as $\pi$ on $(Y_t, \eta_t) - \overline{\bfA}$ and the composition of $u_t$ and $\pi$ on $\overline{\bfA}$. In particular, this implies that the path $(Z_t, \zeta_t)$ belongs to $(\cM_{\bfC})_{lit}$ and hence that the standard shear in $\bfB$ belongs to the tangent space. Since the cylinders in $\bfB$ all have commensurable moduli (since this is true of cylinders in $\bfA$), it follows that $\bfB$ is a subequivalence class. 
\end{proof}


Therefore, up to rescaling the components of the surfaces in it, $(\cM_{\bfC})_{lit}$ is the holonomy double cover of the diagonal embedding of $\cQ$ into a product $\cQ \times \hdots \times \cQ$ ($n$ copies) where $\cQ$ is a genus zero stratum (by Proposition \ref{P:Joinings:Rigid}). 

It remains to show that $(\cM_{\bfC})_{lit}$ is a diagonal embedding without rescaling any component of $\Col_{\bfC}(X, \omega)$. Let $\bfC'$ be any subequivalence class disjoint from $\bfC$. Let $\pi$ be the $\cM$-littlest translation cover on $\Col_{\bfC}(X, \omega)$. Let $\Col_{\bfC}(\bfC')_i$ be the cylinders in $\Col_{\bfC}(\bfC')$ on the $i$th component and let $D_i$ be their image under the $\cM_{\bfC}$-littlest half-translation cover. It suffices to show that the height of $D_i$ is independent of $i$. 

Suppose first that two cylinders in $\Col_{\bfC}(\bfC')$ are glued along an IET if and only if the same two cylinders are glued along an IET in $\bfC'$ (we can always arrange for this to be the case if $\cM$ has rank at least two, see Apisa-Wright \cite[Lemma 3.31]{ApisaWrightDiamonds}, by finding $\bfC'$ that shares no boundary saddle connections with cylinders in $\bfC$). By Lemma \ref{L:Well-behaved}, any two cylinders in $\Col_{\bfC}(\bfC')$ are well-behaved. In other words, there is a constant $h$ so that every cylinder in $\Col_{\bfC}(\bfC')$ has height $h$ (provided that we pretend that two cylinders glued along an IET are a single cylinder). This shows that $\pi(\bfC_i)$ consists of cylinders of height $h$ (again pretending that two cylinders glued along an IET are a single cylinder). In particular, if $D_i$ is a simple cylinder then it has height $h$ and if it is a simple envelope it has height $\frac{h}{2}$. Since every $D_i$ is a simple envelope or every $D_i$ is a simple cylinder, the result follows.

Now suppose that $\cM$ has rank one rel two and that two cylinders in $\bfC'$ are not glued along an IET, but become so in $\Col_{\bfC}(\bfC')$. By Proposition \ref{P:Main:Rk1}, $\bfC'$ consists of two cylinders exchanged by the hyperelliptic involution and so $\cM_{\bfC}$ is connected, so there is nothing to prove. This completes the proof.
%
%
%
\end{proof}

\begin{lem}\label{L:I:FullLocus:BaseCase}
If $\cM$ has rank two rel zero, then it is a full locus of covers of $\cH(2)$ or of the locus in $\cH(2, 0)$ with a Weierstrass point marked.
\end{lem}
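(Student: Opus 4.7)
The plan is to treat rank two rel zero as a base case (it has complex dimension only two, below the ``dimension at least three'' threshold in Theorem \ref{T:Main:Rigid}) by directly constructing a translation cover from surfaces in $\cM$ to a genus-two target. The construction degenerates $\cM$ to a rank one rel zero boundary and extends the resulting cover back across the collapsed cylinders.

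First I would fix a horizontally periodic cylindrically stable surface $(X, \omega) \in \cM$ via Lemma \ref{L:MovingToCS}. Since rank plus rel equals two, its horizontal cylinders partition into exactly two generic subequivalence classes $\bfC_1, \bfC_2$. Collapsing $\bfC := \bfC_1$ drops the complex dimension by one (Apisa-Wright \cite[Lemma 6.5]{ApisaWrightHighRank}), producing a rank one rel zero boundary $\cM_{\bfC}$ which sits in a codimension at most one hyperelliptic locus (Proposition \ref{P:HypBdry}) and is automatically cylinder rigid (Apisa \cite[Lemma 4.2]{Apisa-MHD}). A rank one rel zero cylinder rigid invariant subvariety is a Teichm\"uller curve on each component; combined with the hyperelliptic constraint and the classification of low-dimensional geminal loci in Apisa-Wright \cite[Proposition 6.5]{ApisaWrightGeminal} (applied to $\cM_{\bfC}^{WP}$ if needed), each component is forced to be $\cH(\emptyset)$, its Abelian double, or a quadratic double of $\cQ(-1^4)$.

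Second, I would extend the $\cM_{\bfC}$-littlest translation cover $\pi_{\bfC}:\Col_{\bfC}(X, \omega) \to (Y_0, \eta_0)$ back across $\bfC$, following the extension argument in the proof of Theorem \ref{T:Main} above. The cylinders in $\bfC$ have commensurable heights (a consequence of cylinder rigidity and the hyperelliptic structure), so inserting a single cylinder $D_0$ into $(Y_0, \eta_0)$ at the collapsed location yields a genus-two target $(Y, \eta)$ and an extended translation cover $\pi:(X, \omega) \to (Y, \eta)$. By Riemann-Hurwitz together with the hyperelliptic structure $(Y, \eta)$ has genus two, and by the rank two rel zero constraint the only possibilities are $(Y, \eta) \in \cH(2)$ (when $(Y_0, \eta_0)$ has no marked points beyond the torus two-torsion) or $(Y, \eta) \in \cH(2, 0)^{WP}$ (when a marked preimage of a pole on $(Y_0, \eta_0)$ lifts to a Weierstrass point on $(Y, \eta)$, forced by the hyperelliptic involution). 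Cylinder rigidity of $\cM$ then ensures that $\pi$ varies continuously with $(X, \omega)$, assembling $\cM$ into a full locus of covers.

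The principal obstacle is the extension step: constructing $\pi$ rigorously requires matching the heights of cylinders in $\bfC$ with the height of the inserted cylinder $D_0$, and ruling out pathological cylinder graph configurations that would force $(Y, \eta)$ into a stratum other than $\cH(2)$ or $\cH(2, 0)^{WP}$. This is controlled by the combinatorial framework of Section \ref{S:PeriodicStructure} and by a case analysis on whether $(Y_0, \eta_0)$ carries a marked preimage of a pole, with the cases corresponding precisely to the two possible targets named in the statement.
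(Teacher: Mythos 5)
There is a genuine gap, and also an error in the setup. First, the error: collapsing a generic subequivalence class $\bfC$ of a rank two rel zero $\cM$ (dimension four) produces a boundary $\cM_{\bfC}$ of dimension three and rank one, hence \emph{rel one} --- not rel zero. The components of $\cM_{\bfC}$ are therefore not Teichm\"uller curves; they are the three-dimensional loci classified by Proposition \ref{P:Main:Rk1}, namely $\cH(0,0)$, an Abelian double of $\cH(0,0)$, or a quadratic double of $\cQ(0,-1^4)$ with one preimage of a pole marked. Your list of targets omits the extra marked point throughout, which matters: the dichotomy $\cH(2)$ versus $\cH(2,0)$ with a marked Weierstrass point is governed precisely by whether some transverse equivalence class contains two cylinders glued along an IET, i.e.\ by which of these three rel one loci appears in the boundary.

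The deeper gap is the extension step, which you correctly flag as the principal obstacle but do not resolve. A single cylinder collapse does not determine a translation cover on $(X,\omega)$: to insert a cylinder $D_0$ into $(Y_0,\eta_0)$ and extend $\pi_{\bfC}$ across $\bfC$, you must already know that $\overline{\bfC}$ is the full preimage of a single simple cylinder (or of two simple cylinders glued along an IET with only a Weierstrass point on the common boundary) and that the heights match --- but this is essentially the ``Moreover'' clause of Theorem \ref{T:Main:Rigid:Simple}, i.e.\ part of what is being proved, so the argument is circular as sketched. (The single-degeneration extension in the proof of Theorem \ref{T:Main} works only because there the collapsed class is controlled by a rel vector via Lemma \ref{L:RelInEC}; here rel is zero and no such control is available.) The paper instead uses a \emph{generic diamond}: two transverse generic subequivalence classes $\bfC_1,\bfC_2$, the verification that each $(\cM^{WP})_{\bfC_i^{WP}}$ satisfies Assumption CP (Sublemma \ref{SL:ACP1}), and --- the delicate point --- the equality $\Col(\pi_1^{WP})=\Col(\pi_2^{WP})$ of the two induced covers at the bottom of the diamond (Sublemma \ref{SL:Subtle}, proved via a labelling argument; note that Apisa-Wright only give agreement of fibers componentwise, not equality of maps). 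Only then does the diamond lemma of Apisa-Wright reconstruct a cover on $\cM^{WP}$ itself. Your proposal would need to supply a substitute for this entire compatibility mechanism, and no such substitute is indicated.
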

\begin{proof}

Let $((X, \omega), \cM, \bfC_1, \bfC_2)$ be a generic diamond (this exists by Apisa-Wright \cite[Lemma 3.31]{ApisaWrightDiamonds}). Without loss of generality, suppose that $\bfC_1$ (resp. $\bfC_2$) is horizontal (resp. vertical) and, after perhaps applying the standard shear, contains a vertical (resp. horizontal) saddle connection. By Apisa \cite[Lemma 9.1]{Apisa2} this implies that there is a second horizontal (resp. vertical) equivalence class $\bfC_1'$ (resp. $\bfC_2'$). See the top surface in Figure \ref{F:LabellingScheme}.

\begin{sublem}
If none of the equivalence classes $\bfC_1, \bfC_1', \bfC_2, \bfC_2'$ contain two cylinders glued together along an IET then $\cM_{min} = \cH(2)$.
\end{sublem}
\begin{proof}
Suppose that none of the equivalence classes $\bfC_1, \bfC_1', \bfC_2, \bfC_2'$ contain two cylinders glued together along an IET. By Lemma \ref{L:Rigid:NotAdjacent}, every cylinder in $\ColTwo(\bfC_1)$ (resp. $\ColOne(\bfC_2)$) only borders cylinders in $\ColTwo(\bfC_1')$ (resp.  $\ColOne(\bfC_2')$). 

By Proposition \ref{P:Main:Rk1}, for each component $j$ of $\Col_{\bfC_i}(X, \omega)$, $(\cM_{\bfC_i})_j$ is a full locus of covers of one of the following: a quadratic double of $\cQ(-1^4, 0)$ with the preimage of exactly one pole marked, $\cH(0,0)$, or an Abelian double of $\cH(0,0)$. In the first case, every cylinder direction comprised of two subequivalence classes contains two distinct subequivalent cylinders glued together along an IET. Our assumptions, imply that this case does not occur. Therefore, by Lemma \ref{L:SidesAreCovers}, $\cM_{\bfC_i}$ is a full locus of covers of a diagonal embedding of $\cH(0,0)$ with the covering map being given by the $\cM_{\bfC_i}$-littlest map. Let $\pi_i$ denote the map whose domain is $\Col_{\bfC_i}(X, \omega)$ that is a composition of the $\cM_{\bfC_i}$-littlest map, whose codomain may have multiple components, with the map that identifies all these components with each other. This map satisfies Assumption CP by Proposition \ref{P:Main:Rk1} \eqref{I:P:Main:Rk1:Rel=1:NoIETCase}. We are done by Apisa-Wright \cite[Lemma 8.31]{ApisaWrightDiamonds}.
%
%
\end{proof}



\begin{sublem}
Without loss of generality $\bfC_2'$ contains two cylinders glued together along an IET.
\end{sublem}
\begin{proof}
We have already seen that we may suppose that at least one of $\bfC_1, \bfC_1', \bfC_2, \bfC_2'$ contains two cylinders glued together along an IET. If this collection is either $\bfC_1'$ or $\bfC_2'$ then up to swapping indices we are done. Suppose therefore, that, up to swapping indices, $\bfC_1$ contains two cylinders glued together along an IET. 

Since $\bfC_1$ contains two cylinders glued together along an IET, $\ColTwo(\bfC_1)$ does as well. Let $(X', \omega')$ denote this component of $\ColTwoX$ and $\cM'$ the projection of $\MTwo$ to the factor corresponding to $(X', \omega')$. By Proposition \ref{P:Main:Rk1} \eqref{I:P:Main:Rk1:Rel=1:IETCase}, $(X', \omega')$ covers a surface $(Y, \eta)$ that is the holonomy double cover of a surface in $\cQ(0, -1^4)$ with the preimage, call it $p$, of exactly one pole (and both preimages of the marked point) marked.  Under this map, $\ColTwo(\bfC_1)$ is sent to two simple horizontal cylinders glued together along an IET and whose boundaries contain $p$. By Proposition \ref{P:Main:Rk1} \eqref{I:P:Main:Rk1:Rel=1:IETCase}, $\overline{\ColTwo(\bfC_1)}$ is the full preimage of its image on $(Y, \eta)$, and so no saddle connection in $\ColTwo(\bfC_2)$ projects to one on $(Y, \eta)$ with $p$ as an endpoint. 


Therefore, after perhaps applying the standard shear in $\bfC_1$ we can ensure that $(Y, \eta)$ is covered by two vertical cylinders that are exchanged by the holonomy involution on $(Y, \eta)$, call them $V_1$ and $V_2$. In other words, after shearing the cylinders in $\bfC_1$, we may assume that the image of $\ColTwo(\bfC_2')$ on $(Y, \eta)$ consists of $V_1$ and $V_2$, both of which have a boundary component consisting of a saddle connection $s$ joining $p$ to itself. We have already observed that the saddle connections in the preimage of $s$ on $(X', \omega')$ contain no saddle connections in $\ColTwo(\bfC_2)$ and so there are two cylinders in $\bfC_2'$ that are glued together along an IET. 
\end{proof}



By Lemma \ref{L:SidesAreCovers}, $(\cM_{\bfC_i})$ is a full locus of covers of $\cQ(-1^4, 0)$. Let 
\[ \pi_i: \Col_{\bfC_i}(X, \omega) \ra (Z_i, \zeta_i) \]
be the covering map and $\pi_i^{WP}$ the same map with domain $\Col_{\bfC_i}(X, \omega)^{WP}$.

\begin{sublem}\label{SL:ACP1}
$(\cM^{WP})_{\bfC_i^{WP}}$ is a full locus of covers of $\cQ(-1^4, 0)$ satisfying Assumption CP. 
\end{sublem}

Recall that given a collection of cylinders $\bfC$ on $(X, \omega)$, $\bfC^{WP}$ refers to the corresponding cylinders on $(X, \omega)^{WP}$ (see Definition \ref{D:WP}).

\begin{proof}
By assumption, $\bfC_2'$ contains two cylinders that are generically glued together along an IET. Call them $\bfD$. Let $(Y, \eta)$ denote the component of $\ColOneX$ containing $\Col_{\bfC_1}(\bfD)$. By Proposition \ref{P:Main:Rk1} \eqref{I:P:Main:Rk1:Rel=1:IETCase}, the $(\cM^{WP})_{\bfC_1^{WP}}$-littlest map restricted to $(Y, \eta)$ satisfies Assumption CP. 

Let $(Y', \eta')$ be any component of $\ColOneX$ distinct from $(Y, \eta)$. Let $C'$ be any cylinder on $(Y', \eta')$. By Mirzakhani-Wright \cite[Lemma 2.15]{MirWri}, there is a cylinder $C$ on $(X, \omega)$ in the complement of $\bfC_1$ so that $\ColOne(C) = C'$. Since $C$ belongs to the complement of $\bfD$ (and since $\cM$ is contained in a codimension at most one hyperelliptic locus), it follows that $C$ and hence also $C'$ is fixed by the hyperelliptic involution (see for example Lemma \ref{L:CutReglue}). Therefore, we have shown that every cylinder on $(Y', \eta')$ is fixed by the hyperelliptic involution. 

By Proposition \ref{P:Main:Rk1} \eqref{I:P:Main:Rk1:Rel=1:NoIETCase}, the $\cM_{\bfC_1}$-littlest map restricted to $(Y', \eta')$ satisfies Assumption CP. Since each cylinder on $(Y', \eta')$ is fixed by the hyperelliptic involution, the $(\cM^{WP})_{\bfC_1^{WP}}$-littlest map restricted to $(Y, \eta)$ also satisfies Assumption CP. Therefore, $\pi_i^{WP}$ also satisfies Assumption CP as desired.
\end{proof}



\begin{sublem}\label{SL:Subtle}
$\Col(\pi_1^{WP}) = \Col(\pi_2^{WP})$.
\end{sublem}  
By Apisa-Wright \cite[Theorem 8.2]{ApisaWrightDiamonds}, the codomains of $\Col(\pi_1^{WP})$ and $\Col(\pi_2^{WP})$ are isomorphic and when the two maps are restricted to any component of $\ColOneTwoX$ they have the same fibers. This makes the proof of Sublemma \ref{SL:Subtle} subtle. 
\begin{proof}
It may be useful to refer to Figure \ref{F:LabellingScheme} for this proof.

Label the zeros and marked points on $(X, \omega)^{WP}$ with $2'$ (resp. $1'$) if, generically, the only vertical (resp. horizontal) cylinders to whose boundary they belong are those in $(\bfC_2')^{WP}$ (resp. $(\bfC_1')^{WP}$); label any other points $2$ (resp. $1$). Note that all points will have two labels.  Since the cylinders in $(\bfC_2')^{WP}$ (resp. $(\bfC_1')^{WP}$) persist on $\Col_{\bfC_i^{WP}}(X, \omega)^{WP}$ and $\Col_{\bfC_1^{WP}, \bfC_2^{WP}}(X, \omega)^{WP}$, there is a labelling of zeros and marked points on each of these surfaces so that the maps $\Col_{\bfC_i^{WP}}: (X, \omega)^{WP} \ra \Col_{\bfC_i^{WP}}(X, \omega)^{WP}$ are label-preserving.

Under $\pi_i^{WP}$, $\Col_{\bfC_i^{WP}}((\bfC_2')^{WP})$ is sent to two vertical cylinders that are glued together along an IET. Moreover, under all small perturbations of the codomain that remain in $\cQ(0, -1^4)$, there are precisely two points that remain on the boundary of these vertical cylinders and, generically, no other. Label these two points $2'$ and the other marked points by $2$. By the same construction using $(\bfC_1')^{WP}$ label points $1'$ and $1$.

\begin{figure}[h]\centering
\includegraphics[width=\linewidth]{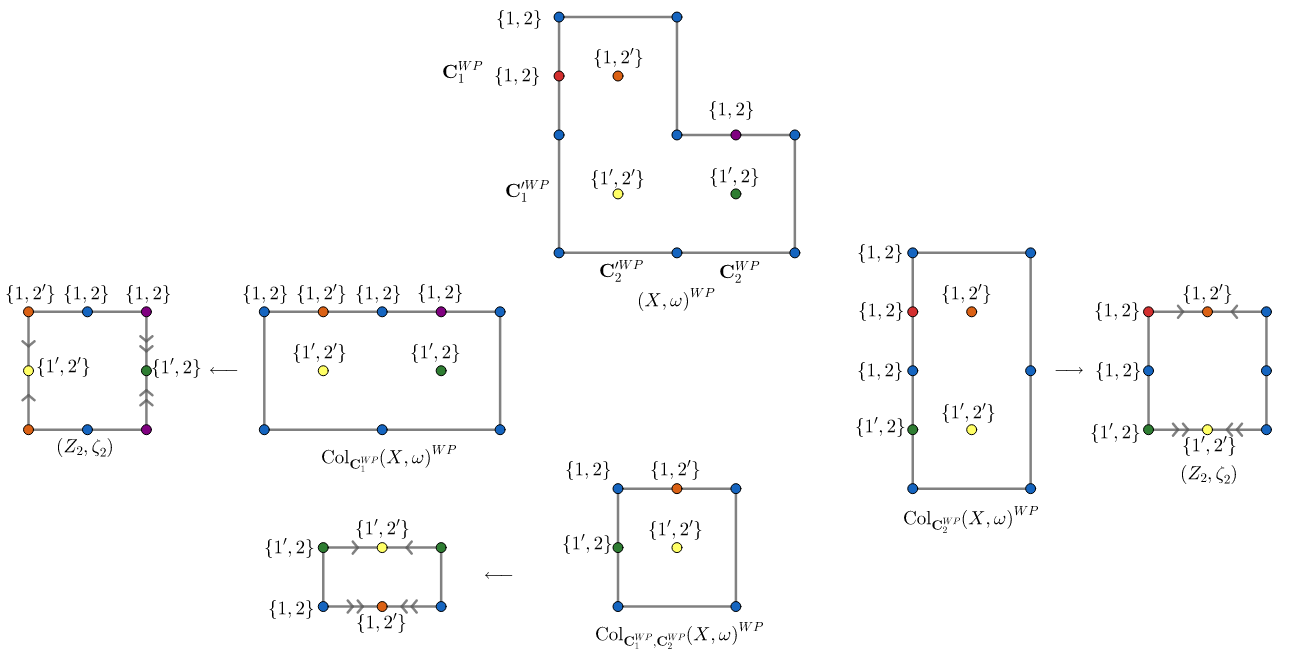}
\caption{An example of the labelling convention where $\cM = \cH(2)$. Unless otherwise indicated, opposite sides are identified on all polygons. The labels $\bfC_i^{WP}$ and $(\bfC_i^{WP})'$ indicate horizontal and vertical equivalence classes of cylinders. The horizontal arrows indicate the maps $\pi_1^{WP}$ and $\pi_2^{WP}$ for the middle surfaces and $\Col(\pi_1^{WP})$ on the bottom.}
\label{F:LabellingScheme}
\end{figure}

By construction, $\pi_i^{WP}$ is label-preserving. Moreover, the labelling of marked points on $(Z_i, \zeta_i)$ induces a labelling of marked points on $\Col_{\pi_i^{WP}(\Col_{\bfC_i^{WP}}((\bfC_{i+1})^{WP}))}((Z_i, \zeta_i))$ so that $\Col(\pi_i)$ is label-preserving. 

Note that all the points on $\Col_{\bfC_1^{WP}, \bfC_2^{WP}}(X, \omega)^{WP}$ are labelled as one of the following $\{\{1,2\}, \{1,2'\}, \{1',2\}, \{1',2'\}\}$. Similarly, each of these labels labels one of the four points on $\Col (Z_i, \zeta_i)^{WP}$. 

By Apisa-Wright \cite[Theorem 8.2]{ApisaWrightDiamonds}, $\Col (Z_1, \zeta_1)^{WP}$ and $\Col (Z_2, \zeta_2)^{WP}$ are isometric half-translation surfaces. Since both surfaces have points labelled by $1$ (resp. $2$) lying on horizontal (resp. vertical) lines, there is an label-preserving isometry $\iota$ from $\Col (Z_1, \zeta_1)$ to $\Col (Z_2, \zeta_2)$. Therefore, $\iota \circ \Col(\pi_1^{WP})$ and $\Col(\pi_2^{WP})$ are both label-preserving holomorphic maps, meaning that they are equal, as desired. 
\end{proof}

Therefore, Sublemmas \ref{SL:ACP1} and \ref{SL:Subtle} imply that $\cM^{WP}$ is a full locus of covers of $\cH(2)$ or a rank two rel zero stratum of quadratic differentials (by Apisa-Wright \cite[Theorem 8.2]{ApisaWrightDiamonds}). In the latter case, since $\cM$ is contained in a codimension zero or one hyperelliptic locus, $\cM_{min}$ is a quadratic double of genus zero stratum and hence a double of $\cQ(1, -1^5)$ (the codimension zero or one condition implies that the preimage of at most one pole is marked).  
\end{proof}

\begin{cor}\label{C:MinimalCoverCP}
If $\cM$ has rank two rel zero then either the $\cM$ or $\cM^{WP}$ minimal translation cover satisfies Assumption CP.
\end{cor}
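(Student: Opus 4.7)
The plan is to extract Assumption CP directly from the proof of Lemma \ref{L:I:FullLocus:BaseCase}, splitting into the same two cases that appeared there according to whether any generic subequivalence class of cylinders on a surface in $\cM$ contains a pair glued together along an IET.

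In the first case (no such IET-gluing anywhere), the proof of Lemma \ref{L:I:FullLocus:BaseCase} showed $\cM_{min} = \cH(2)$. Lemma \ref{L:Rigid:NotAdjacent} then guarantees that no two cylinders in a generic subequivalence class share a boundary saddle connection, and Lemma \ref{L:CRCover:PreimageOfImage} identifies each generic subequivalence class as the full preimage of a single cylinder on the codomain under the $\cM$-minimal translation cover. These are precisely the properties that verify Assumption CP for the $\cM$-minimal cover, so in this case the $\cM$-minimal cover works.

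In the second case, $(\cM^{WP})_{min}$ is a quadratic double of $\cQ(1,-1^5)$, and the proof of Lemma \ref{L:I:FullLocus:BaseCase} produced a generic diamond $((X,\omega),\cM,\bfC_1,\bfC_2)$ in which $\bfC_2'$ contains a pair of cylinders glued along an IET. Sublemma \ref{SL:ACP1} already gave Assumption CP for each $(\cM^{WP})_{\bfC_i^{WP}}$-littlest map, and Sublemma \ref{SL:Subtle} showed $\Col(\pi_1^{WP}) = \Col(\pi_2^{WP})$. Apisa-Wright \cite[Theorem 8.2]{ApisaWrightDiamonds} then assembles these compatible boundary maps into a well-defined $\cM^{WP}$-minimal translation cover. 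Since every generic equivalence class of horizontal cylinders on a surface in $\cM^{WP}$ persists through at least one of the degenerations $\Col_{\bfC_i^{WP}}$, Assumption CP for the $\cM^{WP}$-minimal cover is inherited from Assumption CP for its two degenerations.

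The main point to verify carefully is the lifting step in the second case: one must confirm that Assumption CP genuinely survives the passage from the two boundary components back to the interior of $\cM^{WP}$, with no new subequivalence-class combinatorics emerging that were invisible after either cylinder collapse. This should follow from genericity of the diamond together with the fact that any cylinder degeneration preserving CP downstairs lifts to a cylinder degeneration preserving CP upstairs in the presence of cylinder rigidity.
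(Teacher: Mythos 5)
Your Case 1 is fine and matches the paper: when the codomain lies in $\cH(2)$ (no marked point), Assumption CP for the $\cM$-minimal cover is essentially immediate, and Lemma \ref{L:CRCover:PreimageOfImage} is the right thing to cite. The problem is Case 2, where your argument has a genuine gap at exactly the step you flag. Assumption CP is a condition on \emph{every} generic (sub)equivalence class on \emph{every} surface in $\cM^{WP}$, in every direction, whereas the diamond only gives you control of the two distinguished classes $\bfC_1^{WP}$, $\bfC_2^{WP}$ and of what survives their collapses. Your claim that ``every generic equivalence class persists through at least one of the degenerations $\Col_{\bfC_i^{WP}}$'' is unjustified: a class transverse to both $\bfC_1$ and $\bfC_2$ need not persist under either collapse. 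Moreover, even for a class that does persist, CP for $\Col_{\bfC_i^{WP}}(\pi^{WP})$ does not formally yield CP for $\pi^{WP}$ on that class: the machinery you invoke runs the other way (Apisa-Wright's Lemma 8.6 shows CP upstairs implies CP for the collapse, and Theorem 8.2 \emph{uses} CP of the two degenerations as a hypothesis to identify the codomain — it does not return CP for the assembled cover). If that implication were automatic, the corollary would need no proof at all.

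The paper closes this gap by a direct verification that bypasses the diamond entirely. Given an arbitrary generic equivalence class $\bfC$ on $(X,\omega)$, it uses the already-established structure of the codomain ($\cH(2)$ with one Weierstrass point marked) to see that $\pi(\bfC)$ is either a single simple cylinder or two simple cylinders meeting along a loop through the marked point $p$. In the first case the codimension-$\le 1$ hyperelliptic hypothesis rules out $\bfC$ containing a pair exchanged by the involution (such a pair would force all zeros and marked points onto one fiber of $\pi$); in the second case the same hypothesis forces every cylinder of $\bfC$ other than the IET-glued pair to be fixed by the involution and non-adjacent to the rest of $\bfC$ (via Lemma \ref{L:Rigid:NotAdjacent}). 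Both cases give equal heights in $\bfC^{WP}$ and hence CP for $\pi^{WP}$. Note also that your dichotomy silently drops the classes in Case 2 that contain no IET-glued pair; these exist even when the codomain has the marked Weierstrass point, and they are exactly the first subcase the paper must handle. To repair your argument you would need to supply the direct analysis of an arbitrary $\bfC$, at which point you have reproduced the paper's proof rather than derived CP from the boundary.
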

\begin{proof}
If the codomain of the $\cM$-minimal translation cover belongs to $\cH(2)$, then the cover must satisfy Assumption CP. Suppose therefore that $(X, \omega)$ is a surface in $\cM$ and that the codomain of its $\cM$-minimal translation cover $\pi$ is a surface in $\cH(2)$ with a Weierstrass point $p$ marked. Let $\pi^{WP}$ denote this map with domain $(X, \omega)^{WP}$. If $\bfC$ is a generic equivalence class, then $\pi(\bfC)$ is either a simple cylinder or two simple cylinders that share a boundary consisting of a loop joining $p$ to itself. We will show that every cylinder in $\bfC^{WP}$ has the same height and that Assumption CP is satisfied. 

Suppose first that $\pi(\bfC)$ is a simple cylinder. It is clear that each cylinder in $\bfC$ has the same height. Moreover, if all of the cylinders in $\bfC$ are fixed by the hyperelliptic involution, then $\pi^{WP}$ takes every cylinder in $\bfC^{WP}$ to a cylinder of the same height. Suppose in order to derive a contradiction that $\bfC$ contains two cylinders that are exchanged by the hyperelliptic involution. Since $\cM$ belongs to a codimension zero or one hyperelliptic locus, the boundaries of this exchanged pair of cylinders contain every zero and marked point on surfaces in $(X, \omega)$, all of which map to a single point since $\pi(\bfC)$ is a simple cylinder. However, this contradicts the fact that the codomain of $\pi$ has one zero and one marked point. 

Suppose now that $\pi(\bfC)$ consists of two simple cylinders glued together along one boundary. In this case, $\bfC$ necessarily contains two cylinders that are glued together along an IET. Since $\cM$ belongs to a codimension zero or one hyperelliptic locus, every other cylinder in $\bfC$ is fixed by the hyperelliptic involution and borders no other cylinders in $\bfC$ (since, by Lemma \ref{L:Rigid:NotAdjacent}, this would imply that these two cylinders were glued together along an IET and hence not fixed by the hyperelliptic involution). Therefore, every cylinder in $\bfC^{WP}$ has the same height and $\pi^{WP}$ takes every cylinder in $\bfC^{WP}$ to a cylinder of the same height.
\end{proof}

\begin{lem}\label{L:PiMinDegenBaseCase}
If $\cM$ is either rank two rel zero or rank one rel two, $\bfC$ is a generic subequivalence class of cylinders on $(X, \omega) \in \cM$, and $\pi$ is the $\cM$-minimal translation cover, then, when restricted to the $i$th component of $\Col_{\bfC}(X, \omega)$, $\Col_{\bfC}(\pi)$ is the $(\cM_{\bfC})_i$-littlest map.
\end{lem}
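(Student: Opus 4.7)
The plan is to combine Assumption CP for $\pi$ with the explicit classification of $\cM_{min}$ and $(\cM_{\bfC})_i$, and then invoke the minimality characterization of the littlest cover.

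First, I would verify that $\pi$, possibly after marking Weierstrass points, satisfies Assumption CP: in the rank two rel zero case this is Corollary \ref{C:MinimalCoverCP}, and in the rank one rel two case it is part of Proposition \ref{P:Main:Rk1}\eqref{I:P:Main:Rk1:Rel=2}. By Lemma \ref{L:CRCover:PreimageOfImage}, $\overline{\bfC}$ is the full preimage of $\overline{\pi(\bfC)}$, so $\Col_{\bfC}(\pi)$ is a well-defined translation cover which restricts on each component $S_i$ of $\Col_{\bfC}(X,\omega)$ to a translation cover $\pi_i : S_i \to T_i$.

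Second, I would identify the codomain $T_i$. The stratum containing $(Y,\eta) := \pi(X,\omega)$ is explicitly determined by Lemma \ref{L:I:FullLocus:BaseCase} in the rank two rel zero case and by Proposition \ref{P:Main:Rk1}\eqref{I:P:Main:Rk1:Rel=2} in the rank one rel two case; collapsing $\pi(\bfC)$ then produces components in an explicit boundary stratum of a genus zero quadratic stratum (or its Abelian double). On the other side, by Lemma \ref{L:SidesAreCovers}, $\cM_{\bfC}$ is a full locus of covers of a genus zero stratum, and the $(\cM_{\bfC})_i$-littlest codomain is classified explicitly (independent of $i$ up to Weierstrass markings, and matching one of the three possibilities of Proposition \ref{P:Main:Rk1} in the rank one rel two case, or the tori arising as boundary pieces of $\cM_{min}$ in the rank two rel zero case). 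Comparing these classifications shows $T_i$ and the codomain of the $(\cM_{\bfC})_i$-littlest map lie in the same stratum, and have the same absolute period lattice generated by the persisting equivalence classes on $(X,\omega)$.

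Finally, to conclude that $\pi_i$ is itself the $(\cM_{\bfC})_i$-littlest map, I would use the uniqueness of the littlest cover. Since $\mu_i$, the $(\cM_{\bfC})_i$-littlest map, has minimal codomain among translation covers persisting on $(\cM_{\bfC})_i$, we can write $\pi_i = h_i \circ \mu_i$ for some translation cover $h_i$. Since $T_i$ and the codomain of $\mu_i$ lie in the same stratum (by step two), $h_i$ must have degree one and hence be an isomorphism. The main obstacle is the step of matching the strata: in the rank one rel two case one must also verify that the cylinder structure of $\pi_i$ matches that predicted by the $(\cM_{\bfC})_i$-littlest map of Proposition \ref{P:Main:Rk1}, using in particular the detailed structure (from Proposition \ref{P:Main:Rk1}\eqref{I:P:Main:Rk1:Rel=2}) that the pair exchanged by the hyperelliptic involution only borders cylinders in the other two subequivalence classes, so that the image of $\bfC$ collapses to a point in the codomain with the expected geometry.
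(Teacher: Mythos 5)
Your first step (Assumption CP via Corollary \ref{C:MinimalCoverCP} and Proposition \ref{P:Main:Rk1}\eqref{I:P:Main:Rk1:Rel=2}, well-definedness of $\Col_{\bfC}(\pi)$ via Lemma \ref{L:CRCover:PreimageOfImage}) matches the paper, and your factorization $\pi_i = h_i\circ\mu_i$ is the right framework: the paper likewise writes $\Col_{\bfC}(\pi)\restriction_{(Y,\eta)} = q\circ p$ with $p$ the minimal degree map to a torus and $q$ a residual map between tori. But the final step, which is the actual content of the lemma, has a genuine gap. You conclude that $h_i$ has degree one because its domain and codomain ``lie in the same stratum.'' Both surfaces here are genus-one surfaces with marked points, and an isogeny of tori of any degree $d$ can respect markings while keeping domain and codomain in the same stratum $\cH(0^k)$ --- same stratum constrains neither area nor degree. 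So the stratum comparison does not rule out a nontrivial residual isogeny, and this is precisely the case the lemma exists to exclude.

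The paper closes this gap by a cylinder argument: since $q$ satisfies Assumption CP, horizontal cylinders map to horizontal cylinders of the same height, and an analysis of how the two or three subequivalence classes appear in each periodic direction forces the preimage under $q$ of each cylinder downstairs to be a single cylinder upstairs. This makes $q$ a ``good'' map in the sense of Apisa-Wright, and such a map must be the identity by \cite[Lemma 4.16]{ApisaWrightGeminal}. Note also that in the case where $(Z,\zeta)$ lies in an Abelian double of $\cH(0,0)$, the correct conclusion is that $q$ is the degree-two quotient by the translation involution, not the identity --- consistent with the littlest map being defined as the minimal map composed with that double cover --- so a blanket ``degree one'' claim for the residual map would be false there; one must identify $q$ among the two candidates by checking that the codomain of $\Col_{\bfC}(\pi)$ lies in a quadratic (not Abelian) double. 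Your closing remark about matching cylinder structure gestures in the right direction, but it is presented as a side check in one case rather than as the argument that actually pins down the degree, which is where the proof lives.
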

\begin{proof}
By Lemma \ref{L:SidesAreCovers}, $\cM_{\bfC}$ is a full locus of covers of a genus zero stratum $\cQ$. Since $\bfC$ is generic and $\cM$ is either rank one rel two or rank two rel zero, $\cQ$ is rank one rel one.

By Corollary \ref{C:MinimalCoverCP} (in the case that $\cM$ has rank two) and Proposition \ref{P:Main:Rk1} \eqref{I:P:Main:Rk1:Rel=2} (in the case that $\cM$ has rank one), after perhaps replacing $(X, \omega)$ with $(X, \omega)^{WP}$ and $\cM$ with $\cM^{WP}$, the $\cM$-minimal map satisfies Assumption CP and its codomain belongs to a quadratic double of either $\cQ(0^2, -1^4)$ or $\cQ(1, -1^5)$. Moreover, the codomain of $\Col_{\bfC}(\pi)$ is a surface in a quadratic double of $\cQ(0, -1^4)$ and the map satisfies Assumption CP (by Apisa-Wright \cite[Proof of Lemma 8.6]{ApisaWrightDiamonds}). 

Let $(Y, \eta)$ be the $i$th component of $\Col_{\bfC}(X, \omega)$ and let $p: (Y, \eta) \ra (Z, \zeta)$ be the minimal degree map to a torus. By Proposition \ref{P:Main:Rk1}, $(Z, \zeta)$ belongs to either a quadratic double of $\cQ(0, -1^4)$ or an Abelian double of $\cH(0,0)$. Let $q: (Z, \zeta) \ra (Z', \zeta')$ be the map between tori so that $\Col_{\bfC}(\pi) \restriction_{(Y, \eta)} = q \circ p$. Notice that $q$ satisfies Assumption CP. 

Consider any direction containing a cylinder on $(Z, \zeta)$. Suppose without loss of generality that it is the horizontal direction and that, after perturbing slightly, the direction is covered by two subequivalence classes of cylinders, which we will label $1$ and $2$.

Suppose first that $(Z, \zeta)$ belongs to a quadratic double of $\cQ(0, -1^4)$. The subequivalence classes in the horizontal direction on $\Col_{\bfC}(X, \omega)$ appear vertically in one of the following cyclic orders $(1,2)$, $(1,2,1)$, $(2,1,2)$, or $(1,2,2,1)$. Since horizontal cylinders are sent to horizontal cylinders of the same height under maps satisfying Assumption CP, the preimage under $q$ of each horizontal cylinder on $(Z', \zeta')$ is a single cylinder on $(Z, \zeta)$. 

Since this holds for every periodic direction, $q$ is a ``good" map (see Apisa-Wright \cite[Definition 4.10]{ApisaWrightGeminal}) and so $q$ must be the identity by Apisa-Wright \cite[Lemma 4.16]{ApisaWrightGeminal}. Therefore, $\Col_{\bfC}(\pi) \restriction_{(Y, \eta)}$ is the $(\cM_{\bfC})_i$-littlest map.


Suppose now that $(Z, \zeta)$ belongs to an Abelian double of $\cH(0,0)$. After deforming $(Z, \zeta)$ in the Abelian double, we may suppose that it is covered by one free vertical cylinder and four horizontal simple cylinders appearing in cyclic order $(1,2,1,2)$. We see that $q$ is either the identity or the quotient by the translation involution. Since the codomain of $\Col_{\bfC}(\pi)$ is an element of the quadratic double of $\cQ(0, -1^4)$, it follows that $q$ is the quotient by the translation involution and hence that $\Col_{\bfC}(\pi) \restriction_{(Y, \eta)}$ is the $(\cM_{\bfC})_i$-littlest map. 
\end{proof}

Notice that Lemmas \ref{L:I:FullLocus:BaseCase} and \ref{L:PiMinDegenBaseCase} establish Theorem \ref{T:Main:Rigid} when $\cM$ is four-dimensional. Therefore, suppose now that $\cM$ has dimension at least five.


\begin{lem}\label{L:AgreementAtBase}
Suppose that $((X, \omega), \cM, \bfC_1, \bfC_2)$ is a generic diamond and that $\cM$ has dimension at least five. Let $\pi_i: \Col_{\bfC_i}(X_i, \omega_i) \ra (Y_i, \eta_i)$ be the half-translation cover of a connected surface in a genus zero stratum produced in Lemma \ref{L:SidesAreCovers}. Then $\Col(\pi_2) = \Col(\pi_1)$.
\end{lem}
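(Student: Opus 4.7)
The plan is to extend the labelling argument of Sublemma \ref{SL:Subtle} to the higher-dimensional setting, treating this lemma as the inductive analogue of that sublemma. The key abstract input is that $\Col(Y_1,\eta_1)$ and $\Col(Y_2,\eta_2)$ are already isometric and $\Col(\pi_1),\Col(\pi_2)$ have identical fibers on every component (by Apisa-Wright \cite[Theorem 8.2]{ApisaWrightDiamonds}, using the induction hypothesis \eqref{I:PiMinDegen} applied to $\cM_{\bfC_1}$ and $\cM_{\bfC_2}$, which have dimension exactly one less than $\cM$ by Apisa-Wright \cite[Lemma 6.5]{ApisaWrightHighRank}, so at least four). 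The difficulty is only to pin down which of the possibly many isometries between the two codomains actually witnesses $\Col(\pi_1)=\Col(\pi_2)$.

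First, after applying the standard shear in $\bfC_i$, each $\bfC_i$ contains a saddle connection transverse to its cylinders, so by Apisa \cite[Lemma 9.1]{Apisa2} there exist additional horizontal and vertical equivalence classes $\bfC_1'$ and $\bfC_2'$ parallel respectively to $\bfC_1$ and $\bfC_2$. I would then label every zero and marked point of $(X,\omega)$ by a pair: the first coordinate is $1'$ if generically the only horizontal cylinders containing the point on their boundary lie in $\bfC_1'$, and $1$ otherwise; the second coordinate is defined analogously with $\bfC_2'$ and $\bfC_2$. Because the cylinders in $\bfC_i'$ persist on $\Col_{\bfC_j}(X,\omega)$ and on $\Col_{\bfC_1,\bfC_2}(X,\omega)$, this labelling descends and all collapse maps are label-preserving.

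Next I would transfer the labels across $\pi_i$. By the induction hypothesis \eqref{I:FullLocus} and Lemma \ref{L:CRCover:PreimageOfImage}, the subequivalence class $\Col_{\bfC_i}(\bfC_{i+1}')$ is the full preimage under $\pi_i$ of a single subequivalence class of cylinders on $(Y_i,\eta_i)$; I label the singularities of $(Y_i,\eta_i)$ that lie on the boundary of this image class by $(i+1)'$ and the remaining marked points by $(i+1)$, using an analogous rule for the first coordinate via the images of cylinders coming from $\bfC_1'$ or $\bfC_2'$ on the other side of the diamond (this uses that the induction hypothesis applied to $\cM_{\bfC_i}$ ensures that the $\cM_{\bfC_i}$-littlest map respects the cylinders $\Col_{\bfC_i}(\bfC_j')$ for $j\ne i$). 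With this choice, each $\pi_i$ is label-preserving, and the labels descend again through $\Col$ to $\Col(Y_i,\eta_i)$, making each $\Col(\pi_i)$ label-preserving.

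Finally, as in Sublemma \ref{SL:Subtle}, since the marked points carrying the $1'/1$ label in $\Col(Y_i,\eta_i)$ are exactly those lying on a specified horizontal line (the collapsed images of cylinders from $\bfC_1'$), and similarly for $2'/2$ and the vertical direction, any isometry $\iota:\Col(Y_1,\eta_1)\to\Col(Y_2,\eta_2)$ preserving the labels is forced to identify these distinguished horizontal and vertical configurations. Composing with such an $\iota$, both $\iota\circ\Col(\pi_1)$ and $\Col(\pi_2)$ are label-preserving holomorphic maps with the same fibers on each component, hence equal, giving $\Col(\pi_1)=\Col(\pi_2)$ after the identification. The main obstacle I anticipate is verifying the labelling actually distinguishes enough points on $\Col(Y_i,\eta_i)$ to force uniqueness of $\iota$; this is where the existence of the extra equivalence classes $\bfC_1',\bfC_2'$ (guaranteed by the dimension hypothesis via genericity of the diamond) is essential, and it is also where one must be careful about fixed points of the hyperelliptic involution, possibly by passing to the $WP$ decoration as in Sublemma \ref{SL:ACP1}.
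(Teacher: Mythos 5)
Your first step---that $\Col(\pi_1)$ and $\Col(\pi_2)$ have identical fibers on each component of $\ColOneTwoX$, obtained from the induction hypothesis \eqref{I:PiMinDegen} applied to the (at least four-dimensional) varieties $\cM_{\bfC_i}$---is exactly how the paper begins. Where you diverge is in how you rule out the residual ambiguity, namely that $\Col(\pi_1)$ and $\Col(\pi_2)$ could still differ by a nontrivial automorphism of the common codomain. You propose to extend the labelling scheme of Sublemma \ref{SL:Subtle} to the inductive setting, and you correctly flag the sticking point yourself: it is not clear that the labels distinguish enough points of $\Col(Y_i,\eta_i)$ to force uniqueness of the identifying isometry. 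The features that made the labelling work in the four-dimensional case---the points labelled $1$ all lying on a single horizontal line of a pillowcase, and M\"obius rigidity from three distinguished points---are special to that small codomain and are not available for a general genus zero stratum. As written, this step is an acknowledged but unresolved gap rather than a proof.

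The paper closes the argument by a different and much cheaper route: if $\Col(\pi_1) \ne \Col(\pi_2)$ while the fibers agree, the discrepancy would be a marked-point-preserving affine symmetry of the codomain. By Apisa-Wright \cite[Proof of Lemma 9.5]{ApisaWrightDiamonds}, the only strata whose generic element admits such a symmetry are hyperelliptic components, and the only hyperelliptic genus zero component is $\cQ(-1^4)$, which is two-dimensional, whereas $\MOneTwo$ has dimension at least three (since $\cM$ has dimension at least five and the diamond is generic). Hence no such symmetry exists and the two maps coincide. In other words, the labelling machinery is needed only in the four-dimensional base case precisely because the codomain there is small enough to carry affine symmetries; in the inductive step the dimension hypothesis kills them outright. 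You should either replace your second step with this symmetry argument, or, if you keep the labelling route, actually establish the uniqueness of the label-preserving isometry---which is the hard part you deferred.
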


For a definition of generic diamond see Apisa-Wright \cite[Definition 3.26]{ApisaWrightDiamonds}.

\begin{proof}
By Lemma \ref{L:SidesAreCovers}, $\cM_{\bfC_i}$ is a full locus of covers of a genus zero stratum $\cQ_i$. The covering map on $\Col_{\bfC_i}(X, \omega)$ is $\pi_i$. It follows that $\MOneTwo$ is a full locus of covers of $(\cQ_i)_{\pi_i(\bfC_{i+1})}$ with $\Col(\pi_i)$ the covering map on $\ColOneTwoX$. By the induction hypothesis (see Remark \ref{R:PreimageOfImage}), $\pi_i(\bfC_{i+1})$ is a simple cylinder or simple envelope and so the surfaces in $(\cQ_i)_{\pi_i(\bfC_{i+1})}$ are connected. By the induction hypothesis, specifically by \eqref{I:PiMinDegen}, the restriction of $\Col(\pi_i)$ to the $i$th component of $\ColOneTwoX$ is the composition of the $(\MOneTwo)_i$-littlest map with the quotient by the holonomy involution (note that $\cM_{\bfC_i}$ is at least four-dimensional since $\cM$ has dimension at least five and since we chose the diamond to be generic). This shows that the fibers of $\Col(\pi_1)$ and $\Col(\pi_2)$ are identical on each component of $\ColOneTwoX$.

If $\Col(\pi_1) \ne \Col(\pi_2)$, then there is a marked-point preserving affine symmetry of the codomain. However, the only strata in which the generic element has such affine symmetries are hyperelliptic components by Apisa-Wright \cite[Proof of Lemma 9.5]{ApisaWrightDiamonds}. The only hyperelliptic component of a genus zero stratum is $\cQ(-1^4)$, which is two-dimensional, whereas $\MOneTwo$ has dimension at least three (since $\cM$ has dimension at least five and since we chose the diamond to be generic). Therefore, $\Col(\pi_1) = \Col(\pi_2)$.  
\end{proof}

Suppose that $\cM$ has dimension at least five. Let $\bfC_1$ be any generic subequivalence class on any surface $(X, \omega) \in \cM$. By Apisa-Wright \cite[Lemma 3.31]{ApisaWrightDiamonds}, after perturbing, there is another subequivalence class $\bfC_2$ so that $((X, \omega), \cM, \bfC_1, \bfC_2)$ is a generic diamond. Notice that $\cM_{\bfC_i}$ is a full locus of covers of a genus zero stratum (by Lemma \ref{L:SidesAreCovers}) and let $\pi_i$ denote the corresponding cover with domain $\Col_{\bfC_i}(X, \omega)$. The hypotheses of the diamond lemma, Apisa-Wright \cite[Lemma 2.3]{ApisaWrightDiamonds}, hold by Lemma \ref{L:AgreementAtBase} and the induction hypothesis (see Remark \ref{R:PreimageOfImage}), so there is a half-translation cover $\pi: (X, \omega) \ra (Y, \eta)$ so that $\Col_{\bfC_i}(\pi) = \pi_i$. Letting $\cN$ be the orbit closure of $\pi(X, \omega)$, we have a generic diamond $(\pi(X, \omega), \cN, \pi(\bfC_1), \pi(\bfC_2))$ where $\cN_{\pi(\bfC_i)}$ is a stratum of genus zero quadratic differentials. By Apisa-Wright \cite[Theorem 7.5]{ApisaWrightDiamonds}, $\cN$ is a component of a stratum of quadratic differentials. Notice that $\cN$ cannot be a hyperelliptic component, since then $\cN_{\pi(\bfC_i)}$ would be as well, but the only hyperelliptic genus zero stratum is $\cQ(-1^4)$ and $\cN_{\pi(\bfC_i)} \ne \cQ(-1^4)$ since it has dimension at least four. Since the surfaces in $\cM$ are hyperelliptic they admit maps to genus zero surfaces and hence the surfaces in $\cN$ do as well, which implies that $\cN$ is a genus zero stratum and hence that $\pi$ is the $\cM$-minimal half-translation cover. This shows \eqref{I:FullLocus}. Since this holds for arbitrary $\bfC_1$ we have \eqref{I:PiMinDegen}. 
\end{proof}

\section{The unfoldings of rational right and isosceles triangles - Proof of Theorem \ref{T:Main:Unfolding}}\label{S:ProofTUnfolding}

We begin by fixing a rational right triangle with angles $(\frac{a}{e}, \frac{b}{e}, \frac{1}{2})\pi$ where $a, b$ and $e$ are positive integers with $\mathrm{gcd}(a,b,e) = 1$.

\begin{lem}\label{L:BasicNumberTheory1}
There is an integer $n$ so that $e = 2n$ and, up to exchanging $a$ with $b$, $\mathrm{gcd}(a, 2n) = 1$.
\end{lem}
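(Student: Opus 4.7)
The plan is a short elementary number-theoretic computation from the angle-sum relation together with the coprimality hypothesis.

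First I would use the fact that the angles of a triangle sum to $\pi$:
\[
\frac{a}{e} + \frac{b}{e} + \frac{1}{2} = 1 \quad\Longleftrightarrow\quad e = 2(a+b).
\]
In particular $e$ is even, so I may set $n := a+b$, giving $e = 2n$ as required.

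Next I would show that $\gcd(a,b) = 1$. Let $d = \gcd(a,b)$. Then $d \mid a$ and $d \mid b$, so $d \mid 2(a+b) = 2n$. Hence $d \mid \gcd(a,b,2n) = 1$, forcing $d = 1$. Since $\gcd(a,b) = 1$, the integers $a$ and $b$ cannot both be even, so at least one of them is odd. After possibly swapping the labels $a$ and $b$, I may assume $a$ is odd.

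Finally, since $2n = 2(a+b)$, I can compute
\[
\gcd(a,2n) \;=\; \gcd(a,\,2(a+b)) \;=\; \gcd(a,\,2b).
\]
Because $a$ is odd, $\gcd(a,2) = 1$, and because $\gcd(a,b) = 1$, we get $\gcd(a,2b) = 1$. Hence $\gcd(a,2n) = 1$, as claimed. There is no real obstacle here; the only small point to keep in mind is remembering to reduce $\gcd(a,b,2n) = 1$ to $\gcd(a,b) = 1$ via the divisibility $\gcd(a,b) \mid 2n$ before exploiting parity.
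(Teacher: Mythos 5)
Your proof is correct and follows essentially the same route as the paper: the angle-sum relation gives $e = 2(a+b) = 2n$, and then parity plus the hypothesis $\gcd(a,b,2n)=1$ yields $\gcd(a,2n)=1$ after possibly swapping $a$ and $b$. The only cosmetic difference is that you pass through $\gcd(a,b)=1$ and the identity $\gcd(a,2n)=\gcd(a,2b)$, whereas the paper uses $b=n-a$ directly; both are fine.
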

\begin{proof}
Since the sum of the angles is $\pi$ we have $n:= a + b = \frac{e}{2}$, which establishes the first claim and shows that $b = n - a$. Since $\mathrm{gcd}(a,b,2n) = 1$, it follows that $a$ and $b$ cannot both be even, therefore suppose, after perhaps exchanging $a$ and $b$, that $a$ is odd. The fact that $a$ is odd and $b = n-a$ shows that $1 = \mathrm{gcd}(a,b,2n) = \mathrm{gcd}(a,2n)$.
\end{proof}

In light of the lemma, we will think of the rational right triangle, which we denote $Q_{a,n}$ as being specified by two positive integers $a$ and $n$ so that $\mathrm{gcd}(a,2n)=1$.

The pillowcase double $P_{a,n}$ of $Q_{a,n}$ is a $(2n)$-differential on the sphere with three cone points of cone angle $(\frac{a}{n}, \frac{n-a}{n}, 1)\pi$. We may suppose that these singularities lie at $0, \infty$, and $1$ respectively. Pulling back the pillowcase double under the map $z \ra z^n$ produces a quadratic differential on the sphere with cone points of cone angle $(a, n-a, 1^n)\pi$ where the superscript denotes multiplicity. Let $(S, q)$ denote this quadratic differential (the dependence on $n$ and $a$ will now be tacit) and note that it has at most two zeros. Let $(X, \omega)$ denote its holonomy double cover with no preimages of poles marked. Since $(S, q)$ had at most two zeros, it follows that $(X, \omega)$ belongs to a hyperelliptic locus of codimension at most one (by Lemma \ref{L:Codim=Zeros}).  

Notice that $(X, \omega)$ is a cyclic cover of $P_{a,n}$. Following Mirzakhani-Wright \cite[Section 6]{MirWri2} we will choose a generator $T$ of the deck group, which is isomorphic to $\mathbb{Z}/2n\mathbb{Z}$, so that $T^* \omega = \zeta \omega$ where $\zeta = \mathrm{exp}\left( \frac{2\pi i}{2n} \right)$. Let $H^1_\ell$ (resp. $H^{1,0}_\ell$) denote the eigenspace of eigenvalue $\zeta^\ell$ in $H^1(X; \mathbb{C})$ (resp. $H^{1,0}(X; \mathbb{C})$). 

\begin{lem}\label{L:EigenspaceDimension}
$\dim H^{1,0}_\ell = 1$ if and only if $\ell a$ is an odd number congruent to an element of $\{1, \hdots, n-1\}$ mod $2n$. The dimension is zero otherwise. 
\end{lem}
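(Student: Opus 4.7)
The plan is to realize $(X,\omega)$ explicitly as a cyclic cover of $\mathbb{P}^1$ and then compute $H^{1,0}_\ell$ by a direct Riemann--Roch argument on the eigenspace decomposition. Put the cone points of $P_{a,n}$, of angles $\tfrac{a}{n}\pi$, $\tfrac{n-a}{n}\pi$, $\pi$, at $z=0,\infty,1$; then the $2n$-differential underlying $\omega^{2n}$ is $\eta = z^{a-2n}(z-1)^{-n}(dz)^{2n}$ up to scale, and solving $\omega^{2n} = \eta$ identifies $X$ with the normalization of $\{Y^{2n} = z^a(z-1)^n\}$ with $\omega = c\, Y\,dz/(z(z-1))$. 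Under the generator $T : Y \mapsto \zeta Y$ of the deck group, one checks directly that $T^*\omega = \zeta\omega$, and the local monodromies at $0,1,\infty$ (the exponents of $\zeta$ picked up by $Y$) are $m_0 = a$, $m_1 = n$, $m_\infty = n-a$, which sum to $2n$ as they must.

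Each $\zeta^\ell$-eigenspace $H^{1,0}_\ell$ is spanned by forms of the type $Y^\ell g(z)\,dz$ with $g \in \mathbb{C}(z)$. A local computation at each branch point, using the uniformizer $u_i$ with $u_i^{2n/\gcd(m_i,2n)} = z - z_i$ and accounting for the double pole of $dz$ at $\infty$, shows that $Y^\ell g\,dz$ is holomorphic on $X$ iff $g$ is a section of $\mathcal{O}(D_\ell)$ for the divisor $D_\ell = \sum_i \lceil \ell m_i/(2n) \rceil z_i - 2\cdot\infty$. Since $m_0+m_1+m_\infty = 2n$ and $\lceil x\rceil = x + \{-x\}$ (with $\{\cdot\}$ the fractional part in $[0,1)$), the linear-in-$\ell$ terms telescope and $\deg D_\ell = -2 + \sum_i \{-\ell m_i/(2n)\}$. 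Riemann--Roch on $\mathbb{P}^1$ then gives
\[ \dim H^{1,0}_\ell \;=\; \max\!\Bigl(0,\; -1 + \sum_{i\in\{0,1,\infty\}} \bigl\{ -\tfrac{\ell m_i}{2n} \bigr\} \Bigr). \]

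To finish, I would split on the parity of $\ell$, using that $\gcd(a,2n) = 1$ forces $\ell$ and $r := \ell a \bmod 2n$ to share parity. If $\ell$ is even then $\{-\ell n/(2n)\}=0$ and $\{-\ell a/(2n)\}+\{-\ell (n-a)/(2n)\} \leq 1$, so the dimension is $0$. If $\ell$ is odd then $\{-\ell n/(2n)\} = \tfrac12$ and $\{-\ell a/(2n)\} = (2n-r)/(2n)$; using $\ell(n-a) \equiv n - r \pmod{2n}$ one finds $\{-\ell(n-a)/(2n)\}$ equals $(n+r)/(2n)$, $(r-n)/(2n)$, or $0$ according as $r < n$, $r > n$, or $r = n$, so the total fractional-part sum is $2$, $1$, or $1$, respectively. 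Only the first case yields dimension $1$, so $\dim H^{1,0}_\ell = 1$ iff $r = \ell a \bmod 2n$ is odd and lies in $\{1,\ldots,n-1\}$, which (since reducing mod $2n$ preserves parity) is the stated condition. The main bookkeeping hazard is the sign convention: the choice $T^*\omega = \zeta\omega$ puts $\omega$ in $H^{1,0}_1$ and is what dictates the minus sign in the fractional parts, while the pole of $dz$ at $\infty$ is what contributes the $-2$ in $\deg D_\ell$.
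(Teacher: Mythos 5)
Your argument reaches the correct conclusion and its endgame (the fractional-part formula plus the parity case analysis) is essentially identical to the paper's; the genuine difference is in how you obtain the dimension formula. The paper simply cites Wright's eigenform dimension count (\cite[Lemma 2.6]{W1}, restated in \cite[Lemma 6.1]{MirWri2}), which gives $\dim H^{1,0}_\ell = t(-\ell)-1$ with $t(\ell) = \mathrm{frac}(\ell/2)+\mathrm{frac}(\ell a/2n)+\mathrm{frac}(\ell(n-a)/2n)$ --- exactly your $\max\bigl(0,-1+\sum_i\{-\ell m_i/2n\}\bigr)$ --- and then runs the same split on the parity of $\ell$ and the residue of $\ell a$. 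You instead rederive that formula from scratch by writing the cover as the normalization of $Y^{2n}=z^a(z-1)^n$ and doing Riemann--Roch on $\mathbb{P}^1$. That buys self-containedness at the cost of the local analysis at the three branch points, and it is where your one slip sits: the coefficient of $\infty$ in $D_\ell$ is not $\lceil \ell m_\infty/2n\rceil - 2$ but $-\lfloor \ell(a+n)/2n\rfloor - 2 = \lceil \ell m_\infty/2n\rceil - \ell - 2$, because near $\infty$ the function $Y^\ell$ behaves like $w^{\ell(a+n)/2n}$ in $w=1/z$, not like $w^{-\ell m_\infty/2n}$. With your stated $D_\ell$ the linear terms do \emph{not} telescope (they sum to $\ell$, since $\sum_i m_i = 2n$); the missing $-\ell$ at infinity is precisely what cancels them. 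The two errors compensate, so the degree $\deg D_\ell = -2+\sum_i\{-\ell m_i/2n\}$ you quote is right, and everything downstream --- including the observation that the sum is an integer in $\{0,1,2\}$, the evaluation $\{-\ell n/2n\}=\tfrac12$ for $\ell$ odd, and the three-way split on $r=\ell a \bmod 2n$ versus $n$ --- is correct and agrees with the paper. Fix the coefficient at $\infty$ and the write-up is sound.
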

\begin{proof}
Define
\[ t(\ell) := \mathrm{frac}\left(\frac{\ell}{2} \right) + \mathrm{frac}\left(\frac{\ell a}{2n} \right) + \mathrm{frac}\left(\frac{\ell(n-a)}{2n} \right)\]
where $\mathrm{frac}(x) := x - \lfloor x \rfloor$ denotes the positive fractional part of a real number. By Wright \cite[Lemma 2.6]{W1} (see also Mirzakhani-Wright \cite[Lemma 6.1]{MirWri2}), $\dim H^{1,0}_\ell = t(-\ell) -1$. Since $\mathrm{frac}\left(\frac{\ell}{2} \right) = 0$ when $\ell$ is even, we see that $\dim H^{1,0}_\ell$ is only positive dimensional when $\ell$ is odd, in which case,
\[ \dim H^{1,0}_\ell = \mathrm{frac}\left(\frac{-\ell a}{2n} \right) + \mathrm{frac}\left(\frac{-\ell(n-a)}{2n} \right) - \frac{1}{2}. \]
Since $\ell$ is odd, $-\ell n \equiv n$ mod $2n$, so 
\[ \dim H^{1,0}_\ell = \mathrm{frac}\left(\frac{-\ell a}{2n} \right) + \mathrm{frac}\left(\frac{n+\ell a}{2n} \right) - \frac{1}{2}. \]
If $-\ell a$ is an element of $\{0, \hdots, n\}$ mod $2n$, then the first term is less than or equal to $\frac{1}{2}$ and hence the right hand side is necessarily less than $1$ (and hence equal to zero). Conversely, if $-\ell a$ is an element of $\{n+1, \hdots, 2n-1\}$ mod $2n$ then the right hand side is greater than zero (and hence equal to $1$).
\end{proof}

Let $\omega_\ell$ denote the unique (up to scaling) nonzero element of $H^{1,0}_\ell$ when this space has positive dimension. Let $\cM$ denote the orbit closure of $(X, \omega)$ and let $H^1_\cM$ denote the bundle over $\cM$ whose fiber over a translation surface $(X', \omega')$ is $H^1(X; \mathbb{C})$. By Wright \cite{Wfield}, this bundle decomposes into a sum of symplectically orthogonal flat bundles
\[ H^1_\cM = \bigoplus_{\rho \in \mathrm{Gal}(\mathbf{k}(\cM)/\mathbb{Q})} p(T\cM)^\rho \oplus \bigoplus_s \mathbb{W}_s \]
where $p$ is the projection from relative to absolute cohomology, $k(\cM)$ is the field of definition of $\cM$, the summands in the first factor are Galois conjugates of $p(T\cM)$, and the summands in the second factor are the remaining isotypic components for the monodromy representation. 

\begin{lem}\label{L:DirectSum}
Each summand in the decomposition above is a direct sum of $T$-eigenspaces.
\end{lem}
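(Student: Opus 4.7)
The plan is to exploit the fact that the deck transformation $T$ commutes with the $\mathrm{GL}(2,\mathbb{R})$-action, so $T^*$ defines a flat endomorphism of $H^1_\cM$, and then to combine this with the characterization of the Wright decomposition in terms of monodromy.

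First I would verify that $T$ extends to a symmetry of all of $\cM$, not just of the fiber over $(X,\omega)$. Concretely, since $T^*\omega = \zeta \omega$ and $\zeta$ is a constant, for any $g\in\mathrm{GL}(2,\mathbb{R})$ the pair $(X, g\cdot\omega)$ admits the same underlying diffeomorphism $T$ as an automorphism with $T^*(g\cdot\omega)=\zeta(g\cdot\omega)$. Extending $T$ to nearby surfaces in $\cM$ by deforming its graph (which is possible because the condition ``$f$ is a biholomorphism with $f^*\omega'=\zeta\omega'$'' is an open/closed condition on deformations), one obtains an action of $T$ on $\cM$ by affine diffeomorphisms. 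In particular, $T^*$ is covariantly constant with respect to the Gauss--Manin connection on $H^1_\cM$ and hence commutes with the monodromy representation of $\pi_1(\cM)$.

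Next I would argue that each summand in the Wright decomposition is $T^*$-invariant. The tangent space $T\cM \subseteq H^1(X,\Sigma;\mathbb{C})$ is $T^*$-invariant because $T$ acts on $\cM$, and the projection $p$ to absolute cohomology intertwines the two $T^*$-actions, so $p(T\cM)$ is $T^*$-invariant. Because $T$ and the entire setup are defined over the cyclotomic field $\mathbb{Q}(\zeta) \subset \overline{\mathbb{Q}}$, the operator $T^*$ (or rather the collection of its Galois conjugates) respects the decomposition of $H^1_\cM$ into Galois conjugates of $p(T\cM)$; hence each $p(T\cM)^\rho$ is preserved by $T^*$. Finally, the remaining summands $\mathbb{W}_s$ are the symplectically orthogonal isotypic components for the monodromy (in the sense of Filip \cite{Fi2}), and any flat endomorphism commuting with monodromy preserves each isotypic component, so each $\mathbb{W}_s$ is $T^*$-invariant as well.

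To conclude, observe that $T$ has finite order $2n$, so $T^*$ is a diagonalizable operator on $H^1(X;\mathbb{C})$ with eigenvalues among the $2n$-th roots of unity. The restriction of $T^*$ to any $T^*$-invariant subspace is again diagonalizable, and thus each summand in the decomposition breaks as a direct sum of $T$-eigenspaces, as desired.

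The main obstacle I anticipate is the middle step: rigorously showing that each Galois conjugate $p(T\cM)^\rho$ is $T^*$-invariant. One must combine the compatibility of the $T$-action with the $\mathbb{Q}$-structure on $H^1_\cM$ (so that the Galois group permutes the summands coherently) with the fact that $T^*$, viewed as an element of the commutant algebra of the monodromy, is itself Galois-compatible with the decomposition constructed in Wright \cite{Wfield}. Everything else reduces to the finite-order diagonalizability of $T^*$ and the basic commutation with $\mathrm{GL}(2,\mathbb{R})$.
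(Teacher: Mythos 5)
Your concluding step (finite order of $T^*$ gives diagonalizability, so any $T^*$-invariant subspace splits into eigenspaces) is fine, but the way you establish $T^*$-invariance of the summands has a genuine gap at its very first step. You claim that $T$ extends to an automorphism of every surface in $\cM$, so that $T^*$ becomes a covariantly constant endomorphism of $H^1_\cM$. This is false: $(X,\omega)$ is a highly non-generic point of its orbit closure. By Theorem \ref{T:Unfolding:RightDetailed}, $\cM$ is (generically) a quadratic double of a genus zero stratum, so the generic surface in $\cM$ admits only an order-two deck transformation over its minimal half-translation quotient, not an order-$2n$ automorphism; the cyclic symmetry $T$ simply does not persist under deformation inside $\cM$. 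Consequently there is no flat endomorphism of $H^1_\cM$ restricting to $T^*$, and the subsequent claims that $T^*$ ``commutes with the monodromy representation'' and preserves each $p(T\cM)^\rho$ and $\mathbb{W}_s$ as an element of the commutant have no foundation. (A secondary error: $T^*(g\cdot\omega)=\zeta(g\cdot\omega)$ only holds when $g$ commutes with the rotation by $\arg\zeta$, i.e.\ for similarities, not for all $g\in\mathrm{GL}^+(2,\mathbb{R})$; for general $g$ the derivative of the affine map $T$ is $gR_{\arg\zeta}g^{-1}$.)

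The correct mechanism, and the one the paper uses, is that $T$ is realized as the \emph{monodromy of a loop} in $\cM$ rather than as a flat symmetry: by Mirzakhani--Wright \cite[Lemma 7.7]{MirWri2}, the rotation path from $(X,\omega)$ to $(X,\zeta\omega)=(X,T^*\omega)$ closes up to a loop in $\cM$ based at $(X,\omega)$ whose monodromy is $T$. Each summand in Wright's decomposition is a flat subbundle, hence invariant under the monodromy of every loop, hence $T^*$-invariant; your diagonalizability argument then finishes the proof. Note that an element of the monodromy group automatically preserves monodromy-invariant subbundles without needing to commute with the rest of the monodromy group, so the Galois-compatibility and commutant considerations you flag as the ``main obstacle'' are not needed once $T$ is recognized as monodromy.
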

\begin{proof}
By Mirzakhani-Wright \cite[Lemma 7.7]{MirWri2}, there is a loop $\gamma$ in $\cM$ based at $(X, \omega)$ whose monodromy is $T$.  Since the decomposition of $H^1_{\cM}$ is monodromy-invariant, each summand is a $T$-representation. It follows that each summand is a direct sum of $T$-eigenspaces.  
%
\end{proof}

By Filip \cite{Fi2}, each summand respects the Hodge decomposition in that its intersection with $H^{1,0}$ and $H^{0,1}$ are each half-dimensional. As explained in Mirzakhani-Wright \cite[Lemma 7.2]{MirWri2}, by Forni-Matheus-Zorich \cite[Section 2.3]{FMZ}, this implies that if $\omega_1$ and $\omega_2$ are holomorphic one-forms on $X$ in different summands of the above decomposition then 
\[ B_\omega(\omega_1, \omega_2) := \frac{i}{2} \int_X \omega_1 \omega_2 \frac{\overline{\omega}}{\omega} = 0.\]

\begin{lem}\label{L:UnfoldingRank}
$\cM$ has rank one if and only if $a \in \{1, n-1, n-2\}$. 
\end{lem}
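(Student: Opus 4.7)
The rank of $\cM$ equals $\tfrac{1}{2}\dim_{\mathbb{C}} p(T\cM)$, and by Lemma \ref{L:DirectSum} the subspace $p(T\cM)$ is a direct sum of $T$-eigenspaces that must contain $\omega_1$ and $\overline{\omega_1}$. Since $p(T\cM)$ is closed under complex conjugation and Lemma \ref{L:EigenspaceDimension} shows that $H^{1,0}_1 = \mathbb{C}\omega_1$ and hence $H^{0,1}_{-1} = \mathbb{C}\overline{\omega_1}$ are each one-dimensional, rank one is equivalent to the equality $p(T\cM) = H^{1,0}_1 \oplus H^{0,1}_{-1}$, i.e.\ $p(T\cM)$ contains no other eigenform $\omega_\ell$.

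For the ``if'' direction, I would identify each of the three exceptional values of $a$ with a known Veech right triangle. The cases $a = 1$ and $a = n-1$ correspond (up to swapping $a \leftrightarrow b$) to the classical Veech--Ward triangle with angles $\left(\frac{1}{2n}, \frac{n-1}{2n}, \frac{1}{2}\right)\pi$, shown to unfold to a Veech surface in Veech \cite{V} and Ward \cite{W}. The case $a = n-2$ (valid only when $n$ is odd, with $b=2$) corresponds to the triangle $\left(\frac{n-2}{2n}, \frac{1}{n}, \frac{1}{2}\right)\pi$, whose unfolding is a Veech surface by Hooper \cite{HooperVeechTriangle} and Bouw--M\"oller \cite{BM}. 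In each exceptional case, $(X, \omega)$ lies on a Teichm\"uller curve, so $\cM$ has rank one.

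For the converse direction, assume $a \notin \{1, n-1, n-2\}$. The goal is to produce an eigenform $\omega_\ell$ with $\ell \neq \pm 1 \pmod{2n}$ that must lie in $p(T\cM)$, thereby forcing $\dim_{\mathbb{C}} p(T\cM) \geq 4$. The primary tool is the Mirzakhani--Wright cylinder technique \cite{MirWri2}: one exhibits a periodic direction on $(X, \omega)$ in which the cylinders split into at least two distinct $\cM$-equivalence classes, and then applies Wright's cylinder deformation theorem \cite{Wcyl} to obtain two linearly independent standard-shear elements of $p(T\cM)$. Concretely, the cyclic cover $(X, \omega) \to P_{a, n}$ admits an explicit combinatorial description of its horizontal cylinders pulled back from the pillowcase double via $z \mapsto z^n$, and one reads off that the number of non-tautologically-equivalent cylinders exceeds one away from the exceptional values of $a$.

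The main obstacle is the converse: the cylinder-equivalence analysis requires a case split on the arithmetic of $(a, n)$ sharp enough to exclude $a \in \{1, n-1, n-2\}$ but to succeed for every other $a$ coprime to $2n$. An attractive alternative route is to appeal directly to the Kenyon--Smillie \cite{KS} and Puchta \cite{Pu} classification of acute and right-angled Veech triangles, which asserts that the Veech right triangles are precisely those with $\min(a,b) \in \{1,2\}$. Since rank one orbit closures are Teichm\"uller curves and every point on a Teichm\"uller curve is a Veech surface, if $\cM$ had rank one then $Q_{a,n}$ would be a Veech triangle, forcing $a \in \{1, n-1, n-2\}$.
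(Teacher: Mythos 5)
Your setup and your treatment of the ``if'' direction are essentially what the paper does: rank one forces $p(T\cM)=H^1_{\pm 1}$, and the exceptional parameters are exactly the triangles unfolding to the regular $2n$-gon ($a=1$ or $n-1$) and the regular or double regular $n$-gon ($a=n-2$, $b=2$), all of which lie in Veech's original family \cite{V}. (Your attribution of the $a=n-2$ case to Hooper and Bouw--M\"oller is off --- their Veech triangles are obtuse --- but that is a citation slip, not a gap.)

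The genuine gap is in the converse, and both of your proposed routes fail. Route (a), exhibiting a periodic direction with two $\cM$-equivalence classes of cylinders, at best shows that the twist space is at least two-dimensional; the two standard shears may differ by a rel vector, so this cannot distinguish rank two from rank one with nonzero rel, and you have not carried out the combinatorial case analysis in any event. The introduction of the paper explicitly warns that the Mirzakhani--Wright cylinder algorithm ``is not sufficient to determine which rational right and isosceles triangles have an unfolding that is full rank and is not used in the proof.'' Route (b) rests on the false premise that rank one orbit closures are Teichm\"uller curves: this holds only when rel is zero, and rank one loci with positive rel exist (their generic points are not Veech surfaces), so ``$\cM$ has rank one'' does not place $Q_{a,n}$ on the Kenyon--Smillie list; invoking \cite{KS} would also undercut the paper's stated aim of recovering that classification rather than assuming it. The missing idea is the Hodge-theoretic argument the paper actually runs: if $\cM$ has rank one, then by Mirzakhani--Wright \cite[Lemma 7.6]{MirWri2} the Galois conjugates of $p(T\cM)$ decompose $\bigoplus_{k \in (\mathbb{Z}/2n\mathbb{Z})^\times} H^1_{\pm k}$ into two-dimensional summands which, by Filip \cite{Fi2} and the Forni--Matheus--Zorich formula \cite{FMZ}, are pairwise orthogonal for the second fundamental form $B_\omega$. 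Mirzakhani--Wright \cite[Proposition 7.3]{MirWri2} shows $B_\omega(\omega_\alpha,\omega_\beta)\ne 0$ exactly when $a\alpha+a\beta\equiv 2a \pmod{2n}$, and for $a\notin\{1,n-1,n-2\}$ one writes down explicit $\alpha\ne\beta$ with $\alpha\in(\mathbb{Z}/2n\mathbb{Z})^\times$ and $\omega_\alpha,\omega_\beta\ne 0$ (using Lemma \ref{L:EigenspaceDimension}) for which the pairing is nonzero --- a contradiction. No cylinder combinatorics enters at all.
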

\begin{proof}
%
Suppose that $\cM$ has rank one. Since $p(T\cM)$ contains $H^{1,0}_1$, it must coincide with $H^1_{\pm 1} = H^1_1 \oplus H^1_{-1}$. By Mirzakhani-Wright \cite[Lemma 7.6]{MirWri2}, 
\[ \bigoplus_{\rho \in \mathrm{Gal}(\mathbf{k}(\cM)/\mathbb{Q})} p(T\cM)^\rho = \bigoplus_{k \in (\mathbb{Z}/2n\mathbb{Z})^\times} H^1_{\pm k}.\]
Therefore, any element in $H^{1,0}_k$ for $k \in (\mathbb{Z}/2n\mathbb{Z})^\times$ is $B_\omega(\cdot, \cdot)$-orthogonal to any element of $H^{1,0}_\ell$ for $k \ne \ell$.  By Lemma \ref{L:EigenspaceDimension} and Mirzakhani-Wright \cite[Proposition 7.3]{MirWri2}, we have the following:

\begin{sublem}
If $a\alpha$ and $a\beta$ are odd numbers congruent to a number in $\{1, \hdots, n-1\}$ mod $2n$, then $B_\omega(\omega_\alpha, \omega_\beta) \ne 0$ if and only if $a\alpha + a\beta = 2a$ mod $2n$.
\end{sublem}

If $a \notin \{1, n-1, n-2\}$ we can find $0 \ne \omega_\alpha$ where $\alpha \in (\mathbb{Z}/2n\mathbb{Z})^\times$ and $\beta \ne \alpha$ so that $\omega_\beta \ne 0$ and $B_\omega(\omega_\alpha, \omega_\beta) \ne 0$ (for instance, set $a\alpha = 1$ and $a\beta = 2a-1$ when $4 \leq 2a \leq n$ and set $a\alpha = n-\epsilon$ and $a\beta = 2a+\epsilon-n$ when $n \leq 2a \leq 2n-4$ where $\epsilon$ is $1$ if $n$ is even and $2$ if $n$ is odd. It is relevant that $a$ is a unit in $(\mathbb{Z}/2n\mathbb{Z})^\times$ by Lemma \ref{L:BasicNumberTheory1}). This is a contradiction.
\end{proof}

\begin{thm}\label{T:Unfolding:RightDetailed}
If the smallest angle in $Q_{a,n}$ has the form $\frac{\pi}{m}$ where $m$ is a positive integer, then $\cM$ is the regular $m$-gon (resp. double regular $m$-gon) locus if $m$ is even (resp. odd). Otherwise, $\cM$ is a quadratic double of $\cQ(a-2, b-2, -1^n)$. 
\end{thm}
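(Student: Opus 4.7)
The plan is to split via Lemma \ref{L:UnfoldingRank}, handling the rank-one cases by classical identification with Veech surfaces and the rank-at-least-two cases via Theorem \ref{T:Main} combined with the Mirzakhani-Wright, Forni-Matheus-Zorich, and Filip machinery. First I would verify that ``the smallest angle of $Q_{a,n}$ has the form $\pi/m$'' is equivalent to $a \in \{1, n-1, n-2\}$: writing the smallest angle as $\min(a,b)\pi/(2n)$ with $b = n-a$, the condition $\min(a,b) \mid 2n$ together with $\gcd(a,2n)=1$ (Lemma \ref{L:BasicNumberTheory1}) and $a+b=n$ forces $\gcd(a,b) = 1$ and hence $\min(a,b) \in \{1, 2\}$, with $\min(a,b) = 2$ requiring $n$ odd. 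This matches the rank-one list in Lemma \ref{L:UnfoldingRank} exactly. In the case $\min(a,b) = 1$ (so $m = 2n$ is even), the Katok-Zemlyakov reflection construction on $Q_{a,n}$ unfolds explicitly to the regular $2n$-gon with opposite sides identified, as in Veech's original family \cite{V}. In the case $\{a,b\} = \{2, n-2\}$ with $n$ odd (so $m = n$ is odd), the unfolding is Ward's triangle, whose orbit closure is the double regular $n$-gon locus (Ward \cite{W}, Kenyon-Smillie \cite{KS}).

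In the remaining case $a \notin \{1, n-1, n-2\}$, Lemma \ref{L:UnfoldingRank} gives that $\cM$ has rank at least two. By construction $(X, \omega)$ is the holonomy double cover of $(S, q) \in \cQ(a-2, b-2, -1^n)$ with no preimages of poles marked, so $\cM$ lies in a hyperelliptic locus of codimension at most one (Lemma \ref{L:Codim=Zeros}) and is contained in the locus of holonomy double covers of $\cQ(a-2, b-2, -1^n)$. Let $\cN$ denote the orbit closure of $(S, q)$ inside $\cQ(a-2, b-2, -1^n)$; then $\cM$ is the quadratic double of $\cN$, and by Theorem \ref{T:Main} applied to $\cN$, we see that $\cN$ is a full locus of covers of a genus zero stratum $\cQ'$ with at most two zeros. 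It then suffices to show $\cQ' = \cQ(a-2, b-2, -1^n)$, i.e., that the $\cN$-minimal half-translation cover of $(S, q)$ is the identity.

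The main obstacle is this last identification, which amounts to ruling out any nontrivial factoring of $(S, q)$ as a cover of a simpler genus zero quadratic differential. Here I would deploy the Wright decomposition $H^1_\cM = \bigoplus_\rho p(T\cM)^\rho \oplus \bigoplus_s \bW_s$ from \cite{Wfield}, together with the Avila-Eskin-M\"oller \cite{AEM} compatibility of this decomposition with the Hodge structure and the Forni-Matheus-Zorich formula for the second fundamental form $B_\omega$, to control which eigenspaces $H^{1,0}_\ell$ lie in $p(T\cM)$. The calculation in the proof of Lemma \ref{L:UnfoldingRank} exhibits, for $a \notin \{1, n-1, n-2\}$, indices $\alpha \ne \beta$ with $\omega_\alpha, \omega_\beta$ both nonzero and $B_\omega(\omega_\alpha, \omega_\beta) \ne 0$, forcing these eigenspaces into a common Galois summand. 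Iterating this via Galois conjugation and further $B_\omega$ pairings (as codified in Mirzakhani-Wright \cite[Lemma 7.6]{MirWri2}) yields a span large enough that Filip's theorem \cite{Fi2}, which constrains the zero divisors of holomorphic forms in each isotypic component, matches the zero divisor data of $\cQ(a-2, b-2, -1^n)$ and rules out any nontrivial further quotient. This forces $\cQ' = \cQ(a-2, b-2, -1^n)$, completing the identification of $\cM$ as the quadratic double of $\cQ(a-2, b-2, -1^n)$.
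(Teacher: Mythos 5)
Your overall skeleton matches the paper's: dispose of the Veech cases via the equivalence with $a \in \{1, n-1, n-2\}$ and Lemma \ref{L:UnfoldingRank}, then in the rank-at-least-two case apply Theorem \ref{T:Main} to get a full locus of covers and reduce to showing the minimal (half-)translation cover is trivial. The number-theoretic verification in your first paragraph and the citations for the Veech cases are fine.

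The genuine gap is in your final paragraph, where the actual work happens. Saying that the $B_\omega$ pairings force eigenspaces into a common Galois summand and that Filip's theorem then ``matches the zero divisor data of $\cQ(a-2,b-2,-1^n)$ and rules out any nontrivial further quotient'' is not an argument: Filip's theorem only says each flat summand respects the Hodge decomposition, and neither it nor the Galois-conjugation bookkeeping of Mirzakhani-Wright says anything by itself about where the zeros of the forms in $p(T\cM)$ sit. The mechanism the paper actually uses is different and concrete. First, once $\mathbf{k}(\cM)=\bQ$ (which follows from $\cM$ being a full locus of covers), Mirzakhani-Wright gives $\bigoplus_{k \in (\bZ/2n\bZ)^\times} H^1_k \subseteq p(T\cM)$ directly; the $B_\omega$ computation is only needed in Lemma \ref{L:UnfoldingRank} to establish rank at least two, not here. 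Second, one uses Wright's cyclic-covers formula to identify $(\omega_\ell)^{2n}$ explicitly as the pullback of $dz^{2n}/\bigl(z^{[\ell a]}(z-1)^n\bigr)$, so that $q_\ell$ has cone angles $([\ell a], n-[\ell a], 1^n)\pi$ on $S$. Choosing $\ell_0 = a^{-1} \in (\bZ/2n\bZ)^\times$ makes $q_{\ell_0}$ have a pole at $0$; since $\omega_{\ell_0} \in T_{(X,\omega)}\cM = \pi^* T_{(Y,\eta)}\cL$, the induced map $f:(S,q)\to (Y,\eta)/J$ cannot be ramified at $0$ (a ramification point would force higher vanishing order on any pullback). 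An analogous choice $\ell_1$ with $\ell_1 a \equiv n-1$ or $n-2$ controls the ramification at $\infty$. Finally a degree count rules out $f$ being a double or triple cover, using that $\deg f = 2$ forces $a = n-a$ and $\deg f = 3$ forces $2a = n-a$, both of which contradict $\gcd(a,n)=1$ and $a>1$. Without this explicit choice of eigenforms with prescribed singularity data and the ensuing ramification/degree analysis, your proof does not close: nothing in what you wrote excludes, say, $(S,q)$ being a degree-two cover of a smaller genus zero stratum.
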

\begin{proof}
The first claim is due to Veech \cite{V}. So suppose that $a \notin \{1, n-1, n-2\}$. By Lemma \ref{L:UnfoldingRank}, $\cM$ has rank at least two. Since we have already observed that $\cM$ belongs to a hyperelliptic locus of codimension at most one, $\cM$ is a full locus of covers of a genus zero stratum (by Theorem \ref{T:Main}). 

Let $\pi: (X, \omega) \ra (Y, \eta)$ be the corresponding translation cover with domain $(X, \omega)$ and let $\cL$ denote the hyperelliptic locus containing $(Y, \eta)$. It follows that $T_{(X, \omega)} \cM = \pi^* T_{(Y, \eta)} \cL$. Let $f:(S, q) \ra (Y, \eta)/J$ be the map induced by $\pi$ where $J$ is the hyperelliptic involution on $(Y, \eta)$. 

By Mirzakhani-Wright \cite[Lemma 7.6]{MirWri2}, since $\mathbf{k}(\cM) = \mathbb{Q}$, $p(T_{(X, \omega)} \cM)$ contains 
\[ \bigoplus_{k \in (\mathbb{Z}/2n\mathbb{Z})^\times} H^1_k.\]

By Wright \cite[Lemma 2.6]{W1} (see also Mirzakhani-Wright \cite[Lemma 6.1]{MirWri2}), up to scaling, $(\omega_\ell)^{2n}$ is the pullback of the $(2n)$-differential on the thrice-punctured sphere
\[ \frac{dz^{2n}}{z^{[\ell a]} (z-1)^n} \]
to $X$ where $[\ell a]$ denotes the value in $\{1, \hdots, n-1\}$ that is congruent to $\ell a$. This $(2n)$-differential on the thrice punctured sphere has three cone points at $0$, $1$, and $\infty$ of cone angle $\frac{[\ell a]}{n}  \pi$, $\pi$, and $\frac{n-[\ell a]}{n}  \pi$. Therefore, the pullback to $S$, which we will denote $q_\ell$, has cone points of cone angle $([\ell a], n - [\ell a], 1^n)\pi$. Moreover, the holonomy double cover of $(S, q_\ell)$ belongs to $T_{(X, \omega)} \cM$ provided that $\ell$ is coprime to $2n$ and that $\ell a$ is congruent to some odd number in $\{1, \hdots, n-1\}$ mod $2n$. 


Since $a$ is coprime to $2n$, let $\ell_0$ be its inverse in $(\mathbb{Z}/(2n)\mathbb{Z})^\times$. Note that $(S, q_{\ell_0})$ has a pole at $0$. Since 
\[ \omega_{\ell_0} \in T_{(X, \omega)} \cM = \pi^* T_{(Y, \eta)} \cL, \]
it follows that $f$ does not have a ramification point at $0$. 

Similarly, if $n$ is even (resp. odd) there is $\ell_1 \in (\mathbb{Z}/(2n)\mathbb{Z})^\times$ so that $\ell_1 a = n-1$ (resp. $\ell_1 a = n-2$). Since $(S, q_{\ell_1})$ has a pole (resp. marked point) at $\infty$, it follows, as above, that either $f$ does not have a ramification point at $\infty$ (when $n$ is even) or $f$ is at most doubly ramified at $\infty$ (when $n$ is odd).


Suppose first that $\infty$ is not a ramification point. Since $\{0, \infty\}$ are the only preimages of the zeros of $(Y, \eta)/J$ and since neither is a ramification point, we have that either $f$ is the identity map or $f$ is a double cover and $a = n-a$. However, this latter case violates the condition that $\mathrm{gcd}(a, n) = 1$ and the assumption that $a > 1$.

Suppose now that $\infty$ is a double ramification point. Since $\{0, \infty\}$ are the only preimages of the zeros of $(Y, \eta)/J$ it follows that there is a single zero on $(Y, \eta)/J$ and that $f$ is a triple cover. In particular, $2a = (n-a)$. As before this violates the condition that $\mathrm{gcd}(a, n) = 1$ and the assumption that $a > 1$. 

Therefore, as before, $f$ is the identity as desired. 
%
\end{proof}

\subsection{Unfoldings of rational isosceles triangles}

In this subsection we will complete the proof of Theorem \ref{T:Main:Unfolding} by showing the following. 

\begin{thm}\label{T:Unfolding:IsoscelesDetailed}
Let $T$ be a rational isosceles triangle with angles $\left( \frac{a}{n}, \frac{a}{n}, \frac{b}{n} \right)\pi$ where $a, b$ and $n$ are positive integers with $\mathrm{gcd}(a, b, n) =1$. Then the orbit closure $\cN$ of the unfolding of $T$ is one of the following, when $n$ is odd:
\begin{enumerate}
    \item If $a=1$ (resp. $b=1$) then $\cN$ is the double regular $n$-gon (resp. regular $2n$-gon) locus.
    \item If $\min(a,b) \ne 1$, then $\cN$ is the quadratic double of $\cQ(2a-2, b-2, -1^n)$.
\end{enumerate}
When $n$ is even, the unfolding belongs to the quadratic double\footnote{A similar observation about when the unfoldings of isosceles triangles are double covers was made in Hubert-Schmidt \cite{HubertSchmidt-polygonal}.} of the (unique) hyperelliptic component $\cQ$ of a stratum of quadratic differentials that is a locus of double covers of $\cQ(a-2, \frac{b}{2}-2, -1^{\frac{n}{2}})$. Moreover,
\begin{enumerate}
    \item If $\min(a,\frac{b}{2}) = 1$ (resp. $b=4$) then $\cN$ is the double regular $n$-gon (resp. a double cover of the double regular $\frac{n}{2}$-gon) locus.
    \item If $\min(a,\frac{b}{2}) \ne 1$ and $b \ne 4$, then $\cN$ is the quadratic double of $\cQ$.
\end{enumerate}
\end{thm}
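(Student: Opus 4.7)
The plan is to reduce Theorem \ref{T:Unfolding:IsoscelesDetailed} to Theorem \ref{T:Unfolding:RightDetailed} by exploiting the fact that the altitude from the apex of the isosceles triangle $T$ bisects it into two congruent right triangles $R$ with angles $(a/n, b/(2n), 1/2)\pi$. The first key input is a copy count for the Katok--Zemlyakov unfolding: $X_T$ is assembled from $2n$ copies of $T$, hence $4n$ copies of $R$, while $X_R$ is assembled from $N_R$ copies of $R$, where $N_R$ is twice the order of the dihedral group generated by the reflections in the sides of $R$. Evaluating these orders using the coprimality forced by $\mathrm{gcd}(a,b,n)=1$ gives two regimes. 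For $n$ odd, $b=n-2a$ is odd with $\mathrm{gcd}(b,2n)=1$, so $R = Q_{b,n}$ in the notation of Lemma \ref{L:BasicNumberTheory1}, $N_R = 4n$, and hence $X_T = X_R$. For $n$ even, writing $n=2m$ and $b=2c$ with $c=m-a$, one verifies that $a$ is odd, $\mathrm{gcd}(a,m)=1$, $R$ reduces to $Q_{a,m}$, $N_R = 2n$, and so $X_T$ is a canonical degree two translation cover of $X_R$; concretely, this cover is induced by the natural double cover of pillowcase doubles coming from the reflective symmetry of $T$.

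In the odd case I would then directly apply Theorem \ref{T:Unfolding:RightDetailed} to $R = Q_{b,n}$: the Veech subcases $a=1$ and $b=1$ yield the double regular $n$-gon locus and the regular $2n$-gon locus respectively (from the smallest-angle criterion $\pi/n$ versus $\pi/(2n)$, together with the parity of $n$), and the unexceptional subcase yields the quadratic double of $\cQ(2a-2,b-2,-1^n)$. The conditions $a \neq 1$ and $b \notin \{1,2,4\}$ in the statement reduce, under $n$ odd, to $a\neq 1$ and $b\neq 1$, which coincide with the unexceptionality condition $\min(2a,b)>2$ for $Q_{b,n}$ since $b$ is odd.

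For the even case, applying Theorem \ref{T:Unfolding:RightDetailed} to $R = Q_{a,m}$ identifies $\cM_R := \overline{\GL \cdot X_R}$. The orbit closure $\cN$ of $X_T$ is then obtained by taking the canonical double cover of each surface in $\cM_R$. The Veech subcases $\min(a,b/2) = 1$ and $b = 4$ produce regular or double regular polygon loci for $\cM_R$, and $\cN$ is the corresponding double cover locus, identified respectively with the double regular $n$-gon locus and a double cover of the double regular $(n/2)$-gon locus. In the unexceptional subcase ($a\neq 1$, $b \notin \{2,4\}$), $\cM_R$ is the quadratic double of $\cQ(a-2, b/2-2, -1^{n/2})$, so $\cN$ consists of holonomy double covers of hyperelliptic double covers of surfaces in $\cQ(a-2,b/2-2,-1^{n/2})$. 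To promote this into the statement of the theorem, I would invoke Theorem \ref{T:Main} to force $\cN$ to be a full locus of covers of a genus zero stratum, and then identify the codomain as the quadratic double of a unique hyperelliptic component $\cQ$ that double covers $\cQ(a-2, b/2-2, -1^{n/2})$.

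The principal obstacle is the unexceptional $n$ even case, in two respects. First, one must establish that $\cN$ has rank at least two, so that Theorem \ref{T:Main} applies and forces $\cN$ to be a full locus of covers rather than a rank one subvariety; this requires adapting the cyclic-cover eigenspace decomposition of Lemmas \ref{L:EigenspaceDimension} and \ref{L:UnfoldingRank}, together with the Hodge-theoretic orthogonality coming from Filip \cite{Fi2}, Wright \cite{Wfield}, and Forni--Matheus--Zorich \cite{FMZ}, to the cyclic cover underlying the isosceles unfolding. Second, one must identify the specific hyperelliptic component $\cQ$ and verify its uniqueness among hyperelliptic components of double covers of $\cQ(a-2, b/2-2, -1^{n/2})$; this would follow by combining Lanneau's \cite{Lconn} classification of hyperelliptic components of strata of quadratic differentials with the rigid branching pattern determined by the cover $X_T \to X_R$, arguing that only one branching produces a hyperelliptic component and matches the pillowcase data of the unfolding.
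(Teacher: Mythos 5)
Your overall strategy is the paper's: bisect $T$ along the altitude into two copies of the right triangle $R$, relate the unfolding $X_T$ to $X_R$ via the induced degree-two branched cover of pillowcase doubles $D(T)\ra D(R)$, compute the degree of the induced map of unfoldings ($1$ for $n$ odd, $2$ for $n$ even) from the holonomy/index arithmetic, and then quote Theorem \ref{T:Unfolding:RightDetailed}. Your identifications $R=Q_{b,n}$ ($n$ odd) and $R=Q_{a,n/2}$ ($n$ even), and your translation of the exceptional cases, all check out, and the copy-count arithmetic is the same computation the paper performs with $[K:K\cap N]$ in Proposition \ref{P:IsoscelesCovers}.

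Where you deviate is in the $n$ even unexceptional case, and there you both misplace the difficulty and leave the genuinely needed step unexecuted. You flag "establish that $\cN$ has rank at least two so that Theorem \ref{T:Main} applies" as the principal obstacle and propose redoing the cyclic-cover eigenspace analysis for $X_T$. This detour is unnecessary: once you know (a) that $X_T$ lies in the quadratic double of the specific hyperelliptic component $\cQ$ double covering $\cQ(a-2,\tfrac b2-2,-1^{n/2})$, and (b) that the quotient map is $\GL$-equivariant and sends $\cN$ onto $\overline{\GL\cdot X_R}$, which by Theorem \ref{T:Unfolding:RightDetailed} is the full quadratic double of $\cQ(a-2,\tfrac b2-2,-1^{n/2})$, a dimension count forces $\cN$ to be the entire quadratic double of $\cQ$ — no new rank computation and no fresh appeal to Theorem \ref{T:Main} is required. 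The real content, which you only gesture at, is item (a): you must compute the branch locus of the double cover $X_T\ra X_R$ downstairs. The paper does this by observing that the subgroup $K'\cap N$ corresponds to a double cover of the partial unfolding in $\cQ(a-2,\tfrac b2-2,-1^{n/2})$ that is branched over all $n/2$ poles (the pullback of the flat structure of $D(T)$ has no poles), unbranched over the zero of order $a-2$ (the apex vertex has two preimages under $D(T)\ra D(R)$), and branched over the zero of order $\tfrac b2-2$ exactly when $n/2$ is odd (parity of the number of branch points of a double cover of the sphere); comparison with Lanneau's description then identifies this as precisely the hyperelliptic-component cover. Without carrying out this branch-point bookkeeping you cannot pin down which double cover of $\cQ(a-2,\tfrac b2-2,-1^{n/2})$ the locus $\cQ$ is, so this is the step your proposal still owes.
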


We will reduce the proof of Theorem \ref{T:Unfolding:IsoscelesDetailed} to Theorem \ref{T:Unfolding:RightDetailed}. Begin by fixing a rational isosceles triangle $T$ with angles $(\frac{a}{e}, \frac{a}{e}, \frac{b}{e})\pi$ where $a, b$ and $e$ are positive integers with $\mathrm{gcd}(a,b,e) = 1$. By taking the perpendicular bisector through the angle of size $\frac{b \pi}{e}$ we see that the triangle is tiled by two isometric right triangles $R$ with angles $(\frac{a}{e}, \frac{b}{2e}, \frac{1}{2})\pi$. 

\begin{prop}\label{P:IsoscelesCovers}
If $e$ is odd, then the unfolding of $T$ is the same as the unfolding of $R$. If $e$ is even, then the unfolding of $T$ is a translation double cover of the unfolding of $R$.

Moreover, when $e$ is even, the unfolding of $T$ belongs to a quadratic double of a hyperelliptic component $\cQ$ of a stratum of quadratic differentials. The double cover from the unfolding of $T$ to the unfolding of $R$ is induced from the double cover of a generic surface in $\cQ$ to a generic surface in $\cQ(a-2, \frac{b}{2}-1, -1^{\frac{n}{2}})$.
%
\end{prop}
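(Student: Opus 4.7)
Write $\sigma$ for the reflection across the perpendicular bisector of $T$ joining the apex to the base, so that $T = R \cup \sigma R$. Let $s_{\mathrm{base}}, s_1, s_2$ denote the reflections across the base and the two equal sides of $T$, and let $G_T = \langle s_{\mathrm{base}}, s_1, s_2 \rangle \subset O(2)$ be the dihedral group they generate. The three sides of $R$ are (respectively) the base of $T$, one equal side of $T$, and the bisector, so $G_R = \langle s_{\mathrm{base}}, s_1, \sigma \rangle$; and because $s_2 = \sigma s_1 \sigma$, one has the inclusion $G_T \subseteq G_R$. My plan is to realize both unfoldings as the Katok--Zemlyakov surfaces $X_P = (P \times G_P)/{\sim}$ and to construct the covering map between them explicitly using the decomposition $T = R \cup \sigma R$.

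The first step is to compute orders. Using that $\gcd(a,b) = 1$ (a consequence of $\gcd(a,b,e) = 1$ together with $2a+b = e$), the rotation subgroup of $G_T$ is generated by rotations of angles $\tfrac{2a\pi}{e}$ and $\tfrac{2b\pi}{e}$ and so has order $e$ in both parities. The rotation subgroup of $G_R$ is generated by rotations of angles $\tfrac{2a\pi}{e}, \tfrac{b\pi}{e}$ and $\pi$; a short calculation with $\gcd(2a,b,e)$ gives order $2e$ when $e$ is odd and order $e$ when $e$ is even. Since $\sigma = -s_{\mathrm{base}}$, one has $\sigma \in G_T$ if and only if $-I \in G_T$, which by the order calculation happens if and only if $e$ is even. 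The second step is to construct $f: X_T \to X_R$. Split each labelled copy $(T, g)$ into its two halves $(R, g)$ and $(\sigma R, g)$ and define
\[
f(R, g) = (R, g), \qquad f(\sigma R, g) = (R, g\sigma).
\]
Routine verification shows that $f$ respects the gluings: the identifications across $s_{\mathrm{base}}$ and $s_1$ transfer directly, the identification of $(\sigma R, g)$ with $(\sigma R, g s_2)$ across $s_2$ in $X_T$ matches the identification of $(R, g\sigma)$ with $(R, g s_2 \sigma) = (R, g \sigma s_1)$ across $s_1^R$ in $X_R$ via $\sigma s_2 \sigma = s_1$, and the interior separation between $(R, g)$ and $(\sigma R, g)$ along the bisector maps to the gluing of $(R, g)$ with $(R, g\sigma)$ across $\sigma^R$ in $X_R$. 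The image of $f$ is indexed by $G_T \cup G_T \sigma = G_R$, and the fiber over $(R, g) \in X_R$ consists of the preimages $(R, g)$ and $(\sigma R, g\sigma)$ that lie in $X_T$; exactly one lies in $X_T$ when $\sigma \notin G_T$ and both do when $\sigma \in G_T$. This establishes the first claim.

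For the moreover statement, assume $e$ is even. Both $X_T$ and $X_R$ arise as holonomy double covers of quadratic differentials $q_T$ and $q_R$ on the sphere via the pillowcase-and-pullback construction described before Lemma \ref{L:BasicNumberTheory1}. The degree two translation cover $f$ commutes with the hyperelliptic involutions of the respective loci and hence descends to a degree two half-translation cover $f_0 : (S_T, q_T) \to (S_R, q_R)$. Applying the construction to $R$ shows that $(S_R, q_R)$ lies in $\cQ(a - 2, \tfrac{b}{2} - 2, -1^{e/2})$; this genus zero stratum has exactly two cone points of angle different from $\pi$, namely its two zeros. Chen--Gendron's criterion quoted before Corollary \ref{C:Main2} then identifies the stratum component $\cQ$ containing $(S_T, q_T)$ as the unique hyperelliptic component consisting of double covers of $\cQ(a - 2, \tfrac{b}{2} - 2, -1^{e/2})$, and taking holonomy double covers on both sides exhibits $f$ as induced from $f_0$. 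The main obstacle I anticipate is the careful bookkeeping in the verification that $f$ preserves all four types of gluing relations listed above (across $s_{\mathrm{base}}$, $s_1$, $s_2$, and across the internal bisector); the remaining assertions are then combinations of the dihedral group computation and the application of Chen--Gendron.
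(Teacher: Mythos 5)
Your first paragraph is correct, and at bottom it is the same computation as the paper's: the paper compares the index of $\ker\rho_T$ inside $\pi_1(D(R)-V(R))$ (which is $2e$) with the order $d=\tfrac{2e}{\gcd(e,2)}$ of the holonomy group of $D(R)$, while you compare $2|G_T^+|$ with $|G_R^+|$ for the reflection groups; these are the same integers. Your explicit Katok--Zemlyakov map $f(R,g)=(R,g)$, $f(\sigma R,g)=(R,g\sigma)$ is a nice concrete packaging of the cover that the paper obtains abstractly from the subgroup inclusion $K\cap N\leq K$, and your gluing verifications and the fiber count via $G_T\cup G_T\sigma=G_R$ check out. (You also silently replaced the statement's $\cQ(a-2,\tfrac{b}{2}-1,-1^{n/2})$ by $\cQ(a-2,\tfrac{b}{2}-2,-1^{e/2})$; that is the correct reading, consistent with Theorem \ref{T:Unfolding:IsoscelesDetailed}.)

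The ``moreover'' part, however, has a genuine gap. Knowing that $(S_T,q_T)$ is \emph{some} degree two half-translation cover of $(S_R,q_R)\in\cQ(a-2,\tfrac{b}{2}-2,-1^{e/2})$ does not place it in a hyperelliptic component: you must determine the branch locus of $f_0$ and match it against Lanneau's classification. A double cover with the wrong branching (e.g.\ branched over only some poles, or over regular points of $q_R$ — which can genuinely occur for the descended map, since $f_0$ ramifies wherever a non-fixed point of the holonomy involution upstairs maps to a fixed point downstairs) would not even sweep out a full component of a stratum as $(S_R,q_R)$ varies, let alone a hyperelliptic one. Your appeal to Chen--Gendron runs in the wrong logical direction: their result describes what hyperelliptic components must look like, it does not certify that a given double cover lies in one. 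This branch-locus computation is exactly the content of the second half of the paper's proof: the cover corresponding to $K'\cap N$ is branched over every pole (because the pulled-back flat structure has no poles), unbranched over the zero of order $a-2$ (because that cone point has two preimages under $D(T)\to D(R)$), and its behaviour over the remaining zero is then forced by the parity constraint that a double cover of the sphere is branched over an even number of points; only after this does one recognize the cover as the one defining the hyperelliptic component. Without some version of this computation your identification of $\cQ$ is unjustified.
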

\begin{proof}
Let $D(T)$ and $D(R)$ denote the pillowcase doubles of $T$ and $R$ respectively. Since $T$ is tiled by two copies of $R$, there is a double branched cover $\pi: D(T) \ra D(R)$ that is a local isometry of the flat metric away from cone points. Let $V(R)$ be the three cone points on $D(R)$ and let $V(T)$ be the four points in its preimage on $D(T)$. Since $\pi: D(T) - V(T) \ra D(R) - V(R)$ is a double cover, $\pi_*: \pi_1(D(T) - V(T)) \ra \pi_1(D(R) - V(R))$ is an injection and its image is a normal index two subgroup $N$ of $\pi_1(D(R) - V(R))$. 

Let $\rho_R: D(R) - V(R) \ra \mathbb{Z}/d \mathbb{Z}$ be the holonomy map. Note that $d = \frac{2e}{\mathrm{gcd}(e, 2)}$. The holonomy map on $D(T) - V(T)$ is given by $\rho_T := \rho_R \circ \pi_*$. Let $K$ be the kernel of $\rho_R$, which is a normal index $d$ subgroup. Since the kernel of $\rho_T$ is an index $e$ subgroup of $N$ we have that $K \cap N$ is an index $2e$ subgroup of $\pi_1(D(R) - V(R))$. Since $d = 2e$ when $e$ is odd, $K = K \cap N$ in this case. This shows that $\pi_*(\ker(\rho_T)) = \ker \rho_R$ when $e$ is odd, which yields the first claim.


Suppose now that $e$ is even. In this case, $K \cap N$ is a normal index two subgroup of $K$. This implies that the unfolding of $D(T)$ is a translation double cover of the unfolding of $D(R)$. We note that since $2a + b = e$ and $\mathrm{gcd}(a,b,e)=1$, we have that $a$ is odd and $b$ is even. 

Let $K'$ denote the preimage under $\rho_R$ of the two-element subgroup generated by $\frac{e}{2}$. This subgroup corresponds to the cover of $D(R)$ given by its partial unfolding to a surface in the genus zero stratum $\cQ(a-2, \frac{b}{2}-2, -1^{e/2})$. Therefore, $K' \cap N$ corresponds to a double cover of this surface that is branched over the poles (since the pullback of the flat structure on $D(T)$ to the cover corresponding to $K' \cap N$ has no poles) and unbranched over the zero of order $a-2$ (since the cone point of cone angle $\frac{a\pi}{e}$ has two preimages on the map from $D(T)$ to $D(R)$). Since a double branched cover of the sphere is necessarily branched over an even number of points, the zero of order $\frac{b}{2}-2$ is a branch point if and only if $\frac{e}{2}$ is odd (equivalently, if $\frac{b}{2}-2$ is even). Therefore, the  branched cover corresponding to the inclusion of $K' \cap N$ into $K'$ is precisely the one that sends a generic surface in a hyperelliptic component of a stratum of quadratic differentials to a genus zero surface (see Apisa-Wright \cite[Section 8]{ApisaWrightDiamonds}).
\end{proof}

Theorem \ref{T:Unfolding:IsoscelesDetailed} now follows immediately from Theorem \ref{T:Unfolding:RightDetailed} and Proposition \ref{P:IsoscelesCovers}.

\section{Counting in rational right and isosceles triangles - Proof of Theorems \ref{T:Counting:ClosedTrajectories} and \ref{T:Counting:GeneralizedDiagonals}}

In this section, we will summarize the work of Athreya-Eskin-Zorich \cite{AEZ} and use it to deduce Theorems \ref{T:Counting:ClosedTrajectories} and \ref{T:Counting:GeneralizedDiagonals}. ``Singularity" will refer to zeros, poles, and marked points, all of which will be assumed to be labelled. All saddle connections will be assumed to be unoriented. 

\begin{defn}\label{D:Configurations}
Given a half-translation surface in a genus zero stratum $\cQ(k_1, k_2, 0^m, -1^{k_1+k_2+4})$ where $k_1, k_2 > 0$, we will define three configurations of saddle connections whose numbering corresponds to that in \cite{AEZ}.
\begin{enumerate}
    \item (Configuration $1$): Let $\{p_1, p_2\}$ be a set of two distinct singularities not consisting of two poles. A saddle connection $s$ joining $p_1$ to $p_2$ is said to belong to configuration $1$. The length of the configuration is the length of $s$. Letting $d_i$ be the order of the singularity at $p_i$, define
    \[ c_{p_1, p_2} := \frac{(d_i+d_j+2)!!(d_i+1)!!(d_j+1)!!}{(d_i+d_j+1)!!d_i!!d_j!!}\cdot \begin{cases}
                                  \frac{2}{\pi^2} & \text{ $d_i$ and $d_j$ both odd} \\
                                  \frac{1}{2} & \text{ either $d_i$ or $d_j$ even} 
  \end{cases}\]
    \item (Configuration $3$): Suppose that $m=0$. Let $\{p_1, p_2\}$ be a set of two distinct poles and $r$ a zero. A saddle connection $s_1$ joining $p_1$ to $p_2$ together with a saddle connection $s_2$ joining $r$ to itself that bound a cylinder $C$ is said to belong to configuration $3$. The length of the configuration is the circumference of $C$. Letting $d$ be the order of $r$, set
    \[ c_{env, r} = \frac{d+1}{2\pi^2(k_1+k_2+2)} \]
    \item (Configuration $4$): Suppose that $m=0$. A simple separating cylinder $C$ together with the set of $k_1+2$ poles that belong to one component of the complement of $C$ is said to specify a configuration of type $4$. The length of the configuration is the circumference of $C$.  Set
    \[ c_{simp} = \frac{1}{2\pi^2\binom{k_1+k_2+2}{k_1+1}} \]
\end{enumerate}
\end{defn}

By Eskin-Mirzakhani-Mohammadi \cite[Theorem 2.12]{EMM} and Athreya-Eskin-Zorich \cite[Theorems 4.3, 4.5, and 4.8]{AEZ} we have the following:

\begin{thm}\label{T:AEZ}
Let $(Y, q)$ be any half-translation surface with dense orbit in $\cQ(k_1, k_2, 0^m, -1^{k_1+k_2+4})$ where $k_1, k_2 > 0$. Given a configuration with associated Siegel-Veech constant $c_*$ let $N_{*, (Y, q)}(L)$ be the number of configurations on $(Y, q)$ of length less than $L$. Then
\[ N_{*, (Y, q)}(L) ``\sim" c_* \frac{\pi L^2}{\mathrm{area}(Y, q)}. \]
\end{thm}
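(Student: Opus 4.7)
The plan is to combine the measure classification of Eskin-Mirzakhani \cite{EM} and the equidistribution of Siegel-Veech transforms of Eskin-Mirzakhani-Mohammadi \cite{EMM} with the explicit Siegel-Veech constant computations of Athreya-Eskin-Zorich \cite{AEZ}. Since $(Y,q)$ has dense orbit in $\cQ := \cQ(k_1, k_2, 0^m, -1^{k_1+k_2+4})$, the unique $\mathrm{SL}(2,\mathbb{R})$-invariant affine probability measure on its orbit closure is the Masur-Veech measure $\mu_{MV}$ on $\cQ$. Applying \cite[Theorem 2.12]{EMM} to the indicator function of a configuration of the given type with length at most $L$ yields
\[ N_{*, (Y,q)}(L) \text{ ``$\sim$" } c^{SV}_*(\cQ) \cdot \frac{\pi L^2}{\mathrm{area}(Y, q)}, \]
where $c^{SV}_*(\cQ)$ denotes the Siegel-Veech constant of $\mu_{MV}$ for the configuration in question. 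The factor of $\pi$ reflects the standard convention that Siegel-Veech constants are defined by integrating over disks rather than squares.

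It then remains to identify $c^{SV}_*(\cQ)$ with the explicit constant $c_*$ of Definition \ref{D:Configurations}. When $m = 0$, this is exactly the content of Theorems 4.3, 4.5, and 4.8 of \cite{AEZ} (respectively for configurations $1$, $3$, and $4$). Note that Configurations $3$ and $4$ are only defined for $m=0$, so for these nothing further is required. For Configuration $1$ with $m > 0$, I would reduce to the $m=0$ case via the forgetful fibration $\mathrm{For} \colon \cQ \to \cQ(k_1, k_2, -1^{k_1+k_2+4})$. When neither $p_1$ nor $p_2$ is among the forgotten marked points, the counting of saddle connections joining two specified singularities of orders $d_1, d_2 \geq 1$ is invariant under $\mathrm{For}$, and the Masur-Veech measure pushes forward to a constant multiple of Masur-Veech measure on the target stratum; the Siegel-Veech constant therefore descends, and \cite[Theorem 4.3]{AEZ} applies. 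When one or both endpoints are themselves marked points (so $d_i = 0$), one instead notes that the combinatorial formula for $c_{p_1, p_2}$ in \cite[Theorem 4.3]{AEZ} is proved uniformly for all $d_i, d_j \geq 0$, so the same formula applies on $\cQ$ after treating the relevant marked point as an order-zero singularity.

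The main obstacle is not any deep mathematical input but rather the careful bookkeeping required to match the Siegel-Veech normalizations and configuration labels of this paper with those of \cite{AEZ}, and to verify the invariance of the relevant counts under the forgetful map when marked points are present. Once those conventions are aligned, the theorem follows directly from the cited results with no further computation.
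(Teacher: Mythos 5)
Your proposal is correct and follows essentially the same route as the paper, which states Theorem \ref{T:AEZ} as a direct consequence of Eskin--Mirzakhani--Mohammadi \cite[Theorem 2.12]{EMM} together with Athreya--Eskin--Zorich \cite[Theorems 4.3, 4.5, and 4.8]{AEZ}, with no further argument given. Your additional care about the normalization conventions and the reduction of Configuration $1$ with $m>0$ to the unmarked case via the forgetful map is sound and only makes explicit what the paper leaves implicit.
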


Given a flat surface $(Y, q)$ we will let $N_{cyl, (Y, q)}(L)$ (resp. $N_{simp, (Y, q)}(L)$, $N_{env, (Y, q)}(L)$) denote the number of cylinders (resp. simple cylinders, resp. envelopes) on $(Y, q)$.

\begin{cor}\label{C:T:AEZ}
If $(Y, q)$ is any half-translation surface with dense orbit in $\cQ(k_1, k_2, 0^m, -1^{k_1+k_2+4})$ where $k_1, k_2 > 0$, then 
\[ N_{*, (Y, q)}(L) ``\sim" c_* \frac{\pi L^2}{\mathrm{area}(Y, q)} \]
where $*$ is ``cyl", ``env", or ``simp" and 
\begin{equation*}
\begin{split}
    c_{cyl} &= \frac{(k_1+k_2+4)(k_1+k_2+3)}{4\pi^2}\left( 1 + \frac{2}{(k_1+2)(k_2+2)} \right) \\
    c_{env} &= \frac{1}{2\pi^2} \binom{k_1+k_2+4}{2} \\
    c_{simp} &= \frac{1}{2\pi^2}\binom{k_1+k_2+4}{2} \frac{2}{(k_1+2)(k_2+2)}.
\end{split}
\end{equation*}
\end{cor}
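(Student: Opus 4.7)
The plan is to reduce to $m=0$ and then express each of the three counts on the left-hand side in terms of the configuration counts supplied by Theorem~\ref{T:AEZ}, finishing with combinatorial bookkeeping. The first step is the reduction: since marked points play no role in whether a region of the surface is a cylinder (or a simple cylinder or a simple envelope), and since forgetting marked points carries $(Y,q)$ to a surface with dense orbit in $\cQ(k_1, k_2, -1^{k_1+k_2+4})$, all three counts $N_{cyl,(Y,q)}(L)$, $N_{env,(Y,q)}(L)$, $N_{simp,(Y,q)}(L)$ are unchanged. After this reduction, the classification of configurations on genus-zero quadratic differentials (Boissy~\cite{Boissy}, Masur--Zorich~\cite{MZ}) implies that every cylinder on $(Y,q)$ is either a simple envelope or a simple separating cylinder, so $N_{cyl,(Y,q)} = N_{env,(Y,q)} + N_{simp,(Y,q)}$, and it therefore suffices to establish the formulas for $c_{env}$ and $c_{simp}$.

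For the envelope count, each simple envelope is uniquely determined by the zero $r \in \{r_1, r_2\}$ on its loop boundary together with the unordered pair of poles $\{p_1, p_2\}$ on the opposite (slit) boundary. Summing the Configuration~$3$ asymptotic of Theorem~\ref{T:AEZ} over the two zeros and the $\binom{k_1+k_2+4}{2}$ pole pairs, and using the collapse $c_{env,r_1} + c_{env,r_2} = \frac{(k_1+1)+(k_2+1)}{2\pi^2(k_1+k_2+2)} = \frac{1}{2\pi^2}$, will yield the stated $c_{env}$. For the simple cylinder count, one first checks that the genus-zero constraint on cone angles in each complementary component forces a simple separating cylinder to split the $k_1+k_2+4$ poles as $k_1+2$ on the $r_1$-side and $k_2+2$ on the $r_2$-side, so each simple cylinder corresponds bijectively to a choice of the $r_1$-side subset $S$. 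Summing the Configuration~$4$ asymptotic of Theorem~\ref{T:AEZ} over the $\binom{k_1+k_2+4}{k_1+2}$ such subsets and simplifying the ratio $\binom{k_1+k_2+4}{k_1+2}\big/\binom{k_1+k_2+2}{k_1+1}$ to $(k_1+k_2+4)(k_1+k_2+3)/\bigl((k_1+2)(k_2+2)\bigr)$ will give the stated $c_{simp}$. Finally, $c_{cyl} = c_{env} + c_{simp}$, and factoring out $(k_1+k_2+4)(k_1+k_2+3)/(4\pi^2)$ produces the formula for $c_{cyl}$.

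The main obstacle is not analytic but combinatorial, and depends on two inputs from the configuration classification: that in $\cQ(k_1, k_2, -1^{k_1+k_2+4})$ the only configuration types producing cylinders are simple envelopes (type~$3$) and simple separating cylinders (type~$4$), and that the pole partition induced by any simple separating cylinder must be exactly $(k_1+2, k_2+2)$. Both facts are consequences of the enumeration in Boissy~\cite{Boissy}, but must be identified and invoked with some care; beyond this, the remainder of the argument is routine bookkeeping and simplification of binomial coefficients.
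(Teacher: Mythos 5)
Your argument follows the paper's proof of this corollary almost verbatim: restrict to the full-measure set of surfaces on which every cylinder is a simple envelope or a simple separating cylinder, so that $c_{cyl}=c_{env}+c_{simp}$; obtain $c_{env}$ by summing the Configuration~3 constant of Theorem~\ref{T:AEZ} over the two zeros and the $\binom{k_1+k_2+4}{2}$ pole pairs (the collapse $(k_1+1)+(k_2+1)=k_1+k_2+2$ is exactly the paper's computation); and obtain $c_{simp}$ by summing the Configuration~4 constant over the $\binom{k_1+k_2+4}{k_1+2}$ admissible pole subsets, with the same binomial simplification. So in substance this is the paper's argument, correctly executed.

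The one step you add that the paper does not make --- and which is false as stated --- is the opening claim that forgetting the $m$ marked points leaves all three counts unchanged. Under the convention in force in this paper (see, e.g., Definition~\ref{D:Double} and Lemma~\ref{L:HypParallelism}), marked points are treated as singularities: a marked point lying in the interior of a cylinder splits it into two maximal cylinders of the same circumference, so forgetting marked points genuinely changes $N_{cyl}$ (and the classification of the resulting pieces as simple cylinders or envelopes), not just cosmetically. The paper sidesteps the issue by effectively working with $m=0$ throughout this computation (Configurations~3 and~4 are only defined there, and the corollary is only ever invoked with $m=0$ for cylinder counts). You should either restrict your proof to $m=0$ from the outset, or, if you want the marked-point case, explicitly account for the extra cylinders that marked points create rather than asserting invariance of the counts.
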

\begin{proof}
By Eskin-Mirzakhani-Mohammadi \cite[Theorem 2.12]{EMM}, once area is normalized, the weak asymptotics of $N_{*, (Y, q)}(L)$ are independent of which surface $(Y, q)$ with dense orbit is chosen. We will therefore assume that $(Y, q)$ belongs to the full measure subset of the stratum for which every cylinder is either a simple envelope or a simple cylinder (see Masur-Zorich \cite{MZ} or Apisa-Wright \cite[Section 4.1]{ApisaWrightDiamonds} for an explanation of why these two cylinder types are the only ones that generically occur in genus zero strata). We therefore have $c_{cyl} = c_{env} + c_{simp}$.

Counting simple cylinders is equivalent to counting configurations of type 4. Part of the data of the configuration is specifying $k_1+2$ poles. There are $\binom{k_1+k_2+4}{k_1+2}$ different sets of choices. By Theorem \ref{T:AEZ}, it follows that 
\[ c_{simp} =  \frac{1}{2\pi^2\binom{k_1+k_2+2}{k_1+1}}\binom{k_1+k_2+4}{k_1+2} = \frac{1}{2\pi^2}\binom{k_1+k_2+4}{2} \frac{2}{(k_1+2)(k_2+2)}.\]

Similarly, counting simple envelopes is equivalent to counting configurations of type 3. Part of the data of the configuration is specifying $2$ poles and the order of a zero. By Theorem \ref{T:AEZ}, it follows that 
\[ c_{env} =  \frac{1}{2\pi^2(k_1+k_2+2)}\binom{k_1+k_2+4}{2}\left( (k_1+1) + (k_2+1) \right) = \frac{1}{2\pi^2}\binom{k_1+k_2+4}{2}.\]
\end{proof}

%


The following is an immediate corollary.

\begin{thm}\label{T:AEZ:Corollary}
Let $(Y, q)$ be any half-translation surface with dense orbit in $\cQ(k_1, k_2, 0^m, -1^{k_1+k_2+4})$ where $k_1, k_2 > 0$. Let $(X_1, \omega_1)$ (resp. $(X_2, q_2)$) be the holonomy double cover (resp. corresponding surface in a hyperelliptic component of a stratum of quadratic differentials) that covers $(Y, q)$. Suppose too that preimages of poles are unmarked. Then 
\[ N_{cyl, (X_1, \omega_1)}(L) ``\sim" \frac{1}{2\pi^2} \binom{k_1+k_2+4}{2} \left( 1 + \frac{4}{(k_1+2)(k_2+2)}\right) \frac{2\pi L^2}{\mathrm{area}(X_1, \omega_1)} \]
and 
\[ N_{cyl, (X_2, q_2)}(L) ``\sim" \frac{1}{2\pi^2} \binom{k_1+k_2+4}{2} \left( 1 + \frac{1}{2(k_1+2)(k_2+2)}\right) \frac{2\pi L^2}{\mathrm{area}(X_2, \omega_2)}. \]
\end{thm}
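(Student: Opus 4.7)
The plan is to deduce Theorem \ref{T:AEZ:Corollary} from Corollary \ref{C:T:AEZ} by analysing how each cylinder configuration on $(Y,q)$ lifts through the branched double covers $\pi_i\colon X_i\to Y$. By Corollary \ref{C:T:AEZ}, almost every periodic direction on a generic $(Y,q)\in\cQ(k_1,k_2,0^m,-1^{k_1+k_2+4})$ (with $k_1,k_2>0$) is covered by simple envelopes (configuration 3) and simple separating cylinders (configuration 4) in the terminology of \cite{AEZ}, with weak Siegel--Veech constants
\[
c_{env}^Y=\tfrac{1}{2\pi^2}\tbinom{k_1+k_2+4}{2},\qquad c_{simp}^Y=\tfrac{1}{\pi^2(k_1+2)(k_2+2)}\tbinom{k_1+k_2+4}{2}.
\]
By the Eskin--Mirzakhani--Mohammadi theorem, the cylinder count on each of $(X_1,\omega_1)$ and $(X_2,q_2)$ is determined by the Siegel--Veech constant of its orbit closure, so it suffices to express $N_{cyl,(X_i,*)}(L)$ as a weighted sum of $N_{env,(Y,q)}(L)$ and $N_{simp,(Y,q)}(L)$.

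The central step is to establish the following lifting rules. A simple envelope of circumference $\ell$ on $(Y,q)$ contributes exactly one maximal cylinder of circumference $\ell$ to each of $(X_1,\omega_1)$ and $(X_2,q_2)$. A simple separating cylinder of circumference $\ell$ on $(Y,q)$ contributes two disjoint maximal cylinders of circumference $\ell$ to $(X_1,\omega_1)$ and a single maximal cylinder of circumference $2\ell$ to $(X_2,q_2)$. Combining these rules with Corollary \ref{C:T:AEZ} and the area identity $\mathrm{area}(X_i)=2\,\mathrm{area}(Y,q)$ gives
\[
N_{cyl,(X_1,\omega_1)}(L)\sim\bigl(c_{env}^Y+2c_{simp}^Y\bigr)\tfrac{\pi L^2}{\mathrm{area}(Y,q)},\qquad N_{cyl,(X_2,q_2)}(L)\sim\bigl(c_{env}^Y+\tfrac14 c_{simp}^Y\bigr)\tfrac{\pi L^2}{\mathrm{area}(Y,q)},
\]
which simplify to the stated formulas after substituting the values of $c_{env}^Y$ and $c_{simp}^Y$ and rewriting in terms of $\mathrm{area}(X_i)$.

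The principal obstacle is establishing these lifting rules. Since poles of $q$ lift to regular points under both $\pi_1$ and $\pi_2$, every boundary saddle connection joining two poles on $(Y,q)$ lifts to an interior curve on $(X_i,*)$; this forces the preimage annuli of cylinders adjacent along a pole-to-pole saddle connection on $(Y,q)$ to merge on the cover. A $\mathbb{Z}/2$ monodromy computation shows that the core curve of any generic cylinder has trivial monodromy in both covers (since it bounds a region containing an even number of pole-branch points), so the preimage of any such cylinder consists of two disjoint flat annuli in each cover. The asymmetric behaviour of simple separating cylinders under $\pi_1$ versus $\pi_2$ reflects the additional branching of $\pi_1$ at odd-order zeros of $q$, which alters the connectivity pattern by which the two preimage annuli of a simple cylinder combine with the lifts of the adjacent envelopes into maximal cylinders on the cover. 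A careful combinatorial analysis of the horizontal cylinder adjacency graph of $(Y,q)$ in a generic periodic direction, tracking which adjacencies become interior on each cover, is needed to show that on $(X_1,\omega_1)$ the two preimage annuli of a simple cylinder remain in two distinct maximal cylinders of circumference $\ell$, while on $(X_2,q_2)$ they merge into a single maximal cylinder of circumference $2\ell$.
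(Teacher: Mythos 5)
Your overall strategy is the same as the paper's: decompose the cylinder count on $(Y,q)$ into envelopes (configuration 3) and simple separating cylinders (configuration 4), determine how each type lifts under the two double covers, and combine with Corollary \ref{C:T:AEZ}. Your final lifting rules and the resulting arithmetic (including the factor $\tfrac14$ from the circumference doubling on $(X_2,q_2)$ and the factor of $2$ from the area) are exactly right and match the paper.

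However, the justification you give for the lifting rules contains a genuine error. You assert that ``the core curve of any generic cylinder has trivial monodromy in both covers (since it bounds a region containing an even number of pole-branch points), so the preimage of any such cylinder consists of two disjoint flat annuli in each cover,'' and you then try to recover the single circumference-$2\ell$ cylinder on $(X_2,q_2)$ by having the two annuli ``merge.'' Both halves of this are wrong for a simple separating cylinder under $\pi_2$. The cover $\pi_2$ onto the hyperelliptic component is branched over all $k_1+k_2+4$ poles and over the even-order zeros (not the odd ones), so the core curve of a configuration-4 cylinder encloses $k_1+2$ poles together with the zero of order $k_1$, i.e.\ $k_1+2+[k_1\text{ even}]$ branch points, which is odd for every $k_1$. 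The monodromy is therefore nontrivial: the core curve lifts to a single connected curve of length $2\ell$, and the preimage is one cylinder of circumference $2\ell$ fixed by the deck involution (a complex cylinder), with no merging involved. Conversely, two parallel annuli of circumference $\ell$ can never ``merge into a single maximal cylinder of circumference $2\ell$''; merging across an interior curve doubles the height, not the circumference, which is precisely what happens for envelopes (the pole-to-pole boundary saddle connection becomes interior because preimages of poles are unmarked). For the holonomy cover $\pi_1$ your parity claim does hold (the enclosed branch points are the $k_1+2$ poles plus the order-$k_1$ zero when $k_1$ is odd, always an even total), and no elaborate adjacency-graph analysis is needed: boundary saddle connections of a simple cylinder join zeros, which remain singular upstairs, so the two preimage annuli stay distinct maximal cylinders. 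So the statement you are proving is correct and your constants check out, but the mechanism you propose for the $(X_2,q_2)$ case must be replaced by the correct (nontrivial) monodromy computation, or by the observation that in a hyperelliptic component every cylinder is invariant under the hyperelliptic involution, which rules out a disconnected preimage.
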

The double cover of $(Y, q)$ that belongs to a hyperelliptic component of a stratum of quadratic differentials was originally described by Lanneau \cite{LanneauHyp}, see also Apisa-Wright \cite[Section 9]{ApisaWright}.
\begin{proof}
On $(X_1, \omega_1)$, there are two kinds of cylinders, those that come from envelopes and pairs that come from simple cylinders.  This shows that
\[ N_{cyl, (X_1, \omega_1)}(L) = N_{env, (Y,q)}(L) + 2N_{simp, (Y,q)}(L) \]
Hence the formula follows from Corollary \ref{C:T:AEZ}. 

Similarly, on $(X_2, q_2)$ there are two kinds of cylinders. First, there are cylinders that are preimages of an envelope $C'$; these are simple cylinders of the same length as $C'$ (this follows since the condition that $k_1, k_2 > 0$ implies that the map from $(X_2, q_2)$ to $(Y, q)$ is branched over every pole). Second, there are cylinders that are preimages of simple cylinders $C'$; these are complex cylinders of twice the length of $C'$ (this follows since every cylinder on $(X_2, q_2)$ is fixed by the hyperelliptic involution; see Apisa-Wright \cite[Section 4.1]{ApisaWright} for a definition of ``complex cylinder"). This show that
\[ N_{cyl, (X_2, q_2)}(L) = N_{env, (Y,q)}(L) + N_{simp, (Y,q)}\left( \frac{L}{2} \right) \]
and so the formula follows from Corollary \ref{C:T:AEZ}. 
\end{proof}

\begin{defn}\label{D:Double}
Recall that, given a polygon $P$, its \emph{pillowcase double} $D(P)$, is the flat structure on the sphere formed by taking $P$ and its reflection $P'$ across an edge and identifying corresponding edges. There is an antiholomorphic involution $J$ on $D(P)$ that exchanges $P$ and $P'$ by reflection. Letting $V(P)$ denote the cone points of $D(P)$, a \emph{cylinder} on $D(P)$ will refer to a maximal collection of homotopic closed geodesic loops on $D(P) - V(P)$.

If $P$ is a rational polygon, then $D(P)$ is a holomorphic $k$-differential, and there is a canonical cover, called the \emph{holonomy cover}, $\pi: (X, \omega) \ra D(P)$ under which the $k$-differential pulls back to the $k$th power of a holomorphic one-form $\omega$ on a Riemann surface $X$. If $\pi$ factors through a (half) translation cover $\tau: (X, \omega) \ra (Y, q)$ then $\tau$ has degree one or two and we will call $(Y, q)$ the \emph{partial unfolding of $P$}. Notice that if the degree of $\tau$ is $1$, then $(Y, q) = (X, \omega)$.
\end{defn}

\begin{lem}\label{L:UnfoldingsVsPolygons}
Let $P$ be a rational polygon. Using the notation of Definition \ref{D:Double}, there are only finitely many $J$-invariant saddle connections and cylinders on $D(P)$.

Moreover, if $d$ is the degree of the induced cover from $(Y, q)$ to $D(P)$, then $\displaystyle{\lim_{L \ra \infty} \frac{N_{P}(L)}{\frac{1}{2d} N_{cyl, (Y, q)}(L)}} = 1$. Similarly, if $p_1$ and $p_2$ are points on $P$ with preimages $P_1$ and $P_2$ on $(Y, q)$, then $\displaystyle{\lim_{L \ra \infty} \frac{N_{P; p_1, p_2}(L)}{\frac{1}{2d} N_{\mathcal{C}, (Y, q)}(L)}} = 1$, where $\cC$ is the configuration of saddle connections joining a point in $P_1$ to a point in $P_2$.
%
\end{lem}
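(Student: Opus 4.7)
The plan is to establish the lemma in three steps, going through the intermediate surface $D(P)$.

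First, I will establish finiteness of $J$-invariant saddle connections and cylinders on $D(P)$. By construction $\mathrm{Fix}(J) = \partial P$, a finite union of straight segments, and every cone point of $D(P)$ lies in $\partial P$ and is thus fixed by $J$. A $J$-invariant saddle connection $s$ has $J$-fixed endpoints and satisfies $J(s) = s$. Either $s \subset \partial P$, giving only the finitely many edges of $P$, or $s$ meets $\partial P$ transversally. In the transverse case, $J$ exchanges the two components of $s \setminus \partial P$, forcing $s \cap \partial P$ to be a single point $m$ (the midpoint of $s$), forcing both endpoints of $s$ to coincide at some vertex $v$, and forcing the two halves of $s$ to be $J$-reflections of each other. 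Straightening, $s$ represents a two-link billiard path from $v$ back to $v$ with a perpendicular bounce at $m$ (perpendicularity is imposed by $J$-symmetry), so it is rigidly determined by a choice of vertex $v$ and an edge $e$ of $P$; this yields finitely many $s$. The argument for $J$-invariant cylinders is parallel: the core curve must meet $\partial P$ at exactly two $J$-fixed points, and corresponds to an orthogonal periodic billiard orbit on $P$ bouncing perpendicularly at two boundary points. On a rational polygon such orbits are rigidly determined by pairs of edges and are finite in number.

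Second, I relate periodic billiard bands on $P$ to cylinders on $(Y,q)$. The composition $(Y,q) \xrightarrow{\tau} D(P) \to P$ is a branched covering of degree $2d$ with deck group $\Gamma$ of order $2d$ acting on $(Y,q)$, and periodic billiard bands on $P$ correspond bijectively to $\Gamma$-orbits of cylinders on $(Y,q)$. A cylinder on $(Y,q)$ has nontrivial $\Gamma$-stabilizer only if its image in $D(P)$ is $J$-invariant (bounded by Step~1) or has nontrivial monodromy under $\tau$; cylinders of the latter type have image on $D(P)$ that is traversed at least twice, so their total contribution is a lower-order correction. Consequently
\[ 2d\cdot N_P(L) = N_{cyl,(Y,q)}(L) + O(1), \]
and dividing by $\tfrac{1}{2d}N_{cyl,(Y,q)}(L) \to \infty$ (which holds since the orbit closures from Theorem~\ref{T:Main:Unfolding} guarantee quadratic growth via \cite{Masur-LowerBounds}) yields the desired limit.

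Third, the statement for $N_{P;p_1,p_2}(L)$ is proved by the same argument with ``cylinder'' replaced by ``saddle connection in configuration $\mathcal{C}$''. Each trajectory from $p_1$ to $p_2$ on $P$ corresponds to a $\Gamma$-orbit of configuration-$\mathcal{C}$ saddle connections on $(Y,q)$, generically of orbit size $2d$; the $J$-invariant saddle connections from Step~1 contribute $O(1)$, and the deck-group bookkeeping again produces the factor $\tfrac{1}{2d}$.

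The main obstacle I anticipate is making the monodromy-control in Step~2 fully rigorous: I must argue that cylinders on $(Y,q)$ whose $\tau$-image on $D(P)$ has nontrivial monodromy contribute only $O(1)$ (not merely $o(L^2)$) to $N_{cyl,(Y,q)}(L)$, so that the ratio in the lemma has a genuine (not merely weak-asymptotic) limit. This requires exploiting the specific structure of the $\mathrm{GL}(2,\mathbb{R})$-orbit closure from Theorem~\ref{T:Main:Unfolding} to bound the combinatorics of nongeneric stabilized cylinders.
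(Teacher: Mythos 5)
Your Step 1 is essentially sound and reaches the same conclusion as the paper by a closely related symmetry argument: the paper lifts to the unfolding and shows the direction of a $J$-invariant saddle connection or cylinder lies in a finite set of multiples of $\frac{\pi}{2n}$, whereas you use the differential of $J$ at a fixed point to force perpendicularity to an edge; either way finiteness follows. (One caveat: your analysis of $J|_s$ as ``identity or flip of an interval'' implicitly assumes $s$ is embedded; saddle connections on $D(P)$ need not be simple, and the paper's direction-based argument is insensitive to this.)

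The genuine gap is the one you flagged yourself in Step 2: the cylinders on $(Y,q)$ that map to their image on $D(P)$ with degree greater than one, equivalently those with nontrivial stabilizer in the deck group of $(Y,q)\to D(P)$. Your proposed bound --- that such cylinders contribute a ``lower-order correction'' because their image is traversed at least twice --- does not work as stated: a cylinder wrapping $m\geq 2$ times over a cylinder of circumference $c$ has circumference $mc$, so if a fixed positive proportion of cylinders on $D(P)$ had wrapped preimages, the count $N_{cyl,(Y,q)}(L)$ would be off from $d\cdot N_{cyl,D(P)}(L)$ by a multiplicative constant, not by $o(L^2)$. The missing idea, and the way the paper closes this, is that \emph{no such cylinders exist}: if $T$ generates the cyclic deck group of $(X,\omega)\to D(P)$, then $T^*\omega=\zeta\omega$ for a primitive root of unity $\zeta$, so a power $T^k$ fixing a cylinder or saddle connection must preserve its direction and hence satisfy $\zeta^k=\pm1$; the case $\zeta^k=-1$ is exactly the holonomy involution that has already been quotiented out in forming $(Y,q)$. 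Consequently the induced deck group of $(Y,q)\to D(P)$ acts freely on cylinders and saddle connections, $N_{cyl,(Y,q)}(L)=d\cdot N_{cyl,D(P)}(L)$ holds \emph{exactly}, and the only error term in relating $N_P(L)$ to $\frac{1}{2}N_{cyl,D(P)}(L)$ is the $O(1)$ coming from the finitely many $J$-invariant cylinders of Step 1. You should replace your single group $\Gamma$ of order $2d$ (which also requires justifying that $J$ lifts to $(Y,q)$) with this two-stage comparison, first from $P$ to $D(P)$ via $J$-orbits and then from $D(P)$ to $(Y,q)$ via the root-of-unity argument; the same reasoning handles the saddle-connection count.
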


Recall that $N_P(L)$ and $N_{P; p_1, p_2}(L)$ are defined in the introduction. 

\begin{proof}
Suppose that $s$ is either a saddle connection or the core curve of a cylinder. Suppose too that $s$ is fixed by $J$. Note that $s$ contains a point $p$ that belongs to the interior of an edge $e$ of $P$, which is embedded in $D(P)$. The point $p$ is necessarily a fixed point of $J$. Suppose without loss of generality that $e$ is horizontal and that one of the branches of $s$ through $p$ travels in the direction of $\pm e^{i \alpha}$. Let $v$ denote this unit tangent vector at $p$. Since $s$ is invariant by $J$ (and since $p$ is a fixed point), another branch must pass through $p$ traveling in the direction of $\pm e^{-i\alpha}$. Let $\overline{v}$ denote this unit tangent vector at $p$. Since $P$ is a rational polygon, there is an integer $n$ so that the arguments of the preimages of $v$ (resp. $\overline{v}$) on the unfolding $(X, \omega)$ are a subset of $\alpha + \frac{k \pi}{n}$ (resp. $-\alpha + \frac{k \pi}{n}$) where $k$ is an integer. The lift of $s$ to any preimage of $p$ is a straight line that must contain one preimage of $v$ and one preimage of $\overline{v}$, which implies that $\alpha$ is an integral multiple of $\frac{\pi}{2n}$. This implies that there is a finite set of directions on $(X, \omega)$ to which the lift of $s$ might be parallel. Since there is a finite collection of cylinders and saddle connections in these directions, the first claim follows.

For the second claim we will begin with the following sublemma. Continuing to adopt the notation of Definition \ref{D:Double}, let $T$ be a generator of the deck group of $\pi: (X, \omega) \ra D(P)$. If the order of $T$ is $n$, then $T^* \omega = \zeta \omega$ for an $n$th root of unity $\zeta$ (see for instance Mirzakhani-Wright \cite[Section 6]{MirWri2}). Therefore, if $T^k$ fixes a saddle connection or cylinder, it follows that either $k = n$ or $n = 2k$. In the latter case, the quotient of $(X, \omega)$ by $T^k$ is $(Y, q)$. Therefore, we see that the induced action of the deck group on $(Y, q)$ fixes no cylinder or saddle connection. 

If $N_{cyl, D(P)}(L)$ denotes the number of cylinders (resp. saddle connections) of length at most $L$ on $D(P)$ we see that
\[ d \cdot N_{cyl, D(P)}(L) = N_{cyl, (Y, q)}(L). \]
Recall that periodic directions on the billiard table $P$ correspond to a $J$-orbit of cylinders on $D(P)$. Since all but finitely many $J$-orbits of cylinders contain two cylinders it follows that $\displaystyle{\lim_{L \ra \infty} \frac{N_{P}(L)}{\frac{1}{2d} N_{cyl, (Y, q)}(L)}} = 1$. The final claim follows from identical arguments.
\end{proof}

\begin{proof}[Proof of Theorems \ref{T:Counting:ClosedTrajectories} and \ref{T:Counting:GeneralizedDiagonals}:]
Suppose first that $P$ is a rational right triangle with angles $\left( \frac{a}{2n}, \frac{b}{2n}, \frac{1}{2}\right)\pi$ where $a, b$ and $n$ are positive integers with $\mathrm{gcd}(a,b,2n) =1$ and $\min(a,b) > 2$. The  partial unfolding $(Y, q)$ of $P$ has dense orbit in $\cQ(a-2, b-2, -1^n)$ (by Theorem \ref{T:Main:Unfolding}) and is tiled by $2n$ isometric copies of $P$.  Therefore, by Corollary \ref{C:T:AEZ} and Lemma \ref{L:UnfoldingsVsPolygons},
\[ N_{P}(L) ``\sim" \frac{1}{(2n) 2\pi^2} \left(\frac{n(n-1)}{2} \right) \left( 1 + \frac{2}{ab}\right) \frac{\pi L^2}{(2n)\mathrm{area}(P)}, \]
which simplifies to $\frac{1}{16\pi} \left(1- \frac{1}{n} \right) \left( 1 + \frac{2}{ab}\right) \frac{L^2}{\mathrm{area}(P)}$. 

Suppose next that $p_1$ and $p_2$ are two distinct points on $P$. Let $P_1$ and $P_2$ be the preimages on $(Y, q)$. Let $\cC$ be the configuration consisting of saddle connections joining a point in $P_1$ to a point in $P_2$. Let $(Y, q; P_1 \cup P_2)$ be $(Y, q)$ with the points in $P_1 \cup P_2$ marked. By Apisa-Wright \cite{ApisaWright}, since $(Y, q)$ has dense orbit in $\cQ(a-2, b-2, -1^n)$, $(Y, q; P_1 \cup P_2)$ has dense orbit in $\cQ(a-2, b-2, 0^m, -1^n)$ where $m$ is the number of points in $P_1 \cup P_2$ that are not cone points. Let $q_i$ be a point in $P_i$. By Theorem \ref{T:AEZ}, 
\[ N_{\cC, (Y, q)}(L) ``\sim" |P_1||P_2| c_{q_1, q_2} \frac{\pi L^2}{\mathrm{area}(Y, q)}. \]
Theorem \ref{T:Counting:GeneralizedDiagonals}  now follows as in the previous case by Lemma \ref{L:UnfoldingsVsPolygons} (note that in the notation of Theorem \ref{T:Counting:GeneralizedDiagonals}, $n_i := |P_i|$ and $d_i$ is the order of vanishing of $q$ at a point in $P_i$). 


Suppose finally that $T$ is a rational isosceles triangle with angles $\left( \frac{a}{n}, \frac{a}{n}, \frac{b}{n} \right)\pi$ where $a, b$ and $n$ are positive integers with $\mathrm{gcd}(a, b, n) =1$. Then, when $n$ is odd and $\min(a,b) \ne 1$, the orbit closure $\cN$ of the unfolding $(X_1, \omega_1)$ of $T$ is the quadratic double of $\cQ(2a-2, b-2, -1^n)$ and is tiled by $2n$ copies of $T$ (by Theorem \ref{T:Unfolding:IsoscelesDetailed}). Since $n$ is odd $(X_1, \omega_1)$ is the partial unfolding of $D(P)$. By Theorem \ref{T:AEZ:Corollary} and Lemma \ref{L:UnfoldingsVsPolygons},
\[N_{T}(L) ``\sim" \frac{1}{(2n)2\pi^2} \left(\frac{n(n-1)}{2} \right) \left( 1 + \frac{4}{2ab}\right) \frac{2\pi L^2}{(2n)\mathrm{area}(T)}\]
which simplifies to $\frac{1}{8\pi} \left(1- \frac{1}{n} \right) \left( 1 + \frac{2}{ab}\right) \frac{L^2}{\mathrm{area}(P)}$
Suppose now that $n$ is even and $a \ne 1$ and $b \notin \{2, 4\}$. The partial unfolding $(X_2, q_2)$ of $D(T)$ is tiled by $n$ copies of $T$ and its orbit closure is the (unique) hyperelliptic component of a stratum of quadratic differentials that is a locus of double covers of $\cQ(a-2, \frac{b}{2}-2, -1^{\frac{n}{2}})$. By Theorem \ref{T:AEZ:Corollary} and Lemma \ref{L:UnfoldingsVsPolygons},
\[ N_{T}(L) ``\sim" \frac{1}{(n)2\pi^2} \left(\frac{\frac{n}{2}(\frac{n}{2}-1)}{2} \right) \left( 1 + \frac{1}{ab}\right) \frac{2\pi L^2}{(n)\mathrm{area}(T)}, \]
which simplifies to $\frac{1}{8\pi}  \left( 1 + \frac{1}{ab}\right)\left( 1-\frac{2}{n} \right) \frac{L^2}{\mathrm{area}(T)}$.
\end{proof}

\section{Finite blocking - Proof of Theorems \ref{T:Main:FiniteBlocking} }\label{S:FB}

We will begin by summarizing some relevant background on the finite blocking problem. The \emph{finite blocking problem for a translation surface} $(X, \omega)$ asks for the list of all pairs of finitely blocked points, i.e. points $\{p_1, p_2\}$ on $(X, \omega)$ so that there is a finite collection of points $B \subseteq (X, \omega) - \{p_1, p_2\}$ so that any line from $p_1$ to $p_2$ that does not pass through a singular point in its interior must pass through a point in $B$. If $(X, \omega)$ is the unfolding of a polygon $P$, then two points $\{q_1, q_2\}$ are finitely blocked if every preimage of $q_1$ is finitely blocked from every preimage of $q_2$. 

For any translation surface $(X, \omega)$ that is not a translation cover of a torus, M\"oller \cite[Theorem 2.6]{M2} showed that there is a unique maximal degree translation cover, denoted $\pi_{X_{min}}: (X, \omega) \ra (X_{min}, \omega_{min})$ with domain $(X, \omega)$. Similarly, by Apisa-Wright \cite[Lemma 3.3]{ApisaWright}, there is a quadratic differential $(Q_{min}, q_{min})$ and a degree one or two (half)-translation cover $\pi: (X_{min}, \omega_{min}) \rightarrow (Q_{min}, q_{min})$ so that any half-translation cover is a factor of $\pi_{Q_{min}} := \pi \circ \pi_{X_{min}}$. 

\begin{thm}[Apisa-Wright \cite{ApisaWright} Theorem 3.6 and Lemma 3.8]\label{T:FiniteBlockingBackground}
If $(X, \omega)$ is a translation surface that is not a translation cover of a torus then the only finitely blocked pairs of points are the following:
\begin{enumerate}
    \item Two periodic or singular points blocked by the other periodic and singular points.
    \item Two points with the same image, call it $p$, under $\pi_{Q_{min}}$ blocked by periodic and singular points and the other preimages of $p$.
\end{enumerate}
\end{thm}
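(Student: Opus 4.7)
The plan is to deduce Theorem \ref{T:FiniteBlockingBackground} in two directions: show the listed pairs are indeed finitely blocked, then show these are the only ones. I assume throughout that $(X,\omega)$ is not a torus cover, so the canonical covers $\pi_{X_{min}}$ and $\pi_{Q_{min}}$ are well-defined.

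For the forward direction, both cases are essentially structural. In case (2), if $p_1$ and $p_2$ have a common image $p$ under $\pi_{Q_{min}}$, then any straight-line trajectory from $p_1$ to $p_2$ projects to a closed (or singular) trajectory through $p$ on $(Q_{min}, q_{min})$. Lifting this closed trajectory back to $(X,\omega)$, the lift must pass through some preimage of $p$ (other than $p_1$ or $p_2$) unless the lift is itself a single geodesic arc from $p_1$ directly to $p_2$ without returning; by the finite-fiber property of $\pi_{Q_{min}}$, there are only finitely many such preimages, so they along with the singular and periodic points furnish a finite blocking set. Case (1) is similar but even easier: any saddle connection from $p_1$ to $p_2$ starts and ends at special points, and the finite set of periodic and singular points meets every such connection non-trivially in its interior when $p_1$ and $p_2$ are themselves periodic/singular, because the combinatorics of saddle connection passage forces this (one uses that trajectories entering a small neighborhood of a periodic point in a fixed direction are determined up to finite data).

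For the converse, suppose $(p_1,p_2)$ is finitely blocked with blocking set $B$. The key observation is that blocking is preserved under the action of the affine group: if $g \in \mathrm{SL}(2,\mathbb{R})$ stabilizes $(X,\omega)$ affinely, then $(gp_1, gp_2)$ is blocked on $(X,\omega)$ by $gB$. More generally, moving $(X,\omega,p_1,p_2)$ within its $\mathrm{GL}(2,\mathbb{R})$-orbit closure in the stratum with two marked points, the finite blocking property persists in the sense that one can extract a limit blocking set of bounded cardinality. This lets us replace $(X,\omega)$ by a surface with large affine symmetry group and force $p_1, p_2$ into a rigid configuration. Concretely, I would use the Eskin–Mirzakhani–Mohammadi description of the orbit closure of $(X,\omega,p_1,p_2)$ as an invariant subvariety $\mathcal{M}'$, and compare its dimension to that of the orbit closure $\mathcal{M}$ of $(X,\omega)$: the drop in dimension is controlled by how constrained $p_1, p_2$ are in $(X,\omega)$. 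If the drop is maximal, then $p_2$ is a periodic point relative to $(X,\omega,p_1)$, and iterating (swapping the roles of $p_1,p_2$) one concludes both are periodic or singular — this yields case (1).

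The remaining alternative is that the drop in dimension is strictly smaller than maximal, i.e.\ $p_2$ moves in a non-trivial family as $p_1$ moves. The hard part of the argument is to promote this to the statement that $p_1$ and $p_2$ have a common image under some translation or half-translation cover. For this I would apply a cylinder/saddle-connection argument: blocking forces that the set of periods of segments from $p_1$ to $p_2$ lies in a sublattice of the period lattice, and this sublattice is precisely the period lattice of a quotient surface. The universal property of $\pi_{X_{min}}$ and $\pi_{Q_{min}}$ then identifies the quotient as a factor of one of these canonical covers, placing us in case (2). The principal technical obstacle is showing that this period-lattice constraint forces factorization through $\pi_{Q_{min}}$ rather than some ad hoc quotient; this is handled by the uniqueness property of the minimal half-translation cover established via M\"oller's construction and its extension in Apisa--Wright. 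Once factorization is established, the blocking set must contain all preimages of the common image together with the singular and periodic points, giving exactly the description in (2).
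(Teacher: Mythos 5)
This statement is imported verbatim from Apisa--Wright (their Theorem 3.6 and Lemma 3.8); the paper offers no proof of its own beyond the citation, so your sketch has to stand on its own. Two problems with the ``forward direction.'' First, you have misread the statement: case (1) is a \emph{necessity} assertion (if two points are finitely blocked and do not fall under case (2), then both are periodic or singular and the blocking set is contained in the other periodic and singular points); it does not claim that every pair of periodic or singular points is finitely blocked. Your attempted proof of that stronger claim would contradict the paper itself -- Theorem \ref{T:Main:FiniteBlocking} and Proposition \ref{P:FB:Zero} exhibit singular points that are \emph{not} blocked from each other -- and the mechanism you invoke (``the combinatorics of saddle connection passage forces this'') does not exist. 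Second, for case (2), which genuinely is a blocking assertion, your argument fails at the decisive step: the generic geodesic from $p_1$ to $p_2$ \emph{is} ``a single arc from $p_1$ directly to $p_2$,'' and nothing you say forces it to meet another preimage of $p$. The actual mechanism uses the $-\mathrm{id}$ holonomy of the half-translation cover: the relevant (anti-)symmetry carries a connecting geodesic to a connecting geodesic with the same initial velocity at $p_1$, so by uniqueness of geodesics the arc is symmetric and its midpoint is forced to be a fixed point or another fiber point. Without that symmetry argument there is no blocking.

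For the converse your outline has the right general shape (pass to the orbit closure of $(X,\omega;p_1,p_2)$ and argue that blocking forces the pair to be constrained), but the two load-bearing steps are left as black boxes. (i) You do not explain why a finite blocking set is incompatible with the marked pair being generic; in Apisa--Wright this requires the Eskin--Mirzakhani--Mohammadi counting results showing that saddle connections between two generic marked points grow quadratically and cannot all be intercepted by finitely many points. (ii) Your proposed route from ``the pair is constrained but neither point is individually periodic'' to ``the points have a common image under $\pi_{Q_{min}}$'' via a ``sublattice of the period lattice'' does not make sense here: by hypothesis $(X,\omega)$ is not a torus cover, so there is no period lattice to speak of, and the dichotomy you need is precisely the main classification of constrained marked points in Apisa--Wright, proved via cylinder deformations rather than by any period-lattice argument. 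As written, the proposal reduces the theorem to the two hardest ingredients of the cited proof without supplying either.
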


Recall that a point $p$ on $(X, \omega)$ is called \emph{periodic} if it is a non-singular point and the orbit closure of $(X, \omega)$ and $(X, \omega; \{p\})$ have the same dimension. A non-singular point that is not a periodic point is called \emph{generic}.

Therefore, to solve the finite blocking problem on a rational polygon it is useful to know the minimal half-translation cover and the periodic points on its unfolding. 

\begin{lem}\label{L:MinimalCoverUnfolding}
The only rational right triangles that unfold to torus covers are the $45-45-90$ and $30-60-90$ triangles. All other rational right triangles have their minimal degree half-translation cover given by the quotient by the hyperelliptic involution. 

The only rational isosceles triangles that unfold to torus covers are the $45-45-90$, $60-60-60$, and $30-30-120$ triangles. All other rational isosceles triangles with angles $\left( \frac{a}{n}, \frac{a}{n}, \frac{b}{n} \right)\pi$ with $a$, $b$, and $n$ positive integers with $\mathrm{gcd}(a,b,n) = 1$ have their minimal degree half-translation cover given by the quotient by the hyperelliptic involution (when $n$ is odd) and a degree four map to a genus zero surface (when $n$ is even). 
\end{lem}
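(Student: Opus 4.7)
The plan is to combine the orbit closure classifications in Theorems \ref{T:Unfolding:RightDetailed} and \ref{T:Unfolding:IsoscelesDetailed} with Apisa-Wright's characterization \cite[Lemma 3.3]{ApisaWright} that the minimal half-translation cover $\pi_{Q_{min}}$ is determined, for a surface with dense orbit in an invariant subvariety $\cM$, by $\cM$ itself: it is the half-translation quotient with smallest codomain.

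First I would enumerate the torus covers. For a right triangle, the smallest angle is at most $\pi/4$, so in the notation of Theorem \ref{T:Unfolding:RightDetailed} one always has $m \ge 4$. Only the regular $4$-gon and regular $6$-gon are tori in this range, singling out the $45$-$45$-$90$ case (arising from $a=b=1$, $n=2$) and the $30$-$60$-$90$ case (from $a=1$, $b=2$, $n=3$); direct enumeration of the remaining exceptional subcases with $\min(a,b) \in \{1,2\}$ shows no other torus. For unexceptional right triangles the orbit closure is the quadratic double of $\cQ(a-2,b-2,-1^n)$ with $n \geq 6$, whose generic holonomy covers have genus at least two by Riemann-Hurwitz. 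A parallel analysis via Theorem \ref{T:Unfolding:IsoscelesDetailed} identifies the isosceles torus covers as $45$-$45$-$90$ ($n=4$, $\min(a,b/2)=1$, double regular square), $60$-$60$-$60$ ($n=3$, $a=1$, double regular triangle), and $30$-$30$-$120$ ($n=6$, $b=4$, double cover of the double regular triangle).

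For the remaining right triangles, the orbit closure lies in a hyperelliptic component of a stratum of Abelian differentials, so the unfolding carries a hyperelliptic involution and the quotient is a quadratic differential on a genus-zero surface. For unexceptional cases, this quotient lands in $\cQ(a-2,b-2,-1^n)$ by Theorem \ref{T:Unfolding:RightDetailed}. Because the unexceptional hypothesis excludes $a=b$, the generic surface in the target has distinguishable zeros, admits no marked-point-preserving affine symmetry, and therefore has no further half-translation quotient; hence the hyperelliptic quotient realizes $\pi_{Q_{min}}$. The remaining exceptional non-torus cases are handled analogously using the explicit Veech polygon targets.

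For isosceles triangles, Proposition \ref{P:IsoscelesCovers} identifies the unfolding of $T$ with the unfolding of its associated right triangle $R$ when $n$ is odd, reducing to the right-triangle analysis. When $n$ is even, the same proposition presents the unfolding of $T$ as a translation double cover of the unfolding of $R$, which is itself the hyperelliptic double cover of a genus-zero surface in $\cQ(a-2, b/2-2, -1^{n/2})$; composition yields a degree-four half-translation cover onto a genus-zero surface, and the zero-distinguishability argument above shows it is $\pi_{Q_{min}}$. The main obstacle is the verification that no further half-translation quotient exists for the generic target, which reduces in each case to checking that the two zeros of the target genus-zero stratum have distinct orders---routine from the unexceptional hypotheses.
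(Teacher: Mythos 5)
Your overall route is the same as the paper's: read the orbit closures off Theorems \ref{T:Unfolding:RightDetailed} and \ref{T:Unfolding:IsoscelesDetailed}, identify the torus covers among the Veech loci, and for all remaining triangles argue that the genus-zero quotient realizes $\pi_{Q_{min}}$. The torus-cover enumeration is fine. The gap is in the key step. You justify that the genus-zero target admits no further half-translation quotient by observing that its two zeros have distinct orders and that it ``admits no marked-point-preserving affine symmetry.'' That is not the right mechanism: half-translation covers are not in general quotients by automorphisms (they need not be normal), and zeros of distinct orders do not obstruct a surface from nontrivially covering another one (nor does equality of orders permit it --- a generic surface in $\cQ(2,2,-1^8)$ also covers nothing). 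The correct reason, and the one the paper uses, is dynamical rather than combinatorial: a surface with \emph{dense orbit} in a genus zero stratum of rank at least two cannot be the domain of a nontrivial half-translation cover (Apisa-Wright \cite[Lemma 4.5]{ApisaWright}), since being such a domain would confine its orbit to a proper locus of covers. With that citation in place of the distinct-orders argument, your treatment of the unexceptional cases closes; as written, the step you yourself flag as ``the main obstacle'' is reduced to a condition that does not imply what you need.

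A second, smaller gap concerns the exceptional non-torus triangles, which you dispatch with ``handled analogously using the explicit Veech polygon targets.'' There the unfolding is (a double cover of) a regular or double regular $n$-gon surface, the orbit is not dense in any genus-zero stratum, and the density argument above is unavailable; something specific to those loci is required. The paper invokes Apisa-Saavedra-Zhang \cite[Lemma 2.7]{ApisaSaavedraZhang}, which identifies the minimal half-translation cover of surfaces in the $n$-gon loci as the quotient by the hyperelliptic involution; composing with the translation double cover of Proposition \ref{P:IsoscelesCovers} then yields the degree-four statement for the even-$n$ isosceles Veech cases.
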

\begin{proof}
By Theorems \ref{T:Unfolding:RightDetailed} and \ref{T:Unfolding:IsoscelesDetailed}, the only rational and isosceles triangles that unfold to a translation surface $(X, \omega)$ with a rank one orbit closure have the property that $(X, \omega)$ is one of the following: the regular $n$-gon (for $n$ even), the double regular $n$-gon locus (for $n$ odd), or a double cover of one of those loci. The only such surfaces that are torus covers are the regular $4$-gon, regular $6$-gon (equivalently the double regular $3$-gon), and double covers thereof. The claim about which triangles unfold to torus covers now follows from Theorems \ref{T:Unfolding:RightDetailed} and \ref{T:Unfolding:IsoscelesDetailed}.

Since a surface with dense orbit in a genus zero stratum of rank at least two cannot be the domain of a nontrivial half-translation cover (Apisa-Wright \cite[Lemma 4.5]{ApisaWright}) the other claims follow by Theorems \ref{T:Unfolding:RightDetailed} and \ref{T:Unfolding:IsoscelesDetailed} when the unfolding is neither Veech nor a double cover of a Veech surface.  However, in those cases, the unfolding is either a cover of the regular $n$-gon or double regular $n$-gon locus, for which the minimal half-translation cover is the quotient by the hyperelliptic involution (by Apisa-Saavedra-Zhang \cite[Lemma 2.7]{ApisaSaavedraZhang}).
\end{proof}

The following will allow us to compute the periodic points whenever an unfolding is not a cover of a Veech surface.

\begin{thm}[Apisa-Wright \cite{ApisaWright} Theorem 1.3]\label{T:QuadraticPeriodicBackground}
Suppose that $\cQ$ is a component of a stratum of quadratic differentials of rank at least two. If $\cQ$ is a hyperelliptic component, then the only periodic points are fixed points of the hyperelliptic involution. Otherwise, there are no periodic points.
\end{thm}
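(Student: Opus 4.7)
The plan is to analyze the invariant subvariety $\cQ^\bullet \subseteq \cQ(\kappa, 0)$ consisting of surfaces in $\cQ$ with one additional marked point, and detect periodic points by the condition $\dim \cQ^\bullet = \dim \cQ$ (where the added marked point costs no dimension). Equivalently, if $\cQ_p$ denotes the orbit closure of $((X,q),p)$, then $p$ is periodic precisely when the projection $\cQ_p \to \cQ$ is finite-to-one over a dense set. The overall strategy would be to show that such a $p$ must be fixed by the entire monodromy of a sufficiently rich family of deformations preserving $(X,q)$, and then classify the surviving possibilities.

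First I would exploit cylinder deformations via the cylinder deformation theorem of Wright \cite{Wcyl}: on a cylindrically stable horizontally periodic surface in $\cQ$, the standard shears in each equivalence class lie in $T\cQ$. If $p$ is periodic, the deformation $(X,q) + t\sigma_\bfC$ must send $p$ to a periodic point on the deformed surface. Since periodic points are isolated on generic surfaces (as their locus is a codimension zero subvariety), $p$ must lie on the core curves of cylinders (or otherwise respect each cylinder deformation rigidly). Running this argument for two transverse periodic directions forces $p$ to be intersections of horizontal and vertical special curves, hence from a finite list whose positions are determined by absolute period data. Repeating this across a dense family of periodic surfaces gives global rigidity.

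Second, I would pass to the holonomy double cover $\widetilde{\cQ}$, which is either a stratum of Abelian differentials or a hyperelliptic locus therein, and transfer the question to counting periodic points invariant under the holonomy involution. In the hyperelliptic case, the Weierstrass points form a canonical finite set preserved by every element of $T\cQ$, which immediately yields that they are periodic. One direction is thus easy; the content lies in the converse: showing no other periodic points exist. For this, I would use rank $\geq 2$ to produce, on a dense set of surfaces in $\cQ$, two disjoint generic equivalence classes of cylinders whose joint cylinder deformations can shift every point except Weierstrass points, via an analogue of the argument in Apisa-Wright \cite{ApisaWrightDiamonds} that uses the existence of generic diamonds.

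Finally, the non-hyperelliptic case would follow by showing that in any non-hyperelliptic rank $\geq 2$ component, no finite $\GL$-invariant set of marked points exists on the generic surface. This would use that such components admit no nontrivial affine automorphism of the generic surface (by a classification argument ruling out hyperellipticity for rank $\geq 2$ unless the component is the hyperelliptic one) together with the cylinder deformation rigidity above. The main obstacle I anticipate is the non-hyperelliptic case: one must rule out hypothetical periodic points that are not fixed by any global involution, which requires showing the monodromy action on the punctured surface is transitive on a dense orbit — this is where a careful use of the Mirzakhani-Wright boundary theory (Section \ref{S:PeriodicStructure}) and the joinings-type rigidity of Section \ref{S:Joinings} would be needed to arrive at a contradiction.
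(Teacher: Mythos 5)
This statement is imported verbatim from Apisa--Wright \cite{ApisaWright} (their Theorem 1.3); the paper gives no proof of it, so there is nothing internal to compare your sketch against. Judged on its own terms, your outline gestures at the right circle of ideas (cylinder deformations, the holonomy double cover, Weierstrass points), and the easy direction --- that fixed points of the hyperelliptic involution are periodic --- is fine. But the hard content is missing at each of the three decisive steps.

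First, the assertion that a periodic point ``must lie on the core curves of cylinders'' does not follow from isolation: a standard shear or dilation of an equivalence class moves interior points of those cylinders in a prescribed affine way, and a periodic point may perfectly well sit in the interior of a cylinder provided its position transforms compatibly (the Weierstrass points themselves typically do). What the cylinder-deformation argument actually yields is a rationality/rigidity constraint on the position of $p$ relative to the cylinders, not membership in a finite union of special curves. Second, in the hyperelliptic case the converse --- that every non-Weierstrass point can be displaced by some deformation in $T\cQ$ --- is exactly the theorem, and you assert it rather than prove it; two transverse generic equivalence classes do not obviously move points lying on shared cylinder boundaries or points whose motion under one shear is cancelled by another, and the genuine argument is an induction on strata via cylinder degenerations with separately established low-genus base cases. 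Third, in the non-hyperelliptic case your reduction to the nonexistence of affine automorphisms is a non sequitur: a periodic point is defined by the dimension of its orbit closure with the point marked, not by being fixed under an automorphism, so ruling out automorphisms rules out nothing (free marked points on torus covers and non-fixed periodic points on Veech surfaces illustrate the gap). As written the proposal is a plausible research plan, not a proof.
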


We will use this theorem to show the following:

\begin{prop}\label{P:FB:Zero}
Suppose that $(X, \omega)$ is a holonomy double cover of a genus zero surface $(Y, q)$ with dense orbit in a component $\cQ$ of a stratum of quadratic differentials of rank at least two and with at most two zeros. Then the only pairs of finitely blocked points on $(X, \omega)$ are pairs of nonsingular points exchanged by the hyperelliptic involution, blocked by the Weierstrass points.
\end{prop}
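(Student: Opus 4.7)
The proof plan is to combine Theorem \ref{T:FiniteBlockingBackground} with explicit descriptions of the minimal half-translation cover and the periodic points on $(X,\omega)$, then handle each case that the theorem produces.

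The first step is to identify $\pi_{Q_{min}}$ and the periodic points. Since $(Y,q)$ has dense orbit in the rank at least two genus zero stratum $\cQ$, Apisa-Wright \cite[Lemma 4.5]{ApisaWright} implies $(Y,q)$ admits no nontrivial half-translation cover, so the minimal half-translation cover $\pi_{Q_{min}}$ of $(X,\omega)$ is simply the holonomy double cover $\pi:(X,\omega)\to(Y,q)$. By Theorem \ref{T:QuadraticPeriodicBackground}, $(Y,q)$ has no periodic points (a genus zero stratum of rank at least two is not a hyperelliptic component in the sense of that theorem), so any non-singular periodic point on $(X,\omega)$ must project to a singular point of $(Y,q)$ whose preimage under $\pi$ is regular, forcing its projection to be a simple pole. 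Consequently the periodic points on $(X,\omega)$ are exactly the (regular) Weierstrass points lying over simple poles.

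The second step handles the involution pairs, which are the non-singular pairs sharing a $\pi$-image. For any non-singular, non-Weierstrass point $p$ and any straight segment $\ell:[0,1]\to(X,\omega)$ from $p$ to $\tau(p)$ free of interior singularities, the path $t\mapsto\tau(\ell(1-t))$ is another such segment of the same length and initial direction (using $\tau^*\omega=-\omega$, so that the hyperelliptic involution reverses direction and reparameterization recovers the original heading). Uniqueness of geodesics in a prescribed direction through a regular point then forces $\ell(t)=\tau(\ell(1-t))$, making the interior point $\ell(1/2)$ a $\tau$-fixed, non-singular point, i.e., a regular Weierstrass point. Hence the finite set of Weierstrass points blocks every involution pair of non-singular points, which establishes one direction of the classification.

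Finally, one must rule out all other finitely blocked pairs. By Theorem \ref{T:FiniteBlockingBackground}, any such pair not already handled must consist of two periodic or singular points whose images under $\pi$ are distinct, with the other periodic and singular points as the proposed blocker set. The main obstacle of the proof is to show no such pair is finitely blocked: the plan is to exhibit, for each such pair $\{p_1,p_2\}$, a saddle connection between $p_1$ and $p_2$ avoiding the finite set $F$ of other periodic and singular points. The strategy exploits rank at least two dynamics together with Eskin-Mirzakhani counting: the number of saddle connections of length at most $L$ from $p_1$ to $p_2$ grows quadratically in $L$, while for each fixed $q\in F$ the saddle connections passing through $q$ are determined by their initial direction toward $q$ and form a set whose count is at most linear in $L$. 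Summing over the finitely many $q\in F$ shows that generic saddle connections avoid $F$ entirely, ruling out finite blocking and completing the classification.
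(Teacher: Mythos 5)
Your first two steps are fine and essentially match the paper: the minimal half-translation cover is the degree-two quotient by the hyperelliptic involution, the non-singular periodic points are the Weierstrass points over simple poles, and the midpoint argument (using $\tau^*\omega=-\omega$) correctly shows that involution pairs are blocked by Weierstrass points. The problem is your third step, which is where all the real work lies, and it contains a fatal gap. The claim that the saddle connections from $p_1$ to $p_2$ passing through a fixed blocker $q$ form a set of size $O(L)$ is false: such a trajectory is determined by a singularity-free geodesic segment from $p_1$ to $q$ (which then continues uniquely, or in boundedly many ways, past $q$), and by Masur's bounds and \cite[Theorem 2.12]{EMM} the number of such segments of length at most $L$ grows quadratically in $L$. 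Indeed, finite blocking is possible precisely because all of the quadratically many connections between two points can be forced through a finite set --- your own step 2 exhibits pairs $(p,\tau p)$ for which every one of the $\asymp L^2$ connections passes through a Weierstrass point --- so no counting argument of this shape can distinguish blocked from unblocked pairs. The paper instead constrains the candidate blocking set to the Weierstrass and singular points (via Theorems \ref{T:FiniteBlockingBackground} and \ref{T:QuadraticPeriodicBackground}) and then exhibits, for each remaining candidate pair, a single explicit saddle connection avoiding that set, by lifting the Athreya--Eskin--Zorich configurations (envelopes, simple cylinders, and saddle connections cutting off an odd number of poles) from $(Y,q)$ and, for the cross pairs $z_1,w_2$, by a cylindrical-stability and shearing argument.

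A second, independent gap: your case division omits the pairs of \emph{singular} points with the same image under $\pi$, namely the two preimages $z_1,z_2$ of an even-order zero of $(Y,q)$ (and each $z_i$ from itself). Step 2 covers only non-singular such pairs, and step 3 explicitly restricts to pairs with distinct images, so these pairs are never addressed. They are genuinely the delicate case: the paper shows $z_1$ and $z_2$ are unblocked by finding a saddle connection on $(Y,q)$ from $z$ to itself cutting off an odd number of poles, so that (since the poles are branch points of the double cover) its preimage is a single arc joining $z_1$ to $z_2$ and missing all Weierstrass points. Some such monodromy or parity input is unavoidable here and cannot be replaced by a counting estimate.
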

\begin{proof}
Let $p_1$ and $p_2$ be two not necessarily distinct points on $(Y, q)$. Let $P$ be the possibly empty set consisting of the non-singular points in $\{p_1, p_2\}$. By Apisa-Wright \cite{ApisaWright}, since $(Y, q)$ has dense orbit in $\cQ$, $(Y, q; P)$ has dense orbit in the stratum containing it. By considering the configurations in Definition \ref{D:Configurations}, Eskin-Mirzakhani-Mohammadi \cite[Theorem 2.12]{EMM} and Athreya-Eskin-Zorich \cite[Theorems 4.3, 4.5, and 4.8]{AEZ} (as recalled in Theorem \ref{T:AEZ}) imply that $p_1$ and $p_2$ are finitely blocked from each other if and only if $p_1 = p_2$ is a pole (note that the case where $p_1 = p_2$ is a zero is best understood using configuration 2 in \cite{AEZ}). This shows that the only finite blocking that occurs on $(Y, q)$ is the blocking of a pole from itself.

Since non-singular points on $(X, \omega)$ are blocked from their image under the hyperelliptic involution by Weierstrass points, it only remains to consider blocking between two preimages of zeros of even order on $(Y, q)$ by Theorems \ref{T:FiniteBlockingBackground} and \ref{T:QuadraticPeriodicBackground}. (Note that odd order zeros have a single preimage on $(X, \omega)$ and hence we can solve the blocking problem by lifting paths from $(Y, q)$ to $(X, \omega)$.) The blocking set between two such points must be a subset of the collection of Weierstrass points by Theorems \ref{T:FiniteBlockingBackground} and \ref{T:QuadraticPeriodicBackground}. 

First suppose that $z$ is an even order zero on $(Y, q)$ whose preimages are $z_1$ and $z_2$ on $(X, \omega)$. To see that $z_i$ is not blocked from itself we note that by Athreya-Eskin-Zorich \cite[Theorem 4.5]{AEZ} there are simple envelopes on $(Y, q)$, one boundary of which consists of a saddle connection joining $z$ to itself. The preimage of this envelope on $(X, \omega)$ is a simple cylinder bounded by one saddle connection joining $z_1$ to itself and another joining $z_2$ to itself (neither of which passes through poles). This establishes that $z_i$ is not blocked from itself.

We will now show that $z_1$ and $z_2$ are not blocked from each other. By Athreya-Eskin-Zorich \cite[Theorem 4.4]{AEZ} there is a saddle connection $s$ on $(Y, q)$ that joins $z$ to itself and so that the there is a component of $(Y, q) - s$ whose only cone points are an odd number of poles. The homology class of $s$ on $Y - C(q)$, where $C(q)$ denotes the cone points of $q$, can be identified with the sum of loops around these poles. Since the poles are branch points it follows that the preimage of $s$ on $(X, \omega)$ is a single closed loop (not two closed loops). This implies that a lift of the saddle connection $s$ to $(X, \omega)$ joins $z_1$ and $z_2$ and does not pass through Weierstrass points, as desired.

Finally, suppose that $w$ is an even order zero on $(Y, q)$ that is distinct from $z$ and whose preimages on $(X, \omega)$ are $w_1$ and $w_2$. We will show that $z_i$ and $w_j$ are not finitely blocked from each other for any $i$ and $j$. By Athreya-Eskin-Zorich \cite[Theorem 4.4]{AEZ} there is a simple cylinder on $(Y, q)$ whose boundary consists of one saddle connection joining $z$ to itself and one joining $w$ to itself. The preimage of this cylinder on $(X, \omega)$ consists of two cylinders, $C_1$ and $C_2$, so that, up to relabelling, the boundary of $C_i$ consists of a saddle connection joining $z_i$ to itself and another joining $w_i$ to itself. This demonstrates that $z_i$ and $w_i$ are not finitely blocked from each other. 

It suffices now to show that $z_1$ is not blocked from $w_2$. By perturbing $(X, \omega)$ we may assume that it is cylindrically stable in the direction of $\{C_1, C_2\}$ (by Apisa-Wright \cite[Lemma 7.10 (6)]{ApisaWrightHighRank}). Suppose without loss of generality that $C_1$ is horizontal and that its top boundary consists of a saddle connection joining $z_1$ to itself. Let $D$ denote the horizontal cylinder that borders $C_1$ along this saddle connection. Since $\cQ$ has at most two zeros, $C_1$ and $C_2$ are the only horizontal cylinders that are not fixed by the hyperelliptic involution. In particular, $z_2$ belongs to the top boundary of $D$. By shearing $D$ and equally shearing the cylinders in $\{C_1, C_2\}$, it is straightforward to produce a surface on which the vertical separatrix that travels down from $z_2$ crosses the core curve of $D$ and $C_1$ once and then terminates at $w_1$ without passing through a Weierstrass point. 
\end{proof}

\begin{lem}\label{L:Qhyp-cyl}
Suppose that $(X, \omega)$ is a holonomy double cover of a surface $(Y, q)$ with dense orbit in a hyperelliptic component $\cQ$ of a stratum of quadratic differentials of rank at least two. Let $T$ be the translation involution on $(X, \omega)$ and $J$ the holonomy involution. Suppose that the surfaces in $\cQ$ are double covers of surfaces belonging to the genus zero stratum $\cQ(a, b, -1^{a+b+4})$ where $a \geq b \geq -1$ are integers. The following subequivalence classes of generic cylinders occur on $(X, \omega)$ unless indicated otherwise. 
\begin{enumerate}
    \item A pair of simple cylinders that are exchanged by $J$ and $T$.
    \item A single complex cylinder that is fixed by $J$ and $T$. (This only occurs if $b = -1$)
    \item A pair of complex cylinders that are exchanged by $J$ and $JT$. (This only occurs if $b \ne -1$)
\end{enumerate}
Moreover, it is possible to produce a pair of simple cylinders (resp. a single complex cylinder) that contains the $T$-orbit of two (resp. one) specified fixed point of $JT$.
\begin{figure}[h]\centering
\includegraphics[width=\linewidth]{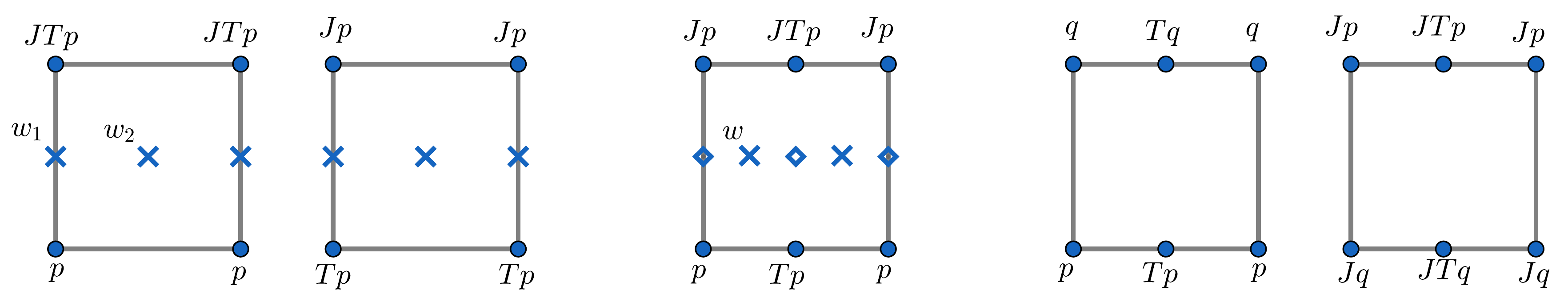}
\caption{These three horizontal cylinders are the three possible subequivalence classes of generic cylinders appearing on a surface in the locus of holonomy double covers of a hyperelliptic component of a stratum of quadratic differentials. The points marked with an ``x" (resp. diamond) are fixed points of $JT$ (resp. $J$); the point $p$ is a cone point and the point $q$ is a cone point that does not belong to the same $\{\mathrm{id}, J, JT, T\}$-orbit.}
\label{F:CylinderTypesQHyp}
\end{figure}
\end{lem}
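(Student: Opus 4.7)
The plan is to exploit the $(\bZ/2)^2$-action of $G = \langle T, J \rangle$ on $(X,\omega)$ arising from the tower $(X,\omega) \to (Y,q) \to (Z,\zeta)$. Here $J$ is the deck of the holonomy double cover $X \to Y$ (so $J^*\omega = -\omega$) and $T$ is the translation-preserving lift to $X$ of the hyperelliptic involution $\sigma_Y$ of $Y$ (so $T^*\omega = \omega$). Since $T$ and $J$ commute, we have $X/\langle T,J\rangle = Z$, and the three intermediate quotients $X/T$, $X/J = Y$, and $X/JT$ each yield double covers of $(Z,\zeta)$. Each non-trivial element $T, J, JT$ has a branch locus $B_T, B_J, B_{JT}$ among the cone points of $(Z,\zeta)$. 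For any cylinder $C$ on $X$, its $G$-orbit has size $|G|/|\mathrm{Stab}_G(C)|$, and the stabilizer is generated by the monodromy of the core curve in $G$, computed as the sum of branch contributions over the cone points enclosed by the core. The three cases of the lemma correspond precisely to stabilizers $\langle JT \rangle$ (orbit size two, Case 1), the full group $G$ (orbit size one, Case 2), and $\langle T \rangle$ (orbit size two, Case 3).

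I would then apply the Athreya-Eskin-Zorich classification in genus-zero strata (see Definition \ref{D:Configurations} and Corollary \ref{C:T:AEZ}): the generic cylinders on $(Z,\zeta) \in \cQ(a,b,-1^{a+b+4})$ are simple cylinders (boundaries joining pairs of poles) and simple envelopes (one boundary is a loop at a zero, the other joins two poles). For a generic simple cylinder, I would argue using the parity constraints imposed by the hyperelliptic structure of $\cQ$ (specifically, which cone points of $Z$ are branched for $Y \to Z$ and which become odd-order singularities of $q$, hence branched for $X \to Y$) that the sum of branch contributions from the two boundary poles on one side is $JT$, giving Case 1. For a simple envelope at the zero of order $b$ with $b \geq 0$, I would show this zero lies in $B_T$ (because it becomes an odd-order singularity of $q$ on $Y$ under the hyperelliptic branching), producing stabilizer $\langle T \rangle$ and hence Case 3. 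When $b = -1$, however, the unique zero of order $a$ on $(Z,\zeta)$ must accommodate different branching constraints: the parity structure of the single hyperelliptic component with only one zero forces this zero into a configuration where both $T$ and $J$ fix a lift, yielding stabilizer $G$ and Case 2. Ruling out the two potential missing stabilizers (trivial, giving orbit size four; and $\langle J \rangle$) follows from the same parity analysis: trivial monodromy requires matching branch types on both boundary poles, which is non-generic, while $\langle J \rangle$-stabilizer would require a branch configuration incompatible with $(Y,q)$ lying in $\cQ$.

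For the moreover statement, I would construct the desired cylinders explicitly. Any fixed point of $JT$ on $X$ projects to a cone point in $B_{JT}$ on $(Z,\zeta)$; by density of the $\mathrm{GL}^+(2,\bR)$-orbit of $(Z,\zeta)$ in its stratum and the freedom to choose boundary configurations of cylinders, one can arrange a simple cylinder on $(Z,\zeta)$ whose boundary saddle connections pass through the images of the two specified fixed points, and lifting produces a Case 1 pair containing their $T$-orbits. For Case 2 with $b = -1$, the analogous construction uses a simple envelope at the unique zero with boundary passing through the relevant fixed point. The main obstacle will be the precise branch-locus bookkeeping—determining the assignment of each cone point of $(Z,\zeta)$ to $B_T$, $B_J$, $B_{JT}$, or the unbranched set depends on the parities of $a$, $b$, and $a+b+4$ together with the requirement that the double cover $(Y,q) \to (Z,\zeta)$ produce a surface in the hyperelliptic component $\cQ$. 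This parity analysis is the crux that forces the dichotomy Case 2 $\Leftrightarrow$ $b = -1$ and Case 3 $\Leftrightarrow$ $b \neq -1$.
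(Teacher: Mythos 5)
Your overall strategy---pass to the tower $(X,\omega)\to(Y,q)\to(Z,\zeta)$ with deck group $G=\langle T,J\rangle\cong(\bZ/2)^2$, classify generic cylinders on the genus-zero quotient as simple cylinders and simple envelopes, and lift---is exactly the paper's. But the execution contains errors that are the substance of the lemma, not bookkeeping. First, your description of simple cylinders is wrong: in a genus-zero stratum a generic simple cylinder is separating, with the zero of order $k_i$ and $k_i+2$ poles on each side, and its two boundary saddle connections are \emph{loops at the two zeros}; the ``saddle connection joining two poles'' is the folded boundary of an \emph{envelope}. Second, your case assignments are swapped. Computing the monodromy $m\colon\pi_1(Z-\Sigma)\to G$ (each branched pole contributes $JT$, since poles have nontrivial holonomy and are branch points of $Y\to Z$; an even-order zero contributes $T$, an odd-order zero contributes $J$), the core curve of a simple cylinder encloses one zero and $k_1+2$ poles and has monodromy $T$ in both parity cases, so its preimage is a pair of complex cylinders each fixed by $T$ and exchanged by $J$ and $JT$ --- that is Case~3, not Case~1. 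Third, and most seriously, the principle ``stabilizer $=\langle m(\gamma)\rangle$'' fails precisely for envelopes, which is where Cases~1 and~2 come from: an envelope whose folded boundary joins two branched poles has core-curve monodromy $JT+JT=0$, so naively four disjoint lifts, but those four annuli merge in pairs across the preimage of the folded saddle connection into two \emph{simple} cylinders of twice the height, each stabilized by $\langle JT\rangle$ (Case~1). When $b=-1$ exactly one pole is unbranched for $Y\to Z$ (parity forces this), the monodromy is $JT+J=T$, and the two naive lifts merge into a single complex cylinder fixed by all of $G$ (Case~2). Note that $G$ is not cyclic, so no stabilizer of the form $\langle m(\gamma)\rangle$ can ever equal $G$: your framework has no mechanism at all for producing Case~2.

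Two further consequences: since no simple cylinders exist on $(Z,\zeta)$ when $b=-1$ (there is only one zero), your scheme would leave Case~1 unproduced in that regime even though the lemma asserts it unconditionally; and your ``moreover'' construction aims at the wrong object --- the fixed points of $JT$ are the preimages of the branched poles, which lie on the folded boundaries of envelopes, not on the boundaries of simple cylinders (which are loops at zeros). The paper's proof of the final claim is a one-liner: by Athreya--Eskin--Zorich one can realize an envelope on $(Y,q)/\tau$ whose folded boundary contains any prescribed pair of poles, and its lift is the desired Case~1 (or, with the unbranched pole, Case~2) configuration.
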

\begin{proof}
Let $\tau$ denote the hyperelliptic involution on $(Y, q)$. The only subequivalence classes of generic cylinders on $(Y, q)/\tau$ are simple envelopes and simple cylinders (since $(Y, q)/\tau$ belongs to a genus zero stratum). Let $C$ be a such a cylinder and let $\bfC$ be its preimage on $(X, \omega)$.

If $C$ is a simple cylinder, then $\bfC/T$ is a pair of simple cylinders and $\bfC/J$ is a complex cylinder. It follows that $\bfC$ consists of a pair of complex cylinders that are exchanged by $J$ and $T$. Note that surfaces in $\cQ(a, b, -1^{a+b+4})$ (where $a \geq b \geq -1$) contain generic simple cylinders if and only if $b \ne -1$. 

If $C$ is a simple envelope, then suppose first that both poles on its boundary are branch points for the map $(Y, q) \ra (Y, q)/\tau$ (this is always the case if $b \ne -1$). In this case, $\bfC/J$ and $\bfC/T$ both consist of a single simple cylinder and hence $\bfC$ is a pair of simple cylinders that are exchanged by $J$ and $T$.

If $C$ is a simple envelope and a pole on its boundary is a branch point, then $b = -1$ and $\bfC/J$ is a complex envelope. It follows that $\bfC$ is a complex cylinder that is fixed by $J$ and $T$.

Finally, notice that by Athreya-Eskin-Zorich \cite[Theorem 4.5]{AEZ}, we can find $C$ on $(Y, q)/\tau$ that is an envelope with any pair of poles we like on its boundary. This implies the final claim.
\end{proof}

\begin{prop}\label{P:FB:Hyp}
Under the same assumptions as Lemma \ref{L:Qhyp-cyl},
\begin{enumerate}
    \item\label{I:FB:Hyp:General} A generic point $p$ is only finitely blocked from $Jp$ and $JTp$. 
    \item\label{I:FB:Hyp:PeriodicPeriodic} Each fixed point of $JT$ is not finitely blocked from at least one other such point
    \item\label{I:FB:Hyp:SingularPeriodic} Every singular point on $(X, \omega)$ is not finitely blocked from at least one nonsingular fixed point of $J$ and $JT$.
    \item\label{I:FB:Hyp:SingularSingular} No zero (not including marked points) on $(X, \omega)$ is finitely blocked from any other.
\end{enumerate}
\end{prop}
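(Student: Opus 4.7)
The plan is to combine Theorem~\ref{T:FiniteBlockingBackground} (which restricts candidate finitely-blocked pairs to orbits of the minimal half-translation cover together with periodic and singular points) and Theorem~\ref{T:QuadraticPeriodicBackground} (which pins down periodic points as Weierstrass points) with the cylinder classification of Lemma~\ref{L:Qhyp-cyl} and the configuration-counting statements from Theorem~\ref{T:AEZ}. The key structural fact I will use throughout is that $J$, $T$, $JT$ generate a Klein four-group whose quotient of $(X,\omega)$ is the $\cM$-minimal half-translation surface $(Y,q)/\tau$; thus two points of $(X,\omega)$ share image under $\pi_{Q_{min}}$ if and only if they lie in a common $\{\mathrm{id},J,T,JT\}$-orbit, and the periodic points of $(X,\omega)$ are precisely the fixed points of $T$ and of $JT$.

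For (1), a generic $p$ has $\{p,Jp,Tp,JTp\}$ pairwise distinct, so by Theorem~\ref{T:FiniteBlockingBackground} the only possible finite-blocking partners of $p$ lie in this orbit. Blocking from $Jp$ (respectively $JTp$) follows immediately because the quotient by $\langle J\rangle$ (respectively $\langle JT\rangle$) is a degree-two branched translation cover: every straight-line trajectory from $p$ to $Jp$ (resp.\ $JTp$) descends to a closed geodesic on the quotient whose lift changes sheets, hence must pass through a branch point, and the branch locus is finite. To exclude blocking from $Tp$, I use Lemma~\ref{L:Qhyp-cyl}(2)-(3): after a generic perturbation within $\cM$ one produces a cylinder containing $p$ that is either a single complex cylinder fixed by $T$ or one of a pair of complex cylinders exchanged by $J$ and $JT$; in both cases the cylinder is $T$-invariant, so $p$ and $Tp$ lie in the same cylinder, and straight-line trajectories across the cylinder supply infinitely many paths from $p$ to $Tp$ avoiding the finite set of Weierstrass points, singular points, $Jp$, and $JTp$.

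For (2), given two distinct fixed points $w_1,w_2$ of $JT$, the final assertion of Lemma~\ref{L:Qhyp-cyl} produces a pair of simple cylinders (type (1)) whose boundaries contain the $T$-orbits of $w_1$ and $w_2$; thus each simple cylinder has $w_1$ on one boundary and $w_2$ on the other, and straight lines of varying slope across the cylinder connect $w_1$ to $w_2$ while avoiding every other Weierstrass point and singularity. For (3), given a singular point $s$, Theorem~\ref{T:AEZ} applied to the configuration of envelopes (Definition~\ref{D:Configurations}, type (3)) on $(Y,q)/\tau$ produces a simple envelope whose boundary contains the image of $s$ and a pole, and by Lemma~\ref{L:Qhyp-cyl} its preimage on $(X,\omega)$ is a cylinder (or pair of cylinders) with $s$ on one boundary and a nonsingular fixed point of $J$ or $JT$ on the other; straight-line trajectories in this cylinder finish the argument. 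For (4), given distinct zeros $z_1,z_2$ of $\omega$, I apply Theorem~\ref{T:AEZ} to the configuration of saddle connections between two zeros on $(Y,q)/\tau$ (type (1)), lift the resulting cylinders using Lemma~\ref{L:Qhyp-cyl}, and extract infinitely many trajectories between $z_1$ and $z_2$ avoiding Weierstrass and singular points.

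The main obstacle will be (1): verifying that for \emph{every} generic $p$ a suitable type (2) or (3) cylinder with $p$ in its interior actually occurs, and simultaneously handling the case distinction between $b=-1$ and $b\neq -1$ in Lemma~\ref{L:Qhyp-cyl} so that at least one of the $T$-invariant cylinder types is available. A secondary subtlety arises in (4) when a zero of $\omega$ is the preimage of an odd-order zero of $q$ (so has a single preimage under $J$): one must ensure that the downstairs configuration of saddle connections can be chosen with endpoints at zeros whose preimage multiplicities on $(X,\omega)$ match the distinct zeros $z_1,z_2$ chosen upstairs, which will require invoking the genericity of the chosen configuration via the fact that $(X,\omega)$ has dense orbit in $\cM$.
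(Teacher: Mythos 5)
Your overall architecture matches the paper's: restrict candidate blocked pairs via Theorems \ref{T:FiniteBlockingBackground} and \ref{T:QuadraticPeriodicBackground}, then exhibit unblocked trajectories inside the cylinders of Lemma \ref{L:Qhyp-cyl}, invoking genericity to place the relevant points in those cylinders. Parts (1)--(3) are essentially sound in this form (note that the fixed points of $JT$ sit in the \emph{interiors} of the simple cylinders, not on their boundaries, though the trajectory argument goes through either way; and in (1) you should also record the case of $p$ versus $p$ itself, which the same cylinder handles). However, your justification of the positive half of (1) is wrong as stated: a closed loop on $(X,\omega)/J$ whose lift changes sheets need \emph{not} pass through a branch point (loops on a pillowcase lifting to non-closed arcs on the torus generically avoid all four corners). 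What is true is that a straight segment $\gamma$ from $p$ to $Jp$ is forced through a fixed point of $J$ at its \emph{midpoint}, because $J$ acts by $-1$ on $\omega$, so $t\mapsto J\gamma(1-t)$ is the same geodesic and $\gamma(1/2)=J\gamma(1/2)$; equivalently, as in the paper, $p/T$ and $(Jp)/T$ form a hyperelliptic pair on $(X,\omega)/T$, so Proposition \ref{P:FB:Zero} applies downstairs and pulls back.

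The genuine gap is in (4), which you flag but do not close. Lifting a type-1 configuration between two distinct zeros of $(Y,q)/\tau$ only connects \emph{some} preimage of one to \emph{some} preimage of the other, whereas statement (4) requires unblocking \emph{every} pair of zeros of $(X,\omega)$ --- including pairs such as $z$ and $Jz$ (or $Tz$, $JTz$) lying over the \emph{same} zero downstairs, for which there is no type-1 configuration to lift at all. Resolving this requires controlling the $\mathbb{Z}/2\times\mathbb{Z}/2$ monodromy of the lifted saddle connection. The paper does this by splitting into parity cases on the orders of the zeros (the parity determines which of $J$, $T$ fixes a given zero, so several sub-cases are vacuous or reduce to ones already handled) and, in the remaining case, passing to the intermediate quotient $(X,\omega)/T$ and reusing the mechanism of Proposition \ref{P:FB:Zero}: a saddle connection on the genus zero surface from a zero to itself cutting off an odd number of poles lifts to a single arc joining the two preimages. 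Without this monodromy control your plan proves only a strictly weaker statement.
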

\begin{proof}
First, let $p$ be a generic point on $(X, \omega)$. Since $p/T$ is finitely blocked from $(Jp)/T$, it follows that $p$ is finitely blocked from $Jp$ and $JTp$. By Theorems \ref{T:FiniteBlockingBackground} and \ref{T:QuadraticPeriodicBackground}, the only other points from which $p$ may be blocked are $p$ and $Tp$, in which case the blocking set would consist of points in $\{p, Tp, Jp, JTp\}$, fixed points of $J$, and preimages of fixed points of $\tau$ (this set consists of fixed points of $JT$). Since $p$ is generic, it suffices to show that the generic point on a generic surface in the locus $\wt{\cQ}$ of holonomy double covers of surfaces in $\cQ$ is not blocked from itself or its image under $T$. This is immediate from Lemma \ref{L:Qhyp-cyl} (see for instance the cylinder on the left in Figure \ref{F:CylinderTypesQHyp}). This establishes \eqref{I:FB:Hyp:General}. Notice that \eqref{I:FB:Hyp:PeriodicPeriodic} and \eqref{I:FB:Hyp:SingularPeriodic} follow similarly from Lemma \ref{L:Qhyp-cyl} (see for instance the cylinders in the left and middle of Figure \ref{F:CylinderTypesQHyp}).

If $p$ is a zero, then it is not finitely blocked from $p$ or $Tp$ by Lemma \ref{L:Qhyp-cyl} (see for instance the cylinder on the left in Figure \ref{F:CylinderTypesQHyp}). We will show additionally that $p$ is not finitely blocked from $Jp$ or $JTp$. Let $\alpha+1$ be the order of vanishing of $\omega$ at $p$. If $\alpha$ is odd, then $p = Jp$ and there is nothing to prove. If $\alpha$ is even, then $p = Tp$ and by Proposition \ref{P:FB:Zero} we can pull back a saddle connection from $p/T$ to $Jp/T$ that does not pass through Weierstrass points to $(X, \omega)$. (See Apisa-Wright \cite[Section 9.1]{ApisaWrightDiamonds} for a review of the connection between the parity of $\alpha$ and the $J$ and $T$ invariance of $p$.). 

Suppose now that there are two zeros or marked points on $\cQ$. Let $p$ and $q$ be a preimage of each one on $(X, \omega)$. We may choose the preimages so that $p$ is not finitely blocked from $q$ or $Tq$ by Lemma \ref{L:Qhyp-cyl} (see for instance the cylinder on the right in Figure \ref{F:CylinderTypesQHyp}). Let $\alpha+1$ (resp. $\beta+1$) be the order of vanishing of $\omega$ at $p$ (resp. $q$). If $\beta$ is odd, then $Jq = q$ and so $p$ is not finitely blocked from $Jq$ or $JTq$. Suppose therefore that $\beta$ is even, which implies that $Tq = q$. If $\alpha$ is odd, then $Jp = p$ and hence the fact that $p$ is not finitely blocked from $q$ or $Tq$ implies that $p$ is not finitely blocked from $Jq$ or $JTq$.  Suppose therefore that $\alpha$ is also even, which implies that $Tp = p$. By Proposition \ref{P:FB:Zero} we can pull back a saddle connection from $p/T$ to $Jq/T$ that does not pass through Weierstrass points to $(X, \omega)$ to produce the desired saddle connection on $(X, \omega)$.  This establishes \eqref{I:FB:Hyp:SingularSingular}.
\end{proof}

The following result is a more explicit version of Theorem \ref{T:Main:FiniteBlocking}.

\begin{thm}\label{T:Main:FiniteBlocking:Detailed}
The only pairs of finitely blocked points on a rational right triangle belong to the following list:
\begin{enumerate}
    \item Any two points on the $30-60-90$ or $45-45-90$ triangle.
    \item The vertex of angle $\frac{\pi}{n}$ is finitely blocked from itself on the triangle with angles $( \frac{1}{n}, \frac{n-2}{2n}, \frac{1}{2})\pi$ for $n$ even. The blocking set may be taken to be empty.
\end{enumerate}
The only pairs of finitely blocked points on a rational isosceles triangle belong to the following list:
\begin{enumerate}
    \item In the $45-45-90$, $60-60-60$, or $30-30-120$ triangles any two points are finitely blocked from each other.
    \item In the $( \frac{1}{n}, \frac{1}{n}, \frac{n-2}{n})\pi$ triangle with $n$ even, any two (resp. any two distinct) vertices of angle $\frac{\pi}{n}$ are blocked from each other when $n$ is even (resp. odd). 
    \item In the $(\frac{n-1}{2n}, \frac{n-1}{2n}, \frac{1}{n})\pi$ triangle, if $n$ is odd, then the vertex of angle $\frac{\pi}{n}$ is blocked from itself. 
    \item In the $(\frac{n-1}{n}, \frac{n-1}{n}, \frac{2}{n})\pi$ triangle, if $n$ is even, then the vertex of angle $\frac{2\pi}{n}$ is blocked from itself. 
\end{enumerate}
In all but the first case the blocking set may be taken to be the midpoint of the edge joining the two vertices of equal angle. 
\end{thm}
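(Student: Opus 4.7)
The plan is to apply Theorem \ref{T:FiniteBlockingBackground}, which reduces the finite blocking problem on a translation surface $(X,\omega)$ to knowing its minimal half-translation cover $\pi_{Q_{min}}$ together with the set of periodic and singular points. A pair of points on a rational polygon $P$ is finitely blocked if and only if every pair of preimages on the unfolding is. Lemma \ref{L:MinimalCoverUnfolding} already identifies $\pi_{Q_{min}}$ for every rational right and isosceles triangle apart from the torus-cover exceptions, and Theorem \ref{T:Main:Unfolding} describes the orbit closure of the unfolding in each remaining case; these together drive a case analysis.

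First I would dispose of the torus-cover triangles ($30$-$60$-$90$, $45$-$45$-$90$, $60$-$60$-$60$, $30$-$30$-$120$) directly: their unfoldings cover a square or hexagonal torus, which is classically known to have every point finitely blocked from every other. Next, for triangles whose unfolding has a rank-one non-toral orbit closure (those with a smallest angle of the form $\pi/m$ or their double covers, corresponding to the regular $n$-gon or double regular $n$-gon loci), the classification of periodic points in these loci due to Apisa-Saavedra-Zhang together with the description of $\pi_{Q_{min}}$ from Lemma \ref{L:MinimalCoverUnfolding} feeds directly into Theorem \ref{T:FiniteBlockingBackground}; the listed vertex-to-itself blockings are exactly the surviving Weierstrass-type blockings, with the midpoint of the opposite edge providing the (potentially empty) blocking set.

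For the unexceptional right triangles, Theorem \ref{T:Main:Unfolding} places the unfolding $(X,\omega)$ as a holonomy double cover of a surface with dense orbit in the non-hyperelliptic genus zero stratum $\cQ(a-2,b-2,-1^n)$, so Proposition \ref{P:FB:Zero} applies and forces the only finite blocking to occur between non-singular points exchanged by the hyperelliptic involution (blocked by Weierstrass points). On $P$ each non-singular point has $2n$ preimages on $(X,\omega)$, and only certain pairs of those preimages are exchanged by the hyperelliptic involution, so not every preimage pair is blocked and no two points of $P$ are finitely blocked; vertices of $P$ lift to singular points, which Proposition \ref{P:FB:Zero} excludes from all blocking. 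The unexceptional isosceles triangles with $n$ odd are handled identically. For the unexceptional isosceles triangles with $n$ even, the unfolding lies in a quadratic double of a hyperelliptic component of a stratum of quadratic differentials, so Proposition \ref{P:FB:Hyp} replaces Proposition \ref{P:FB:Zero}; blocking can now occur between points exchanged by any of the involutions $J$, $T$, $JT$, and the pair of $\pi/n$ vertices of the triangle arises as the preimages of the midpoint of the opposite edge that lie in a common $\{J,T,JT\}$-orbit, with the edge midpoint itself lifting to Weierstrass points that serve as the blocking set.

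The main obstacle will be the bookkeeping of preimages: for each relevant vertex or periodic point of $P$ one must determine which $\{\mathrm{id}, J, T, JT\}$-orbit on $(X,\omega)$ it lifts to, and then verify that \emph{every} pair among the preimages of the two points of $P$ is individually finitely blocked in the sense of Propositions \ref{P:FB:Zero} and \ref{P:FB:Hyp}, not merely one representative pair. This geometric identification, combined with a careful parity analysis of the integers $a, b, n$ to decide which vertex of the isosceles triangle is fixed by which involution (and hence to isolate the exceptional Veech-like configurations where an isolated blocking survives), constitutes the bulk of the work; once completed, the explicit list in the theorem and the identification of the blocking sets with edge midpoints follow.
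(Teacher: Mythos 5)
Your overall strategy coincides with the paper's: torus covers are handled by classical results (Gutkin), the dense-orbit cases by Propositions \ref{P:FB:Zero} and \ref{P:FB:Hyp}, and the Veech-related cases by the periodic point classification of Apisa--Saavedra--Zhang fed into Theorem \ref{T:FiniteBlockingBackground}. The gap lies in the part you defer as ``bookkeeping of preimages.'' Theorem \ref{T:FiniteBlockingBackground} only produces a list of \emph{candidate} blocked pairs (periodic/singular points, and fibers of $\pi_{Q_{min}}$); it does not decide which candidates are actually blocked. For the isosceles triangles with $n$ even whose unfolding is a degree-two cover of the regular $n$-gon or double regular $\tfrac{n}{2}$-gon locus, neither Proposition \ref{P:FB:Zero} nor Proposition \ref{P:FB:Hyp} applies (the orbit closure is rank one), and the elimination of the surviving candidates requires exhibiting explicit unobstructed billiard trajectories: dropping a perpendicular from the edge-midpoint $p$ (and from the vertex of angle $\frac{a\pi}{n}$) to the opposite side, lifting saddle connections from the $n$-gon to detect when the vertex of angle $\frac{b\pi}{n}$ illuminates itself, and a concrete trajectory between the two vertices of equal angle when $a>b$. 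These constructions, not parity bookkeeping, are the substance of the argument in this regime.

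Moreover, one triangle escapes even that analysis: the $\left(\frac{3}{10},\frac{3}{10},\frac{4}{10}\right)\pi$ triangle, whose unfolding covers the double pentagon locus branched over two marked points exchanged by the hyperelliptic involution. Because those branch points are \emph{not} periodic points, one must deform the surface within its orbit closure (moving the saddle connections through the branch locus while preserving their common holonomy) to see that the two zeros illuminate each other; no amount of orbit identification under $\{\mathrm{id},J,T,JT\}$ produces this conclusion. As written, your plan would leave this case unresolved. The remaining reductions in your outline --- unexceptional right triangles and odd-$n$ isosceles triangles via Proposition \ref{P:FB:Zero}, even-$n$ non-Veech isosceles triangles via Proposition \ref{P:FB:Hyp}, and the torus covers --- do match the paper's proof.
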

\begin{proof}
Let $P$ be the right triangle with angles $\left( \frac{a}{2n}, \frac{b}{2n}, \frac{1}{2}\right)\pi$ with $\mathrm{gcd}(a,b,n) = 1$. If the unfolding is a torus (i.e. if $n = 2, 3$; see Lemma \ref{L:MinimalCoverUnfolding}), then any two points are finitely blocked from each other by Gutkin \cite{GutkinBlocking1}. If the unfolding is Veech, but $n \notin \{2, 3\}$, then the claim holds by Apisa-Saavedra-Zhang \cite[Corollary 1.7]{ApisaSaavedraZhang}. Therefore, suppose that $\min(a,b) \notin \{1,2\}$. The unfolding has dense orbit in the quadratic double of $\cQ(a-2,b-2,-1^n)$ (by Theorem \ref{T:Unfolding:RightDetailed}) and so no points are finitely blocked from each other (by Proposition \ref{P:FB:Zero}).

Let $T$ be the isosceles triangle with angles $\left( \frac{a}{n}, \frac{a}{n}, \frac{b}{n}\right)\pi$ with $\mathrm{gcd}(a,b,n) = 1$. Let $(X, \omega)$ denote the unfolding. If the unfolding is a torus (i.e. if $n = 3$, $4$, or $6$, by Lemma \ref{L:MinimalCoverUnfolding}), then any two points are finitely blocked from each other by Gutkin \cite{GutkinBlocking1}. Suppose now that $n \notin \{3, 4, 6\}$. Let $p$ denote the midpoint of the edge of $T$ that joins the two vertices of equal angle. Let $P$ be the preimages of $p$ on the unfolding. By Theorem \ref{T:Unfolding:IsoscelesDetailed}, $(X, \omega)$ is contained in the quadratic double of a stratum $\cQ_0$, where $\cQ_0 = \cQ(2a-2, 2b-2, -1^n)$ when $n$ is odd and where $\cQ_0$ is the hyperelliptic component of a stratum that covers $\cQ(a-2, \frac{b}{2} -2, -1^{\frac{n}{2}})$ when $n$ is even. Moreover, the points in $P$ are precisely the preimages of Weierstrass points under the quotient of the unfolding by the holonomy involution.


If $(X, \omega)$ is not a cover of a Veech surface then its orbit closure is the quadratic double of $\cQ_0$. Since there are $n \geq 5$ preimages of every point on $T$, excluding vertices, on $(X, \omega)$, it follows from Propositions \ref{P:FB:Zero} and \ref{P:FB:Hyp} \eqref{I:FB:Hyp:General} that only vertices or $p$ are finitely blocked from other points. These points can only be blocked from other vertices or from $p$ by Theorems \ref{T:FiniteBlockingBackground} and \ref{T:QuadraticPeriodicBackground}. The solution of the finite blocking problem in this case now follows from Proposition \ref{P:FB:Zero} and \ref{P:FB:Hyp} \eqref{I:FB:Hyp:PeriodicPeriodic}, \eqref{I:FB:Hyp:SingularPeriodic}, and \eqref{I:FB:Hyp:SingularSingular}. 

%

Suppose now that $(X, \omega)$ is a cover of a Veech surface. Suppose additionally that $n$ is odd. It follows that $(X, \omega)$ is the double regular $n$-gon (if $a = 1$) or the regular $2n$-gon (if $b=1$). By Apisa-Saavedra-Zhang \cite[Corollary 1.6]{ApisaSaavedraZhang}, the two distinct vertices of angle $\frac{\pi}{n}$ are blocked from each other if $a=1$ and the vertex of angle $\frac{\pi}{n}$ is blocked from itself if $b=1$ and there is no other blocking. 

Suppose now that $n$ is even. Since $(X, \omega)$ covers a Veech surface it is a double cover of the regular $n$-gon (if $b \ne 4$) or the double regular $\frac{n}{2}$-gon (if $b = 4$). By Apisa-Saavedra-Zhang \cite[Theorem 1.3]{ApisaSaavedraZhang}, the only periodic points are preimages of Weierstrass points. The points on $T$ whose preimage on $(X, \omega)$ are singular or periodic points are precisely vertices of $T$ and $p$. Since four is the degree of the minimal half-translation cover with domain $(X, \omega)$, it follows that a generic point is blocked from at most four others by Theorem \ref{T:FiniteBlockingBackground}. Since each point on $T$ has at least $n \geq 8$ preimages on $(X, \omega)$, it follows from Theorem \ref{T:FiniteBlockingBackground} that only vertices of $T$ and $p$ are finitely blocked from other points and these other points must be other vertices or the point $p$ with the blocking set consisting of $p$. By dropping a perpendicular from $p$ to another edge of $T$, we see that $p$ is not finitely blocked from itself (and hence from any point on $T$). 

Note that, by lifting a saddle connection from the regular $n$-gon or the double regular $\frac{n}{2}$-gon, we see that the vertex of angle $\frac{b \pi}{n}$ is not blocked from itself unless $b = 2$ by Apisa-Saavedra-Zhang \cite[Corollary 1.6]{ApisaSaavedraZhang}.

It remains to consider blocking between the two vertices of angle $\frac{a \pi}{n}$. When $a = 1$, the preimages of these vertices is precisely the preimage of a Weierstrass point on the regular $n$-gon. Since such a point is finitely blocked from itself it follows that when $n$ is even and $a = 1$ any two vertices of angle $\frac{\pi}{n}$ are finitely blocked from each other. 

Suppose now that $a \ne 1$. In particular $a \geq 3$, $n \geq 8$ and $b = 2, 4$. Since $\frac{b \pi }{n}$ and $\frac{a \pi}{n}$ are both acute angles, we see that dropping a perpendicular from a vertex of angle $\frac{a \pi}{n}$ to the opposite side shows that the vertex is not finitely blocked from itself. As illustrated in Figure \ref{F:IsoscelesBilliardTrajectory}, the two vertices of equal angle are not finitely blocked from each other when $a > b$. 

\begin{figure}[h]\centering
\includegraphics[width=.4\linewidth]{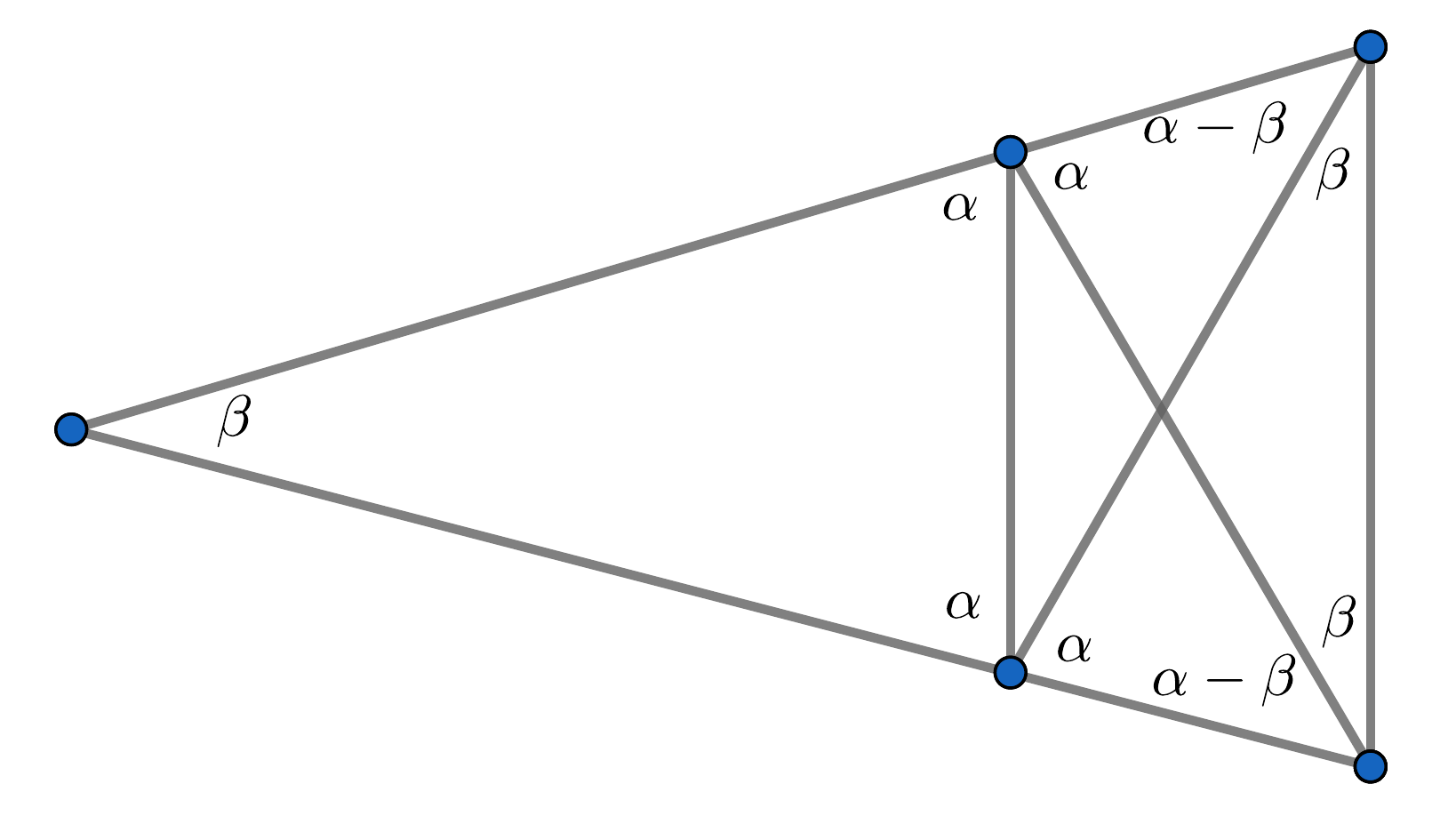}
\caption{Let $\alpha = \frac{a \pi}{n}$ and $\beta = \frac{b \pi}{n}$. When $a > b$ the two vertices of equal angle are not finitely blocked from each other.}
\label{F:IsoscelesBilliardTrajectory}
\end{figure}

This accounts for all the remaining isosceles triangles except for the $\left( \frac{3}{10}, \frac{3}{10}, \frac{4}{10} \right) \pi$ triangle, whose unfolding is depicted in Figure \ref{F:SpecialIsoUnfolding}. This unfolding covers the double pentagon locus and is branched over two marked points that are exchanged by the hyperelliptic involution. Since these marked points are not periodic points the saddle connections labelled $4$ and $5$ may be deformed (as long as they have the same holonomy), while remaining in the orbit closure of $(X, \omega)$. After such a perturbation it is clear that the two distinct zeros illuminate each other on $(X, \omega)$ as desired.
\begin{figure}[h]\centering
\includegraphics[width=.5\linewidth]{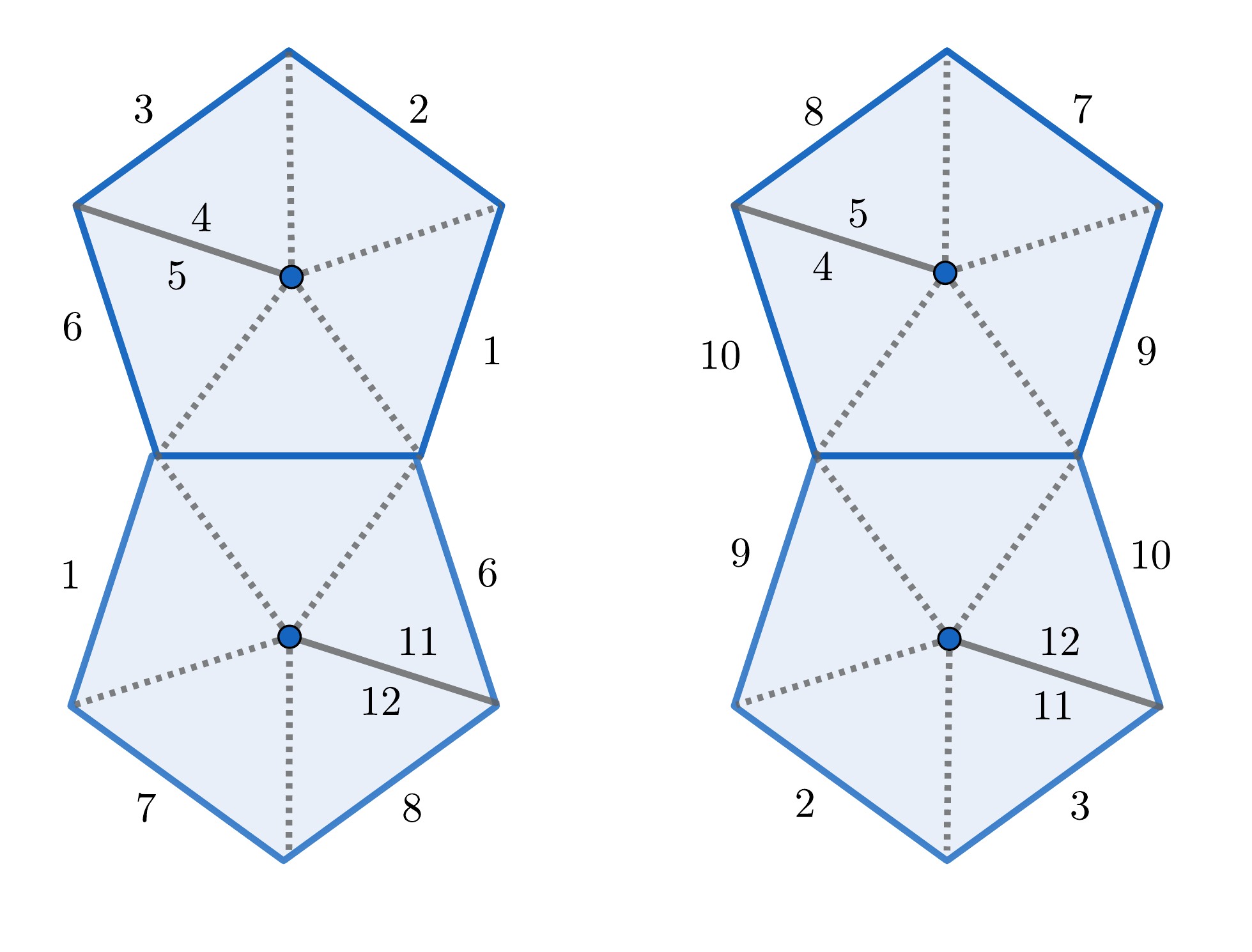}
\caption{This translation surface is the unfolding of the $\left( \frac{3}{10}, \frac{3}{10}, \frac{4}{10} \right) \pi$ triangle $T$. The dotted lines indicate the tiling of the unfolding by copies of $T$.}
\label{F:SpecialIsoUnfolding}
\end{figure}
\end{proof}

\section{Further applications to billiards - Proof of Theorem \ref{T:Unfolding:Parallelogram} and Theorem \ref{T:Main:SpecificSphere}}

In this section we will determine the orbit closure of all but a countable set of rational parallelograms, isosceles trapezoids, and right trapezoids.

\begin{prop}\label{P:Parallelogram}
If $\theta := \left( \frac{a}{n}, \frac{b}{n}, \frac{a}{n}, \frac{b}{n} \right)\pi$, where $a, b$ and $n$ are positive integers with $\mathrm{gcd}(a,b,n) = 1$ and $a+b = n$, then $\cM(\theta)$ is the quadratic double of $\cQ(2a-2, 2b-2, -1^{2n})$ if $n$ is odd and of $\cQ^{hyp}((a-2)^2, (b-2)^2)$ if $n$ is even.
\end{prop}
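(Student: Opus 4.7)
The plan is to adapt the strategy of the proof of Theorem \ref{T:Unfolding:RightDetailed} to parallelograms. First, I would analyze the pillowcase double $D(P)$. The conditions $a+b=n$ and $\gcd(a,b,n)=1$ force $\gcd(a,n)=\gcd(b,n)=1$, so $D(P)$ is a minimal $n$-differential on $\mathbb{CP}^1$ with four singularities of orders $-b,-a,-b,-a$ at the vertices. The holonomy cover $\pi\colon (X,\omega)\to D(P)$ is an $n$-fold cyclic cover with deck group $\langle T\rangle \cong \mathbb{Z}/n\mathbb{Z}$, and a cone-angle computation places $(X,\omega)\in \cH(a-1,a-1,b-1,b-1)$ of genus $n-1$.

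Next, I would identify the hyperelliptic involution $J$ on $(X,\omega)$ coming from the $\pi$-rotation of $P$ about its center. Since this rotation preserves the $n$-differential holonomy of $D(P)$, it lifts (after composing with an appropriate deck-group element) to an involution acting as $-1$ on $\omega$. A Riemann-Hurwitz count combined with a cone-angle computation shows that $(X,\omega)/J$ lies in $\cQ(2a-2,2b-2,-1^{2n})$ when $n$ is odd. For $n$ even, the pillowcase involution on $D(P)$ yields a second commuting involution on $(X,\omega)$ acting as $-1$ on $\omega$; the presence of this extra symmetry forces $(X,\omega)/J$ to itself be a double cover of a genus zero half-translation surface, landing it in $\cQ^{hyp}((a-2)^2,(b-2)^2)$. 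By Lemma \ref{L:Codim=Zeros}, this exhibits $(X,\omega)$ in a hyperelliptic locus of codimension $1$ (for $n$ odd) or $3$ (for $n$ even) inside $\cH(a-1,a-1,b-1,b-1)$.

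To apply Theorem \ref{T:Main} (for $n$ odd, since the target is a genus zero stratum with two zeros) or Corollary \ref{C:Main} (for $n$ even, since $\cQ^{hyp}((a-2)^2,(b-2)^2)$ is a hyperelliptic component), I must verify that $\cM(\theta)$ has rank at least two. I would imitate Lemma \ref{L:UnfoldingRank}: compute $\dim H^{1,0}_\ell$ for the $T$-eigenspaces via the Wright formula, then invoke the Forni-Matheus-Zorich pairing through Mirzakhani-Wright \cite[Proposition 7.3]{MirWri2} together with the Hodge-bundle decomposition of Wright, Filip, and Avila-Eskin-M\"oller to exhibit distinct eigenforms $\omega_\alpha, \omega_\beta$ with $B_\omega(\omega_\alpha,\omega_\beta)\neq 0$, contradicting rank one except in the explicit Veech and double-Veech exceptional cases excluded in Theorem \ref{T:Unfolding:Parallelogram}.

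Once rank at least two is established, Theorem \ref{T:Main} (respectively Corollary \ref{C:Main}) implies that the orbit closure of $(X,\omega)/J$ in its stratum is a full locus of covers of a genus zero stratum. As in the proof of Theorem \ref{T:Unfolding:RightDetailed}, the fact that $p(T\cM(\theta))$ contains all eigenspaces $H^1_k$ for $k\in (\mathbb{Z}/n\mathbb{Z})^\times$, combined with an analysis of the zero and pole structure of the explicit eigenforms $\omega_\ell$ pulled back from the four-punctured sphere, rules out any nontrivial further covering and forces $\cM(\theta)$ to be the full quadratic double asserted in the statement. The main obstacle is the rank lower bound of Step Three: the parity of $n$ and the arithmetic of $(\mathbb{Z}/n\mathbb{Z})^\times$ must be handled carefully to produce the required nonvanishing Forni-Matheus-Zorich pairing, and for $n$ even the extra symmetry shrinks the relevant Galois orbit, so the pairing computation must be done more delicately to distinguish it from a potential rank-one configuration.
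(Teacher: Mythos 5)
Your setup (the stratum $\cH((a-1)^2,(b-1)^2)$, the rotation-by-$\pi$ lift $J$ being the hyperelliptic involution, the extra pillowcase involution when $n$ is even, and the reduction to Theorem \ref{T:Main} once rank at least two is known) matches the paper. Note, though, that the rank bound is cheaper than you make it: since $\cM(\theta)$ is by definition the orbit closure of the \emph{generic} quadrilateral with these angles, the trivial rank bound of Mirzakhani--Wright \cite[Lemma 7.1]{MirWri2} already gives rank at least two, with no eigenform pairing computation and no exceptional cases to exclude (the Veech exceptions you mention live at the level of individual polygons, not of $\cM(\theta)$).

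The genuine gap is the endgame. Your plan to transplant the pole analysis of the eigenforms $\omega_\ell$ from the proof of Theorem \ref{T:Unfolding:RightDetailed} does not go through: that argument leans on the rigidity of the thrice-punctured sphere, which makes each $H^{1,0}_\ell$ at most one-dimensional with an explicit eigenform whose pole at $0$ or $\infty$ forces the putative covering map to be unramified there. Over the four-punctured sphere the eigenspaces can be two-dimensional (this is where rel comes from), the eigenforms are no longer unique up to scale, and the divisor of a generic element of an eigenspace no longer pins down ramification at a chosen singularity. Worse, any argument that uniformly ``rules out any nontrivial further covering'' would prove too much: for $n$ even the correct answer $\cQ^{hyp}((a-2)^2,(b-2)^2)$ \emph{is} a nontrivial locus of double covers of $\cQ(a-2,b-2,-1^n)$. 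The paper instead specializes to the rhombus, whose extra order-two symmetry $S$ exhibits the unfolding as a cover of the unfolding of the right triangle $\left(\frac{a}{2n},\frac{b}{2n},\frac12\right)\pi$; by Theorem \ref{T:Unfolding:RightDetailed} this shows $\cM'$ contains a full locus of covers of $\cQ(a-2,b-2,-1^n)$, so the only way $\cM'$ could be proper (for $n$ odd) is as a locus of degree-two covers. That possibility is then killed by observing that the hypothetical covering involution would induce, on $D(P)/J$ for \emph{every} parallelogram $P$ with angles $\theta$, a symmetry forcing two specific edges to have equal length --- which holds only for the rhombus. Some argument supplying both the lower bound and the non-rhombus contradiction (or a substitute for them) is needed; your sketch currently has neither.
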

\begin{proof}
The case of $n = 2$ is immediate and the case of $n = 3$ follows from Mirzakhani-Wright \cite[Theorem 1.3]{MirWri2}. Therefore, suppose that $n \notin \{2, 3\}$.

Let $P$ be the rhombus with angles in $\theta$ and let $(X, \omega)$ be its unfolding, which is tiled by $2n$ copies of $P$. The unfolding belongs to $\cH((a-1)^2, (b-1)^2)$ where the superscripts indicate multiplicity. Let $T$ denote a generator of the cyclic deck group.

Since every parallelogram has a rotation by $\pi$ symmetry, the unfolding of every parallelogram has this symmetry as well, call this $J$ on $(X, \omega)$ (see Figure \ref{F:ParallelogramSymmetry}). Notice that $J$ does not fix any zero and fixes each copy of $P$ that tiles $(X, \omega)$. This shows that $J$ fixes $2n$ points (the midpoint of each copy of $P$ that tiles $(X, \omega)$). Consequently, $J$ is the hyperelliptic involution and $(X, \omega)/J \in \cQ := \cQ(2a-2, 2b-2, -1^{2n})$. 
\begin{figure}[h]\centering
\includegraphics[width=0.75\linewidth]{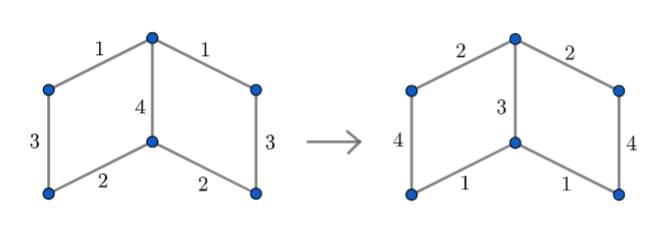}
\caption{The indicated map is the symmetry that $J$ induces on the double of $P$.}
\label{F:ParallelogramSymmetry}
\end{figure}

Let $\cM'$ be the  invariant subvariety corresponding to $\cM(\theta)$ in $\cQ$. By the ``trivial rank bound" of Mirzakhani-Wright \cite[Lemma 7.1]{MirWri2}, $\cM'$ has rank at least two and hence is a locus of covers by Theorem \ref{T:Main}.

\begin{sublem}\label{SL:BigSublocus}
$\cM'$ contains a full locus of covers of $\cQ(a-2,b-2, -1^n)$.
\end{sublem}
\begin{proof}
Since $P$ was taken to be a rhombus, there is an additional symmetry of its double, $D(P)$, which is the holomorphic map that fixes two singular points of $D(P)$ and exchanges the other two. This map will be called $S$ and is pictured in Figure \ref{F:RhombusSymmetry}.

\begin{figure}[h]\centering
\includegraphics[width=0.75\linewidth]{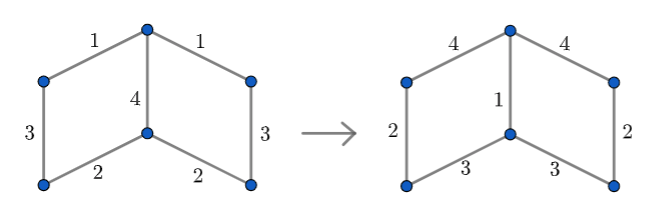}
\caption{The surface on the left is $D(P)$ and the indicated map is $S$.}
\label{F:RhombusSymmetry}
\end{figure}

Letting $J_D$ be the symmetry that $J$ induces on $D(P)$ we see that $J_D$ and $S$ generate a group of holomorphic symmetries of $D(P)$ that is isomorphic to $\mathbb{Z}/2\mathbb{Z} \times \mathbb{Z}/2\mathbb{Z}$. The isomorphism can be exhibited by noticing that the symmetries belong to the subgroup of $\mathrm{Sym}(4)$ that take cone points of a given cone angle to cone points of the same cone angle. Letting $R$ denote the right triangle with cone angles $\left( \frac{a}{2n}, \frac{b}{2n}, \frac{1}{2}\right)\pi$ and letting $D(R)$ denote its double, we see that $D(P)/\langle J_D, S \rangle = D(R)$. Let $(Y, \eta)$ denote the unfolding of $D(R)$. 

Let $f: D(P) \ra D(R)$ be the indicated map between $D(P)$ and $D(R)$. Let $V(R)$ be the singular points on $D(R)$ and let $V(P)$ be their preimages on $D(P)$. Let $\rho: \pi_1(D(R) - V(R)) \ra \mathbb{Z}/2n\mathbb{Z}$ be the holonomy map and notice that the holonomy map on $D(P) - V(P)$ is given by $\rho \circ f_*$. Therefore, there is an induced translation cover $F: (X, \omega) \ra (Y, \eta)$, which has degree at most four. 

By Lemma \ref{L:MinimalCoverUnfolding}, since $n \notin \{2, 3\}$, $\pi_{Y_{min}}$ is the quotient by the hyperelliptic involution and so $(X_{min}, \omega_{min})$ belongs to $\cQ(a-2,b-2, -1^n)$. In particular, $\cM'$ contains a full locus of covers of $\cQ(a-2,b-2, -1^n)$. 
\end{proof}

Suppose first that $n$ is even. Then $(X, \omega)$ is contained in the quadratic double of $\cQ((a-2)^2, (b-2)^2)$. Let $J'$ be the involution on $(X, \omega)$ whose quotient belongs to this stratum. Notice that $J$ and $J'$ commute since $J$ is the hyperelliptic involution. Since $J$ fixes no zero, $(X, \omega)/\langle J, J' \rangle \in \cQ(a-2, b-2, -1^n)$. The fact that $a+b=n$, $n$ is even, and $\mathrm{gcd}(a,b,n) =1$ implies that $a$ and $b$ are odd. Since the hyperelliptic component of $\cQ((a-2)^2, (b-2)^2)$ consists precisely of the double covers of surfaces in $\cQ(a-2, b-2, -1^n)$ that are branched over poles and even order zeros, of which there are none, (see Lanneau \cite{Lconn} and the discussion in Apisa-Wright \cite[Section 9]{ApisaWrightDiamonds}), it follows that $(X, \omega)$ belongs to the quadratic double of $\cQ^{hyp}((a-2)^2, (b-2)^2)$, which is a full locus of covers of $\cQ(a-2,b-2, -1^n)$. Therefore, $\cM(\theta) = \cQ^{hyp}((a-2)^2, (b-2)^2)$ by Sublemma \ref{SL:BigSublocus}.

Suppose now that $n$ is odd. Our goal is to show that $\cM'$ coincides with the stratum of quadratic differentials containing it. Equivalently, since $\cM'$ is a locus of covers, it suffices to show that the generic surface in $\cM'$ is the domain of a minimal half-translation cover that is degree one. Suppose in order to derive a contradiction that this is not the case. By Sublemma \ref{SL:BigSublocus}, the minimal half-translation cover with domain $(X, \omega)/J$ has degree two. Therefore, $\cM' \subseteq \cQ(2a-2, 2b-2, -1^{2n})$ is a locus of double covers of  $\cQ(a-2, b-2, -1^n)$.


Let $S'$ denote the involution on $(X, \omega)/J$ whose quotient is the double cover. Notice that $S'$ is an isometry that fixes each element in the set $\Sigma$ consisting of the two singularities of order $2a-2$ and $2b-2$. Since $T$ and $J$ commute, it follows that $T$ induces an isometry $T'$ on $(X, \omega)/J$ that has the same order as $T$ (since $n$ is odd). Since $T$ fixes the zeros of $(X, \omega)$, $T'$ fixes the elements of $\Sigma$. Since $T'$ and $S'$ are both holomorphic self-maps of $(X, \omega)/J$ that are isometries of the flat metric and that fix the points in $\Sigma$, it follows that they commute in a neighborhood of the points in $\Sigma$ and hence, since they are holomorphic, commute. Since $T'$ has order $n$, which is odd, $T'$ and $S'$ generate a group of $\mathbb{Z}/2n\mathbb{Z}$ symmetries.

Therefore, $S'$ induces an isometry on $(X, \omega) / \langle J, T' \rangle$ that fixes the two cone points of cone angle different than $\pi$ and that necessarily exchanges the two cone points of cone angle equal to $\pi$. Notice that $S$ also induces such a map on $D(P)/J_D$. This induced map is precisely the map induced by $S'$ since M\"obius transformations are determined by the image of three distinct points. 

\begin{figure}[h]\centering
\includegraphics[width=0.75\linewidth]{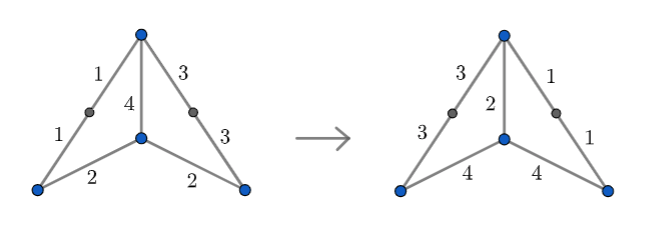}
\caption{The surface on the left is $D(P)/J_D$. The indicated map is the symmetry that $S$ induces.}
\label{F:HalfRhombusSymmetry}
\end{figure}

The induced map is depicted in Figure \ref{F:HalfRhombusSymmetry}, where it is easy to see that the induced map must exchange the saddle connections labelled $2$ and $4$. This deduction applies to the unfolding of any parallelogram with angles $\theta$. However, the indicated saddle connections only have equal lengths when the parallelogram is a rhombus, which is a contradiction. 
\end{proof}

\begin{prop}\label{P:Discrete}
Suppose that $\theta$ is an ordered $4$-tuple of positive rational multiples of $\pi$ that sum to $2\pi$. Let $\cP(\theta)$ be the collection of polygons whose vertices have angles, listed in clockwise order, belonging to $\theta$. Suppose that for any polygon in $\cP(\theta)$ it is possible to change the edge lengths of two parallel edges by equal amounts, while fixing the other two and while remaining in $\cP(\theta)$. Suppose finally that $\cM(\theta)$ has rank at least three and that no polygon in $\cP(\theta)$ is Gaussian or Eisenstein. Then the set of unit area polygons in $\cP(\theta)$ whose unfolding has orbit closure strictly contained in $\cM(\theta)$ is discrete.
\end{prop}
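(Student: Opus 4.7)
The plan is to express the exceptional set as a countable union of real-analytic subvarieties of a one-dimensional parameter space and then upgrade pointwise discreteness to discreteness of the union via local finiteness. Parametrize the space $\cP^1(\theta)$ of unit area polygons as a real-analytic manifold of real dimension one: a quadrilateral with four prescribed angles is determined up to translation by its four edge lengths subject to two real closing-up equations (giving a two-dimensional shape space), and normalizing area removes one further dimension. The unfolding map $U : \cP^1(\theta) \to \cM(\theta)$ is real-analytic; denote its image by $\cR(\theta)$. By Eskin-Mirzakhani-Mohammadi \cite{EMM}, orbit closures in $\cM(\theta)$ are invariant subvarieties, and the collection of proper invariant subvarieties of $\cM(\theta)$ is countable. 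Enumerating these as $\{\cM_i\}_i$, the exceptional set is $E := \bigcup_i U^{-1}(\cM_i)$.

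Each $U^{-1}(\cM_i)$ is a closed real-analytic subvariety of $\cP^1(\theta)$, and is proper by Mirzakhani-Wright \cite[Lemma 6.3]{MirWri2}, which guarantees that almost every polygon in $\cP(\theta)$ has orbit closure $\cM(\theta)$. A proper closed real-analytic subvariety of a connected real-analytic $1$-manifold is zero-dimensional, hence discrete. The hypothesis on parallel edges provides a specific tangent direction $w \in T\cR(\theta)$ which, on the unfolding, corresponds to a rel deformation (it varies a relative period without altering absolute periods). Together with the $\mathrm{GL}(2,\mathbb{R})$-directions it reinforces the transversality of $\cR(\theta)$ with each $\cM_i$ at every intersection point, which underlies the zero-dimensional conclusion for each $U^{-1}(\cM_i)$.

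The main obstacle is passing from discreteness of each $U^{-1}(\cM_i)$ individually to discreteness of the union $E$. My plan is to establish local finiteness: around any $P_0 \in \cP^1(\theta)$ only finitely many $\cM_i$ meet a neighborhood of $P_0$ under $U$. If this failed, a sequence $P_n \to P_0$ of exceptional polygons lying in pairwise distinct $\cM_{i_n}$ would, after passing to a subsequence, produce a limit invariant subvariety $\cM_\infty$ containing the orbit closure of $U(P_0)$. The rank at least three hypothesis bounds from below the complex dimension available to the $\cM_{i_n}$; the non-Gaussian/non-Eisenstein hypothesis forces $\mathbb{Q}(\cM(\theta)) = \mathbb{Q}$ by Wright \cite{Wfield}, ruling out Galois-induced accumulations of subvarieties with special fields of definition; and the rel direction from parallel edges, combined with the rank bound, precludes an infinite sequence of genuinely distinct proper invariant subvarieties from converging nontrivially inside $\cM(\theta)$. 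Together these force the $\cM_{i_n}$ to stabilize to a fixed $\cM_i$ for large $n$, which contradicts the already-established discreteness of $U^{-1}(\cM_i)$ and completes the proof.
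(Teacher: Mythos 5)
Your first half tracks the paper's argument: the paper likewise observes that the parallel-edge deformation makes the unfoldings of all unit-area polygons in $\cP(\theta)$ sweep out a line $L = (X,\omega)+tv$ that is linear in period coordinates, so its intersection with any single proper invariant subvariety $\cM_i$ (also linear, and not containing $L$ since $\cM(\theta)$ is minimal) is discrete. The genuine gap is in your last paragraph, the passage from a countable union of discrete sets to a discrete set. Your proposed "local finiteness" argument does not work as stated, and two of its ingredients are incorrect: the non-Gaussian/non-Eisenstein hypothesis does not force $\mathbf{k}(\cM(\theta))=\mathbb{Q}$ (it only guarantees, via Apisa--Wright, that no unfolding is a torus cover), and there is no general principle that distinct proper invariant subvarieties cannot accumulate --- on the contrary, rank one loci such as Teichm\"uller curves are typically dense in a stratum, so the naive local finiteness of the full family $\{\cM_i\}$ is false. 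Nothing in your sketch explains why the subvarieties $\cM_{i_n}$ through a convergent sequence of exceptional polygons must stabilize.

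The missing input is the finiteness theorem of Eskin--Filip--Wright \cite{EFW}: $\cM(\theta)$ contains only finitely many invariant subvarieties of rank at least two, and, excluding loci of torus covers, all but finitely many rank one invariant subvarieties are contained in one of rank exactly two. Since $\cM(\theta)$ has rank at least three (so the rank two subvarieties are still proper) and torus covers are excluded by the Gaussian/Eisenstein hypothesis, every exceptional orbit closure is contained in one of a \emph{finite} list of proper linear subvarieties $\{\cM_i\}$. The exceptional set is then a finite union of discrete sets, hence discrete. Without this reduction from a countable to a finite family, your argument does not close.
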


Recall that a polygon is called \emph{Gaussian} (resp. \emph{Eisenstein}) if it can be tiled by reflections of isometric $45-45-90$ (resp. $30-60-90$) triangles (see Apisa-Wright \cite{ApisaWright}).

\begin{proof}
Let $P$ be a polygon in $\cP(\theta)$. Let $D(P)$ be its double and $(X, \omega)$ its unfolding. Since it is possible to change two edges by equal amounts, suppose the two edges are horizontal and let $s$ be an arc in $P$ joining these two edges. The preimage of $s$ on $D(P)$ is a closed loop $\gamma$. The preimages of $\gamma$ consist of loops $\gamma_1, \hdots, \gamma_m$ so that the holonomy of $\gamma_i$ is a positive real multiple of $\zeta_i$ where $\zeta_i$ is a root of unity. Define
\[ v := i\sum_{i=1}^m \zeta_i \gamma_i^* \]
where $\gamma_i^*$ is the Poincare dual of $\gamma_i$. Notice that $(X, \omega) + tv$ is the unfolding of $P$ once the lengths of the two edges that $s$ crosses are increased by length $t$. Up to rescaling, the unfolding of every unit area element of $\cP(\theta)$ belongs to $L$, the line $(X, \omega) + tv$ where $t \in \mathbb{R}$.


Since no polygon in $\cP(\theta)$ is Gaussian or Eisenstein, no polygon in $\cP(\theta)$ has an unfolding that is a torus cover by Apisa-Wright \cite[Proposition 3.11]{ApisaWright}. By Eskin-Filip-Wright \cite{EFW}, there is a finite collection invariant subvarieties of $\cM(\theta)$ of rank at least two and, excluding loci of torus covers, all but finitely many rank one invariant subvarieties are contained in an invariant subvariety of $\cM(\theta)$ of rank exactly two.  Since $\cM(\theta)$ has rank at least three, it follows that there is a finite collection of proper invariant subvarieties $\{\cM_i\}$ so that if the orbit closure of the unfolding of a polygon in $\cP(\theta)$ is not $\cM(\theta)$ then it is contained in $\cM_i$ for some $i$.

Since $L$ is linear in local period coordinates and not contained in any $\cM_i$, each of which is defined in period coordinates by linear equations, it follows that the intersection of $L$ and $\cM_i$ is discrete for each $i$, as desired.
%
%
%
%
\end{proof}

 


\begin{proof}[Proof of Theorem \ref{T:Main:SpecificSphere}:]
Suppose that $P$ is a polygon whose angles appear in the cyclic order $\theta := \left( \frac{a}{n}, \frac{b}{n}, \frac{1}{2}, \frac{1}{2} \right)\pi$ where $a, b$ and $n$ are positive integers with $\mathrm{gcd}(a,b,n) = 1$ and $a+b = n$. The unfolding of $P$ belongs to the quadratic double of $\cQ(2a-2, 2b-2, -1^{2n})$ if $n$ is odd and of $\cQ(a-2, b-2, -1^n)$ when $n$ is even.

Since each right trapezoid may be doubled to form an isosceles trapezoid and since, when $n > 2$, each isosceles trapezoid is tiled by two congruent right trapezoids, there is a bijective correspondence between right and isosceles trapezoids. As we have noted before (for instance by the argument in the first two paragraphs of Proposition \ref{P:IsoscelesCovers}) this implies that, letting $\theta' = \left(\frac{a}{n}, \frac{b}{n}, \frac{a}{n}, \frac{b}{n}, \right)\pi$, $\cM(\theta')$ is a full locus of covers of $\cM(\theta)$. Since $\cM(\theta')$ is a full locus of covers of $\cQ(2a-2, 2b-2, -1^{2n})$ if $n$ is odd and of $\cQ(a-2, b-2, -1^n)$ if $n$ is even (by Proposition \ref{P:Parallelogram}). This implies the claim in the first paragraph of Theorem \ref{T:Main:SpecificSphere}. The claim in the second paragraph follows from Proposition \ref{P:Discrete} (note that if $n \ne 2, 3, 4$ or $6$, then a polygon with angles in $\theta$ is neither Gaussian nor Eisenstein; moreover, the rank of $\cM(\theta)$ is at least three). The claim about the finite blocking problem in the third paragraph is immediate from Proposition \ref{P:FB:Zero}. The claim about counting problems in the final paragraph follows immediately from Corollary \ref{C:T:AEZ} and Lemma \ref{L:UnfoldingsVsPolygons} as in the proof of Theorem \ref{T:Counting:ClosedTrajectories}.
%
%
\end{proof}

 


\begin{proof}[Proof of Theorem \ref{T:Unfolding:Parallelogram}:]
The claims in the first two paragraph are given by Propositions \ref{P:Parallelogram} and \ref{P:Discrete}. The claims about counting problems in the final paragraph follows immediately from Theorem \ref{T:AEZ:Corollary} and Lemma \ref{L:UnfoldingsVsPolygons} as in the proof of Theorem \ref{T:Counting:ClosedTrajectories}. It remains to analyze the finite blocking problem.


Suppose that $P$ is a polygon with angles $\theta := \left( \frac{a}{n}, \frac{b}{n}, \frac{a}{n}, \frac{b}{n} \right)\pi$ where $a, b$ and $n$ are positive integers with $\mathrm{gcd}(a,b,n) = 1$ and $a+b = n$. Let $(X, \omega)$ be the unfolding of $P$ and suppose that it has orbit closure equal to $\cM(\theta)$. Let $D(P)$ denote the double of $P$. The double is invariant by an involution $J$ whose fixed points are the points corresponding to the midpoint of $P$ (when $P$ is a parallelogram) and to the midpoint of the two parallel edges (when $P$ is an isosceles trapezoid). We will call these points and vertices the \emph{special points of $P$}. As noted above (see Figure \ref{F:ParallelogramSymmetry}, when $P$ is a parallelogram $J$ lifts to the hyperelliptic involution on $(X, \omega)$ and the preimage of special points are Weierstrass points. The same conclusion holds when $P$ is an isosceles trapezoid. Since $\cM(\theta)$ is a quadratic double, it follows from Theorem \ref{T:QuadraticPeriodicBackground} that the preimages of the special points contain the periodic points on $(X, \omega)$.


When $n$ is odd, $(X, \omega)$ has dense orbit in the quadratic double of $\cQ(2a-2,2b-2, -1^{2n})$ and in this case the solution of the finite blocking problem follows from Proposition \ref{P:FB:Zero}. 

When $n$ is even, $(X, \omega)$ covers a surface with dense orbit in $\cQ^{hyp}((a-2)^2,(b-2)^2)$. By Proposition \ref{P:FB:Hyp}, the only pairs of finitely blocked points must consist of special points and the blocking set must also consist of special points. Another application of Proposition \ref{P:FB:Hyp} shows that no vertex is finitely blocked from any other point unless it has angle $\frac{\pi}{n}$ (and then it is only potentially finitely blocked from another vertex of the same angle). If $T$ denotes the unique translation involution on $(X, \omega)$, then these points map to a single Weierstrass point on $(X, \omega)/T$ (see Apisa-Wright \cite[Section 9.1]{ApisaWrightDiamonds}), which is finitely blocked from itself. It follows that two (not necessarily distinct) vertices of angle $\frac{\pi}{n}$ are finitely blocked from each other. 


It remains to consider blocking between the special points that are not vertices. In the isosceles trapezoid case it is clear that the the two special points are not blocked from each other. Therefore, in that case, it remains to show that they are not blocked from themselves. Since an isosceles trapezoid is tiled by two congruent right trapezoids (where the special points correspond to the right angles of the right trapezoid), this is immediate in the isosceles trapezoid case by the solution of the finite blocking problem in the right trapezoid case. In the parallelogram case, the special point is the midpoint, and any trajectory that strikes an edge perpendicularly will return to the midpoint without passing through a special point. 
\end{proof}

\section{Cyclic covers of $k$-differentials on the sphere - Proof of Theorem \ref{T:Main:GeneralSphere}}\label{S:AlmostRightPolygons}

%
%
%

A \emph{generalized polygon} is an immersed disk in $\mathbb{R}^2$ whose boundary is piecewise linear, i.e. if $f: \mathbb{D} \ra \mathbb{R}^2$ is the immersion there is a cyclically ordered set of points $(p_1, \hdots, p_k)$ on the boundary of $\mathbb{D}$ so that $f$ takes the boundary arc joining $p_i$ to $p_{i+1}$ to a straight line. The set $\{ f(p_i) \}$ are called \emph{vertices} of the generalized polygon. Two vertices are said to be \emph{adjacent} if one is $f(p_i)$ and the other is $f(p_{i+1})$ for some $i$. A generalized polygon $P$ has a \emph{double} $D(P)$ which is formed by taking $P$ and its mirror image $P'$ and identifying sides that are mirror images of each other to form a flat sphere with cone points. The polygon is said to be \emph{rational} if all of the cone points have cone angles that are rational multiples of $\pi$. The \emph{angle of a vertex} is defined to be half the cone angle of the corresponding cone point on $D(P)$. 

We will say that a generalized polygon is \emph{almost-right} if it is rational and all but exactly two angles are integer multiples of $\frac{\pi}{2}$. Similarly, we will say that a stratum of $k$-differentials on the sphere is almost-right if all but at most two cone points have cone angles that are not integer multiples of $\pi$. The \emph{signature} of an almost-right polygon (resp. $k$-differential) will be the list of angles (resp. cone angles) recorded with multiplicity.

We will now describe a construction, which we call \emph{gluing in a square}, for producing one almost-right polygon from another (see Figure \ref{F:GluingSquares} for examples). Let $P$ be a generalized almost-right polygon with an edge $\gamma$ of length $\ell$ that connects a vertex of angle $\frac{\pi}{2}$ to another vertex (or to a marked point, i.e. a vertex of angle $\pi$) $p$ of angle $\theta_p$. Gluing one edge of a square with side length $\ell$ into $\gamma$ produces a new generalized almost-right polygon with one more vertex of angle $\frac{\pi}{2}$ and where the vertex $p$ now has angle $\theta_p + \frac{\pi}{2}$.

\begin{lem}\label{L:AlmostRightConstruction}
An almost-right polygon of any signature can be constructed by successively applying the gluing-in-a-square construction to a right triangle. 
\end{lem}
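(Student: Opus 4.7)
The plan is to give an explicit construction. Let the target almost-right signature be $(\alpha, \beta, m_1 \pi/2, \ldots, m_k \pi/2)$, where $\alpha, \beta$ are the two angles that are not integer multiples of $\pi/2$ and the $m_i$'s are positive integers. Write $\alpha = \gamma_1 + a \pi/2$ and $\beta = \gamma_2 + b \pi/2$ with $\gamma_1, \gamma_2 \in (0, \pi/2)$ and $a, b \geq 0$. The polygon angle-sum identity $\alpha + \beta + \sum_i m_i \pi/2 = k\pi$ forces $\gamma_1 + \gamma_2$ to be a multiple of $\pi/2$ lying strictly in $(0, \pi)$, so $\gamma_1 + \gamma_2 = \pi/2$. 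Thus $(\gamma_1, \gamma_2, \pi/2)$ is a valid right triangle, which will serve as the base polygon; rewriting the angle sum further yields $a + b + \sum_i (m_i - 1) = k - 1$, which equals the number of gluings the construction must use, since each gluing contributes one net new vertex (accounting for the absorbed vertex $q$ and the two new square corners).

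The construction then proceeds in three phases. Phase one performs $a$ consecutive gluings, each targeting the vertex that started with angle $\gamma_1$, using an edge from it to a currently adjacent vertex of angle $\pi/2$; this raises its angle by $\pi/2$ per step, reaching $\alpha$ in $a$ steps. Phase two performs $b$ gluings analogously for the vertex that started with angle $\gamma_2$, raising its angle to $\beta$. Phase three performs the remaining $\sum_i (m_i - 1)$ gluings: for each index $i = 1, \ldots, k$, designate a vertex of angle $\pi/2$ (either one already present or one created during phase three) as the $i$-th right-angle-multiple slot, and perform $m_i - 1$ gluings targeting it, raising its angle to $m_i \pi/2$.

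The main technical obstacle is to verify the adjacency condition throughout: every gluing requires the targeted vertex to be joined by an edge to some vertex of angle $\pi/2$. This is maintained by the following key observation. Each gluing creates two new $\pi/2$-vertices, one adjacent (via the new square edge) to the vertex just incremented, and the other adjacent to the opposite endpoint of the gluing edge. Consequently, a vertex can be repeatedly incremented using the freshly created $\pi/2$-neighbor from its previous gluing, and the $\pi/2$-vertices along the polygon boundary persist as a connected chain, so that phase three can process each right-angle-multiple slot in turn. The detailed bookkeeping reduces to tracking which $\pi/2$-vertex plays the role of $q$ at each step, assigning the slots with larger $m_i$ to earlier-created vertices so that each designated vertex has enough subsequent gluings to reach its target angle; once this is laid out, the numerical identity $a + b + \sum_i (m_i - 1) = k - 1$ established above guarantees the construction terminates with exactly the prescribed signature.
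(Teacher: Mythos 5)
Your setup (base triangle $(\gamma_1,\gamma_2,\tfrac{\pi}{2})$ with $\gamma_1+\gamma_2=\tfrac{\pi}{2}$, then raising angles by repeated gluings) matches the paper's, and your observation that a vertex, once incremented, acquires a fresh $\tfrac{\pi}{2}$-neighbor and can therefore be incremented indefinitely is correct. The gap is in the \emph{first} increment of each phase-three slot. A gluing requires an edge from a genuine $\tfrac{\pi}{2}$-vertex $q$ to the target $p$, and in your scheme every target is an actual vertex. But the right-angle vertex $r$ of the base triangle is adjacent only to the two acute vertices $v_1,v_2$; it has no $\tfrac{\pi}{2}$-neighbor, so it can never serve as $p$, and when $a=b=0$ the only admissible gluing from the bare triangle has $q=r$ and $p\in\{v_1,v_2\}$, which irreversibly overshoots $\alpha$ or $\beta$. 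Concretely, for the signature $\bigl(\tfrac{\pi}{4},\tfrac{\pi}{4},\tfrac{3\pi}{2},\tfrac{\pi}{2},\tfrac{\pi}{2}\bigr)$ your identity prescribes $k-1=2$ gluings, both targeting the $\tfrac{3\pi}{2}$-slot, yet no legal first gluing exists under your rules: phase three cannot begin.

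The missing ingredient is the one the paper leans on: take $p$ to be a \emph{marked point} in the interior of an edge adjacent to a $\tfrac{\pi}{2}$-vertex (the definition of the gluing explicitly permits this). Gluing a square along the segment from $r$ to a marked point produces a $\tfrac{3\pi}{2}$-vertex flanked by two fresh $\tfrac{\pi}{2}$-corners, which can then be incremented further; iterating this is how the paper manufactures all the obtuse multiples of $\tfrac{\pi}{2}$. Note that such a gluing adds \emph{two} net vertices (the marked point is promoted to a vertex), and a gluing taking a $\tfrac{\pi}{2}$-vertex to angle $\pi$ adds zero, so your bookkeeping identity $a+b+\sum_i(m_i-1)=k-1$ does not count the gluings of a valid construction. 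The paper sidesteps this entirely by only tracking the multiset of angles strictly greater than $\tfrac{\pi}{2}$ together with $\alpha_1,\alpha_2$; the angle-sum relation then forces the correct number of $\tfrac{\pi}{2}$'s automatically, with no per-gluing vertex count needed. You should adopt both devices: marked-point targets to create the obtuse right-angle multiples, and the $\kappa_{>\pi/2}$ reduction in place of the vertex count.
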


By taking the doubles of the polygons in Lemma \ref{L:AlmostRightConstruction} we have produced $k$-differentials in any almost-right stratum (note that all strata of $k$-differentials on the sphere are connected). 

\begin{proof}
Let $\kappa$ be the signature of the almost-right polygon. Let $\alpha_1$ and $\alpha_2$ be the two angles that are not multiples of $\frac{\pi}{2}$ and let $\beta_1$ and $\beta_2$ be these angles taken modulo $\frac{\pi}{2}$. Set $\kappa_{>\frac{\pi}{2}}$ to be the multi-set of elements in $\kappa$ that are either equal to $\alpha_i$ or that are strictly greater than $\frac{\pi}{2}$. Since the sum of the elements in $\kappa$ is $(|\kappa|-2)\pi$, if we produce an almost-right polygon with signature $\kappa'$ so that $\kappa'_{>\frac{\pi}{2}} = \kappa_{>\frac{\pi}{2}}$ then we will have produced an almost-right polygon of signature $\kappa$.

Let $T$ be the right triangle with angles $\left( \beta_1, \beta_2, \frac{\pi}{2} \right)$ with vertex $v_i$ having angle $\beta_i$. If $\alpha_i > \frac{\pi}{2}$, then apply the gluing construction to the edge joining $v_i$ to a vertex of angle $\frac{\pi}{2}$. Let $p$ be any vertex of angle $\frac{\pi}{2}$ that is not adjacent to a vertex $v_i$ with $\alpha_i > \frac{\pi}{2}$.

Iteratively apply the gluing construction to an edge joining $v_i$ and an adjacent vertex of angle $\frac{\pi}{2}$ until $v_i$ has angle $\alpha_i$. These gluings never involve an edge with endpoint $p$, which therefore still has angle $\frac{\pi}{2}$.


Let $m$ be the number of elements of $\kappa$, excluding $\alpha_1$ and $\alpha_2$, that are greater than $\frac{\pi}{2}$. Apply the gluing construction to the segment of an edge beginning at a marked point and traveling clockwise to $p$. This replaces $p$ with three vertices whose angles, appearing in clockwise order, are $\frac{3\pi}{2}, \frac{\pi}{2}$, and $\frac{\pi}{2}$. We will relabel the third vertex as $p$. Repeating this construction $m-1$ more times, produces a series of vertices that alternate ($m$ times) between $\frac{3\pi}{2}$ and $\frac{\pi}{2}$.


Since each vertex of angle $\frac{3\pi}{2}$ is succeeded (in the clockwise orientation of the boundary) by one of angle $\frac{\pi}{2}$ we may iteratively apply the gluing construction to these edges until we have created an almost-right polygon of signature $\kappa'$ so that $\kappa'_{>\frac{\pi}{2}} = \kappa_{>\frac{\pi}{2}}$, as desired. 
%
\end{proof}


\begin{proof}[Proof of Theorem \ref{T:Main:GeneralSphere}:]
By Apisa \cite[Lemmas 4.2 and 4.4]{Apisa-PeriodicPointsG=2}, given Theorems \ref{T:Main}, \ref{T:Unfolding:Parallelogram}, and \ref{T:Main:SpecificSphere}, it only remains to consider the orbit closures of unfoldings of almost-right polygons.

Let $P$ be an almost-right polygon with $k$ sides and signature $\kappa$. Notice that $P$ has a partial unfolding to a genus zero quadratic differential in a stratum $\cQ$. Let $\cM'(\kappa)$ be the smallest invariant subvariety containing these partial unfoldings. It suffices to show that $\cM'(\kappa) = \cQ$. When $k = 4$ this follows from Theorem \ref{T:Main:SpecificSphere}, so suppose that $k \geq 5$.  By the ``trivial rank bound" of Mirzakhani-Wright \cite[Lemma 7.1]{MirWri2}, $\cM'(\kappa)$ has rank at least three. In the sequel, it suffices to ignore vertices of angle $\pi$ by Apisa-Wright \cite{ApisaWright} (see Theorem \ref{T:QuadraticPeriodicBackground}).

Let $\alpha_1$ and $\alpha_2$ be the two angles that are not multiples of $\frac{\pi}{2}$. In the sequel, we will say that a \emph{cylinder} on $P$ is a maximal embedded rectangle and three of whose sides are subsets of edges of $P$. The cylinder will be called \emph{special} if the two parallel edges belonging to the boundary of $P$ are comprised of a full edge from a vertex of angle $\frac{\pi}{2}$ to another vertex $v$ of angle $\alpha$ and a segment joining a vertex of angle $\frac{\pi}{2}$ to a marked point. Sending the length of these two parallel edges to zero will be called \emph{collapsing the (special) cylinder} and changes the signature $\kappa$ by deleting a vertex of angle $\frac{\pi}{2}$ and replace $\alpha$ with $\alpha - \frac{\pi}{2}$. 

\begin{sublem}\label{SL:FewObtuse1}
Suppose that all angles aside from possibly $\alpha_1$ and $\alpha_2$ are not obtuse, then $\cM'(\kappa) = \cQ$.
\end{sublem}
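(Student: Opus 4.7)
The plan is to induct on the number $m$ of vertices of angle $\pi/2$, where marked points (vertices of angle $\pi$) and the two angles $\alpha_1, \alpha_2$ are not counted. The base case $m \leq 2$ corresponds to right triangles (when $m=1$) and right trapezoids (when $m=2$), which are handled by Theorem \ref{T:Unfolding:RightDetailed} and Theorem \ref{T:Main:SpecificSphere} respectively; in both cases $\cM'(\kappa) = \cQ$. After each reduction we will ignore marked points via Apisa-Wright, as allowed in the reduction step at the top of the proof.

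For the inductive step with $m \geq 3$: since the angles sum to $(k-2)\pi$ and the marked points of angle $\pi$ balance themselves in this sum, we have $\alpha_1 + \alpha_2 = m\pi/2 \geq 3\pi/2$. Hence, up to relabelling, $\alpha_1 > \pi/2$ is obtuse. Using the square-gluing construction of Lemma \ref{L:AlmostRightConstruction}, I would realize a specific polygon $P$ with signature $\kappa$ in which the vertex $v$ of angle $\alpha_1$ is adjacent to an angle-$\pi/2$ vertex along an edge forming one parallel side of a special cylinder $C$ (the other parallel side being a segment from an angle-$\pi/2$ vertex to a marked point). Concretely, I would take the last square glued adjacent to $v$ in the construction and exhibit $C$ as the interior rectangle it produces.

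Collapsing $C$ yields an almost-right polygon $P'$ with signature $\kappa'$ obtained from $\kappa$ by deleting one $\pi/2$ vertex and replacing $\alpha_1$ by $\alpha_1 - \pi/2 \in (0, \pi)$, which remains a non-multiple of $\pi/2$. Thus $\kappa'$ has exactly two non-right angles (namely $\alpha_1 - \pi/2$ and $\alpha_2$), all other angles are at most $\pi/2$, and $m' = m-1$, so $\kappa'$ satisfies the Sublemma's hypothesis. By the inductive hypothesis, $\cM'(\kappa') = \cQ'$, the full stratum containing the partial unfoldings of signature $\kappa'$. Now the lift of $C$ to the partial unfolding $(Y,q)$ of $P$ is a subequivalence class of cylinders whose collapse yields the partial unfolding of $P'$; by Apisa-Wright \cite[Lemma 6.5]{ApisaWrightHighRank}, this produces a boundary component of $\cM'(\kappa)$ containing $\cQ'$ and of dimension exactly $\dim \cM'(\kappa) - 1$. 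Since every cylinder in the genus-zero stratum $\cQ$ forms a free subequivalence class (by Lemma \ref{L:HypParallelism}), we have $\dim \cQ' = \dim \cQ - 1$, which combined with $\cM'(\kappa) \subseteq \cQ$ forces $\dim \cM'(\kappa) \geq \dim \cQ$, and hence $\cM'(\kappa) = \cQ$.

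The main obstacle is the geometric realization step: verifying that for every signature $\kappa$ satisfying the hypothesis one can construct a polygon $P$ containing a special cylinder adjacent to an obtuse $\alpha_i$-vertex, and confirming that the corresponding cylinder degeneration on the partial unfolding lands exactly in the full stratum $\cQ'$ for $\kappa'$ (rather than a strict sub-locus). This amounts to tracking the square-gluing construction through the partial unfolding, and identifying which sides are identified under the holonomy cover so that the cylinder collapse is generic and codimension-one.
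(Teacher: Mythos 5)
Your induction is structurally the same as the paper's: both realize, via the square-gluing construction of Lemma \ref{L:AlmostRightConstruction}, a polygon of signature $\kappa$ with a special cylinder at an obtuse vertex $\alpha_i$, collapse it to a signature $\kappa'$ still satisfying the hypothesis, and invoke the inductive hypothesis that the boundary contains a full stratum. The paper concludes differently: it first applies Theorem \ref{T:Main} (using the rank $\geq 3$ bound) to see that $\cM'(\kappa)$ is a full locus of degree-$d$ covers, and then the full stratum in the boundary forces $d=1$. Your dimension count is a legitimate alternative and avoids invoking Theorem \ref{T:Main} here, but note that you only need the inequality $\dim \cM'(\kappa)_{\bfC} \leq \dim\cM'(\kappa)-1$ (which holds for any boundary component by Mirzakhani--Wright), not the equality from Apisa--Wright [Lemma 6.5]; the equality would require knowing that the lift of $C$ is a generic subequivalence class of $\cM'(\kappa)$, which you cannot assume before knowing $\cM'(\kappa)$. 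Both your argument and the paper's share the implicit step that the collapsed polygons include one of signature $\kappa'$ whose partial unfolding has dense orbit in $\cM'(\kappa')$.

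There is, however, a false claim in your base case. For $m=1$ the signature is that of a right triangle, which is rigid up to similarity, so $\cM'(\kappa)$ is the orbit closure of a single surface; by Theorem \ref{T:Unfolding:RightDetailed} this is a proper Veech locus (the regular or double regular $m$-gon locus) whenever the smallest angle has the form $\frac{\pi}{m}$, so $\cM'(\kappa) \neq \cQ$ there. The sublemma is only applied in the ambient proof for $k\geq 5$, and the paper's induction bottoms out at $k=4$ (your $m=2$), where Theorem \ref{T:Main:SpecificSphere} genuinely gives the full stratum because trapezoids move in a one-parameter family. Since your inductive step reduces $m$ by exactly one starting from $m\geq 3$, you never actually reach $m=1$, so the error is contained — but you should delete the triangle case from the base case rather than assert a false statement about it. (A smaller slip: $\alpha_1 - \tfrac{\pi}{2}$ need not lie in $(0,\pi)$, since $\alpha_1$ can exceed $\tfrac{3\pi}{2}$ when $m$ is large; what matters, and what is true, is only that it is positive and not a multiple of $\tfrac{\pi}{2}$.)
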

\begin{proof}
Proceed by induction with the result for $k = 4$ forming the base case. By Theorem \ref{T:Main}, $\cM'(\kappa)$ is a locus of covers. It suffices to show that the degree $d$ of these covers is one. By the construction in Lemma \ref{L:AlmostRightConstruction}, there is an almost-right polygon $P'$ with signature $\kappa$ that contains a special cylinder $R$ that contains the vertex of angle $\alpha_i$, for $i \in \{1, 2\}$, in its boundary. Collapsing the special cylinder produces an almost-right polygon whose signature $\kappa'$ still satisfies the hypotheses of the sublemma. Since $\cM'(\kappa')$ is contained in the boundary of $\cM'(\kappa)$ and consists of a full stratum (by the induction hypothesis), it follows that $d=1$. 
\end{proof}

\begin{sublem}\label{SL:FewObtuse2}
If $\kappa$ contains exactly one obtuse angle which is equal to $\frac{3\pi}{2}$, then $\cM'(\kappa) = \cQ$.
\end{sublem}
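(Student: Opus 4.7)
The plan is to reduce Sublemma \ref{SL:FewObtuse2} to the already-established Sublemma \ref{SL:FewObtuse1} by a single boundary degeneration, mimicking the structure of the proof of Sublemma \ref{SL:FewObtuse1} but with one degeneration step rather than an induction. Concretely, using the construction in the proof of Lemma \ref{L:AlmostRightConstruction}, I would first exhibit an almost-right polygon $P$ of signature $\kappa$ containing a special cylinder $R$ whose $\alpha$-vertex (in the notation preceding Sublemma \ref{SL:FewObtuse1}) is the unique vertex $v$ of angle $\tfrac{3\pi}{2}$. Such a $P$ can be built from a polygon $P'$ of signature $\kappa'$ by gluing in a square along an edge meeting a $\tfrac{\pi}{2}$ vertex and a marked point (a vertex of angle $\pi$), which creates $v$ and adds one more $\tfrac{\pi}{2}$ vertex; equivalently, $P'$ is obtained from $P$ by collapsing $R$, replacing $v$ by a marked point and deleting one $\tfrac{\pi}{2}$ vertex.

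The key observation is that $\kappa'$ satisfies the hypothesis of Sublemma \ref{SL:FewObtuse1}: aside from $\alpha_1$ and $\alpha_2$, all its angles are either $\tfrac{\pi}{2}$ or marked points ($\pi$), and marked points are irrelevant by Theorem \ref{T:QuadraticPeriodicBackground}. Hence Sublemma \ref{SL:FewObtuse1} gives $\cM'(\kappa') = \cQ'$, the full genus zero stratum of partial unfoldings of polygons of signature $\kappa'$.

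Next I would argue that the cylinder-collapse from $P$ to $P'$ is a genuine boundary degeneration of $\cM'(\kappa)$ in the Mirzakhani-Wright partial compactification, landing inside $\cM'(\kappa')$. By Theorem \ref{T:Main}, $\cM'(\kappa)$ is a full locus of covers of a genus zero stratum of some degree $d$; this degree must match the covering degree exhibited on the boundary. Since $\cM'(\kappa') = \cQ'$ is already the full boundary stratum, we conclude $d = 1$, i.e., $\cM'(\kappa) = \cQ$.

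The main technical obstacle is verifying that the collapse of the special cylinder $R$ in $P$ indeed produces a boundary component that lies inside $\cM'(\kappa')$ (and not merely inside some invariant subvariety containing $\cM'(\kappa')$), and that the resulting degeneration preserves the ``full cover'' structure granted by Theorem \ref{T:Main}. This essentially amounts to recognizing that after collapse the former $\tfrac{3\pi}{2}$ cone point becomes an ordinary marked point whose presence does not enlarge or shrink the orbit closure, which is a local computation at the boundary saddle connection of $R$ together with Apisa-Wright \cite{ApisaWright}. Once this is in place, the dimension count and Theorem \ref{T:Main} force $d=1$ exactly as in the proof of Sublemma \ref{SL:FewObtuse1}.
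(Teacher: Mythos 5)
Your overall strategy (degenerate, show the boundary is a full stratum, conclude the covering degree is one) is the right one, but two of your steps fail. First, Theorem \ref{T:Main} does not apply here: the partial unfolding of the pentagon lies in $\cQ(a-2,b-2,1^m,-1^{2m})$, where the vertex of angle $\frac{3\pi}{2}$ contributes $m$ zeros of order one, so the stratum has more than two zeros as soon as $m\geq 3$. This is precisely why the paper cannot quote Theorem \ref{T:Main} in Sublemma \ref{SL:FewObtuse2} (whereas it can in Sublemma \ref{SL:FewObtuse1}) and instead invokes the high-rank theorem of Apisa--Wright; that theorem in turn needs a rank lower bound of roughly $\frac{3m}{4}+1$, which the trivial rank bound (rank $\geq 3$) does not supply for large $m$ — the paper extracts the needed bound $\mathrm{rank}\geq m$ from its degeneration to a trapezoid, with a separate check for $m=2,3$.

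Second, and more seriously, your degeneration lands in the wrong place. Collapsing a special cylinder whose $\alpha$-vertex is the $\frac{3\pi}{2}$ vertex turns that vertex into a marked point and deletes a right angle, so the limit is a right \emph{triangle} $\left(\frac{a}{2m},\frac{b}{2m},\frac12\right)\pi$ with a marked point — not a polygon covered by Sublemma \ref{SL:FewObtuse1}, whose induction bottoms out at quadrilaterals (Theorem \ref{T:Main:SpecificSphere}). For a triangle there is no positive-dimensional family to exploit, and the conclusion you need is simply false in general: the hypotheses of Sublemma \ref{SL:FewObtuse2} allow $a=1$ (e.g.\ $\kappa=\left(\frac{1}{10},\frac{4}{10},\frac12,\frac12,\frac32\right)\pi$), in which case the collapsed triangle is exceptional and its unfolding, even with a free marked point, has a rank-one orbit closure by Theorem \ref{T:Unfolding:RightDetailed}. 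The boundary component is then nowhere near the full stratum and gives no control on the covering degree $d$. The paper sidesteps both problems with a different degeneration: it shrinks the edge joining the $\frac{3\pi}{2}$ vertex to the acute vertex of angle $\frac{b\pi}{2m}$, merging them into a vertex of angle $\frac{(b+m)\pi}{2m}$ and producing a right \emph{trapezoid}, whose family is positive-dimensional so that Theorem \ref{T:Main:SpecificSphere} yields a full stratum of rank $m$ on the boundary unconditionally; this single degeneration simultaneously supplies the rank bound needed for the high-rank theorem and forces $d=1$.
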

\begin{proof}
Since $\alpha_1$ and $\alpha_2$ is acute their sum is $\frac{\pi}{2}$. Since the sum of angles in $\kappa$ is $(|\kappa|-2)\pi$, it follows that $\kappa = \left( \frac{a}{2m}, \frac{b}{2m}, \frac{1}{2}, \frac{1}{2}, \frac{3}{2} \right)\pi$, where $a$, $b$, and $m$ are positive integers such that $a + b = m$ and $\mathrm{gcd}(a,b,2m) = 1$. Without loss of generality $a$ is odd and $D(P)$ partially unfolds to a surface in $\cQ(a-2, b-2, -1^{2m}, 1^m)$. The rank is $\lceil \frac{3m-1}{2} \rceil \leq \frac{3m}{2}$. By the construction in Lemma \ref{L:AlmostRightConstruction}, we may replace $P$ with a polygon $P'$ for which the angles appear in clockwise order as indicated in $\kappa$ (see Figure \ref{F:5gonDegeneration}). We can move the angle of size $\frac{3\pi}{2}$ into the acute angle $\frac{b\pi}{2m}$ to form a right trapezoid whose unfolding has dense orbit in $\cQ(a-2, b+m-2, -1^{2m})$, which has rank $m$ (see Figure \ref{F:5gonDegeneration}). 

\begin{figure}[h]\centering
\includegraphics[width=0.45\linewidth]{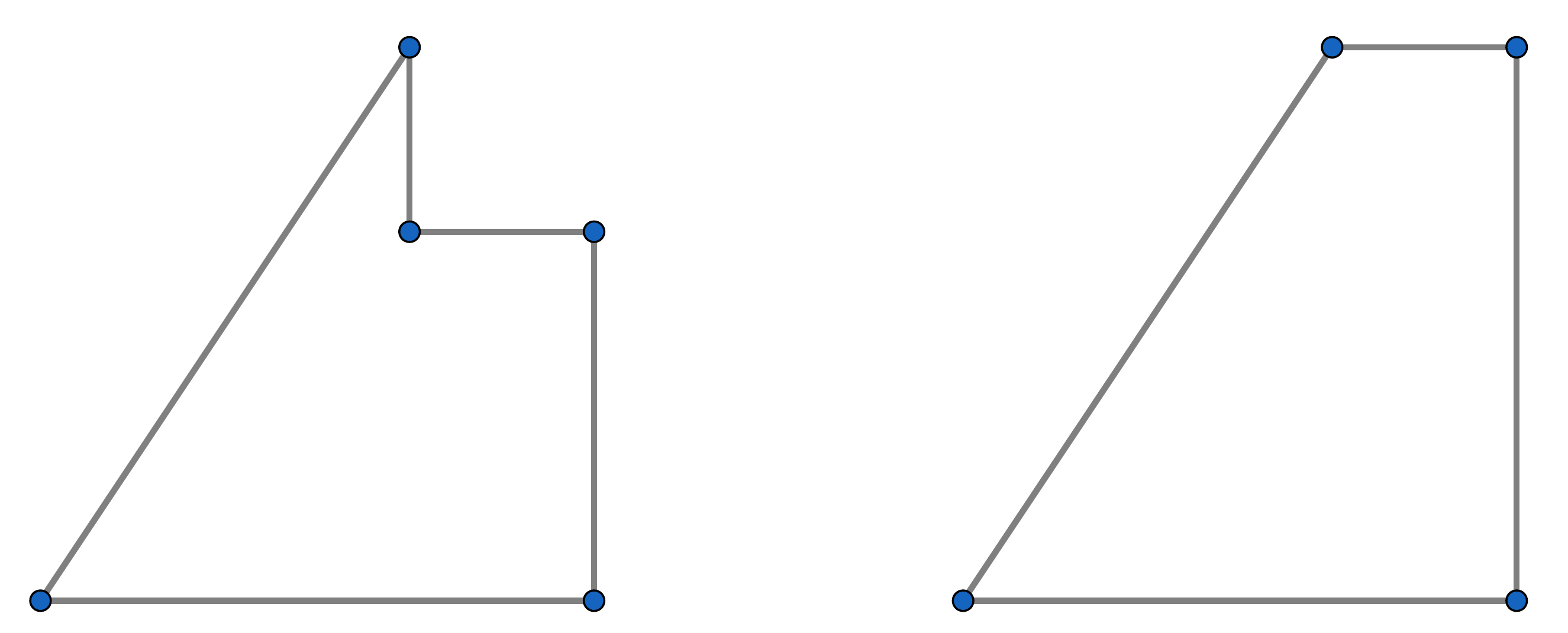}
\caption{The degeneration of $P'$ to a trapezoid.}
\label{F:5gonDegeneration}
\end{figure}

Note that $\cM'(\kappa)$ is high rank if $m \geq \frac{3m}{4} + 1$, i.e. if $m \geq 4$. In this case it is a locus of covers (by Apisa-Wright \cite{ApisaWrightHighRank}), but we have exhibited a degeneration where the minimal cover is the identity, so $\cM'(\kappa) = \cQ$ if $m \geq 4$. The same argument works when $m = 2, 3$ since the rank of $\cQ$ is $3$ and $4$ respectively and we have already noted that $\cM'(\kappa)$ has rank at least $3$. 
%
\end{proof}



\begin{sublem}\label{SL:StrangeDiamond}
Suppose that $P'$ is an almost-right polygon of signature $\kappa$ with disjoint special cylinders $C_1$ and $C_2$ whose preimage on the partial unfolding of $D(P)$ are denoted $\bfC_1$ and $\bfC_2$. Suppose that collapsing $C_i$ results in an almost-right polygon $P_i$ of signature $\kappa_i$ so that $\cM'(\kappa_i)$ is a stratum for $i \in \{1, 2\}$. Then $\cM'(\kappa) = \cQ$. 
\end{sublem}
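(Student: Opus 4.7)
The plan is to apply Apisa-Wright's diamond lemma, specifically \cite[Theorem 7.5]{ApisaWrightDiamonds} (in the same spirit as its use in the proof of Theorem \ref{T:Main:Rigid}), to conclude that $\cM'(\kappa)$ coincides with the entire stratum $\cQ$.

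Let $(X, q)$ denote the partial unfolding of $D(P')$, which lies in $\cM'(\kappa) \subseteq \cQ$. Write $\bfC_i$ for the equivalence class of cylinders on $(X, q)$ that lies over $C_i$. Because $C_1$ and $C_2$ are disjoint special cylinders on $P'$ sharing no boundary edges, the classes $\bfC_1$ and $\bfC_2$ are disjoint and share no boundary saddle connections. After a small perturbation within $\cM'(\kappa)$ (in the spirit of Apisa-Wright \cite[Lemma 3.31]{ApisaWrightDiamonds}), we may arrange that $((X, q), \cM'(\kappa), \bfC_1, \bfC_2)$ is a generic diamond.

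Collapsing $C_i$ on $P'$ yields the polygon $P_i$ of signature $\kappa_i$, and therefore the cylinder collapse $\Col_{\bfC_i}(X, q)$ is precisely the partial unfolding of $D(P_i)$; in particular it lies in $\cM'(\kappa_i)$, which by hypothesis equals a full stratum component $\cQ_i$. By Apisa-Wright \cite[Lemma 6.5]{ApisaWrightHighRank} applied to the generic equivalence class $\bfC_i$, the component of $\cM'(\kappa)_{\bfC_i}$ containing $\Col_{\bfC_i}(X, q)$ has dimension exactly one less than $\cM'(\kappa)$. A dimension count (reading off the change in signature upon collapsing the special cylinder) shows that $\cQ_{\bfC_i}$ also has dimension exactly one less than $\cQ$ and contains $\cQ_i$ as a component; hence $\cM'(\kappa)_{\bfC_i} = \cQ_{\bfC_i} = \cQ_i$ is a full stratum component on each side of the diamond.

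With both sides of the generic diamond being full stratum components, Apisa-Wright \cite[Theorem 7.5]{ApisaWrightDiamonds} (as invoked near the end of the proof of Theorem \ref{T:Main:Rigid}) forces $\cM'(\kappa)$ to be a full component of a stratum as well. Since $\cM'(\kappa) \subseteq \cQ$ and the two have degenerations agreeing along both $\bfC_1$ and $\bfC_2$, we conclude $\cM'(\kappa) = \cQ$. The main obstacle is verifying the two codimension-one identifications $\cM'(\kappa)_{\bfC_i} = \cQ_{\bfC_i}$: one must check that the special-cylinder collapse on the polygon side corresponds precisely to the cylinder collapse $\Col_{\bfC_i}$ on the partial-unfolding side, and that the resulting boundary stratum component of $\cQ$ is exactly $\cQ_i$, so that the dimension count really pins $\cM'(\kappa)_{\bfC_i}$ to the full boundary component (rather than leaving it as a possibly proper sublocus).
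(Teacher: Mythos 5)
Your route through the diamond lemma is genuinely different from the paper's argument, but the step you yourself flag as ``the main obstacle'' is not actually closed, and as written it is circular. You deduce $\cM'(\kappa)_{\bfC_i} = \cQ_{\bfC_i} = \cQ_i$ from the facts that $\cM'(\kappa)_{\bfC_i}$ has dimension $\dim \cM'(\kappa) - 1$ and $\cQ_{\bfC_i}$ has dimension $\dim \cQ - 1$. But the inclusion $\cM'(\kappa)_{\bfC_i} \subseteq \cQ_i$ together with these two dimension counts gives equality only if $\dim \cM'(\kappa) = \dim \cQ$ — which is precisely the statement $\cM'(\kappa) = \cQ$ that you are trying to prove. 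What actually pins down the side of the diamond is not a dimension count but the hypothesis on $\cM'(\kappa_i)$: the boundary component $\cM'(\kappa)_{\bfC_i}$ contains the partial unfoldings of the polygons obtained by collapsing the special cylinder on every polygon of signature $\kappa$ near $P'$, i.e.\ an open family of polygons of signature $\kappa_i$; since almost every such polygon has orbit closure $\cM'(\kappa_i)$ (Mirzakhani--Wright, as quoted after the definition of $\cM(\theta)$), one gets $\cM'(\kappa)_{\bfC_i} \supseteq \cM'(\kappa_i) = \cQ_i$, and equality follows since the boundary component lives in the stratum $\cQ_i$. A secondary point needing care: to invoke the diamond machinery you must know that $\bfC_1$ and $\bfC_2$ are \emph{generic} subequivalence classes of $\cM'(\kappa)$ forming a generic diamond; Apisa--Wright's Lemma 3.31 produces \emph{some} generic diamond after perturbation, not necessarily one whose classes are the prescribed preimages of $C_1$ and $C_2$, and a priori the $\cM'(\kappa)$-subequivalence class of a cylinder over $C_i$ could be strictly larger than $\bfC_i$ when $\cM'(\kappa)$ is a proper subvariety.

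With these repairs your argument would go through, but note that the paper's own proof is shorter and avoids the diamond lemma entirely: since $\bfC_j$ persists under the collapse of $\bfC_i$ and lands in the full stratum $\cM'(\kappa_i) = \cQ_i$, where every cylinder is free, the cylinders of $\bfC_j$ are already free in $\cM'(\kappa)$; then $\cQ_j = \cM'(\kappa)_{\bfC_j}$ is a full stratum obtained by collapsing free simple cylinders, which forces $\dim \cM'(\kappa) = \dim \cQ$ and hence $\cM'(\kappa) = \cQ$. This is why the hypothesis is stated for both cylinders: each collapse certifies the freeness of the other class.
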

\begin{proof}
Collapsing $C_i$ produces a path in the space of almost-right polygons of signature $\kappa$ to $P_i$. This path has a corresponding path in $\cM'(\kappa)$. Since $\bfC_j$ persists on the unfolding of $D(P_i)$ (for $i \ne j$), which is contained in the stratum $\cM'(\kappa_i)$, it follows that $\bfC_j$ is a collection of free cylinders, i.e. cylinders that can be dilated and sheared while fixing the rest of the surface and while remaining in $\cM'(\kappa)$. Since $\cM'(\kappa_j)$ is a full stratum that is formed by collapsing a collection of free simple cylinders in $\cM'(\kappa)$, it follows that $\cM'(\kappa)$ is itself a full stratum. 
\end{proof}

\begin{sublem}\label{SL:FewObtuse3}
If $\kappa$ contains exactly one obtuse angle, then $\cM'(\kappa) = \cQ$.
\end{sublem}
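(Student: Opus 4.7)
My plan is to induct on $|\kappa|$, with Sublemma \ref{SL:FewObtuse2} as the base case. Writing $\theta$ for the obtuse angle, the angle-sum relation $\alpha_1 + \alpha_2 + \theta + (|\kappa| - 3)\pi/2 = (|\kappa| - 2)\pi$ together with $\alpha_1, \alpha_2 > 0$ forces $\theta < (|\kappa| - 1)\pi/2$; in particular, $|\kappa| = 5$ forces $\theta = 3\pi/2$, and this case is handled by Sublemma \ref{SL:FewObtuse2}.

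For the inductive step $|\kappa| \geq 6$, my idea is to apply Sublemma \ref{SL:StrangeDiamond}. To that end, I construct an almost-right polygon $P'$ of signature $\kappa$ carrying two disjoint special cylinders $C_1, C_2$ whose respective collapses yield signatures $\kappa_1, \kappa_2$ each handled by Sublemma \ref{SL:FewObtuse1}, Sublemma \ref{SL:FewObtuse2}, or the induction hypothesis. When $\theta \geq 2\pi$, the vertex $v$ of angle $\theta$ admits enough angular room that a suitable iteration of the gluing-in-a-square construction of Lemma \ref{L:AlmostRightConstruction} produces $P'$ with two disjoint special cylinders, both adjacent to $v$, on non-overlapping portions of its two incident edges. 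Collapsing either such cylinder replaces $\theta$ with $\theta - \pi/2 \geq 3\pi/2$ and drops $|\kappa|$ by one, so the resulting signature still has exactly one obtuse angle and is handled by the induction hypothesis.

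When $\theta = 3\pi/2$ (and $|\kappa| \geq 6$), one has $\alpha_1 + \alpha_2 = (|\kappa| - 4)\pi/2 \geq \pi$, so at least one of $\alpha_1, \alpha_2$ strictly exceeds $\pi/2$. I then construct $P'$ with a special cylinder $C_1$ adjacent to $v$ and a disjoint special cylinder $C_2$ adjacent to the larger of $\alpha_1, \alpha_2$. Collapsing $C_1$ turns $v$ into a marked point and produces $\kappa_1$, whose angles apart from $\alpha_1, \alpha_2$ are all $\pi/2$, so Sublemma \ref{SL:FewObtuse1} applies. Collapsing $C_2$ leaves $\theta = 3\pi/2$ intact and decreases $|\kappa|$ by one; the induction hypothesis handles $\kappa_2$, except when $|\kappa| = 6$, in which case Sublemma \ref{SL:FewObtuse2} applies directly, since the new sum $\alpha_i' + \alpha_{3-i} = \pi - \pi/2 = \pi/2$ is precisely the constraint there.

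The main obstacle is the explicit construction of $P'$ with the required pair of disjoint special cylinders, i.e., arranging the gluings-in-a-square of Lemma \ref{L:AlmostRightConstruction} so that the final two glued squares occur along well-separated edges and produce cylinders that do not overlap. In the first case this uses the large angle at $v$; in the second, it uses the availability of a non-right-angle vertex of angle exceeding $\pi/2$ well-separated from $v$.
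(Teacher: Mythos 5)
Your core argument is the paper's: reduce to the case where the obtuse angle is an integer multiple of $\pi/2$ of size at least $2\pi$, then induct on the number of sides by producing two disjoint special cylinders at the large vertex and invoking Sublemma \ref{SL:StrangeDiamond}. However, there is a genuine hole in your case analysis. Your opening angle-sum identity $\alpha_1+\alpha_2+\theta+(|\kappa|-3)\pi/2=(|\kappa|-2)\pi$ tacitly assumes the unique obtuse angle $\theta$ is one of the integer multiples of $\pi/2$, i.e.\ $\theta\notin\{\alpha_1,\alpha_2\}$. But the unique obtuse angle may well be $\alpha_1$ or $\alpha_2$ (e.g.\ the signature $\left(\tfrac{5}{4},\tfrac{1}{4},\tfrac{1}{2},\tfrac{1}{2},\tfrac{1}{2}\right)\pi$ with $|\kappa|=5$), in which case your claim that $|\kappa|=5$ forces $\theta=3\pi/2$ is false and neither of your two branches ($\theta\geq 2\pi$ or $\theta=3\pi/2$) applies. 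This case is precisely the hypothesis of Sublemma \ref{SL:FewObtuse1} (all angles other than $\alpha_1,\alpha_2$ are non-obtuse), so the repair is a one-line citation at the outset — which is exactly how the paper disposes of it before restricting to $\theta=\ell\pi/2$ with $\ell\geq 4$.

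Separately, your third paragraph treats a case that is empty. If $\theta=3\pi/2$ is the \emph{unique} obtuse angle, then $\alpha_1,\alpha_2<\pi/2$, so $\alpha_1+\alpha_2=(|\kappa|-4)\pi/2<\pi$ forces $|\kappa|=5$. Your own deduction there — that $|\kappa|\geq 6$ makes one of $\alpha_1,\alpha_2$ exceed $\pi/2$ — manufactures a second obtuse angle and contradicts the standing hypothesis; the construction that follows is therefore moot and the paragraph should be deleted rather than elaborated. With these two adjustments your proof coincides with the paper's, including the acknowledged reliance on an explicit gluing-in-a-square construction (the paper's Figure \ref{F:GluingSquares}) to exhibit the two disjoint special cylinders.
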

\begin{proof}
We will proceed by induction on the number of sides $k$. We have already noted that the $k = 4$ case has been established. Sublemmas \ref{SL:FewObtuse1} and \ref{SL:FewObtuse2} allow us to assume that the obtuse angle is equal to $\frac{\ell \pi}{2}$ where $\ell \geq 4$. The construction in Lemma \ref{L:AlmostRightConstruction} shows that we can find an almost-right polygon $P'$ with signature $\kappa$ that has two disjoint special cylinders (see Figure \ref{F:GluingSquares} (left)). The induction hypothesis and Sublemma \ref{SL:StrangeDiamond} allow us to conclude. 
\end{proof}

\begin{figure}[h]\centering
\includegraphics[width=0.75\linewidth]{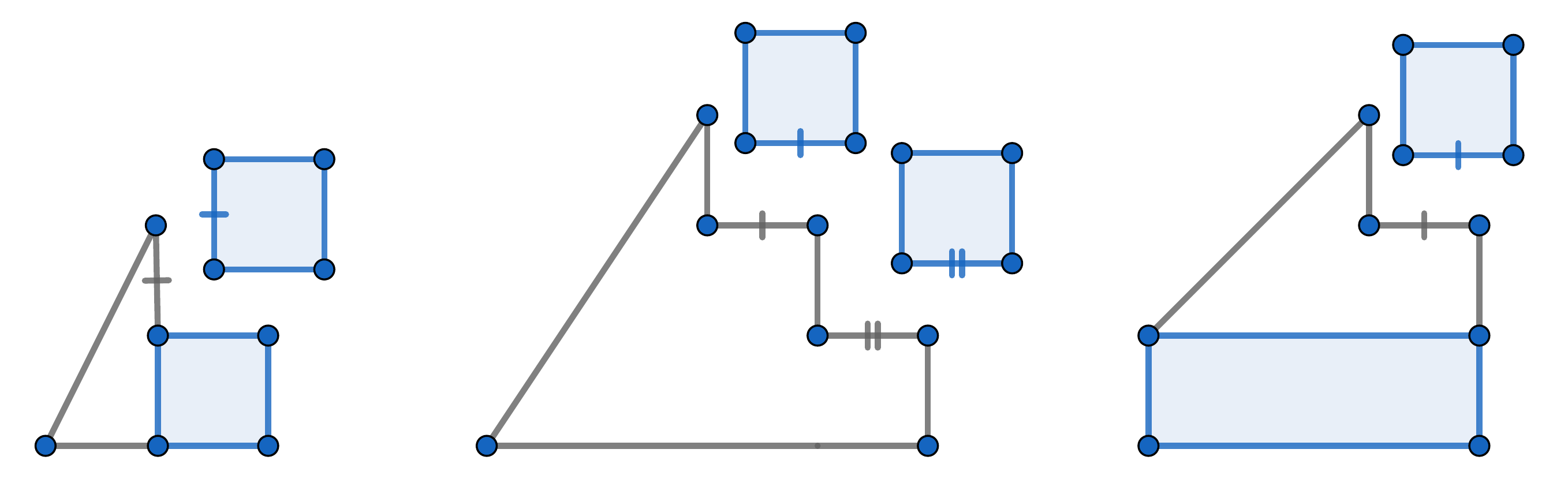}
\caption{Disjoint special cylinders on almost right polygons with one obtuse angle of the form $\frac{\ell \pi}{2}$ for $\ell \geq 4$ (left), two obtuse angles that are integral multiples of $\frac{\pi}{2}$ (middle), and one obtuse angle that is an integral multiple of $\frac{\pi}{2}$ and one of $\alpha_1$ and $\alpha_2$ also obtuse (right).}
\label{F:GluingSquares}
\end{figure}

To conclude the proof we will proceed by induction on the number of sides $k$. We have already noted that the $k = 4$ case has been established. Sublemmas \ref{SL:FewObtuse1} and \ref{SL:FewObtuse3} allow us to assume that there are at least two obtuse angles in $\kappa$ and that at least one is an integral multiple of $\frac{\pi}{2}$. The construction in Lemma \ref{L:AlmostRightConstruction} shows that we can find an almost-right polygon $P'$ with signature $\kappa$ that has two disjoint special cylinders (see Figure \ref{F:GluingSquares} (middle and right)). The induction hypothesis and Sublemma \ref{SL:StrangeDiamond} allow us to conclude. 
\end{proof}

\bibliography{mybib}{}
\bibliographystyle{amsalpha}
\end{document}